\DeclareSymbolFontAlphabet{\mathbb}{AMSb}
\DeclareSymbolFontAlphabet{\mathbbl}{bbold}
\DeclareMathOperator{\WCart}{WCart}
\DeclareMathOperator*{\Cone}{Cone}
\DeclareMathOperator{\Gal}{Gal}
\DeclareMathOperator{\Rep}{Rep}
\DeclareMathOperator{\Ext}{Ext}
\DeclareMathOperator{\TWCart}{\widetilde{WCart}}
\DeclareMathOperator{\Qcoh}{Qcoh}
\newcommand{\Prism}{{\mathlarger{\mathbbl{\Delta}}}}
\setlist[enumerate]{itemsep=2pt,parsep=2pt,before={\parskip=2pt}}
\newcommand{\cosimp}[3]{\xymatrix@1{#1 \ar@<.4ex>[r] \ar@<-.4ex>[r] & {\ }#2 \ar@<0.8ex>[r] \ar[r] \ar@<-.8ex>[r] & {\ } #3 \ar@<1.2ex>[r] \ar@<.4ex>[r] \ar@<-.4ex>[r] \ar@<-1.2ex>[r] & \cdots }}
\newcommand{\fp}{{\mathrm{fp}}}
\newcommand{\adjunction}[4]{\xymatrix@1{#1{\ } \ar@<0.3ex>[r]^{ {\scriptstyle #2}} & {\ } #3 \ar@<0.3ex>[l]^{ {\scriptstyle #4}}}}
\begin{document}

\numberwithin{equation}{section}
\newtheorem{theorem}{Theorem}[section]
\newtheorem*{theorem*}{Theorem}
\newtheorem*{definition*}{Definition}
\newtheorem{proposition}[theorem]{Proposition}
\newtheorem{lemma}[theorem]{Lemma}
\newtheorem{corollary}[theorem]{Corollary}

\theoremstyle{definition}
\newtheorem{definition}[theorem]{Definition}
\newtheorem{question}[theorem]{Question}
\newtheorem{remark}[theorem]{Remark}
\newtheorem{warning}[theorem]{Warning}
\newtheorem{example}[theorem]{Example}
\newtheorem{notation}[theorem]{Notation}
\newtheorem{convention}[theorem]{Convention}
\newtheorem{construction}[theorem]{Construction}
\newtheorem{claim}[theorem]{Claim}
\newtheorem{assumption}[theorem]{Assumption}

\crefname{assumption}{assumption}{assumptions}
\crefname{construction}{construction}{constructions}

%The todo box!
\def\todo#1{\textcolor{red}%
{\footnotesize\newline{\color{red}\fbox{\parbox{\textwidth}{\textbf{todo: } #1}}}\newline}}

%The comment box!
\def\commentbox#1{\textcolor{red}%
{\footnotesize\newline{\color{red}\fbox{\parbox{\textwidth}{\textbf{comment: } #1}}}\newline}}

\newcommand{\dR}{{\mathrm{dR}}}
\newcommand{\qc}{q-\mathrm{crys}}
\newcommand{\Ainf}{{A_{\mathrm{inf}}}}
\newcommand{\Ain}{{\mathbb{A}_{\mathrm{inf}}}}
\newcommand{\Shv}{\mathrm{Shv}}
\newcommand{\Vect}{\mathrm{Vect}}
\newcommand{\et}{\mathrm{\acute{e}t}}
\newcommand{\eh}{\mathrm{\acute{e}h}}
\newcommand{\proet}{\mathrm{pro\acute{e}t}}
\newcommand{\crys}{\mathrm{crys}}
\renewcommand{\inf}{\mathrm{inf}}
\newcommand{\Hom}{\mathrm{Hom}}
\newcommand{\RHom}{\mathrm{RHom}}
\newcommand{\Sch}{\mathrm{Sch}}
\newcommand{\fSch}{\mathrm{fSch}}
\newcommand{\Rig}{\mathrm{Rig}}
\newcommand{\Spf}{\mathrm{Spf}}
\newcommand{\Spa}{\mathrm{Spa}}
\newcommand{\Spec}{\mathrm{Spec}}
\newcommand{\Bl}{\mathrm{Bl}}
\newcommand{\perf}{\mathrm{perf}}
\newcommand{\Perf}{\mathrm{Perf}}
\newcommand{\Pic}{\mathrm{Pic}}
\newcommand{\qsyn}{\mathrm{qsyn}}
\newcommand{\perfd}{\mathrm{perfd}}
\newcommand{\arc}{{\rm arc}}
\newcommand{\conj}{\mathrm{conj}}
\newcommand{\rad}{\mathrm{rad}}
\newcommand{\Id}{\mathrm{Id}}
\newcommand{\coker}{\mathrm{coker}}
\newcommand{\im}{\mathrm{im}}
\newcommand{\Cond}{\mathrm{Cond}}
\newcommand{\CHaus}{\mathrm{CHaus}}
\newcommand{\cg}{\mathrm{cg}}
\newcommand{\topo}{\mathrm{top}}
\newcommand{\Top}{\mathrm{Top}}
\newcommand{\ac}{\mathbb{A}_{\mathrm{crys}}}
\newcommand{\bc}{\mathbb{B}_{\mathrm{crys}}^+}
\newcommand{\oc}{\mathcal{O}_{\Prism}[[\frac{\mathcal{I}_{\Prism}}{p}]]}
\newcommand{\ocn}{\mathcal{O}_{\Prism}[[\frac{\mathcal{I}_{\Prism}}{p}]]/(\mathcal{I}_{\Prism}/p)^n}
\newcommand{\ocm}{\mathcal{O}_{\Prism}[[\frac{\mathcal{I}_{\Prism}}{p}]]/(\mathcal{I}_{\Prism}/p)^m}
\newcommand{\Ab}{\mathrm{Ab}}
\newcommand{\Set}{\mathrm{Set}}
\newcommand{\Pro}{\mathrm{Pro}}
\newcommand{\Ind}{\mathrm{Ind}}
\newcommand{\Sm}{\mathrm{SmRig}}
\newcommand{\HTlog}{\operatorname{HTlog}}
\newcommand{\HT}{\operatorname{HT}}
\newcommand{\Mod}{\mathrm{Mod}}
\newcommand{\MIC}{\mathrm{MIC}}
\newcommand{\nil}{\mathrm{Nil}}
\newcommand{\bdr}{{\mathbb{B}_{\mathrm{dR}}^{+}}}
\newcommand{\an}{{\mathrm{an}}}
\newcommand{\uHom}{\underline{\mathrm{Hom}}}
\newcommand{\Sym}{\operatorname{Sym}}
\newcommand{\Lie}{\operatorname{Lie}}
\newcommand{\fib}{\operatorname{fib}}
\newcommand{\Eq}{\operatorname{Eq}}
\newcommand{\TX}{\widetilde{X^{\Prism}}}
\newcommand{\TXn}{\widetilde{X^{\Prism}_{[n]}}}
\newcommand{\TXL}{\widetilde{\Spf(\mathcal{O}_L)^{\Prism}_{[n]}}}
\newcommand{\TXM}{\widetilde{X^{\Prism}_{[n-1]}}}
\newcommand{\TXV}{\widetilde{X^{\Prism}_{[1]}}}
\newcommand{\TR}{\widetilde{\Spf(R^+)^{\Prism}}}
\newcommand{\TRn}{\widetilde{\Spf(R^+)_n^{\Prism}}}
\setcounter{tocdepth}{1}

\title{On the prismatization of $\mathcal{O}_K$ beyond the Hodge-Tate locus}
\author{Zeyu Liu}
\address{Department of Mathematics, University of California Berkeley, 970 Evans Hall, MC 3840 Berkeley, CA 94720}
\email{zeliu@berkeley.edu}
\begin{abstract}
Let $X=\Spf(\mathcal{O}_K)$. We classify $\Perf((X)_{\Prism}, \mathcal{O}_{\Prism}/\mathcal{I}_{\Prism}^n)
$ when $n\leq 1+\frac{p-1}{e}$ by studying perfect complexes on $X_n^{\Prism}$, which are certain nilpotent thickenings of $X^{\HT}$ inside $X^{\Prism}$, the prismatization of $X$. We describe the category of continuous semilinear representations and their cohomology for $G_K$ with coefficients in $B_{\dR,n}^+$
via rationalization of vector bundles on slight shrinking of $X_n^{\Prism}$. Along the way, we classify $\Perf((X)_{\Prism}, \ocn)$ for all $n$, which should be viewed as integral models for de Rham prismatic crystals studied in \cite{liu2023rham}.
\end{abstract}

\maketitle

\tableofcontents
\section{Introduction}
In this paper, we work with a $p$-adic field $K$. More precisely, let $\mathcal{O}_K$ be a complete discrete valuation ring of mixed characteristic with fraction field $K$ and perfect residue field $k$ of characteristic $p$. 

Recently, based on the pioneering work of Bhatt and Lurie in \cite{bhatt2022absolute} and \cite{bhatt2022prismatization}, Johannes Anschütz, Ben Heuer and Arthur-César Le Bras studied the Hodge-Tate crystals over $\mathcal{O}_K$ in \cite{anschutz2022v} via a stacky approach. Actually, as the category of Hodge-Tate crystals on $(\mathcal{O}_K)_{\Prism}$ is equivalent to the category of vector bundles on the Hodge-Tate stack $\Spf(\mathcal{O}_K)^{\HT}$, it suffices to study the later. Under such a stacky perspective, the unramified case (i.e. $\mathcal{O}_K=W(k)$) was already treated in \cite{bhatt2022absolute}, while the general description was obtained in \cite{anschutz2022v}. More notably, such a stacky approach naturally leads to results for non-abelian coefficients, In particular, \cite{anschutz2022v} obtained a non-abelian version of \cite{gao2023hodge}.
%while \cite{MW21} only works at the abelian level. 

Then %viewing (rational) Hodge-Tate crystals over $\mathcal{O}_K$ as the "torsion" case of de Rham prismatic crystals, 
it is natural to ask whether we could study $\operatorname{Vect}((\mathcal{O}_K)_{\Prism}, \mathcal{O}_{\Prism}/\mathcal{I}_{\Prism}^n)
$ for any $n\geq 1$ via a stacky approach and we would like to give a partial answer in this paper. Actually, we construct $\WCart_{n}$, which are certain closed substacks of the Cartier-Witt stack $\WCart$ and could be viewed as nilpotent thickenings of $\WCart_{1}=\WCart^{\HT}$, satisfying that quasi-coherent complexes on $\WCart_{n}$ parametrizes prismatic crystals on $(\mathbb{Z}_p)_{\Prism}$ with coefficients in $\mathcal{O}_{\Prism}/\mathcal{I}_{\Prism}^n$. Moreover, we end up with a characterization of quasi-coherent complexes on $\WCart_{n}$ for $n\leq p$, generalizing the description of that for $\WCart^{\HT}$ (i.e. $n=1$) in \cite{bhatt2022absolute}.
\begin{theorem}[{\cref{thm.main classification}}]\label{intro.main thm1}
    Assume that $n\leq p$. There exists a  functor 
    \begin{align*}
        \beta_{n}^{+}: \mathcal{D}(\WCart_n)\rightarrow \mathcal{D}(\mathrm{MIC}(\mathbb{Z}_p[[\lambda]]/\lambda^n)) , \qquad \mathscr{E}\mapsto (\rho^{*}(\mathscr{E}),\Theta_{\mathscr{E}})
    \end{align*}
    such that $\beta_{n}^{+}$ is fully faithful. Moreover, the essential image of $\beta_{n}^{+}$ consists of those objects $M\in  \mathcal{D}(\mathrm{MIC}(\mathbb{Z}_p[[\lambda]]/\lambda^n))$ satisfying the following pair of conditions:
    \begin{itemize}
        \item $M$ is $\mathbb{Z}_p$-complete.
        \item The action of $\Theta^p-\Theta$ on the cohomology $\mathrm{H}^*(M\otimes^{\mathbb{L}}\mathbb{F}_p)$ is locally nilpotent.
    \end{itemize}
\end{theorem}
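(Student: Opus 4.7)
The strategy is to build on the presentation of the Hodge--Tate stack used in the $n=1$ case by Bhatt--Lurie and extend it to $\WCart_n$. First I would produce an explicit affine flat cover $\rho: \Spf(\mathbb{Z}_p[[\lambda]]/\lambda^n) \to \WCart_n$, which one expects to exist directly from the construction of $\WCart_n$ as a nilpotent thickening of $\WCart^{\HT}$. The automorphism groupoid of this cover should be governed by a PD-thickened one-dimensional formal group, analogous to $\mathbb{G}_a^{\sharp}$ from the $n=1$ story, now deformed over the parameter $\lambda$. Flat descent along $\rho$ then identifies $\mathcal{D}(\WCart_n)$ with an equivariant derived category, and differentiating the group action gives an integrable connection on $\rho^*\mathscr{E}$ whose $\lambda$-part encodes the $\mathbb{Z}_p[[\lambda]]/\lambda^n$-module structure and whose transverse part is the endomorphism $\Theta_{\mathscr{E}}$; this is the functor $\beta_n^+$.

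For fully faithfulness, flat descent reduces the claim to showing that the full groupoid action is recoverable from its infinitesimal datum, i.e. from $(\rho^*\mathscr{E}, \Theta_{\mathscr{E}})$. This follows formally from the PD structure of the relevant formal group, since the action integrates via a divided-power exponential in $\Theta$. The hypothesis $n \leq p$ is critical: it guarantees that this formal exponential and the relevant divided-power series remain well-defined after truncation modulo $\lambda^n$, so the infinitesimal-to-global integration stays inside the prescribed truncated category.

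For the essential image, the $\mathbb{Z}_p$-completeness is inherited from the $p$-adic formal nature of $\WCart_n$. The local nilpotence of $\Theta^p - \Theta$ on mod-$p$ cohomology reflects an Artin--Schreier-type identity satisfied by the infinitesimal generator of the PD-completed formal group after reduction mod $p$, so any module arising from $\WCart_n$ is forced to satisfy it. Conversely, given $(M, \Theta)$ satisfying both conditions, one integrates $\Theta$ to an action of the formal group by assembling divided-power exponential series whose convergence on mod-$p$ cohomology is guaranteed precisely by the local nilpotence of $\Theta^p - \Theta$; the $\mathbb{Z}_p$-completeness then propagates the construction to all $p$-adic levels.

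The main obstacle is the sufficiency half of the essential image description, i.e.\ verifying that the mod-$p$ nilpotence of $\Theta^p - \Theta$ is genuinely enough to integrate the infinitesimal action to a full action of the thickened formal group. This requires controlling how the Frobenius twist $\Theta^p$ competes with $\Theta$ in the presence of the deformation parameter $\lambda$, and the bound $n \leq p$ should be precisely what prevents pathological interaction between the $\lambda$-adic divided-power filtration and the $p$-power dynamics of the exponential. Making this interaction explicit, likely via a careful Rees-algebra or filtration argument, is where I would expect most of the technical work to concentrate.
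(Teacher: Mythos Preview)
Your strategy is a genuinely different route from the paper's main proof, and it is viable in outline—indeed the paper records essentially your geometric picture as an alternative (identifying $\WCart_n$ for $n\le p$ with $\Sym^{<n}_{\WCart^{\HT}}\mathcal{O}\{1\}$, equivalently with $[\Spf(\mathbb{Z}_p[[\lambda]]/\lambda^n)/\mathbb{G}_m^\sharp]$ for the scaling action on $\lambda$). But the step you treat as routine, namely that the \v{C}ech groupoid of $\rho$ is governed by a fixed PD-formal group, is precisely the hard part and your sketch does not indicate how to prove it. In the paper this identification is established by deformation theory: each square-zero thickening $\WCart_{k}\hookrightarrow\WCart_{k+1}$ is classified by a class in $\Ext^1_{\WCart^{\HT}}(\mathbb{L}_{\WCart^{\HT}/B\mathbb{G}_m},\mathcal{O}\{k\})$, and one proves this group vanishes for $1\le k\le p-1$ using the Sen-theoretic description of $\mathcal{D}(\WCart^{\HT})$. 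So the bound $n\le p$ enters as an Ext-vanishing range, not as a convergence radius for a divided-power exponential.

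The paper's own proof bypasses the groupoid entirely. It works only over the dual numbers: an explicit ghost-coordinate computation in $W(\mathfrak{S}/\lambda^n[\epsilon]/\epsilon^2)$ produces a unit $b$ realizing an isomorphism $\rho\simeq\rho\circ\delta$ with $\delta(\lambda)=(1+\epsilon)\lambda$, and the constraint $n\le p$ appears concretely as the $p$-adic solvability of the resulting equations for the Witt components of $b$. This first-order datum alone defines $\Theta_{\mathscr{E}}$. Full faithfulness and the essential image then follow not by integrating $\Theta$ to a full group action, but from two structural reductions: $\mathcal{D}(\WCart_n)$ is generated under shifts and colimits by the line bundles $\mathcal{I}^k$, and a d\'evissage along $\WCart_1\subset\cdots\subset\WCart_n$ collapses the identity $R\Gamma(\WCart_n,\mathscr{E})\simeq\mathrm{fib}(\Theta_{\mathscr{E}})$ to the $n=1$ theorem of Bhatt--Lurie. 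Your integration approach would yield a more conceptual picture once the quotient presentation is in hand; the paper's route is more economical because it uses the $n=1$ case as a black box rather than first proving a global structure theorem for $\WCart_n$.
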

Let us briefly explain notations in this theorem. 

From the construction of $\WCart_n$, there exists a faithfully flat morphism $\rho: \Spf(\mathfrak{S}_n)\to \WCart_n$, where $\mathfrak{S}=\mathbb{Z}_p[[\lambda]]$ is the Breuil-Kisin prism (with $I=(\lambda$)) and $\mathfrak{S}_n=\mathfrak{S}_n/\lambda^n$. When $n\leq p$, we show that there exists a Sen operator $\Theta_{\mathscr{E}}$ on $\rho^{*}(\mathscr{E})$, the pullback of $\mathscr{E}\in \Qcoh(\WCart_n)$ along $\rho$ satisfying certain Leibniz rule (see \cref{lem.leibniz} for the exact statement).  

However, for $n\geq 2$, as $\Theta$ doesn't vanish on the structure sheaf, which is a key difference with the Hodge-Tate case, we need to be a little careful when describing the target of $\beta_{n}^{+}$, see \cref{def.MIC} for the exact definition of $\mathcal{D}(\mathrm{MIC}(\mathbb{Z}_p[[\lambda]]/\lambda^n))$.

\begin{remark}
     Our results are new when $n\geq 2$. Related results for the isogeny category (i.e. with $p$-inverted) of vector bundles on $\WCart_n$ were obtained in \cite{liu2023rham} and \cite{gao2022rham}. First, those results only hold at the abelian level (i.e. for vector bundles) while our results work for non-abelian coefficients (i.e. quasi-coherent complexes). More notably, the methods and results developed there could only describe the isogeny category $\Vect(\WCart_n)[\frac{1}{p}]$ when $n\geq 2$ and it's unclear to see how to refine them to integral levels, while our new results hold in the integral level.
\end{remark}
\begin{remark}
    It turns out that the key step is to construct the Sen operator $\Theta$ on $\rho^{*}(\mathscr{E})$, which requires $n\leq p$. Actually, once this is done, a standard  d\'evissage reduces to the Hodge-Tate case (i.e. $n=1$) studied explicitly in \cite{bhatt2022absolute}. Such a phenomenon always happens when studying prismatic crystals. For example, when we studied de Rham prismatic crystals in \cite{liu2023rham}, the key difficulty was to extract a Sen operator from the stratification data.
\end{remark}
\begin{remark}
    Actually there is a geometric explanation for \cref{intro.main thm1} using deformation theory, pointed to us by Sasha Petrov. Namely, for $1\leq n \leq p$, there is an isomorphism between $\WCart_n$ and $\Sym^{<n}_{\WCart^{\HT}} \mathcal{O}\{1\}$, the relative stack over $\WCart^{\HT}$ formed by the coherent sheaf $\Sym^{<n}(\mathcal{O}\{1\})$, which is the quotient of the symmetric algebra of $\mathcal{O}\{1\}$ by the ideal of elements of degree at least $n$. We refer the reader to \cref{prop.geometric explanation} for details.
\end{remark}
\begin{remark}
    One might wonder what happens when $n>p$. As $\WCart$ should be viewed as the colimit of $\WCart_n$, the difficulty of studying quasi-coherent complexes on $\WCart_n$ approaches that of understanding quasi-coherent complexes on $\WCart$, where we \textbf{shouldn't} expect that such a classification holds. Actually, as suggested by $n\leq p$, if such a theory exists, then the $\mathbb{Z}_p$-linear Sen operator $\Theta$ on $\mathfrak{S}=\mathbb{Z}_p[[\lambda]]$ should send $\lambda^i$ to $i\lambda^i$, leading to the "wrong" cohomology of the structure sheaf. Indeed, via topological methods, in \cite{bhatt2019topological} Bhatt-Morrow-Scholze showed that $$H^{1}((\mathbb{Z}_p)_{\Prism}, \mathcal{O}_{\Prism})=\prod_{n\in \mathbb{N}} \mathbb{Z}_p.$$
\end{remark}

On the other hand, for any $\mathcal{O}_K$ (without unramified assumption), when working with de Rham prismatic crystals instead, in \cite{liu2023rham} we showed for any {\small $\mathcal{E}\in \operatorname{Vect}((\mathcal{O}_{K})_{\Prism}, (\mathcal{O}_{\Prism}[\frac{1}{p}])_{\mathcal{I}}^{\wedge})$,} %
its evaluation at the Breuil-Kisin prism is equipped with a $t$-connection $\Theta$ preserving the $t$-adic filtration (see \cite[Theorem 5.17]{liu2023rham} for details), which implies that for any $\mathcal{E}_m \in \operatorname{Vect}((\mathcal{O}_{K})_{\Prism}, (\mathcal{O}_{\Prism}/{\mathcal{I}}^{m}[1/p])$, its evaluation at the Breuil-Kisin prism is equipped with a $t$-connection $\Theta$. 

In summary, for $m\in \mathbb{N}$, while $\mathcal{F}_m \in \operatorname{Vect}((\mathcal{O}_{K})_{\Prism}, \mathcal{O}_{\Prism}/{\mathcal{I}}^{m})$ might not be realized as a $\mathfrak{S}_m$-module equipped with certain $t$-connection, its rationalization $\mathcal{F}_m[1/p] \in \operatorname{Vect}((\mathcal{O}_{K})_{\Prism}, (\mathcal{O}_{\Prism}/{\mathcal{I}}^{m}[1/p])$
could be realized as a $\mathfrak{S}_m[1/p]$-module equipped with a $t$-connection. Moreover, when $m=1$, such a rationalization process is unnecessary by the work of \cite{bhatt2022absolute,anschutz2022v} via stacky approach as well as the work of \cite{gao2023hodge} using the prismatic site.

Motivated by the discussion above, it is natural to ask whether working with $\mathcal{O}_{\Prism}/{\mathcal{I}}^{m}[1/p]$ is optimal when $m>1$. In other words, could we find a bounded coefficient ring $*$ such that $$\mathcal{O}_{\Prism}/{\mathcal{I}}^{m}\subseteq *\subseteq \mathcal{O}_{\Prism}/{\mathcal{I}}^{m}[1/p]$$
and that $\operatorname{Vect}((\mathcal{O}_{K})_{\Prism}, *)$ (or more greedily, $\Perf((\mathcal{O}_{K})_{\Prism}, *)$ or even $\mathcal{D}((\mathcal{O}_{K})_{\Prism}, *)$ if we make the correct definition) could still be classified using (derived) $*(\mathfrak{S})$-modules with $t$-connections? Moreover, if such a coefficient ring $*$ exists,  we are interested in whether the $p$-radius of $*$ (i.e. the smallest positive integer $k$ such that $k*\subseteq \mathcal{O}_{\Prism}/{\mathcal{I}}^{m}$) depends on $m$ or not. 

When $K=W(k)[\zeta_p][1/p]$, the rationalization of the cyclotomic ring, recently Michel Gros, Bernard Le Stum and Adolfo Quir\'os classified $\operatorname{Vect}((\mathcal{O}_{K})_{\Prism}, \mathcal{O}_{\Prism})$ (hence also $\operatorname{Vect}((\mathcal{O}_{K})_{\Prism}, \mathcal{O}_{\Prism}/{\mathcal{I}}^{m})$ for all $m$) in \cite{gros2023absolute} using absolute $q$-calculus on modules over the $q$-prism instead of the Breuil-Kisin prism. However, for a general $K$, to the best knowledge of the author, the above question is still unknown.

In this paper, we provide a partial answer to this question without any assumption on $K$. In short, we could take $*$ to be $\ocm \subseteq \mathcal{O}_{\Prism}/{\mathcal{I}}^{m}[1/p]$, the ring obtained by adding $\mathcal{I}/p$ to $\mathcal{O}_{\Prism}/{\mathcal{I}}^{m}$ inside $\mathcal{O}_{\Prism}/{\mathcal{I}}^{m}[1/p]$ (in particular, $(\mathcal{I}/p)^m=0$ in *) and this works for all $m\in \mathbb{N}$. Actually, we expect that it should be the ``smallest coefficient ring” in which a Sen operator could still be defined for a general
$p$-adic field $K$ and any $m\in \mathbb{N}$. %(so the $p$-radius of $*$ doesn't need to increase as $m$ increases). 

Similarly as the proof of \cref{intro.main thm1}, we use the stacky approach to hit such a question. Namely, as there is a morphism $\mu$ from $\WCart$ to $[\widehat{\mathbb{A}}^1/\mathbb{G}_m]$ by sending a Catier-Witt divisor $I\rightarrow W(R)$ a generalized Cartier divisor obtained from projection $W(R)\rightarrow R$ (see \cref{remark.second definition} or \cite[Remark 3.1.6]{bhatt2022absolute} for details), for a bounded $p$-adic formal scheme $X$, we could define $\TXn$ to be the base change of $X^{\Prism}\to [\widehat{\mathbb{A}}^1/\mathbb{G}_m]$ along $[\Spf(\mathbb{Z}[[ t/p]]/(t/p)^n)/ \mathbb{G}_m]\to [\widehat{\mathbb{A}}^1/\mathbb{G}_m]$, then it is not hard to see perfect complexes on $\TXn$ parametrize perfect complexes with coefficients in $*$ on $X_{\Prism}$, i.e. 
$$\Perf(\TXn) \stackrel{\simeq}{\longrightarrow} \Perf((\mathcal{O}_{K})_{\Prism}, \ocn).$$ With such a dictionary in hand, the following theorem could help us understand prismatic crystals with coefficients in $*$.

\begin{theorem}[\cref{thmt.main classification}]\label{intro. main theorem 2}
    Let $X=\Spf(\mathcal{O}_{K})$ and $n\in \mathbb{N}$. There exist functors
    \begin{align*}
        &\beta^{+}: \mathcal{D}(\TX) \rightarrow \mathcal{D}(\mathrm{MIC}(\mathfrak{S}[[\frac{E}{p}]]) , \qquad \mathscr{E}\mapsto (\rho^{*}(\mathscr{E}),\Theta_{\mathscr{E}})
        \\ resp. ~&\beta_n^{+}: \mathcal{D}(\TXn) \rightarrow \mathcal{D}(\mathrm{MIC}(\mathfrak{S}[[\frac{E}{p}]]/(\frac{E}{p})^n), \qquad \mathscr{E}\mapsto (\rho^{*}(\mathscr{E}),\Theta_{\mathscr{E}})
    \end{align*}
    such that $\beta^{+}$ (resp. $\beta_n^{+}$) is fully faithful with an essential image consisting of those objects $M\in \mathcal{D}(\mathrm{MIC}(\mathfrak{S}[[\frac{E}{p}]]))$ (resp. $M\in \mathcal{D}(\mathrm{MIC}(\mathfrak{S}[[\frac{E}{p}]]/(\frac{E}{p})^n) $) satisfying the following pair of conditions:
    \begin{itemize}
        \item $M$ is $\mathbb{Z}_p$-complete.
        \item The action of $\Theta^p-(E^{\prime}(u))^{p-1}\Theta$ on the cohomology $\mathrm{H}^*(M\otimes^{\mathbb{L}}k).$ %\footnote{Here the derived tensor product means the derived base change along $\mathfrak{S}[[\frac{E}{p}]])\to \mathfrak{S}[[\frac{E}{p}]])/(\frac{E}{p},u)=k$} is locally nilpotent. 
    \end{itemize}
\end{theorem}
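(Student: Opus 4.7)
The strategy closely mirrors the proof of \cref{intro.main thm1}, replacing the Cartier--Witt stack $\WCart_n$ by its base change $\widetilde{X^{\Prism}_{[n]}}$ along the projection to $[\widehat{\mathbb{A}}^1/\mathbb{G}_m]$, and replacing the map from the trivial prism by the map from the Breuil--Kisin prism $(\mathfrak{S}, (E))$ of $\mathcal{O}_K$, where $\mathfrak{S} = W(k)[[u]]$. First, I would construct a faithfully flat morphism
\[
\rho_n\colon \Spf(\mathfrak{S}[[E/p]]/(E/p)^n) \longrightarrow \widetilde{X^{\Prism}_{[n]}}
\]
(resp.\ $\rho\colon \Spf(\mathfrak{S}[[E/p]]) \to \widetilde{X^{\Prism}}$ in the untruncated case) by taking the base change of the faithfully flat cover $\Spf(\mathfrak{S}) \to X^{\Prism}$ along the morphism $[\Spf(\mathbb{Z}[[t/p]]/(t/p)^n)/\mathbb{G}_m] \to [\widehat{\mathbb{A}}^1/\mathbb{G}_m]$ used to define $\widetilde{X^{\Prism}_{[n]}}$, noting that under $\mu$ the Breuil--Kisin prism has generalized Cartier divisor given by $E(u)$. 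Faithful flatness is inherited from that of $\Spf(\mathfrak{S}) \to X^{\Prism}$, and flat descent then identifies $\mathcal{D}(\widetilde{X^{\Prism}_{[n]}})$ with comodules over the Hopf algebroid associated with the \v{C}ech nerve of $\rho_n$.

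Next, I would compute the self-product $\Spf(\mathfrak{S}[[E/p]]/(E/p)^n) \times_{\widetilde{X^{\Prism}_{[n]}}} \Spf(\mathfrak{S}[[E/p]]/(E/p)^n)$ via the universal property of $X^{\Prism}$ as classifying prisms with a structure map to $\mathcal{O}_K$, combined with the additional datum provided by base change. Taking the first-order infinitesimal neighborhood of the diagonal produces a canonical derivation $\Theta_{\mathscr{E}}$ on $\rho^*(\mathscr{E})$, whose Leibniz rule is dictated by the derivation $\Theta$ on $\mathfrak{S}[[E/p]]$ (respectively its truncation) arising from the same infinitesimal computation. This yields the functors $\beta^+$ and $\beta_n^+$ landing in $\mathcal{D}(\mathrm{MIC}(\mathfrak{S}[[E/p]]))$ and its truncation as defined analogously to \cref{def.MIC}. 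Fully faithfulness then reduces to showing that the higher-order comultiplication of the Hopf algebroid is determined by $\Theta$, which follows by an induction on the order of infinitesimal neighborhoods analogous to the corresponding step for $\WCart_n$.

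For the essential image, the $\mathbb{Z}_p$-completeness condition is automatic since $\Spf(\mathfrak{S}[[E/p]]/(E/p)^n)$ is $\mathbb{Z}_p$-complete and the condition transfers under the descent. The second condition, local nilpotence of $\Theta^p - (E'(u))^{p-1}\Theta$ on $\mathrm{H}^*(M \otimes^{\mathbb{L}} k)$, should be derived by analyzing the residual fiber of $\rho_n$ over the special point of $\Spf(\mathcal{O}_K)$: after reducing modulo the maximal ideal $\mathfrak{m}$ of $\mathfrak{S}$, the group scheme controlling the descent datum is an Artin--Schreier-type group over $k$ whose natural operator is exactly $\Theta^p - (E'(u))^{p-1}\Theta$, the factor $(E'(u))^{p-1}$ arising from the change of variables from $\lambda$ (in the unramified reference case) to $E(u)$ via the derivation's interaction with the Frobenius twist. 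Once this residual identification is made, a standard d\'evissage--completion argument (as in the $\WCart_n$ case) lets one promote local nilpotence modulo $\mathfrak{m}$ to descent for the full module.

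\textbf{Main obstacle.} The most subtle step is the precise identification of the Artin--Schreier polynomial $\Theta^p - (E'(u))^{p-1}\Theta$, together with the verification that the Sen operator $\Theta$ thus constructed is compatible with the $t$-connections appearing in \cite{liu2023rham}, so that the present integral statement genuinely refines the known rational one. This requires an explicit computation with the Breuil--Kisin Hopf algebroid in the presence of the extra coefficient $E/p$, and in particular a careful bookkeeping of how ramification of $\mathcal{O}_K/\mathbb{Z}_p$ enters through $E'(u)$. Once this computation is in place, the rest of the proof proceeds by standard descent and d\'evissage from the known Hodge--Tate case.
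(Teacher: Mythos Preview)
Your overall shape---build a Sen operator from first-order data and reduce to the Hodge--Tate case---matches the paper, but two steps diverge substantially from what the paper actually does.

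\textbf{Construction of $\Theta$.} You propose to compute the self-fiber-product of $\Spf(\mathfrak{S}[[E/p]]/(E/p)^n)$ over $\TXn$ and extract $\Theta$ from the first infinitesimal neighborhood of the diagonal. The paper does not compute this fiber product. Instead it writes down, by hand, a single isomorphism $\gamma_{b,c}\colon \rho\circ\delta \xrightarrow{\sim} \rho$ over $\mathbb{Z}[\epsilon]/\epsilon^2$, where $\delta\colon u\mapsto u+\epsilon E(u)$. The point you are missing, flagged explicitly in \cref{war.key warning}, is that a point of $X^{\Prism}$ is a Cartier--Witt divisor \emph{together with} a map $\eta\colon \Spf(\Cone(\alpha))\to X$. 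So one must produce two pieces of data: a unit $b\in W(R)^{\times}$ matching the divisors (\cref{lemT.construct b}), \emph{and} a homotopy $c\in W(R)$ between the two induced $\mathcal{O}_K$-structures on the cone (\cref{lemt.construct homotopy}). The explicit solution for $c_n$ contains the factor $E(u)/p$, and this is precisely why one must enlarge the coefficient ring to $\mathfrak{S}[[E/p]]$---without it the homotopy does not exist. Your proposal does not isolate this step, and your ``main obstacle'' paragraph instead locates the difficulty in identifying the Artin--Schreier polynomial, which in the paper is a one-line check on the generators $(\mathcal{I}/p)^k$ (\cref{examplet.action on generators}).

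\textbf{Full faithfulness.} You claim the higher-order comultiplication of the Hopf algebroid is determined by $\Theta$ via induction on infinitesimal order, citing an ``analogous step for $\WCart_n$''. No such step appears in the paper, for either $\WCart_n$ or $\TXn$. The paper's argument is: the natural map $\mathrm{R}\Gamma(\TXn,\mathscr{E})\to\rho^*\mathscr{E}$ factors through $\fib(\Theta_{\mathscr{E}})$ because $\gamma_{b,c}$ witnesses that global sections equalize $\Id$ and $\Id+\epsilon\Theta$; that this factorization is an equivalence is then proved by d\'evissage on $n$ down to the $n=1$ case of \cite{anschutz2022v} (\cref{propt. sen calculate cohomology}). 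Combined with generation of $\mathcal{D}(\TXn)$ by the line bundles $(\mathcal{I}/p)^k$ (\cref{propt.generation}), full faithfulness follows by reducing to $\mathscr{E}=\mathcal{O}$. This avoids any direct control of the \v{C}ech nerve beyond first order, which your route would require and which is not obviously available.
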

\begin{remark}
    When $n\leq 1+\frac{p-1}{e}$, where $e$ is the degree of the Eisenstein polynomial $E(u)$ (hence $e$ is intrinsic to $\mathcal{O}_K$), the theorem could be strengthened by replacing the left-hand side of $\beta_n^+$ with $\mathcal{D}(X^{\Prism}_n)$ and replacing the right-hand side by $\mathcal{D}(\mathrm{MIC}(\mathfrak{S}/E^n)$, see \cref{rem.third stren} for details. Consequently, we get a slight generalization of \cref{intro.main thm1} (as when $e=1, ~~ 1+\frac{p-1}{1}=p$).
\end{remark}

\subsection{A new perspective of the $p$-adic Riemann-Hilbert correspondence}
As \cite{anschutz2022v} reinterpreted the $p$-adic Simpson correspondence from the perspective via Hodge-Tate stack, \cref{intro. main theorem 2} gives some new perspective on the $p$-adic Riemann-Hilbert correspondence. We explain it in more detail, following \cite[Section 1.2]{anschutz2022v}.

For $R$ an integral perfectoid ring, $\Spf(R)^{\Prism}_n$ is naturally isomorphic to \\$\Spf(\Ainf(R)/I^n)$ and $\TRn$ is naturally identified with $\Spf(\Ainf(R^+)[[\frac{I}{p}]]/(\frac{I}{p})^n)$. Take $R=\mathcal{O}_C$, the natural morphism $\Spf(\mathcal{O}_C)\to X=\Spf(\mathcal{O}_K)$ induces a $G_K$-equivariant morphism 
$$\Spf(\Ainf/I^n) \rightarrow X_n^{\Prism}, \qquad \Spf(\Ainf[[\frac{I}{p}]]/(\frac{I}{p})^n)\to \TXn.$$ 

Then by pullback we get canonical functors
\[
  \Perf(X_n^{\Prism})[\frac{1}{p}] \stackrel{f_n}{\longrightarrow} \Perf(\TXn)[\frac{1}{p}]\stackrel{\alpha_{n,K}}{\longrightarrow} \Perf(\Spa(K)_v, \bdr_{,n}).
\]
In summary, we have the following diagram \[
\begin{tikzcd}
	& \Perf(\TXn)[\frac{1}{p}] \arrow[ld, "\alpha_{n,K}"'] \arrow[rd, "\beta_n"] &                      \\
	\Perf(\Spa(K)_v, \bdr_{,n}) \arrow[rr, dotted] &                                                                    & {\Perf(\mathrm{MIC}(\mathfrak{S}/E^n[1/p]))}
\end{tikzcd}\]
Notice that for any $\mathcal{E}\in \Perf(X^{\HT})[\frac{1}{p}]$, $\beta_1(\mathcal{E})=(\rho^{*}(\mathscr{E})[\frac{1}{p}],\Theta_{\mathscr{E}})$ with $\rho^{*}(\mathscr{E})[\frac{1}{p}]$ a perfect complex over $K$, for any $n\geq  1$, we then let $$M_{\mathcal{E},n}=\rho^{*}(\mathscr{E})[\frac{1}{p}]\otimes_{K} \mathfrak{S}/E^n[1/p]\footnote{Here we utilize the fact that $K$ could be embedded canonically into $\mathfrak{S}/E^n[1/p]$ arguing as \cite[Lemma 2.5]{liu2023rham}. Moreover, the canonical Sen operator on $\mathfrak{S}/E^n[1/p]$ vanishes on $K$.}$$ 

and equip it with a Sen operator $\Theta_{\mathscr{E},n}$ such that
$$\Theta_{\mathscr{E},n}(x\otimes a)=\Theta_{\mathscr{E}}(x)\otimes a+x\otimes \Theta(a).$$

In this way, we obtain a sequence of objects $(M_{\mathcal{E},n},\Theta_{\mathscr{E},n})\in \Perf(\mathrm{MIC}(\mathfrak{S}/E^n[1/p]))$ compatible with $n$. 
Consequently, given the full faithfulness of $\alpha_n$ stated in the next theorem, for those $\mathcal{F}=\alpha_1(\mathcal{E})\in \Perf(\Spa(K)_v, \hat{\mathcal{O}})$ living in the image of $\alpha_1$, $\varprojlim (M_{\mathcal{E},n},\Theta_{\mathscr{E},n})$ determines an object in $\Perf(\mathrm{MIC}(\bdr(\mathfrak{S})))$, hence we get a functor  
$$\alpha_{1,K}(\Perf(X^{\HT})[\frac{1}{p}])\rightarrow \Perf(\mathrm{MIC}(\bdr(\mathfrak{S})))\footnote{As a ring, $\bdr(\mathfrak{S})$ is very simple, for example, there is an isomorphism $K[[t]]\stackrel{\simeq}{\to} \bdr(\mathfrak{S})$ by sending $t$ to $E$, see \cite[Lemma 2.5]{liu2023rham}},$$ 
which deserves to be viewed as a $p$-adic \textbf{Riemann-Hilbert} functor via the Cartier-Witt stack perspective.

The next result is a generalization of \cite[Theorem 1.3]{anschutz2022v} and helps us fully understand $\alpha_n$. 
\begin{theorem}[{\cref{thm.main p adic RH correspondencd,propt.relate with representations}}]\label{intro.main 3}
   For any finite Galois extension $L/K$ the functor
  \[
   \alpha_{n,L}^{*}\colon \Perf(\TXL)[1/p]\to \Perf(\Spa (L)_v, \bdr_{,n})
  \]
  is fully faithful and induces a fully faithful functor
  \[
   \alpha_{n,L/K}^{*} \colon \Perf([\TXL/{\Gal(L/K)}])[1/p] \to \Perf(\Spa(K)_v, \bdr_{,n})
  \]
  on $\Gal(L/K)$-equivariant objects. 
Each $\mathcal{E}\in \Perf(\Spa(K)_v, \bdr_{,n})$ lies in the essential image of $\alpha_{n,K}^{*}$ if and only if it is nearly de Rham. 
Consequently, we get an equivalence
  \[
   2\text{-}\varinjlim\limits_{L/K} \Perf([\TXL/\mathrm{Gal}(L/K)])[1/p] \cong \Perf(\mathrm{Spa}(K)_v, \bdr_{,n}),
   \]
   where $L$ runs over finite Galois extensions  of $K$ contained in $\overline{K}$. 
\end{theorem}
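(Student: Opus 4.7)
The plan is to follow the template of the Hodge-Tate case in \cite[Theorem 1.3]{anschutz2022v}, thickening each input from $\widehat{\mathcal{O}}$-coefficients to $\bdr_{,n}$-coefficients by leveraging the new stacks $\TXL$ together with the fully faithful functor $\beta_n^+$ of \cref{intro. main theorem 2}. Concretely, I would split the argument into three pieces: full faithfulness of $\alpha_{n,L}^*$, the essential image characterization via ``nearly de Rham'', and the colimit equivalence obtained by formal Galois descent.

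For full faithfulness of $\alpha_{n,L}^*$, the plan is to pass to the perfectoid cover $\Spf(\mathcal{O}_{\overline L})\to \Spf(\mathcal{O}_L)$. Under the identification $\TRn\cong\Spf(\Ainf(R^+)[[\mathcal{I}/p]]/(\mathcal{I}/p)^n)$ for perfectoid $R$, this cover becomes a faithfully flat cover of $\TXL$ whose Cech nerve is explicit. The $\RHom$ between two perfect complexes is then computed by a Cech nerve of $p$-adically complete $\Ainf(\mathcal{O}_{\overline L})[[\mathcal{I}/p]]/(\mathcal{I}/p)^n$-modules with continuous $G_L$-action. After inverting $p$, these terms match the Cech computation of the analogous $\RHom$ on the $v$-site via the natural isomorphism $\Ainf(\mathcal{O}_{\overline L})[[\mathcal{I}/p]]/(\mathcal{I}/p)^n[1/p]\cong \bdr_{,n}$ evaluated on $\mathcal{O}_{\overline L}$, together with the standard almost vanishing of higher continuous cohomology for the relevant Galois groups. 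The $\Gal(L/K)$-equivariant refinement $\alpha_{n,L/K}^*$ then follows formally by $v$-descent along the $v$-cover $\Spa(L)\to\Spa(K)$.

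The main obstacle will be the essential image statement. Given $\mathcal{E}\in\Perf(\Spa(K)_v,\bdr_{,n})$ nearly de Rham, the strategy is to construct a descent to $\TXL$ by induction on $n$: the base case $n=1$ is \cite[Theorem 1.3]{anschutz2022v} and furnishes a Hodge-Tate descent $\mathcal{E}_1\in \Perf([X^{\HT}/\Gal(L/K)])[1/p]$ for some finite Galois $L/K$; then one lifts step by step, translating the problem through $\beta_n^+$ into extending a Sen operator on the module $\rho^*\mathcal{E}_{n-1}$ from $\mathfrak{S}/E^{n-1}[1/p]$ to $\mathfrak{S}/E^n[1/p]$. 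Successive obstructions live in $\operatorname{Ext}^1$-groups in $\Perf(\mathrm{MIC}(\mathfrak{S}/E^n[1/p]))$ and vanish because the nearly de Rham hypothesis forces the Sen operator on the associated graded piece to satisfy the required $\Theta^p-(E'(u))^{p-1}\Theta$ nilpotence condition. Conversely, any $\mathcal{E}$ in the image of $\alpha_{n,K}^*$ becomes de Rham after base change to $L$, since the Sen operator produced by $\beta_n^+$ integrates to a $G_L$-equivariant de Rham structure on $\mathcal{E}|_{\Spa(L)}$; by definition this is the nearly de Rham condition. The colimit statement then follows formally: every nearly de Rham object descends to some finite Galois $L$, and the full faithfulness established in the first step guarantees that the $2$-colimit over such $L$ is computed correctly.
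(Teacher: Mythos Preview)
Your plan tracks the paper's three-step structure, but the essential-image step conceals a real gap.

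For full faithfulness, the paper does not run the perfectoid \v{C}ech nerve computation you sketch; it proceeds by d\'evissage on $n$ using the fiber sequence $i_{[n-1],*}\mathcal{O}\{1\}\to\mathcal{O}_{\TXn}\to i_{[1],*}\mathcal{O}$ and the projection formula to reduce $\RHom$ on $\TXn$ to the known case $n=1$ (\cref{thm.full faithfulness of pullback}). Your approach might work but would require a $\bdr_{,n}$-coefficient almost-vanishing input that is not immediate; the d\'evissage route avoids this entirely.

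The serious issue is in the essential image. You phrase the inductive step as ``extending a Sen operator from $\mathfrak{S}/E^{n-1}[1/p]$ to $\mathfrak{S}/E^n[1/p]$'' with obstructions that ``vanish because of the nearly de Rham hypothesis''. Two things are missing. First, you have not specified the underlying $\mathfrak{S}/E^n[1/p]$-module: the input is a $B_{\dR,n}^+$-module $M$ with $G_K$-action, and one must first produce a $\mathfrak{S}/E^n[1/p]$-lattice inside $M$ extending the $(n{-}1)$-truncated one; the paper does this by lifting a basis and equipping it with an ad hoc Sen operator. Second, and more importantly, once one has constructed \emph{some} lift $\mathscr{F}\in\Vect(\TXn)[1/p]$ of the $(n{-}1)$-truncated object, no obstruction vanishes: $\alpha_n^*(\mathscr{F})$ agrees with $M$ only as a $B_{\dR,n}^+$-module, not $G_K$-equivariantly, so $[\alpha_n^*(\mathscr{F})]-[M]$ defines a class in the group $\Ext^1_{0,G_K}(M/E^{n-1},E^{n-1}M)$ of $G_K$-extensions that split after forgetting the Galois action. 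The paper's key step (\cref{prop.nearly de Rham image}) is to identify this group with its prismatic analogue $\Ext^1_{0,\Prism}(Z,Y)$---by observing that such split extensions factor through the Hodge--Tate quotient, reducing to $n=1$---and then \emph{correct} $\mathscr{F}$ by the corresponding prismatic class. This is an extension-class comparison \`a la Fontaine, not an obstruction vanishing, and the nilpotence of $\Theta^p-(E'(u))^{p-1}\Theta$ alone does not pick out the correct lift. Your converse direction is also mis-stated: ``nearly de Rham'' in the paper (\cref{not.nearly}) is a condition on Sen weights of the mod-$E$ reduction, not ``becomes de Rham after base change to $L$''.
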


As a byproduct of the above theorem, we obtain the following characterization of (truncated) de Rham prismatic crystals in perfect complexes, answering a conjecture in \cite{liu2023rham}.
\begin{corollary}[{\cref{cort.compare with de Rham}}]
     $\Perf((\mathcal{O}_{K})_{\Prism}, \bdr_{})$ (resp. $\Perf((\mathcal{O}_{K})_{\Prism}, \bdr_{,n})$), the category of (resp. $n$-truncated) de Rham prismatic crystals in perfect complexes is equivalent to the following two categories:
\begin{itemize}
    \item The category of complexes $M\in \Perf(\mathrm{MIC}(\bdr(\mathfrak{S})))$ (resp. \\$\Perf(\mathrm{MIC}(\mathfrak{S}/E^n[1/p]))$) such that $H^\ast(M)$ is finite dimensional over $\bdr(\mathfrak{S})$ (resp. $\mathfrak{S}/E^n[1/p]$) and the action of $\Theta^p-(E^{\prime}(u))^{p-1}\Theta$ on  $H^\ast(M)$ is topologically nilpotent (with respect to the $(p,E)$-adic topology).
    \item The category of (resp. $n$-truncated) nearly de Rham perfect complexes, i.e. perfect complexes $M$ of $B_{\dR}^+$ (resp. $B_{\dR,n}^+$)-modules equipped with a (continuous) semilinear $G_K$ action all of the cohomology groups of $M\otimes^L C$ are nearly Hodge-Tate representations of $G_K$.
\end{itemize}
Moreover, let $n\in \mathbb{N}$, then 
$$\Perf(\TXn)[1/p] \cong \Perf((\mathcal{O}_{K})_{\Prism}, \bdr_{,n}).$$
In other words, every $\bdr_{,n}$ prismatic crystal comes from an $\ocn$-prismatic crystal.
\end{corollary}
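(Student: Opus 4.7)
The plan is to read the corollary off by assembling the two fully faithful functors already in hand: the MIC-theoretic embedding $\beta_n^{+}$ of \cref{intro. main theorem 2} and the Galois-theoretic Riemann-Hilbert embedding $\alpha_{n,K}^{*}$ of \cref{intro.main 3}, both sitting in the triangle displayed just before \cref{intro.main 3}. The pivot is the final display of the corollary, $\Perf(\TXn)[1/p]\cong \Perf((\mathcal{O}_K)_{\Prism},\bdr_{,n})$, which identifies $\Perf(\TXn)[1/p]$ as the common source through which both the MIC description and the nearly de Rham description are expressed.

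First I would establish this pivotal identification. By construction $\TXn$ is the base change of $X^{\Prism}\to[\widehat{\mathbb{A}}^1/\mathbb{G}_m]$ along $[\Spf(\mathbb{Z}[[t/p]]/(t/p)^n)/\mathbb{G}_m]$, so the introduction's equivalence $\Perf(\TXn)\simeq\Perf((\mathcal{O}_K)_{\Prism},\ocn)$ applies. Inverting $p$ kills the distinction between $\mathcal{I}/p$ and $\mathcal{I}$, so $\ocn[1/p]=(\mathcal{O}_{\Prism}/\mathcal{I}^n)[1/p]=\bdr_{,n}$, and the equivalence descends to perfect complexes after rationalization (routine, since localizing at $p$ is exact and commutes with finiteness of Tor-amplitude). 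Composing this pivot with $\alpha_{n,K}^{*}$ from \cref{intro.main 3} identifies $\Perf((\mathcal{O}_K)_{\Prism},\bdr_{,n})$ with the nearly de Rham subcategory of $\Perf(\Spa(K)_v,\bdr_{,n})$, delivering the second bullet. Composing instead with $\beta_n^{+}$ (restricted to perfect complexes and $p$-inverted) identifies it with the subcategory of $\Perf(\MIC(\mathfrak{S}/E^n[1/p]))$ carved out by topological nilpotence of $\Theta^p-(E'(u))^{p-1}\Theta$. The translation from ``locally nilpotent on $H^\ast(M\otimes^{\mathbb{L}}k)$'' (the criterion of \cref{intro. main theorem 2}) to ``topologically nilpotent in the $(p,E)$-adic topology on $H^\ast(M)$'' (the criterion of the corollary) is a short devissage along the nilpotent ideal $(p,E)$, using that the operator preserves the $(p,E)$-adic filtration.

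The untruncated version, with $\bdr_{,n}$ replaced by $\bdr$, follows by passing to the inverse limit along the compatible transition maps $\TXn\to\TXM$, using $\bdr=\varprojlim_n\bdr_{,n}$, the matching tower of nilpotent quotients $\mathfrak{S}[[E/p]]/(E/p)^n\to \mathfrak{S}[[E/p]]/(E/p)^{n-1}$ on the MIC side, and $(p,E)$-adic completeness of the relevant modules. On the Galois side, compatibility of $\alpha_{n,K}^{*}$ with truncation immediately gives the $\bdr$-analogue.

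The step I expect to be most delicate is checking that the two essential-image conditions really cut out the same subcategory on both sides. The commutativity of the triangle $\alpha_{n,K}^{*}\simeq \alpha_{n,K}\circ f_n$ followed by $\beta_n$ is essentially tautological from the construction, but matching the nearly Hodge-Tate condition on $H^\ast(M\otimes^L C)$ with the $(p,E)$-adic nilpotence of $\Theta^p-(E'(u))^{p-1}\Theta$ on $H^\ast(M)$ requires unwinding the definition of nearly Hodge-Tate and comparing it with the Sen formalism. Fortunately, the case $n=1$ is already present in \cite{anschutz2022v,gao2023hodge}, and the general $n$ should reduce to it by devissage along the successive quotients $\mathcal{I}^k/\mathcal{I}^{k+1}$, where the Sen operator acts via its $n=1$ incarnation twisted by the Tate twist.
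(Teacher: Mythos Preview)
There is a genuine gap in your pivotal step. You assert that the equivalence $\Perf(\TXn)\simeq\Perf((\mathcal{O}_K)_{\Prism},\ocn)$ ``descends to perfect complexes after rationalization'' to give $\Perf(\TXn)[1/p]\simeq\Perf((\mathcal{O}_K)_{\Prism},\bdr_{,n})$, calling this routine. But the left-hand side is the \emph{isogeny category} of integral crystals, while the right-hand side is the category of crystals with coefficients in the larger sheaf $\bdr_{,n}$. Your justification (``localizing at $p$ is exact and commutes with finiteness of Tor-amplitude'') at best addresses the well-definedness and possibly full faithfulness of the base-change functor; it does nothing for essential surjectivity. That every $\bdr_{,n}$-crystal admits an integral $\ocn$-model is precisely the content of the ``Moreover'' clause you are trying to prove, so you cannot invoke it as input.

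The paper proceeds quite differently. It does not attempt to compare the isogeny category with $\Perf((\mathcal{O}_K)_{\Prism},\bdr_{,n})$ directly. Instead it uses the commutative triangle
\[
\Perf(\TXn)[1/p]\longrightarrow \Perf((\mathcal{O}_K)_{\Prism},\bdr_{,n})\longrightarrow \{\text{nearly de Rham perfect complexes}\}
\]
where the composite is $\alpha_n^\ast$. The diagonal $\alpha_n^\ast$ is an equivalence by \cref{thm.full faithfulness of pullback} and \cref{prop.nearly de Rham image}. The right-hand arrow is shown to be fully faithful by appealing to an \emph{external} result, namely a variant of \cite[Theorem 5.12]{liu2023rham} on the heart together with the $t$-structure. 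These two facts force the left-hand arrow to be an equivalence. Thus the existence of integral models is deduced indirectly, not assumed.
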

\begin{remark}
    When restricted to the abelian level (i.e. restricted to vector bundles), the statements before the "moreover" part were obtained in \cite{gao2022rham} using prismatic site. Our methods are independent of theirs. Moreover, it's hard to see how the methods developed in \cite{liu2023rham} and \cite{gao2022rham} could be refined to find an integral model (i.e. an $\ocn$-prismatic crystal) inside a $n$-truncated de Rham prismatic crystal when $n>1$.
\end{remark}
\begin{remark}
    Recently a $\log$-version of the Cartier-Witt stack has been developed by Olsson, whose quasi-coherent complexes parameterize $\log$ prismatic crystals, we expect all of our results should have a $\log$-version and we will pursue this generality in a subsequent paper \cite{Liu24b}.
\end{remark}
\begin{remark}
We expect all of the above results to hold if we replace $X=\Spf(\mathcal{O}_K)$ with a quasi-syntomic $p$-adic formal scheme over $\Spf(\mathcal{O}_K)$. Actually, for $n=1$, such generalized results for a smooth $p$-adic formal scheme over $\mathcal{O}_K$ are recently obtained in \cite{anschutz2023hodge}. We will work with the relatively smooth setting for all $n$ in the subsequent paper \cite{Liu24a}. 

On the other hand, as a baby example towards the locally complete intersection direction, by applying the techniques developed so far, we have the following results classifying $n$-truncated prismatic crystals over $Y=\Spf(\mathbb{Z}_p/p^m)$ ($m\geq 2$) for $n\leq p$.  
\end{remark}
\begin{theorem}[{\cref{thm.appl. main classification}}]\label{thm.appl.intro}
     Assume that $n\leq p$. The functor 
    \begin{equation*}
        \beta_n^{+}: \mathcal{D}(Y_n^{\Prism}) \rightarrow \mathcal{D}(\mathrm{MIC}(\mathfrak{S}\{\frac{p^m}{\lambda}\}_{\delta}^{\wedge}/\lambda^n), \qquad \mathscr{E}\mapsto (\rho^{*}(\mathscr{E}),\Theta_{\mathscr{E}}),
    \end{equation*}
    is fully faithful. Moreover, its essential image consists of those objects \\$M\in \mathcal{D}(\mathrm{MIC}(\mathfrak{S}\{\frac{p^m}{\lambda}\}_{\delta}^{\wedge}/\lambda^n))$ satisfying the following %pair of 
    condition:
    \begin{itemize}
        %\item $M$ is $\mathbb{Z}_p$-complete.
        \item The action of $\Theta^p-\Theta$ on the cohomology $\mathrm{H}^*(M\otimes^{\mathbb{L}}\mathbb{F}_p)$ is locally nilpotent.
    \end{itemize}
\end{theorem}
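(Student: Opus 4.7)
The proof will closely parallel that of Theorem \ref{intro.main thm1}, with the Breuil-Kisin prism $(\mathfrak{S},(\lambda))$ replaced by the transversal prism $(A,(\lambda))$ for $A := \mathfrak{S}\{p^m/\lambda\}_{\delta}^{\wedge}$, which gives a faithfully flat cover $\Spf(A) \to Y^{\Prism}$ since $A/\lambda = \mathbb{Z}_p/p^m$. First, I would define $Y_n^{\Prism}$ as the pullback of $\WCart_n \hookrightarrow \WCart$ along the structure map $Y^{\Prism} \to \WCart$, so that the induced map $\rho: \Spf(A/\lambda^n) \to Y_n^{\Prism}$ remains faithfully flat. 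Quasi-coherent complexes on $Y_n^{\Prism}$ then correspond by fpqc descent to $A/\lambda^n$-modules equipped with a descent datum against the \v{C}ech nerve of $\rho$.

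The key step is to extract a Sen operator $\Theta_{\mathscr{E}}$ on $\rho^{*}\mathscr{E}$ from this descent datum. Imitating the strategy used for $\WCart_n$, I would compute the first-order neighborhood of the diagonal in $\Spf(A/\lambda^n) \times_{Y_n^{\Prism}} \Spf(A/\lambda^n)$ and show it is cut out by a single variable; the hypothesis $n\leq p$ rules out divided-power denominators in powers of $\lambda$ and allows the first-order data to extend uniquely to the whole formal neighborhood, producing an operator satisfying the Leibniz rule that defines $\MIC(A/\lambda^n)$. This yields the functor $\beta_n^{+}$. Full faithfulness and the characterization of its essential image then follow by dévissage along the $\lambda$-adic filtration of $A/\lambda^n$: each graded piece $\lambda^i A/\lambda^{i+1}$ is a twist of $A/\lambda = \mathbb{Z}_p/p^m$, reducing everything on graded pieces to the Hodge-Tate case $n=1$, where the condition that $\Theta^p - \Theta$ act locally nilpotently on $H^{*}(M \otimes^{\mathbb{L}} \mathbb{F}_p)$ is exactly the criterion of \cite{bhatt2022absolute} transported along the Hodge-Tate cover of $Y^{\HT}$.

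I expect the main obstacle to be the self-product computation for $A$: unlike $\mathfrak{S}$, the $\delta$-envelope $A$ is not a regular local ring but only a complete intersection, since the relation $\lambda \cdot (p^m/\lambda) = p^m$ links the two generators. Verifying that the first-order neighborhood of the diagonal in the self-product is freely generated by a single parameter therefore requires a careful transversality argument using that $(p,\lambda)$ is a regular sequence in $A$, and that $A$ is classically $(p,\lambda)$-completely flat over $\mathfrak{S}$. Once this is established, the extension from first-order to higher order (controlled by $n \leq p$) and the subsequent dévissage proceed in close analogy with Theorem \ref{intro.main thm1}, and the essential image description follows from the Hodge-Tate input by the same spectral sequence argument.
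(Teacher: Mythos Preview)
Your proposal contains a factual error and takes a harder route than the paper.

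The error: you assert $A/\lambda = \mathbb{Z}_p/p^m$, but this is false. Since $Y = \Spf(\mathbb{Z}_p/p^m)$ is not smooth over $\mathbb{Z}_p$, the reduction $A/\lambda$ of the prismatic envelope is substantially larger; comparing \cref{lem.appl description of points} with Petrov's computation recalled in \cref{lem.appl. petrov} gives $\Spf(A/\lambda) \cong \mathbb{G}_{a,\mathbb{Z}_p/p^m}^\sharp$, a divided-power polynomial ring in one variable over $\mathbb{Z}_p/p^m$. This does not destroy your approach---the map $\rho\colon \Spf(A/\lambda^n)\to Y_n^{\Prism}$ is still a faithfully flat cover, being the base change of $\Spf(\mathfrak{S}/\lambda^n)\to \WCart_n$ along $Y_n^{\Prism}\to \WCart_n$---but it means the base case $n=1$ is \emph{not} an immediate consequence of the Bhatt--Lurie criterion and requires its own argument.

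The route: your plan to extract $\Theta_{\mathscr{E}}$ by analysing the self-product $\Spf(A/\lambda^n)\times_{Y_n^{\Prism}}\Spf(A/\lambda^n)$ directly is unnecessarily hard, and the ``main obstacle'' you anticipate is bypassed entirely in the paper. The point is that, by construction, $\Spf(A/\lambda^n)=Y_n^{\slashed{D}}=Y_n^{\Prism}\times_{\WCart_n}\Spf(\mathfrak{S}/\lambda^n)$, so the cover $\rho$ is pulled back from the absolute cover $\Spf(\mathfrak{S}/\lambda^n)\to\WCart_n$. Hence the element $b\in W(\mathfrak{S}/\lambda^n[\epsilon]/\epsilon^2)^\times$ already produced in \cref{lem.construct b} transports along $W(\mathfrak{S}/\lambda^n[\epsilon]/\epsilon^2)\to W(A/\lambda^n[\epsilon]/\epsilon^2)$ and yields the isomorphism $\rho\circ\eta\simeq\rho$ with no new computation involving the complicated ring $A$ (\cref{appl.prop.key automorphism for small n}). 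The paper then handles $n=1$ separately (\cref{prop.appl. base n=1 case}) by identifying $Y^{\HT}\cong \Spf(A/\lambda)/\mathbb{G}_{m,\mathbb{Z}_p/p^m}^\sharp$ and using that the resulting map $\pi\colon Y^{\HT}\to B\mathbb{G}_{m,\mathbb{Z}_p/p^m}^\sharp$ is affine, so that $\mathcal{D}(Y^{\HT})\simeq \Mod_{\pi_*\mathcal{O}}(\mathcal{D}_{\nil}(\MIC(\mathbb{Z}_p/p^m)))$. The d\'evissage from general $n\leq p$ to $n=1$ then proceeds exactly as you outline.
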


\subsection*{Outline}
The paper is organized as follows. In section 2 we define $\WCart_n$ and its relative versions. Section 3 explains the construction of the Sen operator on $\rho^{*}(\mathscr{E})$ for $\mathscr{E} \in \mathcal{D}(\WCart_n)$ and we prove \cref{intro.main thm1}. In section 4 we study (truncated) $\oc$-prismatic crystals and prove \cref{intro. main theorem 2}. Next in section 5 we study the $v$-realization of perfect complexes on $\TXn$ and complete the proof of \cref{intro.main 3}. Finally in section 6, we study the diffracted $n$-truncated prismatization of $Y=\Spf(\mathbb{Z}_p/p^m)$ ($m\geq 2$) and obtain \cref{thm.appl.intro}.

\subsection*{Notations and conventions}
\begin{itemize}
    \item In this paper $\mathcal{O}_K$ is a complete discrete valuation ring of mixed characteristic with fraction field $K$ and perfect residue field $k$ of characteristic $p$. Fix a uniformizer $\pi$ of $\mathcal{O}_K$. $E(u)$ is its Eisenstein polynomial. $e$ is the degree of $E(u)$.
    \item For $X$ a $p$-adic bounded formal scheme, $X^{\Prism}$ (resp. $X^{\HT}$) is the prismatization of $X$ (resp. the Hodge-Tate stack of $X$) defined as $\WCart_X$ (resp. $\WCart_X^{\HT}$) in \cite{bhatt2022absolute} and \cite{bhatt2022prismatization}. But when $X=\mathbb{Z}_p$, we stick to the original notion $\WCart$ and $\WCart^{\HT}$.
\end{itemize}

\subsection*{Acknowledgments}
The influence of the work of Bhatt and Lurie \cite{bhatt2022absolute,bhatt2022prismatization} and that of Anschütz, Heuer and Le Bras \cite{anschutz2022v} on this paper should be obvious to readers, we thank them for their pioneering and wonderful work. The author benefited a lot from the discussions with Johannes Anschütz, Juan Esteban Rodr\'iguez Camargo and Arthur-César Le Bras when he was visiting the trimester program “The Arithmetic of the
Langlands Program” at the Hausdorff Insitute for Mathematics, funded by the
Deutsche Forschungsgemeinschaft under Germany’s Excellence Strategy – EXC2047/1 – 390685813, we are deeply indebted to their help during that period. We thank Frank Calegari, Ana Caraiani, Laurent Fargues,
and Peter Scholze for their efforts in organizing the amazing trimester and for funding the author's visit. Special thanks to Johannes Anschütz and Sasha Petrov for many useful suggestions and discussions as well as feedback on a preliminary version of this paper. The author receives constant help and support from his advisor Kiran Kedlaya throughout the writing of this paper, and we are very grateful for it. During the preparation of the project, the author was partially supported by NSF DMS-2053473 under Professor Kedlaya. This work was part of the author’s Ph.D. thesis.

\section{Nilpotent thickenings of the Hodge-Tate stack}
\subsection{Certain locus inside the Cartier-Witt stack}
Motivated by Bhatt and Lurie's definition of $\WCart^{\mathrm{HT}}$ (see \cite{bhatt2022absolute}), we define certain nilpotent thickenings of $\WCart^{\mathrm{HT}}$ inside $\WCart$: 
\begin{definition}\label{defi.truncated}
    Let $R$ be a $p$-nilpotent commutative ring. Fix a positive integer $n$, We let $\WCart^{\mathrm{HT}}_{n}(R)$ denote the full subcategory of $\WCart(R)$
spanned by those Cartier-Witt divisors $\alpha: I \rightarrow W(R)$ for which the composite map $I^{\otimes n} \xrightarrow{\alpha^{\otimes n}} W(R) \twoheadrightarrow R$
is equal to zero. The construction $R \mapsto \WCart^{\mathrm{HT}}_n(R)$ determines a closed substack of the Cartier-Witt stack $\WCart$.
We denote this closed substack by $\WCart_n$. 
\end{definition}
\begin{remark}
    When $n=1$, $\WCart_1$ coincides with $\WCart^{\mathrm{HT}}$ and we will switch freely between these two notations. In general, $\WCart_{n}$ could be viewed as a infinitesimal thickening of $\WCart^{\mathrm{HT}}$.
\end{remark}

\begin{remark}\label{remark.second definition}
    As discussed in \cite[Remark 3.1.6]{bhatt2022absolute},  given a Cartier-Witt divisor $I\rightarrow W(R)$, its base change along the restriction map $W(R)\rightarrow R$ is a generalized Cartier divisor. Consequently this determines a morphism of stacks $\mu: \WCart \to [\mathbb{A}^1/\mathbb{G}_m]$, which actually factors through the substack $[\widehat{\mathbb{A}}^1/\mathbb{G}_m]$ as the image of $I$ in $R$ is nilpotent. From this point of view, unwinding \cref{defi.truncated}, we see that the diagram 
    $$ \xymatrix@R=50pt@C=50pt{ \WCart_n \ar[d] \ar@{^{(}->}[r] & \WCart \ar[d]^{\mu}\\
[\Spf(\mathbb{Z}[[t]]/t^n)/\mathbb{G}_m] \ar@{^{(}->}[r]^-{} & [\widehat{\mathbb{A}}^1/\mathbb{G}_m] }$$
is a pullback square, which gives an equivalent definition of $\WCart_n$.
    
\end{remark}

\begin{remark}\label{remark.n-th HT-point-of-prismatic-stack}(Relations with prisms)
Let $(A,I)$ be a prism and regard the commutative ring $A$ as equipped with the $(p,I)$-adic topology. By \cite[Construction 3.2.4]{bhatt2022absolute}, there is a morphism  $\rho_{A}: \Spf(A) \rightarrow \WCart$ sending a $(p,I)$-nilpotent $A$-algebra $R$ to the Cartier-Witt divisor $I\otimes_A W(R)\rightarrow W(R)$ obtained via base change from the inclusion $I\rightarrow A$ (Here we implicitly use the fact that the homomorphism $A\rightarrow R$ uniquely lifts to $\delta$-algebra homomorphism $A\rightarrow W(R)$).
Then $\rho_{A}$ carries the formal subscheme $\Spf(A/I^n) \subset \Spf(A)$ to $\WCart^{\mathrm{HT}}_n$,
and therefore restricts to a morphism $\rho_{n,A}: \Spf(A/I^n) \rightarrow \WCart_n$. Moreover,
the diagram
$$ \xymatrix@R=50pt@C=50pt{ \Spf(A/I^n) \ar[d]^-{ \rho_{n,A}} \ar@{^{(}->}[r] &  \ar[d]^-{\rho_{A} } \Spf(A) \\
 \WCart_n \ar@{^{(}->}[r] & \WCart }$$
is a pullback square. When $n$ varies, these diagrams are compatible.
\end{remark}
\begin{remark}[Quasi-coherent complexes on $\WCart_n$]\label{remark.quasicoherent complex}
    Given any prism $(A,I)$, $\rho_{A,n}: \Spf(A/I^n) \rightarrow \WCart_n$ defined in \cref{remark.n-th HT-point-of-prismatic-stack} induces a functor from {\small $\mathcal{D}(\WCart_n)$} to the $p$-complete derived $\infty$-category
    $\widehat{\mathcal{D}}(A/I^n)$. Utilizing the same strategy as in \cite[Proposition 3.3.5]{bhatt2022absolute}, we end in an equivalence of categories
    $$\mathcal{D}(\WCart_{n}) \xrightarrow{\sim} \lim _{(A, I)\in (\mathbb{Z}_p)_{\Prism}} \widehat{\mathcal{D}}(A/I^{n}).$$
    Similar results hold for perfect complexes and vector bundles.
\end{remark}

Finally we justify that $\WCart_{n}$ are the correct objects to study for understanding $n$-truncated prismatic crystals.

\begin{proposition}\label{stacky information for n}
The category of vector bundles on $\Vect(\WCart_{n})$ is equivalent to the category of prismatic crystals on $(\mathbb{Z}_p)_{\Prism}$ with coefficients in ${\mathcal{O}}_{\Prism}/\mathcal{I}_{\Prism}^n$, i.e.
    \[\Vect(\WCart_{n})\cong \operatorname{Vect}((\mathbb{Z}_p)_{\Prism}, {\mathcal{O}}_{\Prism}/\mathcal{I}_{\Prism}^n).\]
Similar results hold if we replace vector bundles with perfect complexes.
\end{proposition}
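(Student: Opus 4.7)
The plan is to reduce everything to \cref{remark.quasicoherent complex}, which already produces an equivalence of derived $\infty$-categories
\[
\mathcal{D}(\WCart_{n}) \xrightarrow{\sim} \lim_{(A,I)\in (\mathbb{Z}_p)_{\Prism}} \widehat{\mathcal{D}}(A/I^{n}).
\]
The right-hand side is, essentially by definition, the derived $\infty$-category of prismatic crystals on $(\mathbb{Z}_p)_{\Prism}$ with coefficients in $\mathcal{O}_{\Prism}/\mathcal{I}_{\Prism}^{n}$. Thus the only thing left is to check that this equivalence restricts to the subcategories of vector bundles (resp.\ perfect complexes).

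For the vector bundle statement, I would proceed as follows. On the stacky side, $\mathscr{E}\in \mathcal{D}(\WCart_n)$ is a vector bundle iff for some (equivalently, any) faithfully flat cover by an affine formal scheme the pullback is a finite projective (discrete) module. Using \cref{remark.n-th HT-point-of-prismatic-stack}, the morphisms $\rho_{n,A}\colon \Spf(A/I^n)\to \WCart_n$ coming from prisms $(A,I)$ are obtained as pullbacks of the morphisms $\rho_A\colon \Spf(A)\to \WCart$, which jointly form a faithfully flat cover as recorded in \cite{bhatt2022absolute}. Hence the same holds for the $\rho_{n,A}$. Under the equivalence of \cref{remark.quasicoherent complex}, the condition ``pullback to every $\Spf(A/I^n)$ is a finite projective $A/I^n$-module'' on the left-hand side corresponds exactly to the condition on the right-hand side defining a vector bundle-valued prismatic crystal, so the subcategories match.

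The perfect complex case is entirely analogous: perfectness of a quasi-coherent complex on a stack with a faithfully flat affine cover can be tested after pulling back to the cover, and under the equivalence this is the pointwise perfectness of the evaluations $\mathscr{E}(A,I)$ as $A/I^n$-modules, which is the defining condition for a prismatic crystal in perfect complexes.

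The only slightly nontrivial point is the faithful flatness assertion used to test the local properties: this is inherited from the analogous (already known) statement on $\WCart$ through the pullback square in \cref{remark.n-th HT-point-of-prismatic-stack}, so no new geometric input is required. Once this is granted, the proposition is a formal descent-theoretic consequence of \cref{remark.quasicoherent complex}.
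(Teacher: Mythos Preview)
Your approach is essentially the same as the paper's: both reduce to \cref{remark.quasicoherent complex} and then identify the subcategories of vector bundles and perfect complexes on the two sides. One minor difference in emphasis: since \cref{remark.quasicoherent complex} already asserts that the stacky equivalence restricts to vector bundles and perfect complexes, the remaining content is the identification $\operatorname{Vect}((\mathbb{Z}_p)_{\Prism}, \mathcal{O}_{\Prism}/\mathcal{I}_{\Prism}^n)\simeq \lim_{(A,I)}\Vect(A/I^n)$ on the \emph{site} side, which you dismiss as ``essentially by definition'' while the paper invokes $p$-completely faithfully flat descent (\cite[Proposition~2.7]{bhatt2021prismatic}); whether this is tautological depends on whether one takes the limit description or the sheaf-theoretic description of crystals as primitive.
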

\begin{proof}
   By \cref{remark.quasicoherent complex}, we need to show that $$\operatorname{Vect}((\mathbb{Z}_p)_{\Prism}, {\mathcal{O}}_{\Prism}/\mathcal{I}_{\Prism}^n) \xrightarrow{\sim} \lim _{(A, I)} \operatorname{Vect}(A/I^{n}) $$
   and that 
   $$\Perf((\mathbb{Z}_p)_{\Prism}, {\mathcal{O}}_{\Prism}/\mathcal{I}_{\Prism}^n) \xrightarrow{\sim} \lim _{(A, I)} \Perf(A/I^{n}), $$
   but this follows from $p$-completely faithfully flat for vector bundles and perfect complexes discussed in \cite[Proposition 2.7]{bhatt2021prismatic}.
\end{proof}

\subsection{Relative case}
Following \cite[Construction 3.7]{bhatt2022prismatization}, we can generalize the previous construction to any bounded $p$-adic formal scheme $X$.
\begin{construction}\label{ConsAbsHT}
Let $X$ be a bounded $p$-adic formal scheme. Form a fiber square
\[ \xymatrix{ X_{n}^{\Prism} \ar[r] \ar[d] & X^{\Prism} \ar[d] \\
\mathrm{WCart}_n \ar[r] & \mathrm{WCart} }\]
%\[ \WCart_X^{\mathrm{HT}} = \WCart_X \times_{\WCart} \WCart^{\mathrm{HT}}.\] 
defining the closed substack $X_{n}^{\Prism}$ inside $X^{\Prism}$, the prismatization of $X$. Given a $p$-nilpotent ring $R$ and a Cartier-Witt divisor $(I \xrightarrow{\alpha} W(R)) \in \WCart_n^{\mathrm{HT}}(R) \subset \WCart(R)$, the map $\alpha^{\otimes n}: I^{\otimes n}\rightarrow W(R)$ factors over $VW(R) \subset W(R)$, so there is an induced map $W(R)/{}^{\mathbb{L}} I^n \to W(R)/VW(R) \simeq R$. On the other hand, $\alpha^{\otimes n}$ can be written as $\alpha^{\otimes n}=\alpha\circ (\Id\otimes \alpha^{\otimes (n-1)})$, hence induces a map $W(R)/{}^{\mathbb{L}} I^n \rightarrow W(R)/{}^{\mathbb{L}} I\rightarrow R/{}^{\mathbb{L}} I \rightarrow R/{}^{\mathbb{L}}\alpha(I) \rightarrow R/\alpha(I)$
Consequently, given a point $( (I \xrightarrow{\alpha} W(R)), \eta:\mathrm{Spec}(\overline{W(R)}) \to X) \in \WCart_{n,X}^{\mathrm{HT}}(R)$, one obtains a map $\overline{\eta}:\mathrm{Spec}(R/\alpha(I)) \to \mathrm{Spec}(\overline{W(R)}) \xrightarrow{\eta} X$. This construction defines a functor  
\[ \pi^{\mathrm{HT}}:X_{n}^{\Prism} \to X\] 
by sending $R$ to $R/\alpha(I)$, which we refer to as {\em the Mod-$I$ Hodge-Tate morphism}.
\end{construction}

\begin{remark}[$W(k)$-structure on $X^{\Prism}_n$\label{rem.w(k) structure} for $X$ over $\mathcal{O}_K$]
Fix a qcqs smooth $p$-adic formal scheme $X$ over $\mathcal{O}_K$. Recall that given a $p$-nilpotent ring $T$, an $T$-valued point of $X^{\Prism}_n$ corresponds to a Cartier-Witt divisor $\alpha: I \rightarrow W(T)$ together with a map $\eta:\mathrm{Spec}(\overline{W(T)}) \to X$ such that the map $\alpha^{\otimes n}: I^{\otimes n}\rightarrow W(T)$ factors over $VW(T) \subseteq W(T)$. In particular, we have an induced map $W(T)/{}^{\mathbb{L}} I^n \to W(T)/VW(T) \simeq T$. As $X$ is over $\mathcal{O}_K$, we have the following diagram 
$$ \xymatrix{ & &W(T)/{}^{\mathbb{L}} I^n \ar[d] \ar[r]&  T\ar[d] \\ W(k) \ar[r] & \mathcal{O}_K \ar[r] & \overline{W(T)}  \ar[r] & \overline{T}
}$$
Here $\overline{T}$ is defined to be the pushout of the right square. In particular, $T\to \overline{T}$ is a nilpotent thickening as the left vertical map is. The composition of all the arrows in the bottom line gives a ring homomorphism $W(k)\to \overline{T}$ which uniquely lifts to a ring homomorphism $W(k)\to T$ as $W(k)$ is $p$-completely \'etale over $\mathbb{Z}_p$. In other words, $T$ is an $W(k)$-algebra. Hence we obtain a structure morphism which we denoted as $\pi: X^{\Prism}_n\to \Spf W(k)$. 

Similarly, suppose $R^{+}$ is an (integral) perfectoid ring corresponding to the prism $(A,I)$ and $X$ is a qcqs smooth $p$-adic formal scheme over $R^{+}$, then the same arguments shows that there is a natural structure morphism $\pi: X^{\Prism}_n \rightarrow A$.
\end{remark}

Next we state a relative version of \cref{remark.n-th HT-point-of-prismatic-stack}.
\begin{construction}[From prisms in $X_{\Prism}$ to $X_{n}^{\Prism}$]\label{construction.relative prism to truncated}
    Let $X$ be a bounded $p$-adic formal scheme. Fix an object $(\operatorname{Spf}(A) \leftarrow \operatorname{Spf}(\bar{A}) \rightarrow X) \in X_{\Prism}$, then similarly to \cref{remark.n-th HT-point-of-prismatic-stack}, the morphism $\rho_{X,A}: \Spf(A) \rightarrow X^{\Prism}$ constructed in \cite[Construction 3.10]{bhatt2022prismatization} induces the following fiber square:
$$ \xymatrix@R=50pt@C=50pt{ \Spf(A/I^n) \ar[d]^-{ \rho_{n,X,A}} \ar@{^{(}->}[r] &  \ar[d]^-{\rho_{X,A} } \Spf(A) \\
 X_n^{\Prism} \ar@{^{(}->}[r] & X^{\Prism} }$$
\end{construction}

\begin{example}[The $n$-truncated prismatization of a perfectoid]
    Let $R$ be a perfectoid ring corresponding to the perfect prism $(A,I)$ via \cite[Theorem 3.10]{BhattScholzepPrismaticCohomology}. In this case $\rho_A: \Spf(A) \rightarrow R^{\Prism}$ is an isomorphism of functors by \cite[Example 3.12]{bhatt2022prismatization}, which implies that 
    \[\rho_{n,A}: \Spf(A/I^n) \rightarrow R_n^{\Prism}\]
    is also an isomorphism by the above pullback square.
\end{example}

Finally we translate prismatic crystals on $X_{\Prism}$ with coefficients in $\mathcal{O}_{\Prism}/\mathcal{I}_{\Prism}^n$ to quasi-coherent complexes on $X_{n}^{\Prism}$.
\begin{proposition}
    Assume that $X$ is a quasi-syntomic $p$-adic formal scheme, then there is an equivalence
    $$\mathcal{D}_{qc}(X_{n}^{\Prism}) \xrightarrow{\sim} \lim _{(A, I)\in X_{\Prism}} \widehat{\mathcal{D}}(A/I^{n})=:\mathcal{D}_{\crys}(X_{\Prism}, \mathcal{O}_{\Prism}/\mathcal{I}_{\Prism}^n) $$
    of symmetric monoidal stable $\infty$-categories.
\end{proposition}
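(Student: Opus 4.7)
The plan is to adapt the strategy of \cref{stacky information for n} to the relative setting by combining the pullback square from \cref{construction.relative prism to truncated} with quasi-syntomic descent. Concretely, the argument has three steps: reduce to the affine (quasi-regular semi-perfectoid) case; match both sides there; and then descend to general quasi-syntomic $X$.

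First I would verify the \emph{affine case}. Since $X$ is quasi-syntomic, it admits a covering in the quasi-syntomic topology by a quasi-regular semi-perfectoid $p$-adic formal scheme $\widetilde{X}$, whose \v{C}ech nerve $\widetilde{X}^{\bullet}$ is levelwise quasi-regular semi-perfectoid (a standard fact due to Bhatt--Morrow--Scholze). For such $\widetilde{X}$, the absolute prismatic cohomology provides a prism $(A, I)\in \widetilde{X}_{\Prism}$ which is a \emph{final} object of the prismatic site, so that $\widetilde{X}^{\Prism} \cong \Spf(A)$ by the relative analogue of \cite[Example 3.12]{bhatt2022prismatization}. Applying \cref{construction.relative prism to truncated} then yields $\widetilde{X}_{n}^{\Prism} \cong \Spf(A/I^n)$, and both sides of the claimed equivalence unambiguously identify with $\widehat{\mathcal{D}}(A/I^n)$ in this situation.

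Second I would \emph{descend} the affine identification. For an arbitrary prism $(A,I)\in X_{\Prism}$, \cref{construction.relative prism to truncated} exhibits $\rho_{n, X, A}$ as the base change of $\rho_{X,A}\colon \Spf(A)\to X^{\Prism}$ along the closed immersion $X_{n}^{\Prism}\hookrightarrow X^{\Prism}$. Since $\rho_{X,A}$ is $(p,I)$-completely flat and, collectively over the prismatic site, covers $X^{\Prism}$, its base change $\rho_{n,X,A}$ is $p$-completely flat and collectively covering over $X_n^{\Prism}$. Applying this to the simplicial cover $\widetilde{X}^{\bullet}\to X$ above and invoking $p$-completely faithfully flat descent for quasi-coherent complexes (as in \cite[Proposition 2.7]{bhatt2021prismatic}) gives
\[
\mathcal{D}_{qc}(X_n^{\Prism}) \simeq \varprojlim_{[i]\in \Delta} \mathcal{D}_{qc}(\widetilde{X}_n^{i,\Prism}) \simeq \varprojlim_{[i]\in \Delta} \widehat{\mathcal{D}}(A_i/I_i^n),
\]
where $(A_i, I_i)$ denotes the final prism attached to $\widetilde{X}^{i}$. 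On the crystal side, quasi-syntomic descent for prismatic cohomology (again from Bhatt--Morrow--Scholze, Bhatt--Scholze) identifies
\[
\lim_{(A, I) \in X_{\Prism}} \widehat{\mathcal{D}}(A/I^n) \simeq \varprojlim_{[i]\in \Delta} \widehat{\mathcal{D}}(A_i/I_i^n),
\]
and combining the two displays yields the desired equivalence. The symmetric monoidal refinement is automatic since every intermediate equivalence respects the derived tensor product on the nose.

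The main technical obstacle I expect is verifying $p$-completely faithfully flat descent for quasi-coherent complexes on the \emph{stack} $X_n^{\Prism}$, specifically checking that the closed immersion $X_n^{\Prism}\hookrightarrow X^{\Prism}$ interacts compatibly with the flat cover $\coprod \Spf(A)\to X^{\Prism}$ so that the same descent argument goes through after truncation modulo $I^n$. This reduces to confirming that the base change of a $(p,I)$-completely flat cover along a closed substack cut out by the $n$-th power of the Hodge-Tate ideal remains $p$-completely flat and jointly surjective, which should be routine but needs care with the $p$-complete derived structure.
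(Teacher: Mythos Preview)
Your proposal is correct and follows essentially the same route as the paper: the paper's proof simply invokes \cite[Proposition 8.13]{bhatt2022prismatization} and \cite[Proposition 8.15]{bhatt2022prismatization}, whose content is precisely the quasi-syntomic descent via quasi-regular semiperfectoid covers that you spell out, combined with the pullback description of $X_n^{\Prism}$ from \cref{construction.relative prism to truncated}. One minor terminological slip: the prism attached to a qrsp ring is an \emph{initial} (not final) object of its absolute prismatic site in the Bhatt--Scholze conventions, though this is harmless for the argument.
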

\begin{proof}
    This follows from our definition of $X_{n}^{\Prism}$ and \cite[Proposition 8.13]{bhatt2022prismatization}, \cite[Proposition 8.15]{bhatt2022prismatization}.
\end{proof}

\section{Sen operators on truncated Cartier-Witt stacks}
\subsection{Sen operator}
In this subsection we assume $\mathfrak{S}=\mathbb{Z}_p[[u]]=\mathbb{Z}_p[[\lambda]]$ for $E(u)=\lambda=u-p$. Then $(\mathfrak{S},\lambda)$ defines a transversal object in $(\mathbb{Z}_p)_{\Prism}$, hence $\rho_{n}: \Spf(\mathfrak{S}/\lambda^n) \rightarrow \WCart_n$ defined in \cref{remark.n-th HT-point-of-prismatic-stack} is faithfully flat by \cite[Corollary 3.2.10]{bhatt2022absolute}. Later we will omit the subscript $n$ and just write $\rho$ if no confusion. 

In this section we aim to classify quasi-coherent complexes on $\WCart_n$ for $n$ no larger than $p$ by realizing them as objects in $\widehat{\mathcal{D}}(\mathfrak{S}/\lambda^n)$ equipped with an additional Sen operator satisfying certain nilpotence condition. 

For this purpose, the key step is to construct a Sen operator on $\rho^*\mathscr{E}$ for $\mathscr{E}\in \Qcoh(\WCart_n)$ extending the Sen operator on the Hodge-Tate locus constructed in \cite[Section 3.5]{bhatt2022absolute}, which will be based on the following key lemma:
\begin{lemma}\label{lem.construct b}
    Fix $n\leq p$. For $R=\mathfrak{S}/\lambda^n[\epsilon]/\epsilon^2=\mathbb{Z}_p[[\lambda, \epsilon]]/(\lambda^n,\epsilon^2)$, there exists $b$ in $W(R)^{\times}$ such that the following holds:
    \begin{itemize}
        \item $\Tilde{g}(\lambda)=\Tilde{f}(\lambda)\cdot b$, where $\Tilde{g}$ is the unique $\delta$-ring map such that the following diagram commutes:
    \[\begin{tikzcd}[cramped, sep=large]
& W(R) \arrow[d, "p_0"]\\ 
\mathbb{Z}_p[[\lambda]] \arrow[ru, "\Tilde{g}"] \arrow[r, "\lambda \mapsto (1+\epsilon)\lambda"]& R
\end{tikzcd}\]
and $\Tilde{f}$ is defined similarly with the bottom line in the above diagram replaced with the identity map.
    \end{itemize}
\end{lemma}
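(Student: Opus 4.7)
The plan is to construct $b$ explicitly as $1 + (y_0, y_1, \ldots) \in W(R)$ by specifying Witt components $y_i \in R$ and then verify the required identity by comparing ghost components --- which suffices because $R$ is $\mathbb{Z}_p$-flat so the ghost map $W(R) \to \prod_{i\geq 0} R$ is injective. First I would identify $\tilde{f}(\lambda)$ and $\tilde{g}(\lambda)$ concretely: since $\mathfrak{S} = \mathbb{Z}_p[[u]]$ is the $\delta$-ring with $\delta(u) = 0$, the unique $\delta$-ring lift of a ring map $\mathfrak{S} \to R$ to $W(R)$ sends $u$ to the Teichmüller representative of its image, so $\tilde{f}(u) = [u]$ and $\tilde{g}(u) = [u + \epsilon\lambda]$ (using $(1+\epsilon)\lambda + p = u + \epsilon\lambda$). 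Therefore $\tilde{f}(\lambda) = [u] - p$ and $\tilde{g}(\lambda) = [u+\epsilon\lambda] - p$, and the lemma reduces to finding a unit $b \in W(R)^{\times}$ with $\tilde{f}(\lambda)(b-1) = \Delta$, where $\Delta := [u+\epsilon\lambda] - [u]$.

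Using $\epsilon^2 = 0$, the $i$-th ghost component of $\Delta$ is $(u+\epsilon\lambda)^{p^i} - u^{p^i} = p^i u^{p^i - 1}\epsilon\lambda$, while the $i$-th ghost of $\tilde{f}(\lambda)$ is $u^{p^i} - p$. Thus the ghost components of $b - 1$ are forced to be $f_0 = \epsilon$ and $f_i = p^i u^{p^i - 1}\epsilon\lambda / (u^{p^i} - p)$ for $i \geq 1$. Here is where the assumption $n \leq p$ enters: since $\lambda^n = 0$ in $\mathfrak{S}_n$ and $p^i \geq p \geq n$ for $i \geq 1$, one has $\lambda^{p^i} = 0$, so $u^{p^i} \equiv \lambda^{p^i} \equiv 0 \pmod p$, yielding a factorization $u^{p^i} - p = p h_i$ where $h_i \equiv -1 \pmod{(p,\lambda)}$ is a unit in the local ring $\mathfrak{S}_n$. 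The same congruence shows $u^{p^i - 1}\lambda \in p\mathfrak{S}_n$, so I can extract the Witt components $y_i$ of $b - 1$ from the $f_i$: because $y_0 = \epsilon$ and inductively each $y_j$ for $j \geq 1$ carries a factor of $\epsilon$, while $\epsilon^k = 0$ for $k \geq 2$, every cross term $y_j^{p^{i-j}}$ with $i > j$ vanishes in the ghost--Witt recursion, which collapses to $f_i = p^i y_i$ and gives the explicit formula $y_i = u^{p^i - 1}\epsilon\lambda/(p h_i) \in R$.

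Setting $b := 1 + (y_0, y_1, \ldots)$, the equality $\tilde{g}(\lambda) = \tilde{f}(\lambda) \cdot b$ holds by construction at the ghost level, and hence in $W(R)$ by injectivity of the ghost map. That $b \in W(R)^{\times}$ follows from $p_0(b) = 1 + \epsilon \in R^{\times}$ together with the fact that $VW(R)$ lies in the Jacobson radical of $W(R)$ (since $R$ is $p$-adically complete). The main obstacle is this integrality step, namely ensuring that each $y_i$ lands in $R$ rather than only in $R[1/p]$; this is precisely where the hypothesis $n \leq p$ is unavoidable, because for $n > p$ the quantity $u^{p^i - 1}\lambda$ is no longer divisible by $p$ in $\mathfrak{S}_n$ (the term $\lambda^p$ survives), so no such $b$ exists in $W(R)^{\times}$ --- consistent with the introduction's remark that a Sen-type operator cannot exist in that range.
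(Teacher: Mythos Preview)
Your proof is correct and follows essentially the same strategy as the paper's: reduce to ghost components via injectivity of the ghost map (using that $R$ is $p$-torsion free), then construct the Witt components of $b$ inductively, exploiting that $\epsilon^2=0$ so all higher powers $y_j^{p^{i-j}}$ vanish in the ghost recursion, and finally use $n\le p$ to verify the required $p$-divisibility.

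The presentation differs slightly: the paper works directly with $b=(b_0,b_1,\ldots)$ (taking $b_0=1+\epsilon$), expands $(\lambda+p)^{p^m}$ binomially, and does an explicit coefficient-of-$\lambda^i$ comparison with a careful $p$-valuation count; you instead recognize $\tilde f(u)=[u]$, $\tilde g(u)=[u+\epsilon\lambda]$ as Teichm\"uller lifts (using $\delta(u)=0$), work with $b-1$, and obtain the integrality of $y_i$ in one stroke from $u^{p^i-1}\lambda\equiv\lambda^{p^i}=0\pmod p$. This is a genuine streamlining of the same argument, but not a different method.
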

\begin{proof}
    First we notice that $\Tilde{g}(\lambda)$ is uniquely characterized by the following two properties:
\begin{itemize}
    \item $p_0(\Tilde{g}(\lambda))=(1+\epsilon)\lambda$.
    \item $\varphi(\Tilde{g}(\lambda))=(\Tilde{g}(\lambda)+p)^p-p$.
\end{itemize}
In the following, for simplicity, we will denote $\Tilde{g}(\lambda)$ as $\Tilde{\lambda}$ and $\Tilde{f}(\lambda)$ simply as $\lambda$. We will show the existence of $b=(b_0,b_1,\ldots)\in W(R)$ ($b_i\in R $) satisfying the desired properties. As $R$ is $p$-torsion free, the ghost map is injective, hence the identity $\Tilde{\lambda}=b\lambda$ is equivalent to that 
\begin{align}\label{equa.ghost identity II}
    \forall m\geq 0, w_m(\Tilde{\lambda})=w_m(b)\cdot w_m(\lambda).
\end{align}
Here $w_m$ denotes the $n$-th ghost map.

Recall that $\varphi(\lambda)=(\lambda+p)^p-p$, hence by induction we have the following equality in $R$:
\begin{align}\label{er1}
    w_m(\lambda)=(w_{0}(\lambda)+p)^{p^m}-p=(\lambda+p)^{p^m}-p=p^{p^m}-p+\sum_{i=1}^{p^m}\binom{p^m}{i}p^{p^m-i}\lambda^i.
\end{align}
Similarly, we have that
\begin{align}\label{er2}
    w_m(\Tilde{\lambda})=(w_{0}(\Tilde{\lambda})+p)^{p^m}-p=((1+\epsilon)\lambda+p)^{p^m}-p=p^{p^m}-p+\sum_{i=1}^{p^m}(1+i\epsilon)\binom{p^m}{i}p^{p^m-i}\lambda^i.
\end{align}

Take $m=0$ in \cref{equa.ghost identity}, we want $b_0$ such that $\lambda b_0=(1+\epsilon)\lambda $, hence we could just take $b_0=1+\epsilon$.

Suppose $m\geq 1$ and we have determined $b_0,\cdots,b_{m-1}$ such that \cref{equa.ghost identity II} holds for non-negative integers no larger than $m-1$. Moreover, we assume that $b_i~ (1\leq i\leq m-1)$ is divisible by $\epsilon$. Then we aim to find $b_m=\sum_{j=0}^{\infty}c_{m,j}\lambda^j\in R$ such that \cref{equa.ghost identity II} holds for $m$. For this, first notice that  
\begin{align}\label{er3}
    w_{m}(b)=\sum_{i=0}^{m}p^ib_{i}^{p^{m-i}}=b_{0}^{p^m}+p^mb_m=(1+\epsilon p^m)+p^mb_m.
\end{align}
Here the second identity holds as $b_i^p=0$ by our assumption that $b_i$ is divisible by $\epsilon$ for $1\leq i \leq m-1$.

Combining \cref{er1},\cref{er2} and \cref{er3}, we see that $w_m(\Tilde{\lambda})=w_m(b)\cdot w_m(\lambda)$ can be reinterpreted as 
{\small
\begin{align}\label{equa.infinity many coefficients}
    p^{p^m}-p+\sum_{i=1}^{p^m}(1+i\epsilon)\binom{p^m}{i}p^{p^m-i}\lambda^i=(p^{p^m}-p+\sum_{i=1}^{p^m}\binom{p^m}{i}p^{p^m-i}\lambda^i)((1+\epsilon p^m)+p^m\sum_{i=0}^{\infty}c_{m,i}\lambda^i).
\end{align}}%
As $n\leq p$, by comparing the coefficients of $\lambda^i$ ($1\leq i\leq p-1$) on both sides, this equality holds if we could pick $c_{m,0}=-\epsilon$ and $c_{m, i}$ for each $1\leq i\leq p-1$ such that
\begin{equation}\label{Compare coefficients}
      (p^{p^m}-p)p^mc_{m,i} = 
           -i\epsilon \binom{p^m}{i}p^{p^m-i}+\sum_{j=1}^{i-1}\binom{p^m}{j}p^{p^m-j}p^m c_{m,i-j} 
  \end{equation}
We claim that there is a unique sequence $\{c_{m,i}=\epsilon d_{m,i}\}_{1\leq i\leq p-1}$ with $d_{m,i}\in \mathbb{Z}_p$ satisfying \cref{Compare coefficients} and that $v_{p}(d_{m,i})=p^m-i-1$.

To prove the claim, we argue by induction on $i$. We aim to define $c_{m,i}$ inductively on $i$ such that \cref{Compare coefficients} is satisfied. The divisibility by $\epsilon$ easily follows from induction and we only need to verify that $v_{p}(d_{m,i})=p^m-i-1$. Take $i=1$, \cref{Compare coefficients} implies that $v_{p}(d_{m,1})=(m+p^m-1)-(m+1)=p^m-2$, the claim holds. Assume that $i\geq 
2$ and that we have shown $v_{p}(d_{m,j})=p^m-j-1$ for all $j\leq i-1$, then we calculate the $p$-valuations for the terms on the right-hand side of \cref{Compare coefficients}: $v_{p}(i\epsilon \binom{p^m}{i}p^{p^m-i})=m+p^m-i$ and for $1\leq j\leq i-1$,
\begin{equation*}
v_{p}(\binom{p^m}{j}p^{p^m-j}p^n c_{m,i-j})=(m-v_{p}(j))+(p^m-j)+m+(p^m-(i-j)-1)>m+p^m-i.
\end{equation*}
Consequently $v_{p}(d_{m,i})=(m+p^m-i)-(m+1)=p^m-i-1$, as desired.

%Now suppose $k\geq 1$ and we have shown the claim for all $i\leq kp^n$. For $kp^n+1\leq i\leq (k+1)p^n$, we define $c_{n,i}$ inductively on $i$ such that the second equation in \cref{Compare coefficients} is satisfied. We only need to verify that $v_{p}(c_{n,i})=(k+1)p^n-i-k-1$. As before, suppose $kp^n+1\leq i$ and that we have shown that $c_{n,j}$ has the desired $p$-valuation for all $j\leq i-1$, then we calculate the $p$-valuations for the terms on the right handside of the second equation in \cref{Compare coefficients}: $v_{p}(p^n c_{n,i-p^n})=n+(kp^n-(i-p^n)-k=n+(k+1)p^n-k-i$,
Our construction of $c_{m,i}$ implies that $b_m=\sum_{j=0}^{\infty}c_{m,j}\lambda^j$ is divisible by $\epsilon$, we win.
\end{proof}
\begin{remark}\label{rem.descend of b to the Hodge-Tate locus}
    \begin{itemize}
        \item We fix $n$ as a priori and our element $b$ might depend on $n$, however, the compatibility with $n$ is clear from our construction, hence we omit $n$ and just write $b$ by abuse of notation. In other words, we could just construct such a $b$ in $W(\mathfrak{S}/E^p[\epsilon]/\epsilon^2)$, then its projection to $W(\mathfrak{S}/E^n[\epsilon]/\epsilon^2)$ for $n\leq p$ automatically satisfies the desired properties. In particular, take $n=1$, one can easily see that $b$ in $ W(\mathfrak{S}/\lambda[\epsilon]/\epsilon^2)=W(\mathbb{Z}_p[\epsilon]/ \epsilon^2)$ is just $[1+\epsilon]$.
        \item The reason that we need to assume $n\leq p$ is implicitly given in the above proof. Actually, if $n\geq p$, then we need to compare the coefficients of $\lambda^{p}$ in \cref{equa.infinity many coefficients}, which leads to no solution of $c_{1,p}$ in $\mathbb{Z}_p$, but only in $\mathbb{Q}_p$.
    \end{itemize}
\end{remark}

%Let $R$ be a commutative ring. Recall that by \cite[Remark 3.1.6]{BL22a}, there is a morphism $\mu: \WCart\rightarrow [\widehat{\mathbf{A}}^1 / \mathbf{G}_m]$
\begin{proposition}\label{prop.key automorphism for small n}
    For $n\leq p$, the element $b$ constructed in \cref{lem.construct b} induces an isomorphism $\gamma_b$ between functors $\rho: \Spf(\mathfrak{S}/\lambda^n) \rightarrow \WCart_n$
    and $\rho\circ \delta: \Spf(\mathfrak{S}/\lambda^n) \rightarrow \WCart_n$ after base change to $\Spec(\mathbb{Z}[\epsilon]/(\epsilon^2))$, i.e. we have the following commutative diagram:
\[\xymatrixcolsep{5pc}\xymatrix{\Spf(\mathfrak{S}/\lambda^n)\times \Spec(\mathbb{Z}[\epsilon]/(\epsilon^2))\ar[d]^{\rho}\ar[r]^{\delta: \lambda \mapsto (1+\epsilon)\lambda}& \Spf(\mathfrak{S}/\lambda^n)\times \Spec(\mathbb{Z}[\epsilon]/(\epsilon^2))\ar@{=>}[dl]^b \ar[d]_{}^{\rho}
\\\WCart_{n}\times \Spec(\mathbb{Z}[\epsilon]/(\epsilon^2))  \ar[r]&\WCart_{n}\times \Spec(\mathbb{Z}[\epsilon]/(\epsilon^2))}\]
\end{proposition}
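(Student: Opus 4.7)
The plan is to unpack what the two compositions in the displayed square produce on the level of Cartier-Witt divisors over a test ring and then observe that the unit $b$ produced in \cref{lem.construct b} is, essentially by construction, the required 2-cell.

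First, I would recall the description of $\rho$ from \cref{remark.n-th HT-point-of-prismatic-stack}: for any $\mathfrak{S}/\lambda^n$-algebra $R$, the CW divisor $\rho(R)$ is the principal divisor on $W(R)$ generated by the image of $\lambda$ under the unique $\delta$-ring homomorphism $\mathfrak{S}\to W(R)$ lifting $\mathfrak{S}\to R$. Applied to the base change, I take the universal test algebra $R=\mathfrak{S}/\lambda^n[\epsilon]/\epsilon^2$: the right-hand composition $\rho$ produces the CW divisor generated by $\tilde f(\lambda)\in W(R)$, while the upper composition $\rho\circ\delta$ first remaps $\lambda\mapsto (1+\epsilon)\lambda$ in $R$ and only then invokes the $\delta$-lift, so it produces the CW divisor generated by $\tilde g(\lambda)\in W(R)$; these are precisely the two elements in the statement of \cref{lem.construct b}.

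Next, I would use the standard fact that a morphism between two principal CW divisors on the same base, both trivialized as $W(R)\to W(R)$, is the datum of a unit $b\in W(R)^{\times}$ such that the two generators differ by $b$. Hence producing the 2-cell $\gamma_b$ filling the square amounts to finding a unit $b\in W(R)^{\times}$ with $\tilde g(\lambda)=b\cdot \tilde f(\lambda)$, which is exactly what \cref{lem.construct b} furnishes. Moreover the assumption $n\leq p$ is precisely the hypothesis needed to invoke that lemma.

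Finally, I would check functoriality: for any further $R$-algebra $R'$, the lifts $\tilde f,\tilde g$ and the element $b$ are compatible with pushforward along $W(R)\to W(R')$ by naturality of the $\delta$-structure on the Witt vectors, so the single universal $b$ constructed over $R$ trivializes the difference of the two CW divisors over every test algebra, promoting the pointwise datum to an actual 2-morphism of functors after base change to $\Spec(\mathbb{Z}[\epsilon]/\epsilon^2)$. The main obstacle in this chain is not in the current proposition but already handled in \cref{lem.construct b}; the remaining work here is essentially bookkeeping, modulo the careful identification of $\rho\circ\delta$ with the divisor generated by $\tilde g(\lambda)$, which is a direct consequence of the universal property of the $\delta$-lift.
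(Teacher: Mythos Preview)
Your proposal is correct and follows essentially the same approach as the paper: identify the two Cartier--Witt divisors $(\lambda)\otimes_{\mathfrak{S},\tilde f}W(T)\to W(T)$ and $(\lambda)\otimes_{\mathfrak{S},\tilde g}W(T)\to W(T)$ produced by $\rho$ and $\rho\circ\delta$, and then use the unit $b$ from \cref{lem.construct b} to write down the required isomorphism $(\lambda)\otimes 1\mapsto (\lambda)\otimes b$, naturality being automatic from the functoriality of Witt vectors. The only cosmetic difference is that the paper phrases the argument directly over an arbitrary test algebra $T$ rather than first over the universal $R$ and then pushing forward.
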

\begin{proof}
      Given a test $p$-nilpotent $R=\mathfrak{S}/\lambda^n[\epsilon]/\epsilon^2$-algebra $T$ via the structure morphism $h$, we denote the induced morphism $W(R)\to W(T)$ by $\Tilde{h}$. Then $\rho\circ h(T)$ corresponds to the point $((\lambda) \otimes_{\mathfrak{S},\Tilde{h}\circ \Tilde{f}} W(T)\to W(T))$ in $\WCart_{n}(T)$, while $(\rho\circ \delta)\circ  h(T)$ corresponds to the point $((\lambda) \otimes_{\mathfrak{S},\Tilde{h}\circ \Tilde{g}} W(T)\to W(T))$ in $\WCart_{n}(T)$. \cref{lem.construct b}  implies that we have an isomorphism of these two Cartier-Witt divisors given by 
     \[\xymatrix{(\lambda) \otimes_{\mathfrak{S},\Tilde{h}\circ \Tilde{g}} W(T) \ar[d]\ar[r]& W(T) \ar[d]_{}^{\Id}
\\(\lambda) \otimes_{\mathfrak{S},\Tilde{h}\circ \Tilde{f}} W(T)\ar[r]&W(T)}\]
Here the left vertical map is a $W(T)$-linear map sending $(\lambda)\otimes 1$ to $(\lambda)\otimes \Tilde{h}(b)$.
\end{proof}
\begin{warning}\label{war.key warning}
    The above proposition still works if we replace $\WCart_n$ with \\$\Spf (W(k))^{\Prism}_n$ ($n\leq p$) thanks to \cref{rem.w(k) structure}. However, for $\mathcal{O}_K$ ramified, it no longer holds. Actually, to give a point in $\Spf(\mathcal{O}_K)^{\Prism}_n(T)$, we need to specify a Cartier-Witt divisor $\alpha$ together with a map (of derived schemes) $\eta: \Spf(\mathrm{Cone}(\alpha))\to \Spf(\mathcal{O}_K)$. Consequently, to construct an isomorphism of $\rho$ and $\rho\circ \delta$, we need not only the isomorphism $b$ between Cartier-Witt divisors but also a homotopy between $\eta$ and $b\circ \eta$ as there are no canonical $\mathcal{O}_K$-structures on these cones. We will pursue this generality in the next section. Most notably, even working with "small" $n$ between $1$ and $p$, we need to add $\frac{I}{p}$ to construct such a homotopy.
\end{warning}
\begin{remark}[Compatibility with the construction in \cite{bhatt2022absolute} on the Hodge-Tate locus]\label{rem.comapre with BL}
    If we consider the restriction of the isomorphism constructed above to the Hodge-Tate locus (i.e. take $n=1$), then as $\delta=\Id$ on $\Spf (\mathbb{Z}_p[\epsilon]/\epsilon^2)$, we see $b$ descends to an automorphism 
    \[\WCart_{n}\times \Spec(\mathbb{Z}[\epsilon]/(\epsilon^2))\to \WCart_{n}\times \Spec(\mathbb{Z}[\epsilon]/(\epsilon^2))\]
    Per \cref{rem.descend of b to the Hodge-Tate locus}, this automorphism is exactly given by multiplication by $[1+\epsilon]$, hence it coincides with the construction in \cite[Section 3.5]{bhatt2022absolute}.
\end{remark}

Fix $n\leq p$. Now we are ready to construct a Sen operator on $\rho^*\mathscr{E}$ for $\mathscr{E}\in \Qcoh(\WCart_n)$. Based on \cref{prop.key automorphism for small n}, we have an isomorphism $\gamma_b: \rho\circ \delta \stackrel{\simeq}{\longrightarrow} \rho$. Consequently, for $\mathscr{E}\in \Qcoh(\WCart_n)$, we have an isomorphism
\[\gamma_b: \delta^{*}\rho^{*}(\mathscr{E}\otimes \mathbb{Z}[\epsilon]/(\epsilon^2)) \stackrel{\simeq}{\longrightarrow} \rho^{*}(\mathscr{E}\otimes \mathbb{Z}[\epsilon]/(\epsilon^2)).\]
Unwinding the definitions, this could be identified with a $\delta$-linear morphism 
\begin{align}\label{equa.truncated induced morphism}
    \gamma_b: \rho^{*}(\mathscr{E})\rightarrow \rho^{*}(\mathscr{E})\otimes \mathbb{Z}[\epsilon]/(\epsilon^2).
\end{align} 
Moreover, our construction of the unit $b$ in \cref{lem.construct b} implies that the morphism $\gamma_b$ in \cref{equa.truncated induced morphism} reduces to the identity modulo $\epsilon$, hence could be written as $\Id+\epsilon\Theta_{\mathscr{E}}$ for some operator $\Theta_{\mathscr{E}}: \rho^{*}\mathscr{E} \rightarrow \rho^{*}\mathscr{E}$. 

\begin{remark}\label{rem.describe the category}
    $\gamma_b$ gives a $\delta$-linear endomorphism on $\rho^{*}(\mathscr{E})[\epsilon]/\epsilon^2 \in \mathcal{D}(\mathbb{Z}_p[[\lambda,\epsilon]]/(\lambda^n,\epsilon^2))$, hence $(\rho^{*}(\mathscr{E}),\gamma_b)$ lies in the category $\mathcal{C}_1$ defined to be the pullback of the following square:
    \[ \xymatrix{ \mathcal{C}_1 \ar[d] \ar[r] &   \mathcal{D}(\mathbb{Z}_p[[\lambda]]/\lambda^n) \ar[d] \\ 
\operatorname{Fun}(\Delta^{1}, \mathcal{D}(\mathbb{Z}_p[[\lambda,\epsilon]]/(\lambda^n,\epsilon^2))) \ar[r] & \operatorname{Fun}(\Delta^{0}\sqcup \Delta^{0}, \mathcal{D}(\mathbb{Z}_p[[\lambda,\epsilon]]/(\lambda^n,\epsilon^2))) }.\]
Here the arrow in the bottom is the forgetful functor sending $(M\xrightarrow[]{f} N)$ to $(M,N)$ for $M, N\in \mathcal{D}(\mathbb{Z}_p[[\lambda,\epsilon]]/(\lambda^n,\epsilon^2))$ and the right vertical map sends $K \in  \mathcal{D}(\mathbb{Z}_p[[\lambda]]/\lambda^n)$ to $(\delta^{*}(K[\epsilon]/\epsilon^2), K[\epsilon]/\epsilon^2)$.

Similarly, one could define the category $\mathcal{C}_2$ to be the pullback of the square
\[ \xymatrix{ \mathcal{C}_2 \ar[d] \ar[r] &   \mathcal{D}(\mathbb{Z}_p[[\lambda]]/\lambda^n) \ar[d] \\ 
\operatorname{Fun}(\Delta^{1}, \mathcal{D}(\mathbb{Z}_p[[\lambda]]/\lambda^n)) \ar[r] & \operatorname{Fun}(\Delta^{0}\sqcup \Delta^{0}, \mathcal{D}(\mathbb{Z}_p[[\lambda]]/\lambda^n)),}\]
then the fact that that the induced $\delta$-linear morphism  $\gamma_b: \rho^{*}(\mathscr{E})\rightarrow \rho^{*}(\mathscr{E})\otimes \mathbb{Z}[\epsilon]/(\epsilon^2)$ is the identity after modulo $\epsilon$ precisely means that after modulo $\epsilon$, $(\rho^{*}(\mathscr{E}),\gamma_b)$ (viewed as an object in $\mathcal{C}_2$) actually lies in $\mathcal{D}(\mathrm{MIC}(\mathbb{Z}_p[[\lambda]]/\lambda^n))$, defined as follows. 
\end{remark}
%Motivated by the above discussion, we make the following definition:
\begin{definition}\label{def.MIC}
    Define $\mathcal{D}(\mathrm{MIC}(\mathbb{Z}_p[[\lambda]]/\lambda^n))$ to be the pullback of the following diagram
\[ \xymatrix{ \mathcal{D}(\mathrm{MIC}(\mathbb{Z}_p[[\lambda]]/\lambda^n)) \ar[r] \ar[d] & \mathcal{C}_1 \ar[d] \\ 
\mathcal{D}(\mathbb{Z}_p[[\lambda]]/\lambda^n) \ar[r] &  \mathcal{C}_2  .}\]
Here the right vertical map is given by modulo $\epsilon$ and the bottom arrow sends $M\in \mathcal{D}(\mathbb{Z}_p[[\lambda]]/\lambda^n)$ to $(M\xrightarrow[]{\Id} M)$.

Similarly, we define $\Perf(\mathrm{MIC}(\mathbb{Z}_p[[\lambda]]/\lambda^n))$ by replacing $\mathcal{D}(\bullet)$ with $\Perf(\bullet)$ ((including such a modification for $\mathcal{C}_1$ and $\mathcal{C}_2$) in the above diagram.
\end{definition}
\begin{remark}\label{rem.explicit description via sen operator}
    More explicitly, following the discussion before \cref{rem.describe the category}, we see that specifying an object in $\mathcal{D}(\mathrm{MIC}(\mathbb{Z}_p[[\lambda]]/\lambda^n))$ is the same as giving a pair $(M,\Theta_M)$ where $M \in \mathcal{D}(\mathbb{Z}_p[[\lambda]]/\lambda^n)$ and $\Theta_M$ is an operator on $M$ satisfying Leibniz rule thanks to the following lemma.
\end{remark}

\begin{lemma}\label{lem.leibniz}
   Given $(M,\gamma_M) \in \mathcal{D}(\mathrm{MIC}(\mathbb{Z}_p[[\lambda]]/\lambda^n))$ where $M\in \mathcal{D}(\mathbb{Z}_p[[\lambda]]/\lambda^n)$ and $\gamma_M: \delta^{*}(M[\epsilon]/\epsilon^2)\to M[\epsilon]/\epsilon^2$, write $\gamma_M$ as $\Id+\Theta_M$ when restricted to $M$, then we have that $$\Theta_M(ax)=a\Theta_M(x)+\Theta(a)x$$ for $a\in \mathbb{Z}_p[[\lambda]]/\lambda^n$ and $x\in M$, here $\Theta: \mathbb{Z}_p[[\lambda]]/\lambda^n\to \mathbb{Z}_p[[\lambda]]/\lambda^n$ is $\mathbb{Z}_p$-linear and sends $\lambda^i$ to $i\lambda^i$. 
\end{lemma}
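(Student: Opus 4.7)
The plan is to unwind the characterization of $(M,\gamma_M)$ given in \cref{rem.describe the category} and \cref{def.MIC}, and then compute $\Theta_M$ on products $ax$ directly. The first input is a formula for how $\delta$ acts on the coefficient ring: since $\delta$ is the ring automorphism of $\mathbb{Z}_p[[\lambda,\epsilon]]/(\lambda^n,\epsilon^2)$ sending $\lambda \mapsto (1+\epsilon)\lambda$ and fixing $\epsilon$, the identity $(1+\epsilon)^i \equiv 1 + i\epsilon \pmod{\epsilon^2}$ gives $\delta(\lambda^i) = \lambda^i + i\epsilon\lambda^i$, so extending $\mathbb{Z}_p$-linearly yields $\delta(a) = a + \epsilon\Theta(a)$ for every $a \in \mathbb{Z}_p[[\lambda]]/\lambda^n$, with $\Theta$ exactly the operator in the statement.

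The datum $\gamma_M : \delta^*(M[\epsilon]/\epsilon^2) \to M[\epsilon]/\epsilon^2$ can be transported, by untwisting the source module structure, into a $\delta$-semilinear endomorphism $\tilde\gamma_M$ of the underlying complex $M[\epsilon]/\epsilon^2$, meaning $\tilde\gamma_M(ry) = \delta(r)\tilde\gamma_M(y)$ for every $r$ in the coefficient ring. The pullback square in \cref{def.MIC} then forces $\tilde\gamma_M$ to reduce to the identity modulo $\epsilon$, so on $x \in M \subset M[\epsilon]/\epsilon^2$ we may write $\tilde\gamma_M(x) = x + \epsilon\Theta_M(x)$ for a uniquely determined operator $\Theta_M$, which is the one already singled out in \cref{rem.explicit description via sen operator}.

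With these two ingredients in hand, I evaluate $\tilde\gamma_M(ax) = \delta(a)\tilde\gamma_M(x)$ for $a \in \mathbb{Z}_p[[\lambda]]/\lambda^n$ and $x \in M$. The left side equals $ax + \epsilon\Theta_M(ax)$, while the right side expands, using $\epsilon^2 = 0$, to $(a+\epsilon\Theta(a))(x+\epsilon\Theta_M(x)) = ax + \epsilon\bigl(a\Theta_M(x)+\Theta(a)x\bigr)$. Comparing the $\epsilon$-coefficients gives the Leibniz rule $\Theta_M(ax) = a\Theta_M(x) + \Theta(a)x$. The computation itself is essentially bookkeeping; the one place that needs care is matching the direction of $\gamma_M$ with the $\delta$-twist so that the semilinearity reads $\tilde\gamma_M(ry)=\delta(r)\tilde\gamma_M(y)$ rather than the inverse convention, which would flip the sign of $\Theta(a)x$—but this is precisely the convention forced by the identification preceding \cref{rem.describe the category}.
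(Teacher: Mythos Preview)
Your proof is correct and follows essentially the same approach as the paper's: both exploit the $\delta$-semilinearity of $\gamma_M$ to write $\gamma_M(ax)=\delta(a)\gamma_M(x)$, expand using $\delta(a)=a+\epsilon\Theta(a)$ and $\gamma_M(x)=x+\epsilon\Theta_M(x)$, and read off the $\epsilon$-coefficient. The paper's version is a one-line computation while yours spells out the justification for $\delta(a)=a+\epsilon\Theta(a)$ and the passage from $\gamma_M$ to a semilinear map, but the argument is the same.
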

\begin{proof}
    By assumption, we have that $$\gamma_M(ax)=\delta(a)\gamma_M(x)=(a+\epsilon\Theta(a))(x+\epsilon\Theta_\mathscr{E}(x))=ax+\epsilon(a\Theta_\mathscr{E}(x)+\Theta(a)x).$$
    This implies the desired result.
\end{proof}

\begin{remark}\label{lem.sen operator on pullback}
    Recall that in \cite{bhatt2022absolute}, the Sen operator for complexes on the Hodge-Tate locus is defined on the complex itself, while for $1<n\leq p$ and $\mathscr{E}\in \Qcoh(\WCart_n)$, our Sen operator is defined on $\rho^{*}(\mathscr{E})$ other than on $\mathscr{E}$ itself, this is due to the fact that our construction of the isomorphism $b$ relies on certain coordinates $u$ in the Breuil-Kisin prism when $n>1$. This is a feature but not a bug as we expect that the Sen operator $\Theta$ for the structure sheaf should act nontrivially on $u$, see next example.
\end{remark}
\begin{example}[Sen operator on the ideal sheaf $\mathcal{I}^k$]\label{example.action on generators}
    Fix $n\leq p$. Let $\mathscr{E}$ be the structure sheaf $\mathcal{O}_{\mathrm{WCart}_{n}}$. Then under the trivialization  $\rho^{*}\mathscr{E}=\mathbb{Z}_p[[\lambda]]/\lambda^n$, $b(\lambda^i)=\delta(\lambda^i)=\lambda^i(1+\epsilon)^i=\lambda^i(1+i\epsilon)$, hence $\Theta_{\mathscr{E}}$ sends $\lambda^i$ to $i\lambda^i$. In general, for $\mathscr{E}=\mathcal{I}^k$, one can verify that under the trivialization  $\rho^{*}\mathscr{E}=\mathbb{Z}_p[[\lambda]]/\lambda^n\cdot (\lambda^k)$, $\Theta_{\mathscr{E}}$ sends $\lambda^i$ to $(i+k)\lambda^i$.
\end{example}

Then our main result in this section is the following description of quasi-coherent complexes on $\WCart_n$ for $n\leq p$:
\begin{theorem}\label{thm.main classification}
Assume that $n\leq p$. The functor 
    \begin{align*}
        \mathcal{D}(\WCart_n)\rightarrow \mathcal{D}(\mathrm{MIC}(\mathbb{Z}_p[[\lambda]]/\lambda^n)) , \qquad \mathscr{E}\mapsto (\rho^{*}(\mathscr{E}),\Theta_{\mathscr{E}})
    \end{align*}
    is fully faithful. Moreover, its essential image consists of those objects $M\in \mathcal{D}(\mathrm{MIC}(\mathbb{Z}_p[[\lambda]]/\lambda^n))$ satisfying the following pair of conditions:
    \begin{itemize}
        \item $M$ is $\mathbb{Z}_p$-complete.
        \item The action of $\Theta^p-\Theta$ on the cohomology $\mathrm{H}^*(M\otimes^{\mathbb{L}}\mathbb{F}_p)$ is locally nilpotent.
    \end{itemize}
\end{theorem}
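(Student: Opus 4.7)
The plan is to combine faithfully flat descent along the cover $\rho: \Spf(\mathfrak{S}/\lambda^n) \to \WCart_n$ with a dévissage along the $\lambda$-adic filtration that reduces the problem to the Hodge-Tate case $n=1$ already treated in \cite[Section 3.5]{bhatt2022absolute}. Both halves of the theorem (full faithfulness and identification of the essential image) follow once one checks that the descent datum along $\rho$ is governed by the single first-order operator $\Theta_{\mathscr{E}}$.

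For full faithfulness, any $\mathscr{E} \in \mathcal{D}(\WCart_n)$ is reconstructed from $\rho^{*}\mathscr{E}$ equipped with descent data along the self-fiber product of $\rho$. By \cref{prop.key automorphism for small n} the automorphism $\gamma_b$ trivializes the first-order thickening of this self-fiber product along $\Spec(\mathbb{Z}[\epsilon]/\epsilon^2)$, and the infinitesimal part of the descent datum is encoded precisely by $\Theta_{\mathscr{E}}$ via $\gamma_b = \Id + \epsilon \Theta_{\mathscr{E}}$. To argue that this infinitesimal datum determines the full descent datum when $n \leq p$, I would induct on $n$: modding out by $\lambda^{n-1}$ reduces to $\WCart_{n-1}$ by the definition of the closed immersion $\WCart_{n-1} \hookrightarrow \WCart_n$, while the graded piece is supported on $\WCart^{\HT}$, where the Sen-operator description of \cite{bhatt2022absolute} applies. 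A more conceptual alternative uses the geometric identification $\WCart_n \cong \Sym^{<n}_{\WCart^{\HT}} \mathcal{O}\{1\}$ alluded to after \cref{intro.main thm1}, which matches descent along $\rho$ with descent along the Hodge-Tate cover combined with the vector bundle structure over $\WCart^{\HT}$.

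For the essential image I would induct on $n$, with base case $n=1$ being Bhatt--Lurie. Given $(M,\Theta)$ satisfying the two hypotheses, the Leibniz rule of \cref{lem.leibniz} together with $\Theta(\lambda) = \lambda$ (see \cref{example.action on generators}) implies that $\Theta$ preserves each submodule $\lambda^k M$ and acts on the quotient $\lambda^k M / \lambda^{k+1} M$ as $k + \bar{\Theta}$, where $\bar{\Theta}$ is the operator induced by $\Theta$ on $M/\lambda M$. Since $k^p - k \equiv 0 \pmod{p}$ by Fermat, the mod-$p$ local nilpotence of $\Theta^p - \Theta$ transfers to that of $\bar{\Theta}^p - \bar{\Theta}$ on every graded piece, so the base case descends each graded piece to a complex on $\WCart^{\HT}$. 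One then builds $\mathscr{E}$ by lifting through the closed immersion $\WCart_{n-1} \hookrightarrow \WCart_n$: by the induction hypothesis, $(M/\lambda^{n-1} M, \Theta)$ descends to some $\mathscr{E}' \in \mathcal{D}(\WCart_{n-1})$, and the extension class relating $(M,\Theta)$ to $\mathscr{E}'$ and the graded piece $\lambda^{n-1} M / \lambda^{n} M$ is matched by a corresponding extension class on the geometric side via the fully faithfulness already established. The $\mathbb{Z}_p$-completeness hypothesis ensures that this lifting procedure is well-behaved under passage to the integral level.

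The principal obstacle is the identification of the full descent groupoid along $\rho$ from the single infinitesimal operator $\Theta$, and the hypothesis $n \leq p$ is sharp for this: \cref{rem.descend of b to the Hodge-Tate locus} shows that the unit $b$ producing the infinitesimal isomorphism fails to exist in degree $p$ of the $\lambda$-expansion, so for $n > p$ no such $\mathbb{Z}_p$-linear Sen operator can be constructed by this method, consistent with the pathological cohomology of the structure sheaf discussed in the introduction. Once one verifies that the $n \leq p$ hypothesis suffices to integrate the infinitesimal datum $\gamma_b$ into a complete descent datum, both the full faithfulness and the essential-image statement follow by the dévissage along $\lambda$ outlined above.
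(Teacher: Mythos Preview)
Your proposal has the right ingredients (d\'evissage to the Hodge--Tate case, the Sen operator $\Theta$ from $\gamma_b$), but the central step you flag as ``the principal obstacle'' is in fact a genuine gap that you do not close: you never explain why the single first-order datum $\Theta_{\mathscr{E}}$ determines the \emph{full} descent datum along $\rho$. The self-fiber product $\Spf(\mathfrak{S}/\lambda^n)\times_{\WCart_n}\Spf(\mathfrak{S}/\lambda^n)$ is not a first-order thickening of the diagonal, and $\gamma_b$ only produces one particular $\mathbb{Z}[\epsilon]/\epsilon^2$-point of it. Inducting on $n$ via the $\lambda$-filtration does not automatically integrate a first-order operator into a full cocycle condition; one would need an additional argument (e.g.\ that the relevant automorphism groupoid is formally determined by its tangent, which is essentially what the deformation-theoretic identification $\WCart_n\cong \Sym^{<n}_{\WCart^{\HT}}\mathcal{O}\{1\}$ supplies, but you only allude to this without carrying it out).

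The paper circumvents this entirely by a different mechanism. It never claims that $\Theta$ reconstructs the descent datum. Instead it proves two auxiliary results: (i) the cohomology formula $\mathrm{R}\Gamma(\WCart_n,\mathscr{E})\simeq \fib(\Theta_{\mathscr{E}})$ (\cref{sen calculate cohomology}), obtained by producing the comparison map from the universal property of $\mathrm{R}\Gamma$ and then checking it is an equivalence by d\'evissage to $n=1$; and (ii) generation of $\mathcal{D}(\WCart_n)$ under shifts and colimits by the line bundles $\mathcal{I}^k$ (\cref{generation}). Full faithfulness then reduces to $\mathscr{E}=\mathcal{I}^k$, where it is the cohomology formula applied to $\mathscr{F}(-k)$. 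The essential image is identified not by building extensions as you propose, but by showing that the target subcategory $\mathcal{C}$ is likewise generated by the objects $\rho^*\mathcal{I}^k$: given nonzero $M\in\mathcal{C}$, one passes to $M\otimes\mathbb{F}_p$, uses local nilpotence of $\Theta^p-\Theta=\prod_{0\le k<p}(\Theta-k)$ to find an eigenvector, and thereby a nonzero map from some $\rho^*\mathcal{I}^k[m]$. Your extension-class argument for the essential image is plausible but more laborious, and still depends on the full faithfulness you have not established.
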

\begin{remark}
    For $n=1$, this is \cite[Theorem 3.5.8]{bhatt2022absolute}, hence our theorem is a generalization of Bhatt and Lurie's description of quasi-coherent complexes on the Hodge-Tate stack. However, our proof of \cref{thm.main classification} will require \cite[Theorem 3.5.8]{bhatt2022absolute} as an input.
\end{remark}

For the proof of \cref{thm.main classification}, we need several preliminaries.

\begin{proposition}\label{generation}
    For any $n\geq 1$, the $\infty$-category $\mathcal{D}(\WCart_n)$ is generated under shifts and colimits by the invertible sheaves $\mathcal{I}^n$ for $n\in \mathbb{Z}$.
\end{proposition}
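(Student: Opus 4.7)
My plan is to induct on $n$. The base case $n=1$ identifies $\WCart_1$ with $\WCart^{\HT}$ and is Bhatt--Lurie's generation of $\mathcal{D}(\WCart^{\HT})$ by Breuil--Kisin twists $\{j^*\mathcal{I}^k\}_{k\in\mathbb{Z}}$, where $j\colon \WCart^{\HT}\hookrightarrow \WCart_n$ is the closed immersion; see \cite{bhatt2022absolute}.

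For the inductive step, let $j'\colon \WCart_{n-1}\hookrightarrow \WCart_n$ be the closed immersion. On the faithfully flat cover $\rho$ its ideal $\mathcal{J}$ becomes $(\lambda^{n-1})\subset \mathfrak{S}/\lambda^n$, which is annihilated by $\lambda$ and generated by $\lambda^{n-1}$ carrying $\mathbb{G}_m^{\sharp}$-weight $n-1$, so $\mathcal{J}\simeq j_*(j^*\mathcal{I}^{n-1})$. The resulting fiber sequence
\[
 j_*(j^*\mathcal{I}^{n-1})\longrightarrow \mathcal{O}_{\WCart_n}\longrightarrow j'_*\mathcal{O}_{\WCart_{n-1}},
\]
tensored with $\mathscr{E}\in \mathcal{D}(\WCart_n)$ and combined with the projection formula, realizes $\mathscr{E}$ as an extension of $j'_*(j'^*\mathscr{E})$ by $j_*(j^*\mathcal{I}^{n-1}\otimes j^*\mathscr{E})$. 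By the inductive hypothesis applied on $\WCart_{n-1}$, the object $j'^*\mathscr{E}$ lies in the stable subcategory generated by $\{\mathcal{I}^k|_{\WCart_{n-1}}\}_{k\in\mathbb{Z}}$, so $j'_*(j'^*\mathscr{E})$ lies in the subcategory generated by $\{j'_*(\mathcal{I}^k|_{\WCart_{n-1}})\}_{k\in\mathbb{Z}}$; by the base case on $\WCart^{\HT}$, the left term lies in the subcategory generated by $\{j_*(j^*\mathcal{I}^k)\}_{k\in\mathbb{Z}}$.

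It remains to show that the pushforwards $j_*(j^*\mathcal{I}^k)$ and $j'_*(\mathcal{I}^k|_{\WCart_{n-1}})$ themselves lie in the stable subcategory $\mathcal{C}\subseteq \mathcal{D}(\WCart_n)$ generated under shifts and colimits by $\{\mathcal{I}^k\}_{k\in\mathbb{Z}}$. This is achieved by globalizing the periodic free resolutions of $\mathbb{Z}_p = \mathfrak{S}/\lambda$ and $\mathfrak{S}/\lambda^{n-1}$ as $\mathfrak{S}/\lambda^n$-modules; for $\mathbb{Z}_p$ this is
\[
 \cdots \xrightarrow{\lambda^{n-1}}\mathfrak{S}/\lambda^n\xrightarrow{\lambda}\mathfrak{S}/\lambda^n\xrightarrow{\lambda^{n-1}}\mathfrak{S}/\lambda^n\xrightarrow{\lambda}\mathfrak{S}/\lambda^n \twoheadrightarrow \mathbb{Z}_p,
\]
and tracking the $\mathbb{G}_m^{\sharp}$-weights promotes each copy of $\mathfrak{S}/\lambda^n$ to a line bundle $\mathcal{I}^{a_i}$ on $\WCart_n$ whose exponents $a_i$ increase alternately by $1$ and $n-1$. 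Hence $j_*(j^*\mathcal{O})$ (and by tensoring, all the $j_*(j^*\mathcal{I}^k)$) is a filtered colimit of bounded complexes of line bundles, and therefore lies in $\mathcal{C}$; an analogous resolution handles $j'_*\mathcal{O}_{\WCart_{n-1}}$ and its twists. The induction is complete.

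The main obstacle is the twist-tracking involved in identifying $\mathcal{J}\simeq j_*(j^*\mathcal{I}^{n-1})$ and in promoting the periodic resolutions to actual complexes of line bundles on $\WCart_n$: these reflect the non-regularity of $\WCart^{\HT}\hookrightarrow \WCart_n$ (which forces the Koszul-type resolutions to be infinite) and the specific weight of $\lambda$ under the $\mathbb{G}_m^{\sharp}$-action on the cover. Once these computations are in place, the remainder is a formal combination of fiber sequences, projection formula, and inductive bookkeeping.
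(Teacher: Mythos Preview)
Your argument is correct and is a reasonable way to flesh out the paper's one-line deferral to \cite[Corollary 3.5.16]{bhatt2022absolute}. The induction on $n$ via the fiber sequence, together with the observation that $j_*$ and $j'_*$ preserve colimits (being pushforwards along closed immersions), reduces everything to showing that $j_*\mathcal{O}_{\WCart^{\HT}}$ and $j'_*\mathcal{O}_{\WCart_{n-1}}$ lie in the subcategory $\mathcal{C}$ generated by the $\mathcal{I}^k$. Your periodic-resolution step handles this correctly: the brutal truncations $F_m$ of the global complex are finite complexes of line bundles, and one checks $\colim_m F_m \simeq j_*\mathcal{O}$ after pulling back along the faithfully flat cover $\rho$ (which is conservative and preserves colimits).

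Two small remarks. First, for $n>1$ the stack $\WCart_n$ is no longer the classifying stack of $\mathbb{G}_m^\sharp$, so the phrase ``tracking the $\mathbb{G}_m^\sharp$-weights'' is slightly off. What you are really using is the canonical global section $s\colon \mathcal{I}\to\mathcal{O}_{\WCart_n}$ coming from the universal Cartier--Witt divisor: its powers $s^j\colon \mathcal{I}^{a+j}\to\mathcal{I}^a$ pull back to multiplication by $\lambda^j$ on the cover, and $s^n=0$ by the very definition of $\WCart_n$, so consecutive maps in your complex compose to zero and exactness descends from the cover. Second, your identification $\mathcal{J}\simeq j_*(j^*\mathcal{I}^{n-1})$ is cleanest to justify via the short exact sequence $0\to I^{n-1}/I^n\to A/I^n\to A/I^{n-1}\to 0$ for transversal prisms together with $I^{n-1}/I^n\simeq (A/I)\otimes_A I^{\otimes(n-1)}$, rather than by a weight count. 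With these cosmetic fixes the proof stands.
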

\begin{proof}
    The proof is essentially the same as that in \cite[Corollary 3.5.16]{bhatt2022absolute}.
\end{proof}
\begin{proposition}\label{commute with colimits}
    For any $n\geq 1$, the global sections functor $\mathrm{R} \Gamma(\mathrm{WCart}_{n}, \bullet): \mathcal{D}(\mathrm{WCart}_{n}) \rightarrow \widehat{\mathcal{D}}(\mathbb{Z}_p)$ commutes with colimits.
\end{proposition}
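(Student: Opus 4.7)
The plan is to argue by induction on $n$. The base case $n=1$ asserts that $\mathrm{R}\Gamma(\WCart^{\HT},-)$ commutes with colimits; this follows from the work of Bhatt--Lurie in \cite[Section 3.5]{bhatt2022absolute}, where the cohomology of the twisting line bundles $\mathcal{O}\{k\}$ on $\WCart^{\HT}$ is computed explicitly and shown to have bounded amplitude. Combined with the $n=1$ version of \cref{generation}, this yields the claim.

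For the inductive step, consider the closed immersion $i \colon \WCart_{n-1} \hookrightarrow \WCart_n$ with ideal sheaf $\mathcal{J}$. The key geometric observation is that $\mathcal{J}$ is annihilated by the Hodge-Tate ideal inside $\mathcal{O}_{\WCart_n}$, and hence is of the form $i_{1,*}(\mathcal{L})$ for some line bundle $\mathcal{L}$ on $\WCart^{\HT}$, where $i_1 \colon \WCart^{\HT} \hookrightarrow \WCart_n$ is the inclusion of the Hodge-Tate locus. This is visible from the pullback square of \cref{remark.second definition}: the ideal $\mathcal{J}$ arises as the pullback of the ideal $(t^{n-1}) \subset \mathbb{Z}[[t]]/t^n$, and $t^{n-1}\cdot t = 0$ in that ring. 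Equivalently, after pullback along the faithfully flat cover $\rho \colon \Spf(\mathfrak{S}/\lambda^n) \to \WCart_n$, the ideal $\mathcal{J}$ becomes $(\lambda^{n-1})/(\lambda^n) \cong \mathfrak{S}/\lambda \cong \mathbb{Z}_p$, visibly annihilated by $\lambda$.

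Given $\mathscr{E} \in \mathcal{D}(\WCart_n)$, derivedly tensoring the fiber sequence $\mathcal{J} \to \mathcal{O}_{\WCart_n} \to i_* \mathcal{O}_{\WCart_{n-1}}$ with $\mathscr{E}$ and invoking the projection formula yields a fiber sequence
$$i_{1,*}(\mathcal{L} \otimes i_1^*\mathscr{E}) \longrightarrow \mathscr{E} \longrightarrow i_* i^*\mathscr{E}.$$
Applying $\mathrm{R}\Gamma(\WCart_n, -)$, the right term becomes $\mathrm{R}\Gamma(\WCart_{n-1}, i^*\mathscr{E})$, which commutes with colimits in $\mathscr{E}$ by the inductive hypothesis (using that $i^*$ is colimit-preserving), while the left term becomes $\mathrm{R}\Gamma(\WCart^{\HT}, \mathcal{L}\otimes i_1^*\mathscr{E})$, which commutes with colimits by the base case. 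Since fiber sequences in $\widehat{\mathcal{D}}(\mathbb{Z}_p)$ are compatible with colimits in each variable, the middle term $\mathrm{R}\Gamma(\WCart_n, \mathscr{E})$ commutes with colimits as well.

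The main obstacle I expect is carefully justifying the derived projection formula in this stacky setting and pinning down the precise structure of $\mathcal{J}$ as the pushforward of a line bundle; both should reduce to transparent manipulations on the Breuil--Kisin cover $\rho$ via faithfully flat descent, since the analogous assertions for the module $\mathfrak{S}/\lambda^n$ are routine.
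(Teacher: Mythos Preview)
Your proof is correct and follows essentially the same d\'evissage strategy as the paper: induction on $n$, base case from Bhatt--Lurie, and the inductive step via a fiber sequence combined with the projection formula for the closed immersions into $\WCart_n$. The only cosmetic difference is the choice of filtration: you use the sequence $0 \to \mathcal{I}^{n-1}/\mathcal{I}^n \to \mathcal{O}_{\WCart_n} \to i_*\mathcal{O}_{\WCart_{n-1}} \to 0$ (so the Hodge--Tate piece appears as the kernel), whereas the paper uses $0 \to i_{n-1,*}\mathcal{O}_{\WCart_{n-1}}\{1\} \to \mathcal{O}_{\WCart_n} \to i_{\HT,*}\mathcal{O}_{\WCart^{\HT}} \to 0$ (so the Hodge--Tate piece appears as the cokernel); either way the induction goes through identically.
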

\begin{proof}
    We prove the claim by induction on $n$. For $n=1$, the desired result follows from \cite[Corollary 3.5.13]{bhatt2022absolute}. Suppose $n\geq 2$ and that we have shown the claim for up to $n-1$. Then as for any prism $(A,I)$, we have a short exact sequence 
    \begin{equation*}
       0\to I\otimes A/I^{n-1}\to A/I^n\to A/I\to 0.
    \end{equation*}
    Via the dictionary transferring prismatic crystals to quasi-coherent complexes on truncated Cartier-Witt stacks by \cref{stacky information for n}, it implies that the closed embeddings $i_{n-1}: \WCart_{n-1} \hookrightarrow \mathrm{WCart}_{n}$ and $i_{\HT}: \WCart^{\HT} \hookrightarrow \WCart_{n}$ induce the following exact sequence on $\WCart_{n}$:
    \begin{equation}\label{equa.structure exact}
        i_{n-1,*}\mathcal{O}_{\mathrm{WCart}_{n-1}}\{1\}\rightarrow \mathcal{O}_{\mathrm{WCart}_{n}} \rightarrow i_{\HT,*}\mathcal{O}_{\mathrm{WCart}^{\HT}}.
    \end{equation}
For any $\mathcal{F}\in \mathcal{D}(\mathrm{WCart}_{n})$, we hence obtain a fiber sequence 
\[\mathcal{F}\otimes (i_{n-1,*}\mathcal{O}_{\mathrm{WCart}_{n-1}}\{1\})\rightarrow \mathcal{F}\otimes \mathcal{O}_{\mathrm{WCart}_{n}} \rightarrow \mathcal{F}\otimes (i_{\HT,*}\mathcal{O}_{\mathrm{WCart}^{\HT}})\]
by tensoring $\mathcal{F}$ with \cref{equa.structure exact}.

Taking global sections then yields the fiber sequence
\begin{equation*}
    \begin{split}
        \mathrm{R} \Gamma(\mathrm{WCart}_{n}, \mathcal{F}\otimes (i_{n-1,*}\mathcal{O}_{\mathrm{WCart}_{n-1}}\{1\})) \rightarrow &\mathrm{R} \Gamma(\mathrm{WCart}_{n}, \mathcal{F}\otimes \mathcal{O}_{\mathrm{WCart}_{n}}) \\ &\rightarrow \mathrm{R} \Gamma(\mathrm{WCart}_{n}, \mathcal{F}\otimes (i_{\HT,*}\mathcal{O}_{\mathrm{WCart}^{\HT}})).
    \end{split}
\end{equation*}
%\[\mathrm{R} \Gamma(\mathrm{WCart}_{n}, \mathcal{F}\otimes (i_{n-1,*}\mathcal{O}_{\mathrm{WCart}_{n-1}}\{1\})) \rightarrow \mathrm{R} \Gamma(\mathrm{WCart}_{n}, \mathcal{F}\otimes \mathcal{O}_{\mathrm{WCart}_{n}}) \rightarrow \mathrm{R} \Gamma(\mathrm{WCart}_{n}, \mathcal{F}\otimes (i_{\HT,*}\mathcal{O}_{\mathrm{WCart}^{\HT}})).\]
As taking colimits is exact, it suffices to show both $\mathrm{R} \Gamma(\mathrm{WCart}_{n}, \mathcal{F}\otimes (i_{n-1,*}\mathcal{O}_{\mathrm{WCart}_{n-1}}\{1\}))$ and $\mathrm{R} \Gamma(\mathrm{WCart}_{n}, \mathcal{F}\otimes (i_{\HT,*}\mathcal{O}_{\mathrm{WCart}^{\HT}}))$ commute with colimits. However the projection formula implies that
\begin{equation*}
    \begin{split}
        \mathrm{R} \Gamma(\mathrm{WCart}_{n}, \mathcal{F}\otimes (i_{n-1,*}\mathcal{O}_{\mathrm{WCart}_{n-1}}\{1\}))&=\mathrm{R} \Gamma(\mathrm{WCart}_{n}, i_{n-1,*}(i_{n-1}^{*}(\mathcal{F})\otimes \mathcal{O}_{\mathrm{WCart}_{n-1}}\{1\}))\\&=\mathrm{R} \Gamma(\mathrm{WCart}_{n-1}, i_{n-1}^{*}\mathcal{F}\{1\}),
    \end{split}
\end{equation*}
here the last equality holds as $i_{n-1,*}=Ri_{n-1,*}$ since the closed immersion $i_{n-1}$ is an affine morphism. Consequently, the functor $\mathrm{R} \Gamma(\mathrm{WCart}_{n}, \bullet\otimes (i_{n-1,*}\mathcal{O}_{\mathrm{WCart}_{n-1}}))$ commutes with colimits as both $i_{n-1,*}$ and $\mathrm{R} \Gamma(\mathrm{WCart}_{n-1}, \bullet)$ commute with colomits (by induction). Similarly, one can prove that the functor $\mathrm{R} \Gamma(\mathrm{WCart}_{n}, \bullet\otimes (i_{\HT,*}\mathcal{O}_{\mathrm{WCart}^{\HT}}))$ also commutes with colimits. We are done.
\end{proof}

Recall that for $\mathcal{E}\in \mathcal{D}(\WCart_n)$, the global section of $\mathcal{E}$ is defined as 
\begin{equation*}
    \mathrm{R} \Gamma(\mathrm{WCart}_{n}, \mathcal{E}):=\lim_{f: \Spec(R)\to \mathrm{WCart}_{n}} f^{*}\mathcal{E}
\end{equation*}

In particular, the cover $\rho: \Spf(\mathfrak{S}/\lambda^n) \rightarrow \WCart_n$ induces a natural morphism $$\mathrm{R} \Gamma(\mathrm{WCart}_{n}, \mathcal{E})\to \rho^{*}\mathcal{E}.$$

Utilizing our construction of the Sen operator and results from \cite{bhatt2022absolute}, we can understand $\mathrm{R} \Gamma(\mathrm{WCart}_{n}, \mathcal{E})$ quite well through this morphism:
\begin{proposition}\label{sen calculate cohomology}
    Let $n\leq p$, then for any $\mathcal{E}\in \mathcal{D}(\WCart_n)$, the natural morphism $\mathrm{R} \Gamma(\mathrm{WCart}_{n}, \mathcal{E})\to \rho^{*}\mathcal{E}$ induces a canonical identification $$
    \mathrm{R} \Gamma(\mathrm{WCart}_{n}, \mathcal{E})=\fib(\rho^*\mathcal{E}\stackrel{\Theta_{\mathscr{E}}}{\longrightarrow} \rho^*\mathcal{E}).$$
\end{proposition}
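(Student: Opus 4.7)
The plan is to proceed in two stages: first produce a canonical map $\alpha_\mathscr{E}: \mathrm{R}\Gamma(\WCart_n,\mathscr{E}) \to \fib(\rho^*\mathscr{E} \xrightarrow{\Theta_{\mathscr{E}}} \rho^*\mathscr{E})$ by exhibiting $\Theta_{\mathscr{E}}$ as nullhomotopic on pulled-back global sections, then verify that $\alpha_\mathscr{E}$ is an equivalence by reducing to the generating line bundles $\mathcal{I}^k$ and inducting on $n$.

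For the first stage, recall that $\Theta_{\mathscr{E}}$ was extracted in \cref{equa.truncated induced morphism} from the $2$-isomorphism $\gamma_b = \Id + \epsilon\Theta_{\mathscr{E}}$ between the maps $\rho$ and $\rho\circ\delta$, after base change along $\Spec(\mathbb{Z}[\epsilon]/\epsilon^2)$ (see \cref{prop.key automorphism for small n}). Any global section of $\mathscr{E}$ on $\WCart_n$ pulls back to a section of $\rho^*\mathscr{E}\otimes\mathbb{Z}[\epsilon]/\epsilon^2$ that is compatible, by functoriality, with both $\rho^*$ and $(\rho\circ\delta)^*$, hence is fixed by $\gamma_b$. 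Consequently $\Theta_{\mathscr{E}}$ annihilates the image of $\mathrm{R}\Gamma(\WCart_n,\mathscr{E})\to\rho^*\mathscr{E}$, yielding the canonical factorization $\alpha_\mathscr{E}$.

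For the second stage, observe that both the source and target of $\alpha_\mathscr{E}$, viewed as functors in $\mathscr{E}\in\mathcal{D}(\WCart_n)$, commute with colimits: the source by \cref{commute with colimits}, and the target because $\rho^*$ is a left adjoint and fibers commute with colimits in stable $\infty$-categories. By \cref{generation}, it therefore suffices to check that $\alpha_{\mathcal{I}^k}$ is an equivalence for every $k\in\mathbb{Z}$. I would induct on $n$: the base case $n=1$ is \cite[Theorem 3.5.8]{bhatt2022absolute}, which is applicable thanks to \cref{rem.comapre with BL}. For the inductive step, I would tensor the fiber sequence \cref{equa.structure exact} with $\mathcal{I}^k$ and apply the projection formula to obtain a fiber sequence
$$i_{n-1,*}\mathcal{O}_{\WCart_{n-1}}\{k+1\} \to \mathcal{I}^k \to i_{\HT,*}\mathcal{O}_{\WCart^{\HT}}\{k\}$$
on $\WCart_n$. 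Applying $\mathrm{R}\Gamma(\WCart_n,\bullet)$ and, separately, $\rho^*$ (followed by taking $\Theta$-fibers) produces two fiber triangles linked by $\alpha$; the inductive hypothesis handles the outer terms, and the two-out-of-three property of equivalences in fiber sequences then finishes the argument. By \cref{example.action on generators}, $\Theta$ acts on $\rho^*\mathcal{I}^k = \mathfrak{S}/\lambda^n\cdot\lambda^k$ by $\lambda^i\mapsto(i+k)\lambda^i$, which is exactly what makes the restriction of $\Theta$ to the $\lambda\cdot\mathfrak{S}/\lambda^n$-summand consistent with the Sen operator for $\mathcal{I}^{k+1}$ on $\WCart_{n-1}$.

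The main obstacle I anticipate is verifying the compatibility of $\alpha$ with the fiber triangle above, that is, producing an honest morphism of fiber sequences rather than only comparing outer terms. This requires knowing that $\rho^*$ of $i_{n-1,*}\mathcal{O}_{\WCart_{n-1}}\{k+1\}$ identifies canonically with the $\lambda\cdot\mathfrak{S}/\lambda^n$-summand of $\rho^*\mathcal{I}^k$, and, crucially, that our Sen operator built from the element $b$ of \cref{lem.construct b} restricts to the Sen operator for $\mathcal{I}^{k+1}$ on $\WCart_{n-1}$ under this identification. The compatibility of $b$ across varying $n$ noted in \cref{rem.descend of b to the Hodge-Tate locus} is precisely what makes this restriction well-behaved and ensures the induction goes through cleanly.
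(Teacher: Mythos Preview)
Your proposal is correct and follows essentially the same approach as the paper: produce the factorization through $\fib(\Theta_\mathscr{E})$ using the $2$-isomorphism $\gamma_b$, then verify it is an equivalence by d\'evissage on $n$ via the fiber sequence \cref{equa.structure exact}, reducing to the $n=1$ case handled by Bhatt--Lurie. The only differences are that the paper runs the d\'evissage directly on arbitrary $\mathscr{E}$ rather than first reducing to the generators $\mathcal{I}^k$ (your extra step is harmless but unnecessary, since the projection-formula argument from the proof of \cref{commute with colimits} already applies to any $\mathscr{E}$), and the precise $n=1$ reference is \cite[Proposition~3.5.11]{bhatt2022absolute} rather than Theorem~3.5.8.
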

    %By the functoriality of $\Theta$ and \cref{generation}, \cref{commute with colimits}, both sides commute with taking colimits, hence it suffices to show that the identification in the proposition holds for $\mathcal{E}=\mathcal{I}^n$. For such $\mathcal{E}$, we invoke the calculation in \cite{Liu22} to calculate $\mathrm{R} \Gamma(\mathrm{WCart}_{n}, \mathcal{E})$.
\begin{proof}
By \cref{prop.key automorphism for small n}, we have an isomorphism $b: \rho\circ \delta \stackrel{\simeq}{\longrightarrow} \rho$ as functors  $\Spf(\mathfrak{S}/\lambda^n [\epsilon]/\epsilon^2) \rightarrow \WCart_n$. Then the definition of $ \mathrm{R} \Gamma(\mathrm{WCart}_{n}, \mathcal{E})$ implies that the natural morphism $\mathrm{R} \Gamma(\mathrm{WCart}_{n}, \mathcal{E})\to \rho^{*}\mathcal{E}$ factors through the equalizer of $$\rho^*\mathcal{E}\stackrel{\Id\otimes 1}{\longrightarrow} 
 \rho^*\mathcal{E}\otimes \mathbb{Z}[\epsilon]/(\epsilon^2)$$
and 
$$\rho^*\mathcal{E}\stackrel{b}{\longrightarrow} 
 \rho^*\mathcal{E}\otimes \mathbb{Z}[\epsilon]/(\epsilon^2),$$
where $b=\Id+\epsilon \Theta_{\mathcal{E}}$ is defined in \cref{equa.truncated induced morphism}. This produces a canonical morphism 
$$
    \mathrm{R} \Gamma(\mathrm{WCart}_{n}, \mathcal{E})\to \fib(\rho^*\mathcal{E}\stackrel{\Theta_{\mathscr{E}}}{\longrightarrow} \rho^*\mathcal{E}).$$
To see that it is actually an identification, using standard  d\'evissage (the trick we used in the proof of \cref{commute with colimits}), by induction on $n$, this reduces to $n=1$, which follows from \cite[Proposition 3.5.11]{bhatt2022absolute} thanks to \cref{rem.comapre with BL}.

\end{proof}
%\begin{lemma}For $k\in \mathbb{Z}$, $\mathrm{R} \Gamma(\mathrm{WCart}_{n}, \mathcal{I}^k)=(\mathbb{Z}_p[[\lambda]]/\lambda^n\stackrel{\lambda^i\mapsto (i-k)\lambda^i}{\longrightarrow} \mathbb{Z}_p[[\lambda]]/\lambda^n)$.\end{lemma}
%\begin{proof}$\mathcal{I}^k(\mathfrak{S})=(\lambda^k)\cdot \mathbb{Z}_p[[\lambda]]/\lambda^n$ could be identified with $e\cdot \mathbb{Z}_p[[\lambda]]/\lambda^n$ (here $e$ corresponds to $\lambda^k$), hence $$\varepsilon (e)=e\cdot (\frac{\eta}{\lambda})^k=e\cdot (1-X_1)^k.$$\end{proof}

With all of the above ingredients in hand, finally we are ready to prove \cref{thm.main classification}.
\begin{proof}[Proof of \cref{thm.main classification}]
    The functor is well defined thanks to \cref{lem.leibniz}. Then we follow the proof of \cite[Theorem 3.5.8]{bhatt2022absolute}. 
    For the full faithfulness, let $\mathcal{E}$ and $\mathcal{F}$ be quasi-coherent complexes on $\WCart_n$ and we want to show that the natural map 
    \begin{equation*}
        \Hom_{\mathcal{D}(\WCart_n)}(\mathscr{E}, \mathscr{F}) \rightarrow \Hom_{\mathcal{D}(\mathrm{MIC}(\mathbb{Z}_p[[\lambda]]/\lambda^n))} (\rho^{*}(\mathscr{E}), \rho^{*}(\mathscr{F}))
    \end{equation*}
    is a homotopy equivalence. Thanks to \cref{generation}, we could reduce to the case that $\mathscr{E}=\mathcal{I}^k$ for some $k\in \mathbb{Z}$. Replacing $\mathscr{F}$ by the twist $\mathscr{F}(-k)$, we could further assume that $k=0$. Then the desired result follows from \cref{sen calculate cohomology}. 

    To check that the action of $\Theta^p-\Theta$ on the cohomology $\mathrm{H}^*(\rho^{*}(\mathscr{E})\otimes^{\mathbb{L}}\mathbb{F}_p)$ is locally nilpotent for $\mathscr{E}\in \mathcal{D}(\WCart_n)$, again thanks to \cref{generation}, we might assume $\mathscr{E}=\mathcal{I}^k$ for some $k\in \mathbb{Z}$. Then by \cref{example.action on generators}, $\eta^{*}(\mathcal{I}^k)=e\cdot \mathbb{Z}_p[[\lambda]]/\lambda^n$ (here we identity $e$ with $\lambda^k$) and under this trivialization  %$\rho^{*}\mathscr{E}=\mathbb{Z}_p[[\lambda]]/\lambda^n\cdot (\lambda^k)$, 
    $\Theta_{\mathscr{E}}$ sends $\lambda^i$ to $(i+k)\lambda^i$. As for any integer $j$, $j^p\equiv j \mod p$, we conclude that $\Theta^p-\Theta$ acts nilpotently on each $\lambda^i$, hence so on $\rho^{*}(\mathcal{I}^k)$.

    Let $\mathcal{C}\subseteq \mathcal{D}(\mathrm{MIC}(\mathbb{Z}_p[[\lambda]]/\lambda^n))$ be the full subcategory spanned by objects satisfying two conditions listed in \cref{thm.main classification}. As the source $\mathcal{D}(\WCart_n)$ is generated under shifts and colimits by the invertible sheaves $\mathcal{I}^n$ for $n\in \mathbb{Z}$ by \cref{generation}, to complete the proof it suffices to show that $\mathcal{C}$ is also generated under shifts and colimits by $\{\rho^*\mathcal{I}^k\}$ ($k\in \mathbb{Z}$). In other words, we need to show that for every nonzero object $M\in \mathcal{C}$, $M$ admits a nonzero morphism from $\rho^*\mathcal{I}^k[m]$ for some $m,k \in \mathbb{Z}$. Replacing $M$ by $M\otimes \mathbb{F}_p$, we may assume that there exists some cohomology group $\mathrm{H}^{-m}(M)$ containing a nonzero element killed by $\Theta^p-\Theta=\prod_{0\leq n<p} (\Theta-n)$ (this could be done by iterating the action of $\Theta^p-\Theta$ and then use the nilpotence assumption). Furthermore, we could assume this element is actually killed by $\Theta-k$ for a single integer $k$. It then follows that there exists a non-zero morphism from $\rho^*\mathcal{I}^k[m]$ to $M$ which is nonzero in degree $m$.
\end{proof}

We end this section with the following geometric characterization of $\WCart_n$, observed by Sasha Petrov. It would essentially lead to \cref{thm.main classification} with some extra work, although we don't pursue this further in this paper.
\begin{proposition}\label{prop.geometric explanation}
    There is a unique isomorphism between $\WCart_n$ and $\Sym^{<n}_{\WCart^{\HT}} \mathcal{O}\{1\}$ as stacks over $\Spf(\mathbb{Z}_p)$, here the later is the relative stack over $\WCart^{\HT}$ formed by the coherent sheaf $\Sym^{<n}(\mathcal{O}\{1\})$, the quotient of the symmetric algebra of $\mathcal{O}\{1\}$ by the ideal of elements of degree at least $n$.
\end{proposition}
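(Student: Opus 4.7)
The plan is to present both stacks as quotients of formal schemes by the same formal group and to descend an obvious isomorphism of the total spaces. The starting point is the identification $\WCart^{\HT}\cong B\mathbb{G}_m^{\sharp}$ from the work of Bhatt-Lurie, under which the line bundle $\mathcal{O}\{1\}$ corresponds to the standard weight-one character. Pulling back along the canonical cover $\Spf(\mathbb{Z}_p)\to B\mathbb{G}_m^{\sharp}$ trivializes $\mathcal{O}\{1\}$, so $\Sym^{<n}(\mathcal{O}\{1\})$ becomes $\mathbb{Z}_p[x]/x^n$ with $\mathbb{G}_m^{\sharp}$ acting on $x$ by weight one. This identifies
\[
\Sym^{<n}_{\WCart^{\HT}}\mathcal{O}\{1\}\ \simeq\ [\,\Spf(\mathbb{Z}_p[x]/x^n)\,/\,\mathbb{G}_m^{\sharp}\,].
\]

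Next I would establish the analogous presentation $\WCart_n\simeq [\Spf(\mathfrak{S}/\lambda^n)/\mathbb{G}_m^{\sharp}]$, with $\mathbb{G}_m^{\sharp}$ again acting by weight one on $\lambda$. The cover $\rho:\Spf(\mathfrak{S}/\lambda^n)\to\WCart_n$ is faithfully flat by \cite[Corollary 3.2.10]{bhatt2022absolute}; the content is to compute its automorphism groupoid. For a test $p$-nilpotent $R$ over $\mathfrak{S}/\lambda^n$, one must show that two $R$-points of the cover becoming isomorphic in $\WCart_n(R)$ differ by the action of a unique element of $\mathbb{G}_m^{\sharp}(R)$. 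The technical core is a generalization of \cref{lem.construct b}: for each $u\in \mathbb{G}_m^{\sharp}(R)$ one constructs a unique unit $b_u\in W(R)^{\times}$ satisfying $\Tilde{g}_u(\lambda)=\Tilde{f}(\lambda)\cdot b_u$, where $\Tilde{g}_u$ is the $\delta$-lift of the ring map sending $\lambda\mapsto u\cdot\lambda$; the case $u=1+\epsilon$ already treated in that lemma is the infinitesimal shadow. The cocycle relations on $u\mapsto b_u$ then upgrade this to a genuine $\mathbb{G}_m^{\sharp}$-action whose quotient recovers $\WCart_n$.

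With both presentations in hand, the $\mathbb{G}_m^{\sharp}$-equivariant ring isomorphism $\mathbb{Z}_p[x]/x^n\cong \mathfrak{S}/\lambda^n$, $x\mapsto \lambda$, descends to the claimed isomorphism of stacks over $\Spf(\mathbb{Z}_p)$. For uniqueness, any such isomorphism restricts to the identity on the closed substack $\WCart^{\HT}$ and induces the canonical identification $\mathcal{I}/\mathcal{I}^2\cong \mathcal{O}\{1\}$ on the conormal sheaf; these two pieces of data rigidify the isomorphism because of the absence of nontrivial $\mathbb{G}_m^{\sharp}$-equivariant automorphisms of $\mathbb{Z}_p[x]/x^n$ fixing the generator $x$ modulo $x^2$.

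The main obstacle is the automorphism-groupoid computation for $\WCart_n$ when $n\geq 2$: constructing the full family $\{b_u\}$ for $u$ ranging over $\mathbb{G}_m^{\sharp}$ (not just the infinitesimal generator treated in \cref{lem.construct b}) and verifying multiplicativity. This is where the hypothesis $n\leq p$ enters, through the recursive system analogous to \cref{Compare coefficients}: as noted in \cref{rem.descend of b to the Hodge-Tate locus}, for $n>p$ the coefficient at $\lambda^p$ fails to lie in $\mathbb{Z}_p$, so the action of $\mathbb{G}_m^{\sharp}$ does not extend to $\mathfrak{S}/\lambda^n$ and the presentation above breaks down.
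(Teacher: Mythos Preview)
Your strategy is quite different from the paper's and, as written, has a real gap at its core.

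The paper does not attempt to present $\WCart_n$ as a quotient stack. Instead it argues by deformation theory: for each $n$ the closed immersion $\WCart_n\hookrightarrow\WCart_{n+1}$ is a square-zero thickening with ideal $\mathcal{O}\{n\}$, lying over the analogous thickening $[\Spf(\mathbb{Z}[[t]]/t^n)/\mathbb{G}_m]\hookrightarrow[\Spf(\mathbb{Z}[[t]]/t^{n+1})/\mathbb{G}_m]$. Such thickenings are classified by $\Ext^1_{\WCart^{\HT}}(\mathbb{L}_{\WCart^{\HT}/B\mathbb{G}_m},\mathcal{O}\{n\})$, and the paper shows the full $\RHom$ vanishes for $1\le n\le p-1$. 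The key input is that $\WCart^{\HT}\cong B\mathbb{G}_m^{\sharp}$ is the classifying stack of a \emph{commutative} group, so its cotangent complex over $\Spf(\mathbb{Z}_p)$ carries trivial adjoint action and hence comes by pullback from $\Spf(\mathbb{Z}_p)$; a Sen-operator computation (using \cite[Theorem 3.5.8]{bhatt2022absolute} and \cite[Lemma 6.1]{petrov2023non}) then gives the vanishing. Since the deformation space is trivial at every step, both $\WCart_n$ and $\Sym^{<n}_{\WCart^{\HT}}\mathcal{O}\{1\}$ are forced to agree, and uniqueness is automatic.

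In your approach the entire content is packed into the claim $\WCart_n\simeq[\Spf(\mathfrak{S}/\lambda^n)/\mathbb{G}_m^{\sharp}]$, and here the argument is incomplete. Even granting that your generalization of \cref{lem.construct b} produces, for every $u\in\mathbb{G}_m^{\sharp}(R)$, a unit $b_u\in W(R)^\times$ with the correct cocycle behaviour, this only yields a morphism
\[
\Spf(\mathfrak{S}/\lambda^n)\times\mathbb{G}_m^{\sharp}\ \longrightarrow\ \Spf(\mathfrak{S}/\lambda^n)\times_{\WCart_n}\Spf(\mathfrak{S}/\lambda^n),
\]
and you give no argument that it is an isomorphism. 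An $R$-point of the right-hand side is a triple $(f_1,f_2,b)$ with $f_i\colon\mathfrak{S}/\lambda^n\to R$ and $b\in W(R)^\times$ an arbitrary unit satisfying $\widetilde{f_2}(\lambda)=b\cdot\widetilde{f_1}(\lambda)$. You would need to show that every such $b$ arises as $b_u$ for a unique $u\in\mathbb{G}_m^{\sharp}(R)$; in particular that its first ghost component $b_0\in R^\times$ automatically lands in the proper subgroup $\mathbb{G}_m^{\sharp}(R)\subsetneq\mathbb{G}_m(R)$, and that $f_2$ is determined by $(f_1,u)$ even though $f_1(\lambda)$ is nilpotent and one cannot divide. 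None of this is addressed. Establishing that the map above is an isomorphism is essentially equivalent to the proposition itself, and the most natural way to close the gap is precisely the deformation-theoretic argument the paper gives.
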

\begin{proof}
    For any $n$, unwinding the definition of $\WCart_{n}$ via \cref{remark.second definition}, we see that the ideal sheaf corresponding to the closed embedding $\WCart_{n} \hookrightarrow \mathrm{WCart}_{n+1}$ is supported on $\WCart^{\HT}$ and is isomorphic to $\mathcal{O}\{n\}$. Moreover, $\mathrm{WCart}_{n+1}$ is a square-zero thickening of $\WCart_{n}$ living over the $n$-th order neighbourhood of $B\mathbb{G}_m$ inside $[\widehat{\mathbb{A}}^1/\mathbb{G}_m]$. On the other hand, such square-zero thickenings are classified by $$\Ext^1_{\WCart_{n}}(\mathbb{L}_{\WCart_{n}/[\Spf(\mathbb{Z}[[t]]/t^n)/\mathbb{G}_m]}, \mathcal{O}\{n\})=\Ext^1_{\WCart^{\HT}}(\mathbb{L}_{\WCart^{\HT}/B\mathbb{G}_m}, \mathcal{O}\{n\}),$$
    where the identity comes from the adjunction of the closed embedding above and base change for the cotangent complex. 

    Now it suffices to show that $\RHom_{\WCart^{\HT}}(\mathbb{L}_{\WCart^{\HT}/B\mathbb{G}_m}, \mathcal{O}\{n\})=0$ for $1\leq n\leq p-1$, which will also give the uniqueness of the above isomorphism. By considering $\WCart^{\HT} \to B\mathbb{G}_m \to \Spf(\mathbb{Z}_p)$, we have the following exact triangle of cotangent complexes
    $$\mathbb{L}_{B\mathbb{G}_m/\mathbb{Z}_p}\otimes_{\mathcal{O}_{B\mathbb{G}_m}} \mathcal{O}_{\WCart^{\HT}} \to \mathbb{L}_{\WCart^{\HT}/\mathbb{Z}_p} \to \mathbb{L}_{\WCart^{\HT}/\mathbb{G}_m}.$$

    We claim that both $\RHom_{\WCart^{\HT}}(\mathbb{L}_{B\mathbb{G}_m/\mathbb{Z}_p}\otimes_{\mathcal{O}_{B\mathbb{G}_m}} \mathcal{O}_{\WCart^{\HT}}, \mathcal{O}\{n\})$ and $\RHom_{\WCart^{\HT}}(\mathbb{L}_{\WCart^{\HT}/\mathbb{Z}_p}, \mathcal{O}\{n\})$ are zero when $1\leq n\leq p-1$, leading to the desired vanishing of $\RHom_{\WCart^{\HT}}(\mathbb{L}_{\WCart^{\HT}/B\mathbb{G}_m}, \mathcal{O}\{n\})$. We will prove $\RHom_{\WCart^{\HT}}(\mathbb{L}_{\WCart^{\HT}/\mathbb{Z}_p}, \mathcal{O}\{n\})=0$ for $1\leq n\leq p-1$ in detail, and the vanishing of the former term will follow for a similar reason. For this purpose, first recall that for any flat group $\mathbb{Z}_p$-scheme $G$, the cotangent complex $\mathbb{L}_{BG/\mathbb{Z}_p}$ is identified with $e^*\mathbb{L}_{G/\mathbb{Z}_p}[-1]$ equipped with the adjoint action of G, where $e:\Spf (\mathbb{Z}_p) \to G$ is the unit section. Applying the above discussion to $\WCart^{\HT}$, which is isomorphic to the classifying stack of $\mathbb{G}_m^{\sharp}$ by \cite[Theorem 3.4.13]{bhatt2022absolute}, we see that $\mathbb{L}_{\WCart^{\HT}/\mathbb{Z}_p}\cong \mathbb{L}_{B\mathbb{G}_m^{\sharp}}=e^*\mathbb{L}_{\mathbb{G}_m^{\sharp}/\mathbb{Z}_p}[-1]$ is a $\mathbb{Z}_p$-module equipped with the trivial adjoint action of $\mathbb{G}_m^{\sharp}$ as $\mathbb{G}_m^{\sharp}$ is a commutative group scheme. Consequently we conclude that $\mathbb{L}_{\WCart^{\HT}/\mathbb{Z}_p}$ (viewed as an object in $\mathcal{D}(\WCart^{\HT})$) comes from pullback of a certain object in $\mathcal{D}(\mathbb{Z}_p)$ via the structure morphism $f: \WCart^{\HT}$ to $\Spf(\mathbb{Z}_p)$. 
    
    However, for any $\mathscr{E}=f^*M \in \mathcal{D}(\WCart^{\HT})$ (here $M\in \mathcal{D}(\mathbb{Z}_p)$), it just corresponds to the pair $(M, 0)$ as an object in $\mathcal{D}(\mathbb{Z}_p[\Theta])$ under the full faithful embedding from $\mathcal{D}(\WCart^{\HT})$ to $\mathcal{D}(\mathbb{Z}_p[\Theta])$ given in \cite[Theorem 3.5.8]{bhatt2022absolute}. Hence for $1\leq n\leq p-1$
    $$\RHom_{\mathcal{D}(\WCart^{\HT})}(\mathscr{E}, \mathcal{O}\{n\})=\RHom_{\mathcal{D}(\mathbb{Z}_p[\Theta])}((M,0), (\mathbb{Z}_p,n))=0.$$
    Here the first identity comes from \cite[Theorem 3.5.8]{bhatt2022absolute} and the second equality is due to \cite[Lemma 6.1]{petrov2023non} as multiplication by $n$ is invertible.

    Now we win by applying the above discussion to $\mathscr{E}=\mathbb{L}_{\WCart^{\HT}/\mathbb{Z}_p}$.
\end{proof}
\begin{remark}
    It is interesting to study the extension class in $\Ext^1_{\WCart^{\HT}}(\mathbb{L}_{\WCart^{\HT}/B\mathbb{G}_m}, \mathcal{O}\{p\})$ corresponding to the closed immersion $\WCart_{p} \hookrightarrow \mathrm{WCart}_{p+1}$, which should be a non-zero element. We plan to delve into this further in the near future. 
\end{remark}
\section{(Truncated)-\texorpdfstring{$\oc$}\quad -prismatic crystals}
In this section we study prismatic crystals with coefficients in (truncated) $\oc$ for arbitrary $p$-adic field $K$. The motivation is that $\oc \subseteq \bdr$ should be the "smallest coefficient ring" in which a Sen operator could still be defined for a general $p$-adic field $K$. 

Again, under the philosophy that prismatic crystals with coefficients derived from ${\mathcal{O}}_{\Prism}$ should correspond to quasi-coherent complexes on certain restricted locus of the Cartier-Witt stack, we introduce the following definition:
\begin{definition}
    \[ \TWCart:=\WCart \times_{[\widehat{\mathbf{A}}^1 / \mathbf{G}_m]} [\Spf(\mathbb{Z}[[ \frac{t}{p}]])/ \mathbf{G}_m].\]  
Here the structure morphism $\mu:
\WCart \rightarrow [\widehat{\mathbf{A}}^1 / \mathbf{G}_m]$ is reviewed in \cref{remark.second definition}.

Moreover, let $\TWCart_{[n]}$ be the closed substack of $\TWCart$ determined by $(\frac{t}{p})^n=0$.
\end{definition}
\begin{example}\label{compare two versions}
    When $n=1$, we see that $\TWCart_{[1]}=\WCart^{\HT}$.
\end{example}
\begin{definition}
    Let $X$ be a bounded $p$-adic formal scheme. $\TX$ is defined to be the fiber square of the following diagram
\[ \xymatrix{ X_{n}^{\Prism} \ar[r] \ar[d] & X^{\Prism} \ar[d] \\
\TWCart \ar[r] & \mathrm{WCart} }\]
Similarly we get the definition of $\TXn$ by replacing $\TWCart$ in the bottom left corner with $\TWCart_{[n]}$.
%\[ \WCart_X^{\mathrm{HT}} = \WCart_X \times_{\WCart} \WCart^{\mathrm{HT}}.\] 
\end{definition}

\subsection{Construction of Sen operator}
In this subsection, we work with a general $p$-adic field $K$. More precisely, let $\mathcal{O}_K$ be a complete discrete valuation ring of mixed characteristic with fraction field $K$ and perfect residue field $k$ of characteristic $p$. Fix a uniformizer $\pi$ of $\mathcal{O}_K$ and Let $(\mathfrak{S}=W(k)[[u]],E(u))$ be the corresponding Breuil-Kisin prism. Denote $X=\Spf(\mathcal{O}_K)$. Then $\rho: \Spf(\mathfrak{S}[[\frac{E}{p}]]) \rightarrow \TX$ is a faithfully flat cover as it is the base change of the faithfully flat cover $\Spf(\mathfrak{S})\to X^{\Prism}$.

Similarly as before, our construction of a Sen operator on $\rho^*\mathscr{E}$ for $\mathscr{E}\in \Qcoh(\TX)$ will be based on the following several key lemmas.
\begin{lemma}\label{lemt.extend delta}
The $W(k)[\epsilon]/\epsilon^2$-linear homomorphism $\delta: \mathfrak{S}[\epsilon]/\epsilon^2 \to  \mathfrak{S}[\epsilon]/\epsilon^2$ sending $u$ to $u+\epsilon E(u)$ extends uniquely to a ring homomorphism $ \mathfrak{S}[[\frac{E}{p}]][\epsilon]/\epsilon^2 \to \mathfrak{S}[[\frac{E}{p}]][\epsilon]/\epsilon^2$, which will still be denoted as $\delta$ by abuse of notation.
\end{lemma}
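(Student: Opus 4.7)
The plan is to present $\mathfrak{S}[[\frac{E}{p}]]$ concretely as a (completed) quotient of a polynomial ring and then reduce the extension problem to a single check, namely defining the value of $\delta$ on the generator $\frac{E}{p}$ compatibly with the relation $p\cdot\frac{E}{p}=E$. Since $\mathfrak{S}[\frac{E}{p}]\subset\mathfrak{S}[1/p]$, one has a presentation $\mathfrak{S}[\frac{E}{p}]=\mathfrak{S}[T]/(pT-E)$ where $T$ corresponds to $\frac{E}{p}$, and $\mathfrak{S}[[\frac{E}{p}]]$ is its $(\frac{E}{p})$-adic (equivalently $(p,\frac{E}{p})$-adic) completion. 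In particular any extension of $\delta$ must satisfy $p\cdot\delta(\frac{E}{p})=\delta(E)$, so the value of $\delta$ on $\frac{E}{p}$ is forced once we compute $\delta(E)$.

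First I would compute $\delta(E)$ by Taylor expansion modulo $\epsilon^2$: since $\delta(u)=u+\epsilon E(u)$ and $\epsilon^2=0$,
\[
\delta(E(u))=E(u+\epsilon E(u))=E(u)+\epsilon E(u)E'(u)=E(u)\bigl(1+\epsilon E'(u)\bigr).
\]
This dictates that the only possible extension has $\delta(\frac{E}{p})=\frac{E}{p}(1+\epsilon E'(u))$, giving uniqueness (modulo the point that $p$ is a non-zero-divisor in $\mathfrak{S}[[\frac{E}{p}]]$, which follows from $p$-torsion-freeness of $\mathfrak{S}[\frac{E}{p}]\subset\mathfrak{S}[1/p]$ together with flatness of the completion, as $\frac{E}{p}$ is a regular element). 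Next I would verify well-definedness at the polynomial level: defining $\tilde\delta$ on $\mathfrak{S}[T][\epsilon]/\epsilon^2$ by $T\mapsto T(1+\epsilon E'(u))$ and by the given formula on $\mathfrak{S}[\epsilon]/\epsilon^2$, one finds
\[
\tilde\delta(pT-E)=pT(1+\epsilon E'(u))-E(1+\epsilon E'(u))=(pT-E)(1+\epsilon E'(u))\in(pT-E),
\]
so $\tilde\delta$ descends to a ring homomorphism on $\mathfrak{S}[\frac{E}{p}][\epsilon]/\epsilon^2$.

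Finally I would check $(\frac{E}{p})$-adic continuity to extend to the completion. Using $\epsilon^2=0$ once more,
\[
\delta\!\left(\tfrac{E}{p}\right)^{\!n}=\left(\tfrac{E}{p}\right)^{\!n}(1+\epsilon E'(u))^n=\left(\tfrac{E}{p}\right)^{\!n}(1+n\epsilon E'(u))\in \bigl((\tfrac{E}{p})^n\bigr),
\]
so $\delta$ is contractive for the $(\frac{E}{p})$-adic topology and extends uniquely to $\mathfrak{S}[[\frac{E}{p}]][\epsilon]/\epsilon^2$. I do not expect any genuine obstacle: the whole argument is forced by the single identity $\delta(E)=E(1+\epsilon E'(u))$, and the only subtle point worth writing carefully is the $p$-torsion-freeness of $\mathfrak{S}[[\frac{E}{p}]]$ that underlies uniqueness of $\delta(\frac{E}{p})$.
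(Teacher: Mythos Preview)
Your proof is correct and follows essentially the same approach as the paper: both arguments set $\delta(\tfrac{E}{p})=\tfrac{E}{p}(1+\epsilon E'(u))$, extend first to $\mathfrak{S}[\tfrac{E}{p}][\epsilon]/\epsilon^2$, and then pass to the completion using that $\delta(\tfrac{E}{p})$ is topologically nilpotent. Your version is simply more explicit about the presentation $\mathfrak{S}[T]/(pT-E)$ and about the $p$-torsion-freeness underlying uniqueness.
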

\begin{proof}
    If we define $\delta (\frac{E}{p})=\frac{E(u)(1+\epsilon E^{\prime}(u))}{p}$, then $\delta$ first extends to a ring homomorphism $\mathfrak{S}[\frac{E}{p}][\epsilon]/\epsilon^2 \to \mathfrak{S}[[\frac{E}{p}]][\epsilon]/\epsilon^2$, as $\delta (\frac{E}{p})$ is topologically nilpotent in the target, this further extends to 
    \[\delta: \mathfrak{S}[[\frac{E}{p}]][\epsilon]/\epsilon^2 \to \mathfrak{S}[[\frac{E}{p}]][\epsilon]/\epsilon^2.\]
    The uniqueness can be checked easily.
\end{proof}

\begin{lemma}\label{lemT.construct b}
    For $R=\mathfrak{S}[[\frac{E}{p}]][\epsilon]/\epsilon^2$, there exists a unique $b$ in $W(R)^{\times}$ such that the following holds:
    \begin{itemize}
        \item $\Tilde{g}(\lambda)=\Tilde{f}(\lambda)\cdot b$, where $\Tilde{g}$ is the unique $\delta$-ring map such that the following diagram commutes:
    \[\begin{tikzcd}[cramped, sep=large]
& W(R) \arrow[d, "p_0"]\\ 
\mathfrak{S} \arrow[ru, "\Tilde{g}"] \arrow[r, "\delta: u \mapsto u+\epsilon E(u)"]& R
\end{tikzcd}\]
and $\Tilde{f}$ is defined similarly with the bottom line in the above diagram replaced with the canonical embedding $\mathfrak{S}\hookrightarrow R$.
    \end{itemize}
\end{lemma}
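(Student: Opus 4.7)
The plan is to parallel the ghost-component construction of \cref{lem.construct b}, with the role of the restriction $n \leq p$ now played by the extra flexibility gained from adjoining $E/p$. Since $R = \mathfrak{S}[[\tfrac{E}{p}]][\epsilon]/\epsilon^2$ is $p$-torsion-free, the ghost map $W(R) \hookrightarrow R^{\mathbb{N}}$ is injective; this gives uniqueness of $b$ directly, and reduces existence to constructing $b = (b_0, b_1, \ldots) \in W(R)$ whose ghost components match $w_m(\tilde{g}(\lambda))/w_m(\tilde{f}(\lambda))$ for every $m \geq 0$.

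The target ghost components are easy to write down. Since $\tilde{f}$ and $\tilde{g}$ are $\delta$-ring maps, $w_m(\tilde{f}(\lambda)) = \varphi^m(E(u)) =: P_m(u) \in \mathfrak{S}$, and $w_m(\tilde{g}(\lambda)) = P_m(u + \epsilon E(u)) = P_m(u) + \epsilon E(u) P_m'(u)$ because $\epsilon^2 = 0$. So we need
\[
    w_m(b) \;=\; 1 + \epsilon \cdot \frac{E(u)\, P_m'(u)}{P_m(u)}.
\]
For $m=0$ this is just $1 + \epsilon E'(u) \in R$. For $m \geq 1$, the identity $\varphi(E) = E^p + p\,\delta(E) = p\bigl(p^{p-1}(E/p)^p + \delta(E)\bigr)$, together with the fact that $\delta(E) \bmod p$ has constant term $\tilde{c}_0^p$ (where $\tilde{c}_0 = E(0)/p \in W(k)^\times$), shows that $\delta(E)$ is a unit in $R/(p, E/p) = k[u]/(u^e)$, hence a unit in the $(p,E/p)$-adically complete ring $R$. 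Consequently $\varphi(E)/p \in R^\times$, and iterating Frobenius yields $P_m/p = u_m \in R^\times$ for all $m \geq 1$. Combined with the direct formula $P_m'(u) = p^m \sum_i i\,\varphi^m(c_i)\, u^{ip^m-1}$ (for $E(u) = \sum c_i u^i$), this shows $E(u) P_m'(u)/P_m(u) \in R$, confirming that each candidate $w_m(b)$ does lie in $R$.

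With the target ghost components secured, set $b_0 = 1 + \epsilon E'(u)$ to match $w_0(b)$ and inductively solve
\[
    p^m b_m \;=\; w_m(b) - \bigl(b_0^{p^m} + p b_1^{p^{m-1}} + \cdots + p^{m-1} b_{m-1}^{\,p}\bigr)
\]
for $b_m \in R$. The main obstacle and heart of the proof is the $p^m$-divisibility of the right-hand side --- a Dwork-type congruence. Unlike in \cref{lem.construct b}, the ring $R$ does not itself admit a Frobenius lift in the strict sense (the natural endomorphism sending $E/p$ to $p^{p-1}(E/p)^p + \delta(E)$ fails to satisfy $\varphi(x) \equiv x^p \pmod{p}$ on $E/p$), so one cannot directly invoke Dwork's lemma. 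Instead, I would verify the congruence by hand using that $\tilde{f}(\lambda)$ and $\tilde{g}(\lambda)$ are honest Witt vectors --- so their individual ghost sequences satisfy the standard consistency relations --- and combine these with the explicit factorization $P_m = p\,u_m$ with $u_m \in R^\times$ to show that the ratio inherits the required congruence, tracking carefully that everything only needs to hold to first order in $\epsilon$. Invertibility of $b$ is then automatic since $b_0 \in R^\times$, and uniqueness follows from ghost-map injectivity as noted at the outset.
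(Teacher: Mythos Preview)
Your setup matches the paper's: reduce to ghost components by $p$-torsion-freeness, compute the target $w_m(b)=1+\epsilon\,EP_m'/P_m$, and observe that $P_m=p u_m$ with $u_m\in R^\times$ for $m\geq 1$. Your computation $P_m'(u)=p^m\sum_i i\,\varphi^m(c_i)u^{ip^m-1}$ is also correct and is exactly what is needed. But you stop short at the decisive point: the recursion $p^m b_m = w_m(b)-\sum_{i<m}p^i b_i^{p^{m-i}}$ is left as a ``Dwork-type congruence'' to be ``verified by hand,'' and your suggestion to exploit that $\tilde f(\lambda),\tilde g(\lambda)$ are Witt vectors does not obviously transfer congruences to their ratio.

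The one idea you are missing---and which your phrase ``first order in $\epsilon$'' gestures toward without making precise---is to maintain inductively that $b_i\in\epsilon R$ for all $i\geq 1$. Since $\epsilon^2=0$, this forces $b_i^{p^{m-i}}=0$ for $1\leq i\leq m-1$, so the recursion collapses to
\[
p^m b_m \;=\; w_m(b)-b_0^{p^m}\;=\;\epsilon\Bigl(\tfrac{EP_m'}{P_m}-p^mE'\Bigr).
\]
Now your own ingredients finish it: writing $P_m'=p^mQ_m$ and $P_m=pu_m$ gives $EP_m'/P_m=p^{m-1}EQ_m/u_m$, whence $b_m=\epsilon\bigl((E/p)\cdot Q_m u_m^{-1}-E'\bigr)\in\epsilon R$ because $E/p\in R$. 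This both solves for $b_m$ in $R$ and propagates the inductive hypothesis. The paper carries out exactly this computation (packaging $Q_m$ and $u_m$ slightly differently via an auxiliary lemma on $h_m:=\tfrac{1}{p}(\varphi^m(E)-E^{p^m})$ and the divisibility $p^m\mid h_m'$), so once you insert the $\epsilon$-divisibility observation your argument and the paper's coincide.
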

\begin{proof}
    Notice that the $W(k)$-linear algebra morphism $\Tilde{g}$ is uniquely characterized by the following two properties:
\begin{itemize}
    \item $p_0(\Tilde{g}(u))=u+\epsilon E(u)$.
    \item $\varphi(\Tilde{g}(u))=\Tilde{g}(\varphi(u)$.
\end{itemize}

Now we wish to construct $b=(b_0,b_1,\ldots)$ such that $\Tilde{g}(\lambda)=\Tilde{f}(\lambda)\cdot b$. As $R$ is $p$-torsion free, the ghost map is injective, hence this identity is equivalent to that 
\begin{align}\label{equa.ghost identity}
    \forall n\geq 0, w_n(\Tilde{g}(\lambda))=w_n(b)\cdot w_n(\Tilde{f}(\lambda)).
\end{align}
Here $w_n$ denotes the $n$-th ghost map.

We first make the three terms showing up in \cref{equa.ghost identity} more explicit. Notice that 
\begin{equation*}
    w_n(\Tilde{f}(\lambda))=w_0(\varphi^n(\Tilde{f}(\lambda)))=w_0(\Tilde{f}(\varphi^n(\lambda)))=\varphi^n(E(u))=E(u)^{p^n}+ph_n(u),
\end{equation*}
where $h_n(u)\in \mathfrak{S}$ is defined to be $\varphi^n(E(u))-E(u)^{p^n}$. Here the second equality follows as $\Tilde{f}$ commutes with $\varphi$. Similarly, one could calculate that
\begin{equation*}
\begin{split}
    w_n(\Tilde{g}((\lambda))=g(\varphi^n(E(u)))&=E(u+\epsilon E(u))^{p^n}+ph_n(u+\epsilon E(u))
    \\&=(E(u)(1+\epsilon E^{\prime}(u)))^{p^n}+p(h_n(u)+\epsilon h_n^{\prime}(u)E(u))
    \\&=E(u)^{p^n}(1+p^n\epsilon E^{\prime}(u))+ph_n(u)+p\epsilon h_n^{\prime}(u)E(u).
\end{split}
\end{equation*}
Take $n=0$ in \cref{equa.ghost identity}, we want $b_0$ such that $E(u)(1+\epsilon E^{\prime}(u))=b_0\cdot E(u)$, hence $b_0=1+\epsilon E^{\prime}(u)$ as $R$ is $E(u)$-torsion free.

Suppose $n\geq 1$ and we have determined $b_0,\cdots,b_{n-1}$ such that \cref{equa.ghost identity} holds for non-negative integers no larger than $n-1$. Moreover, we assume that $b_i~ (1\leq i\leq n-1)$ is divisible by $\epsilon$. Then we claim that there exists a unique $b_n\in R$ such that \cref{equa.ghost identity} holds for $n$. For this, first notice that 
\begin{equation*}
     w_{n}(b)=\sum_{i=0}^{n}p^ib_{i}^{p^{n-i}}=b_{0}^{p^n}+p^nb_n=(1+\epsilon p^n E^{\prime}(u))+p^nb_n.
\end{equation*}
Here the second identity holds as $b_i^p=0$ by our assumption that $b_i$ is divisible by $\epsilon$ for $1\leq i \leq n-1$. 

Combining all of the previous calculations together, we see that $w_n(\Tilde{g}(\lambda))=w_n(b)\cdot w_n(\Tilde{f}(\lambda))$ if and only if 
\begin{equation*}
    \begin{split}
    & E(u)^{p^n}(1+p^n\epsilon E^{\prime}(u))+ph_n(u)+p\epsilon h_n^{\prime}(u)E(u)=(E(u)^{p^n}+ph_n(u))(1+\epsilon p^n E^{\prime}(u)+p^nb_n)
        \\ & \Longleftrightarrow  ~p^{n+1}b_n(h_n(u)+\frac{E(u)^{p^n}}{p})=p\epsilon h_n^{\prime}(u)E(u)-p^{n+1}\epsilon h_n(u)E^{\prime}(u)
    \end{split}
\end{equation*}
As $h_n(u)$ is a unit in $\mathfrak{S}$ by the proof of \cref{lem.units sn}, we see that $h_n(u)+\frac{E(u)^{p^n}}{p}\in R^{\times}$ since $\frac{E(u)^{p^n}}{p}\in R^{\times}$ is topologically nilpotent in $R$. Utilizing the $p$-torsion freeness of $R$, the above equation has a unique solution 
\begin{equation*}
    b_n=\epsilon (h_n(u)+\frac{E(u)^{p^n}}{p})^{-1}(t_n(u)E(u)-h_n(u)E^{\prime}(u)),
\end{equation*}
where $t_n \in \mathfrak{S}$ is defined as in \cref{lem.units sn}. In particular, $b_n$ is divisible by $\epsilon$. Then inductively we construct the unique $b$ satisfying the desired properties.
\end{proof}
\begin{remark}\label{rem.first strenthern}
    For $k\leq p$, if we take $R_k=\mathfrak{S}/E^k[\epsilon]/\epsilon^2$ instead, then we could run the above proof to show that there exists $b$ in $W(R_k)^{\times}$ satisfying similar properties (but such a $b$ might not be unique). For example, we could take $b=(b_0, b_1, \cdots)$ with $b_0=1+\epsilon E^{\prime}(u)$ and $b_i=\epsilon (h_n(u))^{-1}(t_n(u)E(u)-h_n(u)E^{\prime}(u))$.
\end{remark}

The following lemma is used in the above proof:
\begin{lemma}\label{lem.units sn}
    Keep notations as in the above lemma. In $T=\mathfrak{S}[[\frac{E}{p}]]$, for all $n\geq 1$, let $s_n =h_n(u)+\frac{E(u)^{p^n}}{p}$, then $s_n\in T^{\times}$ and there exists $t_n(u)\in T$ such that
    \begin{equation*}
        h_n^{\prime}(u)=p^n t_n(u).
    \end{equation*}
\end{lemma}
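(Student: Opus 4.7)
The plan is to treat the two assertions separately, isolating where the completed ring $T = \mathfrak{S}[[E/p]]$ plays a role (as opposed to $\mathfrak{S}$ itself).

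For the unit claim, note that $E(u)^{p^n}/p = (E/p)\cdot E(u)^{p^n-1}$ lies in the ideal $(E/p)T$, which is topologically nilpotent since $T$ is $(p, E/p)$-adically complete. Hence $s_n \in T^\times$ if and only if $s_n \bmod (E/p) = h_n(u)$ is a unit in $T/(E/p)\cong \mathfrak{S}$, and, since $\mathfrak{S} = W(k)[[u]]$ is local with maximal ideal $(p,u)$, this reduces to checking $h_n(0) \in W(k)^\times$. Writing the Eisenstein constant term as $E(0) = pc$ with $c \in W(k)^\times$ and using $\varphi(p) = p$, I compute
\[
h_n(0) \;=\; \frac{\varphi^n(pc) - (pc)^{p^n}}{p} \;=\; \varphi^n(c) - p^{p^n - 1} c^{p^n} \;\equiv\; \varphi^n(c) \pmod{p},
\]
which is a unit since $\varphi^n$ preserves $W(k)^\times$.

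For the divisibility claim, I would differentiate the defining relation $p h_n(u) = \varphi^n(E(u)) - E(u)^{p^n}$. Since $\varphi^n$ acts on $\mathfrak{S}$ by $u \mapsto u^{p^n}$, writing $E(u) = \sum_{i=0}^e a_i u^i$ yields $(\varphi^n(E(u)))' = p^n \sum_{i=1}^{e} i\,\varphi^n(a_i)\, u^{ip^n-1}$ and $(E(u)^{p^n})' = p^n E(u)^{p^n-1} E'(u)$. Dividing by $p$ gives
\[
h_n'(u) \;=\; p^{n-1}\Bigl(\sum_{i=1}^{e} i\,\varphi^n(a_i)\, u^{ip^n-1} \;-\; E(u)^{p^n-1} E'(u)\Bigr),
\]
so it suffices to show the bracketed expression $S(u)$ lies in $pT$. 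The second term factors as $p^{p^n-1}(E/p)^{p^n-1}E'(u) \in pT$ for $n\ge 1$; by Eisensteinness, $a_i \in pW(k)$ for $i < e$, so in the first sum only the summand $i=e$, namely $e\,u^{ep^n - 1}$, is potentially problematic.

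The key input (and the only step where one must work in $T$ rather than $\mathfrak{S}$) is the observation that, inside $T$, the Eisenstein relation reads $u^e = p\cdot(E/p) - \sum_{i<e} a_i u^i$, and since $a_i \in pW(k)$ for $i<e$, this forces $u^e \in pT$ (even though $u^e \notin p\mathfrak{S}$). Hence $u^{ep^n - 1} = (u^e)^{p^n-1}\cdot u^{e-1} \in p^{p^n-1}T \subseteq pT$ for $n\ge 1$, so $e\,u^{ep^n-1}\in pT$ as well. Combined with the earlier reductions, $S(u) \in pT$, giving $t_n(u) := h_n'(u)/p^n \in T$, as claimed.
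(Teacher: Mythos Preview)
Your proof is correct for the lemma as stated, and the unit argument is essentially the paper's (topological nilpotence of $E^{p^n}/p$ reduces to $h_n \in \mathfrak{S}^\times$, which you verify explicitly at the maximal ideal).

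For the divisibility claim the two arguments diverge in an interesting way. The paper does \emph{not} use that $u^e \in pT$; instead it isolates the unique non-$p$-divisible monomial in each of $(\varphi^n E)'$ and $(E^{p^n})'$, observes that both are $p^n e\,u^{ep^n-1}$, and lets them cancel. This yields the sharper conclusion $t_n \in \mathfrak{S}$ (as the paper explicitly notes), whereas your argument, which treats the two terms separately and absorbs the troublesome $e\,u^{ep^n-1}$ using $u^e \in pT$, only gives $t_n \in T$. For the lemma as written either suffices, but the paper's stronger version is what is invoked later in \cref{rem.first strenthern}, where one works in $R_k = \mathfrak{S}/E^k[\epsilon]/\epsilon^2$ rather than in $T$; your $t_n$ does not obviously land there. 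So your route is perfectly valid for the present statement, and is arguably more systematic, but it loses a refinement that the paper actually uses downstream.
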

\begin{proof}
    In general, given an oriented prism $(A,d)$, by induction on $n$, one could easily show that there exists $v_n \in A^{\times}$ such that $\varphi^n(d)=d^{p^n}+pv_n$ (The base case $n=1$ just follows from the definition of a prism). Applying to our case, we see that $h_n(u)\in \mathfrak{S}^{\times}$, then $s_n=h_n(u)+\frac{E(u)^{p^n}}{p}$ is still a unit in $T$ as $\frac{E(u)^{p^n}}{p}$ is a topologically nilpotent in $T$.
     
    Next we show $t_n$ already exists in $\mathfrak{S}$. Suppose the Eisenstein polynomial $E(u)=\sum_{i=0}^{e}a_iu^i$ with $a_e=1$ and $p|a_i$ for $0\leq i\leq e-1$. Then $\varphi^n(E(u))=u^{p^ne}+\varphi^n(a_0)+k_n(u)$ with
    \begin{equation*}
        k_n(u)=\sum_{i=1}^{e-1}\varphi^n(a_i)u^{p^ni},
    \end{equation*}
     from which we can see that $p^{n+1}|k_n^{\prime}(u)$. Assume that $k_n^{\prime}(u)=p^{n+1}r_n(u)$

    On the other hand, $ (E(u)^{p^n})^{\prime}=p^nE(u)^{p^n-1}E^{\prime}(u)$ and that by the definition of Eisenstein polynomial, the coefficients of $u^i$ in $E(u)^{p^n-1}E^{\prime}(u)$ are all divisible by $p$ except possibly for the top degree $i_0=e(p^n-1)+e-1)=p^n e-1$. In summary, $(E(u)^{p^n})^{\prime}=p^n u^{p^n e-1}+p^{n+1}R_n(u)$ for some polynomial $R_n(u)\in W(k)[u]$. Consequently,
    \begin{equation*}
        (p h_n(u))^{\prime}=(\varphi^n(E(u))-E(u)^{p^n})^{\prime}=p^{n+1}(r_n(u)-R_n(u)),
\end{equation*}
     hence we could take $t_n(u)=r_n(u)-R_n(u)$ satisfying that $h_n^{\prime}(u)=p^n t_n(u)$ (This is the unique choice as $R$ is $p$-torsion free).
\end{proof}

Next we state a result which will be used together with \cref{lemT.construct b} to construct the desired Sen operator.
\begin{lemma}\label{lemt.construct homotopy}
    Keep notations as in \cref{lemT.construct b}, there exists a unique $c$ in $W(R)$ such that $$\Tilde{g}(u)-\Tilde{f}(u)=\Tilde{f}(\lambda)\cdot c.$$
\end{lemma}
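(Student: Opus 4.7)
The plan is to parallel the strategy of Lemma~\ref{lemT.construct b} and build $c$ through its Witt components, exploiting the injectivity of the ghost map $W(R)\to R^{\mathbb{N}}$ (valid since $R$ is $p$-torsion-free).

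Uniqueness is immediate. By Lemma~\ref{lem.units sn}, $w_n(\tilde{f}(\lambda))=\varphi^n(E(u))=p\,s_n$ with $s_n\in T^{\times}$, which is a non-zero-divisor in $R$. Ghost injectivity transfers this to $W(R)$, so $\tilde{f}(\lambda)$ is a non-zero-divisor and at most one $c$ can satisfy $\tilde{g}(u)-\tilde{f}(u)=\tilde{f}(\lambda)\cdot c$.

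For existence I first compute the target ghost sequence. Because $\tilde{g},\tilde{f}$ are $\delta$-ring maps and $\epsilon^2=0$,
\[
w_n(\tilde{f}(u))=u^{p^n},\qquad w_n(\tilde{g}(u))=(u+\epsilon E(u))^{p^n}=u^{p^n}+p^n u^{p^n-1}\epsilon E(u),
\]
so the required components for $c$ are
\[
w_n(c)\;=\;\frac{p^n u^{p^n-1}\epsilon E(u)}{p\,s_n}\;=\;\frac{p^{n-1}u^{p^n-1}\epsilon E(u)}{s_n}\quad(n\ge 1),\qquad w_0(c)=\epsilon.
\]
Each lies in $R$ because $s_n$ is a unit in $T$ and $E/p\in T$.

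I then recover the Witt components of $c$ from the relations $w_n(c)=\sum_{i=0}^n p^i c_i^{p^{n-i}}$. The crucial simplification, absent in Lemma~\ref{lemT.construct b}, is that every $c_i$ I produce will lie in the square-zero ideal $\epsilon R$, forcing $c_i^{p^{n-i}}=0$ whenever $n>i$. The recursion therefore collapses to $p^n c_n=w_n(c)$, yielding $c_0=\epsilon$ and
\[
c_n\;=\;\frac{u^{p^n-1}\epsilon\,(E/p)}{s_n}\;\in\;R\qquad(n\ge 1).
\]
The element $c=(c_0,c_1,\dots)\in W(R)$ has the correct ghost components by construction, and ghost injectivity finishes the proof. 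The only technical input beyond routine Witt-vector manipulations is Lemma~\ref{lem.units sn}, which supplies the unit $s_n$ needed to absorb the single extra factor of $p$ in $\varphi^n(E)=p s_n$ while keeping all $c_n$ inside $R$; this is the main obstacle, but it has already been handled.
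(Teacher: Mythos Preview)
Your argument is correct and essentially identical to the paper's: both proofs use ghost injectivity over the $p$-torsion-free ring $R$, compute $w_n(\tilde g(u))-w_n(\tilde f(u))=p^n u^{p^n-1}\epsilon E(u)$, invoke that $w_n(\tilde f(\lambda))=p\,s_n$ with $s_n=h_n(u)+E(u)^{p^n}/p$ a unit, and then use $c_i\in\epsilon R$ (hence $c_i^{p^{n-i}}=0$ for $i<n$) to collapse the ghost recursion to $p^n c_n=w_n(c)$, arriving at $c_n=\epsilon\,s_n^{-1}u^{p^n-1}(E/p)$. The only cosmetic difference is that you first write down the full target ghost sequence and then extract the Witt components, whereas the paper interleaves the two steps in a single induction; the content is the same.
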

\begin{proof}
    We wish to construct $c=(c_0,c_1,\ldots)$ such that $\Tilde{g}(u)-\Tilde{f}(u)=\Tilde{f}(\lambda)\cdot c$. As $R$ is $p$-torsion free, the ghost map is injective, hence this identity is equivalent to that 
\begin{align}\label{equa for c.ghost identity}
    \forall n\geq 0, w_n(\Tilde{g}(u))-w_n(\Tilde{f}(u))=w_n(c)\cdot w_n(\Tilde{f}(\lambda)),
\end{align}
where $w_n$ denotes the $n$-th ghost map.
Notice that $$w_n(\Tilde{f}(u))=w_0(\varphi^n(\Tilde{f}(u)))=w_0(\Tilde{f}(\varphi^n(u)))=u^{p^n}$$
and that 
\[w_n(\Tilde{g}(u))=w_0(\varphi^n(\Tilde{g}(u)))=w_0(\Tilde{g}(\varphi^n(u)))=(u+\epsilon E(u))^{p^n}=u^{p^n}+p^nu^{p^n-1}\epsilon E(u)\]
Take $n=0$ in \cref{equa for c.ghost identity}, we want $c_0$ such that $\epsilon E(u)=c_0\cdot E(u)$, hence $c_0=\epsilon$ as $R$ is $E(u)$-torsion free.

Suppose $n\geq 1$ and we have determined $c_0,\cdots,c_{n-1}$ such that \cref{equa for c.ghost identity} holds for non-negative integers no larger than $n-1$. Moreover, we assume that $c_i~ (0\leq i\leq n-1)$ is divisible by $\epsilon$. Then we claim that there exists a unique $c_n\in R$ such that \cref{equa for c.ghost identity} holds for $n$ and that $c_n$ is divisible by $\epsilon$ as well. For this, first notice that 
\begin{equation*}
     w_{n}(c)=\sum_{i=0}^{n}p^ic_{i}^{p^{n-i}}=p^nc_n.
\end{equation*}
Here the second identity holds as $c_i^p=0$ by our assumption that $c_i$ is divisible by $\epsilon$ for $0\leq i \leq n-1$.

Combining all of the previous calculations with the calculation of $w_n(\Tilde{f}(\lambda))$ in \cref{lemT.construct b}, we see that \cref{equa for c.ghost identity} holds for $n$ if and only if 
\begin{equation*}
    p^nu^{p^n-1}\epsilon E(u)=p^nc_n(E(u)^{p^n}+ph_n(u))=p^{n+1}c_n(h_n(u)+\frac{E(u)^{p^n}}{p})
\end{equation*}
As $R$ is $p$-torsion free and $h_n(u)+\frac{E(u)^{p^n}}{p}\in R^{\times}$, the above equation has a unique solution
\begin{equation*}
    c_n=\epsilon(h_n(u)+\frac{E(u)^{p^n}}{p})^{-1} u^{p^n-1} \frac{E(u)}{p}.
\end{equation*}
In particular, $c_n$ is divisible by $\epsilon$. Then inductively we construct the unique $c$ satisfying the desired properties.
\end{proof}
\begin{remark}\label{rem.second strenthern}
    For $k\leq 1+\frac{p-1}{e}$, if we take $R_k=\mathfrak{S}/E^k[\epsilon]/\epsilon^2$ instead, then we could run the above proof to show that there exists $c$ in $W(R_k)$ satisfying similar properties (but such a $c$ might not be unique). Actually, in this case, $u^{p-1}E(u) \equiv u^{p-1+e}\mod p$. But as $E(u)^k=0$ in $R_k$, we see that $u^{ek} \in pR_k$, hence $u^{p-1+e}\in pR_k$ as our choice of $k$ guarantees that $p-1+e\geq ek$. Consequently we could keep finding $c_n$ such that \cref{equa for c.ghost identity} holds.
\end{remark}

\begin{proposition}\label{propt.key automorphism of functors}
    The elements $b$ and $c$ constructed in \cref{lemT.construct b} and \cref{lemt.construct homotopy} together induce an isomorphism $\gamma_{b,c}$ between functors $\rho: \Spf(\mathfrak{S}[[\frac{E}{p}]]) \rightarrow \TX$
    and $\rho\circ \delta: \Spf(\mathfrak{S}[[\frac{E}{p}]]) \rightarrow \TX$ after base change to $\Spec(\mathbb{Z}[\epsilon]/(\epsilon^2))$, i.e. we have the following commutative diagram:
\[\xymatrixcolsep{5pc}\xymatrix{\Spf(\mathfrak{S}[[\frac{E}{p}]])\times \Spec(\mathbb{Z}[\epsilon]/(\epsilon^2))\ar[d]^{\rho}\ar[r]^{\delta}& \Spf(\mathfrak{S}[[\frac{E}{p}]])\times \Spec(\mathbb{Z}[\epsilon]/(\epsilon^2))\ar@{=>}[dl]^{\gamma_{b,c}} \ar[d]_{}^{\rho}
\\\TX\times \Spec(\mathbb{Z}[\epsilon]/(\epsilon^2))  \ar[r]&\TX\times \Spec(\mathbb{Z}[\epsilon]/(\epsilon^2))}\]
\end{proposition}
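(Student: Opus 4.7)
The strategy closely follows the proof of \cref{prop.key automorphism for small n}, accompanied by the additional consideration highlighted in \cref{war.key warning}. Namely, a $T$-point of $\TX$ consists of two pieces of data: a Cartier-Witt divisor $\alpha$ satisfying the $\TWCart$-refinement, together with a morphism $\eta$ from $\mathrm{Spec}$ of the associated cone to $X = \Spf(\mathcal{O}_K)$. The required 2-isomorphism $\gamma_{b,c}$ will therefore be constructed in two layers: the unit $b$ of \cref{lemT.construct b} handles the Cartier-Witt piece, while the element $c$ of \cref{lemt.construct homotopy} provides the homotopy on the $\mathcal{O}_K$-structure side.

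To make this precise, fix a test $p$-nilpotent $R$-algebra $T$ via $h\colon R \to T$ (where $R = \mathfrak{S}[[\frac{E}{p}]][\epsilon]/\epsilon^2$), and denote the induced $\delta$-morphism by $\Tilde{h}\colon W(R) \to W(T)$. Unwinding the definition of $\rho$ as in \cref{construction.relative prism to truncated}, the point $\rho \circ h \in \TX(T)$ corresponds to the pair $(\alpha_f, \eta_f)$, where $\alpha_f \colon (\lambda) \otimes_{\mathfrak{S}, \Tilde{h} \circ \Tilde{f}} W(T) \to W(T)$ is the natural Cartier-Witt divisor and $\eta_f$ is the map $\mathcal{O}_K \to W(T)/\alpha_f$ sending $\pi \mapsto \Tilde{h}(\Tilde{f}(u))$ modulo the image of $\alpha_f$. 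Similarly, $(\rho \circ \delta) \circ h$ corresponds to $(\alpha_g, \eta_g)$ obtained by replacing $\Tilde{f}$ with $\Tilde{g}$. Both lie in the $\TWCart$-substack because the generator $\lambda$ comes from an $(E/p)$-adically complete base.

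By \cref{lemT.construct b}, the identity $\Tilde{g}(\lambda) = \Tilde{f}(\lambda) \cdot b$ yields, after applying $\Tilde{h}$, an isomorphism of Cartier-Witt divisors $\alpha_g \xrightarrow{\sim} \alpha_f$ via the $W(T)$-linear map $\lambda \otimes 1 \mapsto \lambda \otimes \Tilde{h}(b)$, exactly as in \cref{prop.key automorphism for small n}. Since $\Tilde{h}(b)$ is a unit in $W(T)$, the induced Cartier divisor class in $\TWCart(T)$ is preserved (the defining condition is insensitive to scaling by units). What remains is to upgrade this to a 2-isomorphism in $\TX(T)$, i.e.\ to match the two structure maps $\eta_f$ and $\eta_g$ under this identification.

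For this last step, the two maps differ precisely by $\Tilde{h}(\Tilde{g}(u)) - \Tilde{h}(\Tilde{f}(u))$ viewed in $W(T)$. Applying $\Tilde{h}$ to the identity of \cref{lemt.construct homotopy} yields
\[
\Tilde{h}(\Tilde{g}(u)) - \Tilde{h}(\Tilde{f}(u)) = \Tilde{h}(\Tilde{f}(\lambda)) \cdot \Tilde{h}(c),
\]
which lies in the image of $\alpha_f$ and therefore provides an explicit nullhomotopy in $\mathrm{Cone}(\alpha_f)$. The main subtlety --- and what makes the addition of $\frac{E}{p}$ to the coefficient ring essential --- is that this homotopy must be interpreted in the derived sense: the $\mathcal{O}_K$-action on the cone is homotopy-coherent data, and the element $c$ furnishes the explicit cochain realizing the required coherence. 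Functoriality in $T$ follows from the uniqueness statements in \cref{lemT.construct b} and \cref{lemt.construct homotopy}, completing the construction of $\gamma_{b,c}$.
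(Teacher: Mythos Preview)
Your proposal is correct and follows essentially the same two-layer strategy as the paper: use $b$ to identify the Cartier--Witt divisors and $c$ to supply the homotopy on the $\mathcal{O}_K$-structure. The paper is more explicit at the homotopy step---it works with maps of quasi-ideals $((E)\to\mathfrak{S})\to((E)\otimes W(T)\to W(T))$, extends the homotopy from the generator $u$ to all $s(u)\in\mathfrak{S}$ via the difference quotient $k_s(u,v)$ with $s(u)-s(v)=(u-v)k_s(u,v)$ by setting $\gamma_c(s(u))=E\otimes c\cdot k_s(\tilde g(u),\tilde f(u))$, and reduces the two chain-homotopy identities to one using that $W(R)$ is $E$-torsion-free---whereas you leave this packaged inside the phrase ``$c$ furnishes the explicit cochain''; also note the test algebras should be $(p,\tfrac{E}{p})$-nilpotent and the ideal is $(E)$ rather than $(\lambda)$ in this section.
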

\begin{proof}
      Let $R$ be $\mathfrak{S}[[\frac{E}{p}]][\epsilon]/\epsilon^2$. Given a test $(p,\frac{E}{p})$-nilpotent $R$-algebra $T$ via the structure morphism $h: R\to T$, we denote the induced morphism $W(R)\to W(T)$ by $\Tilde{h}$. Then $\rho\circ h(T)$ corresponds to the point $$(\alpha:(E) \otimes_{\mathfrak{S},\Tilde{h}\circ \Tilde{f}} W(T)\to W(T),\eta: \Cone((E)\to \mathfrak{S})\xrightarrow{\Tilde{h}\circ \Tilde{f}} \Cone(\alpha))$$
      in $\TX(T)$, while $(\rho\circ \delta)\circ  h(T)$ corresponds to the point $$(\alpha^{\prime}:(E) \otimes_{\mathfrak{S},\Tilde{h}\circ \Tilde{g}} W(T)\to W(T), \eta^{\prime}: \Cone((E)\to \mathfrak{S}) \xrightarrow{\Tilde{h}\circ \Tilde{g}} \Cone(\alpha^{\prime})).$$ 
      
      We need to specify an isomorphism $\gamma_{b}: \alpha^{\prime}\xrightarrow{\simeq} \alpha$ as well as a homotopy $\gamma_{c}$ between $\gamma_{b}\circ \eta^{\prime}$ and $\eta$ which are both functorial in $T$.
      
      Utilizing \cref{lemT.construct b},  we construct the desired isomorphism $\gamma_{b}$ as follows: 
     \[\xymatrixcolsep{5pc}\xymatrix{(E) \otimes_{\mathfrak{S},\Tilde{h}\circ \Tilde{g}} W(T) \ar[d]^{(E)\otimes x\mapsto (E)\otimes \Tilde{h}(b)x}\ar[r]^{\iota}& W(T) \ar[d]_{}^{\Id}
\\(E) \otimes_{\mathfrak{S},\Tilde{h}\circ \Tilde{f}} W(T)\ar[r]^{\iota}&W(T)}\]
Here the left vertical map is $W(T)$-linear and the commutativity of the diagram follows from \cref{lemT.construct b}.

Then we draw a diagram illustrating $\gamma_{b}\circ \eta^{\prime}$ and $\eta$ (as maps of quasi-ideals):
\[\xymatrixcolsep{5pc}\xymatrix{(E) \ar@<-.5ex>[d]_{\gamma_{b}\circ \eta^{\prime}} \ar@<.5ex>[d]^{\eta}\ar[r]^{\iota}& \mathfrak{S} \ar[d]_{\gamma_{b}\circ \eta^{\prime}}
 \ar@<-.5ex>[d]^{\eta}
\\(E) \otimes_{\mathfrak{S},\Tilde{h}\circ \Tilde{f}} W(T)\ar[r]^{\iota}&W(T)}\]
Here the two left vertical maps are given by $\gamma_{b}\circ \eta^{\prime}: x\cdot (E)\mapsto (E)\otimes \Tilde{h}(b)\Tilde{h}(\Tilde{g}(x))$ and $\eta: x\cdot (E)\mapsto (E)\otimes \Tilde{h}\circ \Tilde{f}(x)$, the two right vertical maps are given by $\gamma_{b}\circ \eta^{\prime}$ and $\eta=\Tilde{h}\circ \Tilde{f}$.

To construct the desired homotopy, we need to specify a map $\gamma_c: \mathfrak{S} \to (E) \otimes_{\mathfrak{S},\Tilde{h}\circ \Tilde{f}} W(T)$ such that $\iota\circ \gamma_c=\gamma_{b}\circ \eta^{\prime}-\eta$ and that $\gamma_c \circ \iota =\gamma_{b}\circ \eta^{\prime}-\eta$. Without loss of generality, we assume $T=R$. In this case, $\iota: (E) \otimes_{\mathfrak{S}, \Tilde{f}} W(R) \to W(R)$ is injective as $W(R)$ is $E$-torsion free by the uniqueness in \cref{lemT.construct b} (otherwise if there exists a nontrivial $E$-torsion $q$, $b+q\neq b$ is another objects in $W(R)$ satisfying $(b+q)\cdot E=E$, a contradiction with the uniqueness of $b$). Consequently, it suffices to construct $\gamma_c$ and check that $\iota\circ \gamma_c=\gamma_{b}\circ \eta^{\prime}-\eta$.

Inspired by \cref{lemt.construct homotopy}, we just define $\gamma_c(u)$ to be $E\otimes_{\mathfrak{S}, \Tilde{f}} c$. For a general $s(u)\in \mathfrak{S}$, denote $k_s(u,v) \in W(k)[[u,v]]$ to be the unique power series such that $s(u)-s(v)=(u-v)\cdot k_s(u,v)$. Then 
\[(\gamma_{b}\circ \eta^{\prime}-\eta)(s(u))=\Tilde{g}(s(u))-\Tilde{f}(s(u))=s(\Tilde{g}(u))-s(\Tilde{f}(u))=(\Tilde{g}(u)-\Tilde{f}(u))\cdot k_s=\Tilde{f}(E)c\cdot k_s\]
for $k_s=k_s(\Tilde{g}(u), \Tilde{f}(u))\in W(R)$. Here the last quality follows from our construction of $c$ in \cref{lemt.construct homotopy}.

Hence if we define 
\[\gamma_c(s(u))=E\otimes_{\mathfrak{S}, \Tilde{f}} c k_s,\]

Then the desired identity $\iota\circ \gamma_c=\gamma_{b}\circ \eta^{\prime}-\eta$ follows.

Finally it is clear that $\gamma_b$ and $\gamma_c$ are all constructed via a base change from $W(R)$ to $W(T)$, hence they are all natural in $T$. We win.
\end{proof}

When restricted to the locus where $(\frac{t}{p})^n=0$ inside $[\Spf(\mathbb{Z}[[ \frac{t}{p}]])/ \mathbf{G}_m]$, we obtain the following truncated version of \cref{propt.key automorphism of functors}.
\begin{corollary}\label{cort.truncated}
    The $\gamma_{b,c}$ constructed in \cref{propt.key automorphism of functors} induces an isomorphism between functors $\rho: \Spf(\mathfrak{S}[[\frac{E}{p}]]/(\frac{E}{p})^n) \rightarrow \TXn$
    and $\rho\circ \delta: \Spf(\mathfrak{S}[[\frac{E}{p}]]) \rightarrow \TXn$ after base change to $\Spec(\mathbb{Z}[\epsilon]/(\epsilon^2))$, i.e. we have the following commutative diagram:
\[\xymatrixcolsep{3pc}\xymatrix{\Spf(\mathfrak{S}[[\frac{E}{p}]]/(\frac{E}{p})^n)\times \Spec(\mathbb{Z}[\epsilon]/(\epsilon^2))\ar[d]^{\rho}\ar[r]^{\delta}& \Spf(\mathfrak{S}[[\frac{E}{p}]]/(\frac{E}{p})^n)\times \Spec(\mathbb{Z}[\epsilon]/(\epsilon^2))\ar@{=>}[dl]^{\gamma_{b,c}} \ar[d]_{}^{\rho}
\\\TXn\times \Spec(\mathbb{Z}[\epsilon]/(\epsilon^2))  \ar[r]&\TXn\times \Spec(\mathbb{Z}[\epsilon]/(\epsilon^2))}\]
\end{corollary}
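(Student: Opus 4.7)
The plan is to deduce this corollary directly from \cref{propt.key automorphism of functors} by pulling back along the closed immersion $\TXn \hookrightarrow \TX$, verifying that every ingredient in the construction of $\gamma_{b,c}$ descends to the truncated quotient $R_n := \mathfrak{S}[[\tfrac{E}{p}]]/(\tfrac{E}{p})^n[\epsilon]/\epsilon^2$.

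First I would note that, since $\TWCart_{[n]}$ is by definition the closed substack of $\TWCart$ cut out by $(\tfrac{t}{p})^n = 0$, the cover $\rho: \Spf(\mathfrak{S}[[\tfrac{E}{p}]]/(\tfrac{E}{p})^n) \to \TXn$ appearing in the statement is precisely the base change of the cover $\rho: \Spf(\mathfrak{S}[[\tfrac{E}{p}]]) \to \TX$ along the closed immersion $\TXn \hookrightarrow \TX$. Thus a test point of $\TXn$ over a ring $T$ in which $(\tfrac{E}{p})^n = 0$ is the same datum as a test point of $\TX$ with the extra nilpotence property. Second, I would check that the $\delta$-ring map of \cref{lemt.extend delta} descends mod $(\tfrac{E}{p})^n$: since $\delta(\tfrac{E}{p}) = \tfrac{E(u)}{p}(1+\epsilon E'(u))$, we have $\delta((\tfrac{E}{p})^n) = (\tfrac{E}{p})^n(1+\epsilon E'(u))^n$, which vanishes in $R_n$. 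Hence $\delta$ induces a well-defined ring endomorphism of $R_n$ making the top arrow in the claimed square meaningful.

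Next I would observe that the elements $b \in W(R)^\times$ and $c \in W(R)$ from \cref{lemT.construct b} and \cref{lemt.construct homotopy}, being constructed inductively via explicit ghost-component formulas, are functorial in the coefficient ring: their images in $W(R_n)$ under the quotient map $W(R) \to W(R_n)$ satisfy exactly the defining identities $\Tilde{g}(\lambda) = \Tilde{f}(\lambda)\cdot b$ and $\Tilde{g}(u) - \Tilde{f}(u) = \Tilde{f}(\lambda)\cdot c$ in $W(R_n)$. Consequently, the morphism of Cartier–Witt divisors $\gamma_b$ and the homotopy $\gamma_c$ constructed in the proof of \cref{propt.key automorphism of functors} all make sense after base change along $R \to R_n$, and their naturality in the test ring $T$ is preserved.

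Putting these observations together, for any $(p, \tfrac{E}{p})$-nilpotent $R_n$-algebra $T$, we can view $T$ as a $(p, \tfrac{E}{p})$-nilpotent $R$-algebra via the quotient $R \twoheadrightarrow R_n$, and apply \cref{propt.key automorphism of functors} to obtain the required isomorphism of $T$-points together with its homotopy; the condition $(\tfrac{E}{p})^n = 0$ in $T$ ensures that both composites $\rho$ and $\rho\circ\delta$ land in $\TXn \times \Spec(\mathbb{Z}[\epsilon]/\epsilon^2)$. There is no genuine obstacle here: the entire argument is formal compatibility of the constructions with the quotient map, and the only step requiring a brief check is that $\delta$ preserves the ideal $(\tfrac{E}{p})^n$, which was handled above.
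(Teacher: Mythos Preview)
Your proposal is correct and follows essentially the same approach as the paper: the paper deduces the corollary in a single sentence by restricting \cref{propt.key automorphism of functors} to the locus where $(\tfrac{t}{p})^n=0$ inside $[\Spf(\mathbb{Z}[[\tfrac{t}{p}]])/\mathbb{G}_m]$, and you have simply spelled out the details of this restriction (checking that $\delta$ preserves the ideal $(\tfrac{E}{p})^n$ and that $b,c$ are functorial in the coefficient ring). One minor terminological slip: in \cref{lemt.extend delta} the map named $\delta$ is only a ring homomorphism, not a $\delta$-ring map, but this does not affect your argument.
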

\begin{remark}[Compatibility with the construction in \cite{anschutz2022v} on the Hodge-Tate locus]\label{remt.compatibility with LB}
If we consider the restriction of the isomorphism constructed above to the Hodge-Tate locus (i.e. take $n=1$), then as $\delta=\Id$ on $\Spf (\mathcal{O}_K[\epsilon]/\epsilon^2)$, we see $\gamma_{b,c}$ descends to an automorphism 
    \[\Spf(\mathcal{O}_K)^{\HT}\times \Spec(\mathbb{Z}[\epsilon]/(\epsilon^2))\to \Spf(\mathcal{O}_K)^{\HT}\times \Spec(\mathbb{Z}[\epsilon]/(\epsilon^2))\]
    As $\Spf(\mathcal{O}_K)^{\HT}$ is the classifying stack of $G_{\pi}$ by \cite[Proposition 9.5]{bhatt2022prismatization}, where $G_{\pi}$ is calculated in \cite[Example 9.6]{bhatt2022prismatization}. More explicitly, 
    \[G_\pi=\{(t,a)\in \mathbb{G}_m^\sharp\ltimes \mathbb{G}_a^\sharp\ |\ t-1=E^{\prime}(\pi)\cdot a\}\]
Under this identification, $\gamma_{b,c}$ corresponds to an element in $G_\pi(\mathcal{O}_K[\epsilon]/\epsilon^2)$, we claim this element is precisely $(1+E^{\prime}(\pi)\epsilon,\epsilon)$. %\footnote{ Here all necessary higher divided powers of $\varepsilon, 1+e\varepsilon$ are defined to be $0$.}. 
To see this, unwinding the construction of $\gamma_{b,c}$ from $b$ and $c$, we just need to verify that the image of $b$ in $\mathbb{G}_m^\sharp(\mathcal{O}_K[\epsilon]/\epsilon^2)$ is precisely $1+e\epsilon$ and the image of $c$ in $\mathbb{G}_a^\sharp(\mathcal{O}_K[\epsilon]/\epsilon^2)$ is precisely $\epsilon$. As $\mathcal{O}_K[\epsilon]/\epsilon^2$ is $p$-torsion free, it suffices to check that $b_0=1+E^{\prime}(\pi)\epsilon$ and $c_0=\epsilon$ in $\mathcal{O}_K[\epsilon]/\epsilon^2$, which are both clear from our construction of $b$ and $c$ in \cref{lemT.construct b} and \cref{lemt.construct homotopy}. 
\end{remark}

Let $n\in \mathbb{N}$. Now we are ready to construct a Sen operator on $\rho^*\mathscr{E}$ for $\mathscr{E}\in \Qcoh(\TX)$ (resp. $ \Qcoh(\TXn)$). Based on \cref{propt.key automorphism of functors} (resp. \cref{cort.truncated}), we have an isomorphism $\gamma_{b,c}: \rho\circ \delta \stackrel{\simeq}{\longrightarrow} \rho$. Consequently, for $\mathscr{E}\in \Qcoh(\TX)$ (resp. $ \Qcoh(\TXn)$), we have an isomorphism
\[\gamma_{b,c}: \delta^{*}\rho^{*}(\mathscr{E}\otimes \mathbb{Z}[\epsilon]/(\epsilon^2)) \stackrel{\simeq}{\longrightarrow} \rho^{*}(\mathscr{E}\otimes \mathbb{Z}[\epsilon]/(\epsilon^2)).\]
Unwinding the definitions, this could be identified with a $\delta$-linear morphism 
\begin{align}\label{equat.truncated induced morphism}
    \gamma_{b,c}: \rho^{*}(\mathscr{E})\rightarrow \rho^{*}(\mathscr{E})\otimes \mathbb{Z}[\epsilon]/(\epsilon^2).
\end{align} 
Moreover, our definition of the element $b$ and $c$ in \cref{lemT.construct b} and \cref{lemt.construct homotopy} implies that $\gamma_{b,c}$ in \cref{equat.truncated induced morphism} reduces to the identity modulo $\epsilon$, hence could be written as $\Id+\epsilon\Theta_{\mathscr{E}}$ for some operator $\Theta_{\mathscr{E}}: \rho^{*}\mathscr{E} \rightarrow \rho^{*}\mathscr{E}$. 

\begin{proposition}\label{propt. sen calculate cohomology}
    Let $n\in \mathbb{N}$, then for any 
  $\mathscr{E}\in \Qcoh(\TX)$ (resp. $ \Qcoh(\TXn)$), the natural morphism $\mathrm{R} \Gamma(\TX, \mathcal{E})\to \rho^{*}\mathcal{E}$ (resp. $\mathrm{R} \Gamma(\TXn, \mathcal{E})\to \rho^{*}\mathcal{E}$) induces a canonical identification $$
    \mathrm{R} \Gamma(\TX, \mathcal{E})=\fib(\rho^*\mathcal{E}\stackrel{\Theta_{\mathscr{E}}}{\longrightarrow} \rho^*\mathcal{E})\quad \text{resp.}\quad  \mathrm{R} \Gamma(\TXn, \mathcal{E})=\fib(\rho^*\mathcal{E}\stackrel{\Theta_{\mathscr{E}}}{\longrightarrow} \rho^*\mathcal{E}).$$
\end{proposition}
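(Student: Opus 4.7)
The plan is to mirror the strategy used in the proof of \cref{sen calculate cohomology}, now using the isomorphism $\gamma_{b,c}$ produced by \cref{propt.key automorphism of functors} (resp.\ \cref{cort.truncated}) in place of $\gamma_b$. First I would construct the canonical map. By the definition of global sections as a limit
$$\mathrm{R}\Gamma(\TXn, \mathcal{E}) = \lim_{f:\Spec R \to \TXn} f^{*}\mathcal{E},$$
the cover $\rho$ together with its base change to $\Spec(\mathbb{Z}[\epsilon]/\epsilon^{2})$ contributes two parallel arrows $\rho^{*}\mathcal{E} \rightrightarrows \rho^{*}\mathcal{E}\otimes \mathbb{Z}[\epsilon]/\epsilon^{2}$ in the diagram defining this limit: one is $\Id\otimes 1$, the other is $\gamma_{b,c}=\Id+\epsilon\Theta_{\mathscr{E}}$ (see \cref{equat.truncated induced morphism}). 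Consequently the natural map $\mathrm{R}\Gamma(\TXn,\mathcal{E})\to \rho^{*}\mathcal{E}$ factors through the equalizer of these two maps, which is canonically identified with $\fib(\rho^{*}\mathcal{E}\xrightarrow{\Theta_{\mathscr{E}}}\rho^{*}\mathcal{E})$. The exact same recipe applies to $\TX$.

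Next, to check that this factorization is an equivalence, I would proceed by induction on $n$. For the inductive step I would repeat the d\'evissage used in the proofs of \cref{commute with colimits} and \cref{sen calculate cohomology}: the closed immersion $i_{n-1}\colon \TXM \hookrightarrow \TXn$ produces a fiber sequence of structure sheaves on $\TXn$ whose cone is supported on $X^{\HT}$ (this is the analogue of \cref{equa.structure exact}, with the ideal sheaf generated by $(E/p)^{n-1}$). Tensoring with $\mathcal{E}$, taking $\mathrm{R}\Gamma$, and using the projection formula (as in \cref{commute with colimits}) produces a fiber sequence whose outer terms are computed on $\TXM$ and on $X^{\HT}=\TXV$ respectively. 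The same fiber sequence holds after applying $\fib(\rho^{*}(-)\xrightarrow{\Theta}\rho^{*}(-))$, because the covers are compatible and the explicit formulas for $b$ and $c$ in \cref{lemT.construct b} and \cref{lemt.construct homotopy} show that $\Theta$ preserves the $(E/p)$-adic filtration on $\rho^{*}\mathcal{O}_{\TXn}$. The induction then reduces everything to the Hodge-Tate case $n=1$.

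The base case is the Hodge-Tate case for $X=\Spf(\mathcal{O}_K)$. Here $X^{\HT}\cong BG_\pi$ by \cite[Proposition 9.5]{bhatt2022prismatization}, and by \cref{remt.compatibility with LB} the element $\gamma_{b,c}$ reduces on the Hodge-Tate locus to the element $(1+E'(\pi)\epsilon,\epsilon)\in G_\pi(\mathcal{O}_K[\epsilon]/\epsilon^{2})$ used by Anschütz--Heuer--Le Bras. Hence the Sen operator extracted here coincides with theirs, and the identification $\mathrm{R}\Gamma(X^{\HT},\mathcal{E})=\fib(\rho^{*}\mathcal{E}\xrightarrow{\Theta}\rho^{*}\mathcal{E})$ is precisely the content of (the proof of) \cite[Theorem 1.3 / Theorem 3.4]{anschutz2022v}. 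Finally, for the non-truncated statement on $\TX$ itself I would pass to the inverse limit along the thickenings $\TXn\hookrightarrow \TX$; both sides commute with the relevant inverse limit, and the truncated identifications are compatible by the uniqueness part of \cref{lemT.construct b} and \cref{lemt.construct homotopy}.

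The step I expect to be the main obstacle is verifying cleanly that $\Theta_{\mathscr{E}}$ respects the $(E/p)$-adic filtration that underlies the d\'evissage, i.e.\ that the inductive fiber sequence for $\fib(\Theta_{\mathscr{E}})$ really agrees termwise with the fiber sequence obtained from $\mathcal{O}_{\TXM}\{n-1\}\to \mathcal{O}_{\TXn}\to \mathcal{O}_{X^{\HT}}$. This should follow from the explicit formulas $b_{0}=1+\epsilon E'(u)$, $c_{0}=\epsilon$ and the divisibility by $\epsilon$ of the higher ghost components, together with the fact that $\delta$ preserves the ideal $(E/p)$ by construction in \cref{lemt.extend delta}; but it is the point where one must be careful, as warned in \cref{war.key warning}.
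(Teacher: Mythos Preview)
Your proposal is correct and follows essentially the same route as the paper: construct the factorization through $\fib(\Theta_{\mathscr{E}})$ from the isomorphism $\gamma_{b,c}$, reduce the truncated case to $n=1$ by the d\'evissage of \cref{commute with colimits}, invoke \cite{anschutz2022v} (the paper cites their Proposition~2.7) via \cref{remt.compatibility with LB} for the base case, and pass to the inverse limit for $\TX$. Your explicit discussion of why $\Theta_{\mathscr{E}}$ respects the $(E/p)$-adic filtration is more careful than the paper, which simply says ``standard d\'evissage''; in fact this compatibility is automatic because $\Theta_{\mathscr{E}}$ is obtained functorially by pulling back $\gamma_{b,c}$, so it commutes with the maps induced by the closed immersions $i_{[n-1]}$ and $i_{[1]}$.
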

\begin{proof}
Arguing as in \cref{sen calculate cohomology}, we see that the natural morphism $\mathrm{R} \Gamma(\TX, \mathcal{E})\to \rho^{*}\mathcal{E}$ (resp. $\mathrm{R} \Gamma(\TXn, \mathcal{E})\to \rho^{*}\mathcal{E}$) factors through the fiber of $\Theta_{\mathscr{E}}$.

For $n\in \mathbb{N}$, to see this induces an identification of $\mathrm{R} \Gamma(\TXn, \mathcal{E})\to \rho^{*}\mathcal{E}$ with $\fib(\rho^*\mathcal{E}\stackrel{\Theta_{\mathscr{E}}}{\longrightarrow} \rho^*\mathcal{E})$, using standard  d\'evissage (the trick we used in the proof of \cref{commute with colimits}), by induction on $n$, it reduces to $n=1$, which follows from \cite[Proposition 2.7]{anschutz2022v} thanks to \cref{remt.compatibility with LB}.

Finally for $\mathscr{E}\in \Qcoh(\TX)$, as taking global sections commutes with limits, by writing $\mathscr{E}$ as the inverse limit of $\mathscr{E}_n$ for $\mathscr{E}_n$ the restriction of $\mathscr{E}$ to $\TXn$, we see that 
\[\mathrm{R} \Gamma(\TX, \mathcal{E})=\varprojlim \mathrm{R} \Gamma(\TXn, \mathcal{E}_n)=\varprojlim \fib(\rho^*\mathcal{E}_n\stackrel{\Theta_{\mathscr{E}}}{\longrightarrow} \rho^*\mathcal{E}_n)=\fib(\rho^*\mathcal{E}\stackrel{\Theta_{\mathscr{E}}}{\longrightarrow} \rho^*\mathcal{E}).\]
Here the second equality follows from the above paragraph and the last equality holds as finite limits commute with limits.
\end{proof}

As a byproduct of \cref{propt. sen calculate cohomology}, we conclude that
\begin{corollary}
    The global sections functor $$\mathrm{R} \Gamma(\TX, \bullet): \mathcal{D}(\TX) \rightarrow \widehat{\mathcal{D}}(\mathbb{Z}_p) \quad \text{resp.}\quad \mathrm{R} \Gamma(\TXn, \bullet): \mathcal{D}(\TXn) \rightarrow \widehat{\mathcal{D}}(\mathbb{Z}_p)$$
    commutes with colimits.
\end{corollary}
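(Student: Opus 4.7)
The plan is to deduce this formally from \cref{propt. sen calculate cohomology}. By that proposition, we have a canonical identification
\[
 \mathrm{R}\Gamma(\TX,\mathscr{E}) \;=\; \fib\bigl(\rho^{*}\mathscr{E} \xrightarrow{\Theta_{\mathscr{E}}} \rho^{*}\mathscr{E}\bigr),
\]
and similarly in the truncated case. So it suffices to show that each of the constituent functors---the pullback $\rho^{*}$, the formation of the morphism $\Theta_{(-)}$ as an arrow in $\widehat{\mathcal{D}}$, and the fiber of that arrow---commutes with colimits.

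First I would observe that $\rho^{*}\colon \mathcal{D}(\TX) \to \widehat{\mathcal{D}}(\mathfrak{S}[[\tfrac{E}{p}]])$ (respectively $\rho^{*}\colon \mathcal{D}(\TXn) \to \widehat{\mathcal{D}}(\mathfrak{S}[[\tfrac{E}{p}]]/(\tfrac{E}{p})^{n})$) commutes with colimits, being the pullback functor along a morphism of stacks on quasi-coherent complexes (it is a symmetric monoidal left adjoint to $\rho_{*}$). Next, the assignment $\mathscr{E} \mapsto \Theta_{\mathscr{E}}$ is itself functorial and colimit-preserving at the level of arrows: unwinding \cref{equat.truncated induced morphism}, $\Theta_{\mathscr{E}}$ is extracted from the isomorphism $\gamma_{b,c}$ which is obtained by pulling $\mathscr{E}$ back along the two functors $\rho$ and $\rho\circ\delta$ and comparing them, a construction that is visibly compatible with colimits in $\mathscr{E}$. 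Hence we obtain a functor $\mathscr{E} \mapsto (\rho^{*}\mathscr{E} \xrightarrow{\Theta_{\mathscr{E}}} \rho^{*}\mathscr{E})$ into $\operatorname{Fun}(\Delta^{1}, \widehat{\mathcal{D}}(\mathbb{Z}_p))$ which commutes with colimits.

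Finally, the fiber of a morphism in the stable $\infty$-category $\widehat{\mathcal{D}}(\mathbb{Z}_p)$ is, up to a shift, its cofiber; in a stable $\infty$-category cofibers are themselves colimits, and in particular the fiber functor $\operatorname{Fun}(\Delta^{1}, \widehat{\mathcal{D}}(\mathbb{Z}_p)) \to \widehat{\mathcal{D}}(\mathbb{Z}_p)$ commutes with arbitrary colimits. Composing these three colimit-preserving functors yields the desired result.

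There is no real obstacle: the content is entirely in \cref{propt. sen calculate cohomology}, which already reduced the computation of global sections to a fiber of an explicit operator on a complex pulled back along the faithfully flat cover $\rho$. The corollary should therefore be a short formal consequence, in direct analogy with how \cref{commute with colimits} was deduced in the absolute setting, but now bypassing the d\'evissage argument since the fiber description is valid for all $n$ (including the infinite case) in one stroke.
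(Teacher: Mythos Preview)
Your proposal is correct and is exactly the argument the paper intends: the paper states this corollary as an immediate ``byproduct'' of \cref{propt. sen calculate cohomology} without further proof, and you have simply spelled out the formal reasons why the fiber description implies colimit-preservation. There is nothing to add.
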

\begin{remark}
    In contrary, $\mathrm{R} \Gamma(\WCart, \bullet)$ doesn't commute with colimits by \cite{bhatt2022absolute}.
\end{remark}

\begin{definition}\label{def.I/p sheaf}
    We define $(\frac{\mathcal{I}}{p})^k$ to be the invertible sheaf on $\TX$ by pulling back the invertible sheaf generated by $(\frac{t}{p})^k$ on $[\Spf(\mathbb{Z}[[ \frac{t}{p}]])/ \mathbf{G}_m]$.
\end{definition}

\begin{example}[Sen operator on the ideal sheaf $(\frac{\mathcal{I}}{p})^k$]\label{examplet.action on generators}
    Let $\mathscr{E}$ be the structure sheaf $\mathcal{O}_{\TX}$. Then under the trivialization  $\rho^{*}\mathscr{E}=\mathfrak{S}[[\frac{E}{p}]]$, $\gamma_{b,c}(u)=\delta(u)=u+\epsilon E(u)$, hence $\Theta_{\mathscr{E}}$ sends $f(u)\in \mathfrak{S}[[\frac{E}{p}]]$ to $f^{\prime }(u)E(u)$. In general, for $\mathscr{E}=(\frac{\mathcal{I}}{p})^k$, one can verify that under the trivialization  $\rho^{*}\mathscr{E}=\mathfrak{S}[[\frac{E}{p}]]\cdot (\frac{E}{p})^k$, $\Theta_{\mathscr{E}}$ sends $f(u)\in \mathfrak{S}[[\frac{E}{p}]]$ to $kf(u)E^{\prime}(u)+f^{\prime }(u)E(u)$.
\end{example}

\begin{proposition}\label{propt.generation}
    Let $n\in \mathbb{N}$. The $\infty$-category $\mathcal{D}(\TX)$ (resp. $\mathcal{D}(\TXn)$ is generated under shifts and colimits by the invertible sheaves $(\frac{\mathcal{I}}{p})^k$ for $k\in \mathbb{Z}$.
\end{proposition}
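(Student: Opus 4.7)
The plan is to argue by induction on $n$ for the truncated stacks $\TXn$ and then pass to $\TX$ via a completeness/Sen-eigenvalue argument. Let $\mathcal{C}_n \subseteq \mathcal{D}(\TXn)$ (resp.\ $\mathcal{C} \subseteq \mathcal{D}(\TX)$) denote the smallest full subcategory closed under shifts and colimits and containing every $(\frac{\mathcal{I}}{p})^k$ for $k\in\mathbb{Z}$; the goal is to show these exhaust the whole derived categories. The base case $n = 1$ reduces to $\TXV = X^{\HT}$, where the restrictions $(\frac{\mathcal{I}}{p})^k|_{X^{\HT}}$ recover (up to a sign convention) the Hodge--Tate twists $\mathcal{O}\{\mp k\}$; generation is then \cite[Proposition 2.7]{anschutz2022v} together with the classifying-stack description of $X^{\HT}$.

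For the inductive step $n\geq 2$, the algebraic identity $(I/p)\cdot A[[I/p]]/(I/p)^n \cong A[[I/p]]/(I/p)^{n-1}$ globalises to a short exact sequence of $\mathcal{O}_{\TXn}$-modules
\[
0 \to (\tfrac{\mathcal{I}}{p})\otimes i_{n-1,*}\mathcal{O}_{\TXM} \to \mathcal{O}_{\TXn} \to i_{1,*}\mathcal{O}_{X^{\HT}} \to 0,
\]
where $i_{n-1}\colon \TXM \hookrightarrow \TXn$ and $i_1\colon X^{\HT}\hookrightarrow \TXn$ are the evident closed immersions. Tensoring with any $\mathscr{F}\in \mathcal{D}(\TXn)$ and applying the projection formula yields a fiber sequence $(\tfrac{\mathcal{I}}{p})\otimes i_{n-1,*}i_{n-1}^*\mathscr{F}\to \mathscr{F}\to i_{1,*}i_1^*\mathscr{F}$. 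By the inductive hypothesis together with the base case, both $i_{n-1}^*\mathscr{F}$ and $i_1^*\mathscr{F}$ lie in the appropriate generated subcategories; since $i_{n-1,*}$ and $i_{1,*}$ commute with colimits (being pushforwards along affine closed immersions), everything reduces to verifying that $i_{1,*}\mathcal{O}_{X^{\HT}}\in \mathcal{C}_n$ (the pushforward $i_{n-1,*}\mathcal{O}_{\TXM}$ then follows by completing the above fiber sequence applied to $\mathscr{F} = \mathcal{O}_{\TXn}$ and twisting by $(\frac{\mathcal{I}}{p})^{-1}$).

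The crucial observation closing this reduction is that the ideal $(\frac{\mathcal{I}}{p})^{n-1}\subset\mathcal{O}_{\TXn}$ is annihilated by $\frac{\mathcal{I}}{p}$ (since $(\frac{\mathcal{I}}{p})^n=0$), hence is automatically pushed forward from $X^{\HT}$: concretely, $(\frac{\mathcal{I}}{p})^{n-1} \cong i_{1,*}\mathcal{L}_{n-1}$ for the invertible sheaf $\mathcal{L}_{n-1} := i_1^*(\frac{\mathcal{I}}{p})^{n-1}$. Since $(\frac{\mathcal{I}}{p})^{n-1}\in\mathcal{C}_n$ by definition, tensoring $i_{1,*}\mathcal{L}_{n-1}$ with the generator $(\frac{\mathcal{I}}{p})^{-(n-1)}$ and invoking the projection formula gives
\[
(\tfrac{\mathcal{I}}{p})^{-(n-1)}\otimes i_{1,*}\mathcal{L}_{n-1} \;\cong\; i_{1,*}\bigl(\mathcal{L}_{n-1}^{-1}\otimes\mathcal{L}_{n-1}\bigr) \;=\; i_{1,*}\mathcal{O}_{X^{\HT}} \in \mathcal{C}_n,
\]
as required.

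For the untruncated case $\TX$, the fact that $\TX = \varprojlim_n \TXn$ is a formal thickening whose derived category is $(p,\frac{\mathcal{I}}{p})$-complete means generation does not follow automatically from the truncated case---this is the main obstacle. The plan is to argue in the spirit of \cref{thm.main classification}: using the identification $\mathrm{R}\Gamma(\TX,-) = \fib(\Theta)$ from \cref{propt. sen calculate cohomology} (which implies in particular that $\mathrm{R}\Gamma(\TX,-)$ commutes with colimits), one translates the question into a Sen-eigenvalue analysis on $\rho^*\mathscr{F}$. For any non-zero $\mathscr{F}\in\mathcal{D}(\TX)$ belonging to the right orthogonal of $\mathcal{C}$, the operator $\Theta_\mathscr{F}-kE'(u)$ would be invertible on $\rho^*\mathscr{F}$ for every $k\in\mathbb{Z}$; reducing modulo $p$ and leveraging the $(p, \frac{\mathcal{I}}{p})$-completeness to locate a non-zero eigenvector should produce a non-trivial morphism from some shift of $(\frac{\mathcal{I}}{p})^k$, contradicting the orthogonality and thereby showing $\mathcal{C} = \mathcal{D}(\TX)$.
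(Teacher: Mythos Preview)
Your overall strategy of reducing to $n=1$ matches the paper's, but there is a genuine gap in the inductive step for the truncated case. When you write ``the ideal $(\frac{\mathcal{I}}{p})^{n-1}\subset\mathcal{O}_{\TXn}$ \ldots\ concretely $(\frac{\mathcal{I}}{p})^{n-1}\cong i_{1,*}\mathcal{L}_{n-1}$'' and then ``since $(\frac{\mathcal{I}}{p})^{n-1}\in\mathcal{C}_n$ by definition'', you are conflating two different objects. In the paper's conventions (Definition~\ref{def.I/p sheaf}), $(\frac{\mathcal{I}}{p})^{n-1}$ denotes the \emph{line bundle} on $\TXn$ obtained by pulling back an invertible sheaf from $[\Spf(\mathbb{Z}[[t/p]])/\mathbb{G}_m]$; it is locally free of rank one. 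The \emph{ideal sheaf} $(\frac{\mathcal{I}}{p})^{n-1}\cdot\mathcal{O}_{\TXn}$, by contrast, is supported on $X^{\HT}$ and is indeed isomorphic to $i_{1,*}\mathcal{L}_{n-1}$. These are not the same: the line bundle is a generator of $\mathcal{C}_n$ by fiat, but you have not shown that the ideal sheaf (hence $i_{1,*}\mathcal{O}_{X^{\HT}}$) lies in $\mathcal{C}_n$. One can salvage this by observing that $i_{1,*}\mathcal{O}_{X^{\HT}}$ admits a $2$-periodic resolution by the line bundles $(\frac{\mathcal{I}}{p})^k$ (alternating multiplication by $t/p$ and $(t/p)^{n-1}$), and a filtered colimit of its stupid truncations exhibits it in $\mathcal{C}_n$; but this is extra work your argument does not supply.

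Your sketch for the untruncated case is closer to what the paper intends, but the phrase ``locate a non-zero eigenvector'' is dangerous: producing an eigenvector for $\Theta$ with eigenvalue in $E'(\pi)\cdot\mathbb{Z}$ typically requires the nilpotence of $\Theta^p-(E'(u))^{p-1}\Theta$, which in this paper is deduced \emph{from} generation (see the proof of \cref{thmt.main classification}), so that route is circular. The clean fix---which also handles the truncated case uniformly and is presumably what the paper means by ``arguing as in \cite[Corollary~3.5.16]{bhatt2022absolute}''---is: if $\mathscr{F}$ is right-orthogonal to every $(\frac{\mathcal{I}}{p})^k$, then by \cref{propt. sen calculate cohomology} each $\Theta_{\mathscr{F}}-kE'(u)$ is an equivalence on $\rho^*\mathscr{F}$; applying $-\otimes^{\mathbb{L}}\mathcal{O}_K$ shows each $\Theta-kE'(\pi)$ is an equivalence on $\rho^*(i_1^*\mathscr{F})$, i.e.\ $i_1^*\mathscr{F}$ is right-orthogonal to all twists on $X^{\HT}$, hence vanishes by the $n=1$ case. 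Derived Nakayama (for the nilpotent, resp.\ topologically nilpotent, ideal $(E/p)$) then gives $\rho^*\mathscr{F}=0$, and faithful flatness of $\rho$ finishes. This avoids both the ideal/line-bundle confusion and any appeal to eigenvectors.
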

\begin{proof}
    Arguing as in \cite[Corollary 3.5.16]{bhatt2022absolute}, this could be reduced to $n=1$, where the results follow from \cite[Proposition 2.9]{anschutz2022v} as on the Hodge-Tate locus $\frac{\mathcal{I}}{p}\simeq \mathcal{I}$.
\end{proof}

\begin{theorem}\label{thmt.main classification}
Let $n\in \mathbb{N}$. The functor 
    \begin{align*}
        &\beta^{+}: \mathcal{D}(\TX) \rightarrow \mathcal{D}(\mathrm{MIC}(\mathfrak{S}[[\frac{E}{p}]]), \qquad \mathscr{E}\mapsto (\rho^{*}(\mathscr{E}),\Theta_{\mathscr{E}})
        \\ resp. ~&\beta_n^{+}: \mathcal{D}(\TXn) \rightarrow \mathcal{D}(\mathrm{MIC}(\mathfrak{S}[[\frac{E}{p}]]/(\frac{E}{p})^n), \qquad \mathscr{E}\mapsto (\rho^{*}(\mathscr{E}),\Theta_{\mathscr{E}})
    \end{align*}
    is fully faithful \footnote{Here $\mathcal{D}(\mathrm{MIC}(\mathfrak{S}[[\frac{E}{p}]])$ is defined similarly as \cref{def.MIC}}. Moreover, its essential image consists of those objects $M\in \mathcal{D}(\mathrm{MIC}(\mathfrak{S}[[\frac{E}{p}]]))$ (resp. $M\in \mathcal{D}(\mathrm{MIC}(\mathfrak{S}[[\frac{E}{p}]]/(\frac{E}{p})^n) $) satisfying the following pair of conditions:
    \begin{itemize}
        \item $M$ is $\mathbb{Z}_p$-complete.
        \item The action of $\Theta^p-(E^{\prime}(u))^{p-1}\Theta$ on the cohomology $\mathrm{H}^*(M\otimes^{\mathbb{L}}k)$ \footnote{Here the derived tensor product means the derived base change along $\mathfrak{S}[[\frac{E}{p}]])\to \mathfrak{S}[[\frac{E}{p}]])/(\frac{E}{p},u)=k$} is locally nilpotent. 
    \end{itemize}
\end{theorem}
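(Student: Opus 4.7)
The plan is to follow the blueprint of the proof of \cref{thm.main classification} closely, substituting \cref{propt.generation} for \cref{generation}, \cref{propt. sen calculate cohomology} for \cref{sen calculate cohomology}, and ultimately \cite[Theorem 1.3]{anschutz2022v} for \cite[Theorem 3.5.8]{bhatt2022absolute} as the Hodge-Tate base case (compatibility being supplied by \cref{remt.compatibility with LB}). I would first treat the truncated functor $\beta_n^{+}$ and then obtain the untruncated statement by passing to the inverse limit over $n$. Well-definedness is automatic: $\gamma_{b,c}$ was constructed in \cref{propt.key automorphism of functors} and \cref{cort.truncated} as the pullback of an isomorphism between two $\delta$-ring maps, so the resulting $\Theta_{\mathscr{E}}$ is a derivation with respect to the canonical Sen operator on the structure sheaf computed in \cref{examplet.action on generators} — a verbatim analogue of \cref{lem.leibniz}.

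For full faithfulness, \cref{propt.generation} reduces the problem to computing $\mathrm{RHom}((\mathcal{I}/p)^{k},\mathscr{F})$. Twisting $\mathscr{F}$ by $(\mathcal{I}/p)^{-k}$ further reduces to $\mathrm{RHom}(\mathcal{O},\mathscr{F}) = \mathrm{R}\Gamma(\TXn,\mathscr{F})$, which \cref{propt. sen calculate cohomology} identifies with $\fib(\Theta_{\mathscr{F}})$; the same fiber computes the $\MIC$-side Hom from the definition of $\mathcal{D}(\MIC(\ldots))$.

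To show necessity of the two conditions, $\mathbb{Z}_{p}$-completeness is built in, and \cref{propt.generation} again reduces nilpotence to $\mathscr{E} = (\mathcal{I}/p)^{k}$, whose residual fibre $\rho^{*}(\mathcal{I}/p)^{k} \otimes^{\mathbb{L}} k$ is one-dimensional with $\Theta$ acting by $\overline{kE'(u)} = k\overline{E'(u)}$ for $k \in \mathbb{F}_{p}$, so $(\overline{kE'(u)})^{p} = \overline{E'(u)}^{p-1}\overline{kE'(u)}$ and the operator vanishes identically. For sufficiency, let $\mathcal{C}$ be the full subcategory carved out by the two conditions; paralleling \cref{thm.main classification}, I would show that every nonzero $M \in \mathcal{C}$ admits a nonzero morphism from some $\rho^{*}(\mathcal{I}/p)^{n_{0}}[m]$. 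After replacing $M$ by $M \otimes^{\mathbb{L}} k$, iterating the action of $\Theta^{p} - (E'(u))^{p-1}\Theta = \prod_{n \in \mathbb{F}_{p}}(\Theta - nE'(u))$ (the factorization being valid in characteristic $p$) extracts a nonzero class in $H^{-m}(M \otimes^{\mathbb{L}} k)$ annihilated by a single factor $\Theta - n_{0}E'(u)$; such a class is exactly a nonzero morphism $\rho^{*}(\mathcal{I}/p)^{n_{0}}[m] \otimes^{\mathbb{L}} k \to M \otimes^{\mathbb{L}} k$ in $\mathcal{D}(\MIC(\ldots))$, which lifts to an integral morphism by derived Nakayama, using $\mathbb{Z}_{p}$-completeness of $M$ together with the fact that $\mathfrak{S}[[\tfrac{E}{p}]]$ and its truncations are already $(u, E/p)$-complete. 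The untruncated statement then follows by taking $\lim_{n}$ on both sides, since both conditions survive the inverse limit (the cohomological one because the residual fibre depends only on the Hodge-Tate quotient).

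The main obstacle is the lifting step in the sufficiency argument: promoting a residual morphism to an integral $\Theta$-equivariant morphism. Unlike the Hodge-Tate case \cite[Theorem 1.3]{anschutz2022v}, where $\Theta$ on $\rho^{*}(\mathcal{I}/p)^{n_{0}}$ is essentially a scalar, here $\Theta$ is a genuine derivation in the $(E/p)$-direction, so the lift has to be built up inductively along the $(E/p)$-adic filtration; at each stage, one must verify vanishing of an obstruction class living in a Hodge-Tate-type $\Ext$ group, which is precisely where the Anschütz–Heuer–Le Bras description enters through \cref{remt.compatibility with LB}.
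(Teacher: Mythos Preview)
Your proposal tracks the paper's proof closely through full faithfulness and the necessity of the two conditions: the paper argues exactly as you outline, invoking \cref{propt.generation} to reduce to the line bundles, \cref{propt. sen calculate cohomology} to identify $\mathrm{R}\Gamma$ with $\fib(\Theta)$, and the explicit formula from \cref{examplet.action on generators} to verify nilpotence on $(\mathcal{I}/p)^k$. The divergence is in your final paragraph, and it is a genuine confusion rather than a harmless variant.

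There is no lifting step. The phrase ``replace $M$ by $M\otimes^{\mathbb{L}} k$'' is not a two-way passage requiring you to climb back up afterwards; it is a one-way reduction, and it is cleanest in contrapositive form. What you actually want is that the right orthogonal of $\{\rho^*(\mathcal{I}/p)^k[m]\}_{k,m}$ inside $\mathcal{C}$ is zero. If $M$ lies in this right orthogonal, then so does $M\otimes^{\mathbb{L}} k$, because $M\otimes^{\mathbb{L}} k$ is an iterated cofiber of endomorphisms of $M$ (Koszul on $p$ and $u$; note $E/p$ is already nilpotent and $u^e\in (p)$ in $\mathfrak{S}[[E/p]]/(E/p)^n$, so $p$-completeness of $M$ suffices). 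Now $M\otimes^{\mathbb{L}} k$ is a complex of $k$-vector spaces with $\prod_{0\le i<p}(\Theta - iE'(\pi))$ locally nilpotent on cohomology; if it is nonzero, some $H^{-m}$ contains a nonzero eigenvector for a single factor, and that eigenvector \emph{is} a nonzero morphism $\rho^*(\mathcal{I}/p)^{n_0}[m]\to M\otimes^{\mathbb{L}} k$ in $\mathcal{D}(\MIC(\ldots))$ (any such morphism factors through $\rho^*(\mathcal{I}/p)^{n_0}\otimes^{\mathbb{L}} k$ automatically, since the target is a $k$-module). This contradicts right-orthogonality, so $M\otimes^{\mathbb{L}} k=0$, and derived Nakayama gives $M=0$.

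Two specific points: first, derived Nakayama detects vanishing of complete objects; it does not lift morphisms, so your sentence ``lifts to an integral morphism by derived Nakayama'' is a misapplication. Second, the obstruction-theoretic induction along the $(E/p)$-adic filtration that you sketch is unnecessary and is not what \cref{remt.compatibility with LB} is used for in the paper --- that remark enters only to anchor the base case $n=1$ of \cref{propt. sen calculate cohomology} to \cite{anschutz2022v}. Once you drop the phantom obstacle, your argument coincides with the paper's.
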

\begin{proof}%[Proof of \cref{thmt.main classification}]
    Given \cref{propt. sen calculate cohomology} and \cref{propt.generation}, the functor is well-defined and fully faithful using the same argument in the proof of \cref{thm.main classification}. 

    To check that the action of $\Theta^p-(E^{\prime}(u))^{p-1}\Theta$ on the cohomology $\mathrm{H}^*(\rho^{*}(\mathscr{E})\otimes^{\mathbb{L}}k)$ is locally nilpotent for $\mathscr{E}\in \mathcal{D}(\TX)$ (resp. $\mathscr{E}\in \mathcal{D}(\TXn)$), again thanks to \cref{propt.generation}, we might assume $\mathscr{E}=(\frac{\mathcal{I}}{p})^k$ for some $k\in \mathbb{Z}$. Then by \cref{examplet.action on generators}, after base change to $k$ the action of $\Theta^p-(E^{\prime}(u))^{p-1}\Theta$ is given by
    \[(kE^{\prime}(\pi))^{p}-kE^{\prime}(\pi)(E^{\prime}(\pi))^{p-1}=(k^p-k)=(k^p-k)(E^{\prime}(\pi))^{p},\] 
    which already vanishes as for any integer $j$, $j^p\equiv j \mod p$. 

    Let $\mathcal{C}\subseteq \mathcal{D}(\mathrm{MIC}(\mathfrak{S}[[\frac{E}{p}]]))$ (resp. $M\in \mathcal{D}(\mathrm{MIC}(\mathfrak{S}[[\frac{E}{p}]]/(\frac{E}{p})^n) $) be the full subcategory spanned by objects satisfying two conditions listed in \cref{thmt.main classification}. As the source $\mathcal{D}(\WCart_n)$ is generated under shifts and colimits by the invertible sheaves $\mathcal{I}^n$ for $n\in \mathbb{Z}$ by \cref{propt.generation}, to complete the proof it suffices to show that $\mathcal{C}$ is also generated under shifts and colimits by $\{\rho^*(\frac{\mathcal{I}}{p})^k\}$ ($k\in \mathbb{Z}$). In other words, we need to show that for every nonzero object $M\in \mathcal{C}$, $M$ admits a nonzero morphism from $\rho^*(\frac{\mathcal{I}}{p})^k[m]$ for some $m,k \in \mathbb{Z}$. Replacing $M$ by $M\otimes k$ (the derived Nakayama guarantees that $M\otimes k$ detects whether $M$ is zero or not as $M$ is assumed to be $p$-complete), we may assume that there exists some cohomology group $\mathrm{H}^{-m}(M)$ containing a nonzero element killed by $\Theta^p-(E^{\prime}(\pi))^{p-1}\Theta=\prod_{0\leq i<p} (\Theta-E^{\prime}(\pi)i)$ (this could be done by iterating the action of $\Theta^p-(E^{\prime}(\pi))^{p-1}\Theta$ and then use the nilpotence assumption). Furthermore, we could assume this element is actually killed by $\Theta-k$ for a single integer $k$. It then follows that there exists a non-zero morphism from $\rho^*(\frac{\mathcal{I}}{p})^k[m]$ to $M$ which is nonzero in degree $m$.
\end{proof}
%\begin{proposition}
%There is an equivalence of categories $$\mathcal{D}(\TWCart_{[n]}) \rightarrow \lim _{(A, I)} \widehat{\mathcal{D}}(A/I^{p^n}[\frac{I^p}{p}])$$ Here $A/I^{p^n}[\frac{I^p}{p}]$ denote the sub-$A/I^{p^n}$ algebra generated by $1$ and $\frac{I^p}{p}$ inside $A/I^{p^n}[\frac{1}{p}]$.
%\end{proposition}

\begin{corollary}\label{corot.perfect version of the remain theorem}
Let $n\in \mathbb{N}$. The functor $\beta^+_n$ from \cref{thmt.main classification} restricts to a fully faithful functor
  \[
    \beta^+_n\colon \Perf(\TXn) \to \Perf(\mathrm{MIC}(\mathfrak{S}[[\frac{E}{p}]]/(\frac{E}{p})^n)%\footnote{Here $\Perf(\mathrm{MIC}(\mathfrak{S}[[\frac{E}{p}]]/(\frac{E}{p})^n)$ is defined to be the $\infty$-category of perfect complexes $M$ over $\mathfrak{S}[[\frac{E}{p}]]/(\frac{E}{p})^n$ admitting a $W(k)$-linear operator $\Theta$ from $M$ to itself satisfying the Leibniz rule. In particular, when $n=1$, as $\Theta$ vanishes on the $\mathfrak{S}/E$, this coincides with $\Perf(\mathcal{O}_K[\Theta_{\pi}])$  defined in \cite{anschutz2022v} by the proof of \cite[Lemma 2.13]{anschutz2022v}.}
  \]
  whose essential image consists of $p$-adically complete perfect complexes $M$ over \\$\mathfrak{S}[[\frac{E}{p}]]/(\frac{E}{p})^n$ admitting a $W(k)$-linear operator $\Theta$ from $M$ to itself satisfying the Leibniz rule such that $\Theta^p-(E^{\prime}(u))^{p-1}\Theta$ is nilpotent on $H^\ast(k\otimes^L M)$.    
\end{corollary}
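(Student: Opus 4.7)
The plan is to derive this corollary directly from \cref{thmt.main classification} by a faithfully flat descent argument, observing along the way that the ``locally nilpotent'' hypothesis simplifies to honest nilpotence in the perfect setting.

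First I would check that $\beta_n^+$ does restrict as claimed. For any $\mathscr{E} \in \Perf(\TXn)$, the pullback $\rho^*\mathscr{E}$ along the affine faithfully flat cover $\rho : \Spf(\mathfrak{S}[[\frac{E}{p}]]/(\frac{E}{p})^n) \to \TXn$ is a perfect complex over $\mathfrak{S}[[\frac{E}{p}]]/(\frac{E}{p})^n$ (hence automatically $p$-complete), and it carries the Sen operator $\Theta_{\mathscr{E}}$ constructed in Section~4, which satisfies the Leibniz rule by the analogue of \cref{lem.leibniz}. Thus $\beta_n^+$ lands in $\Perf(\mathrm{MIC}(\mathfrak{S}[[\frac{E}{p}]]/(\frac{E}{p})^n))$, and full faithfulness of the restriction is inherited from the unrestricted statement.

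For the essential image, the key observation is that when $M$ is a perfect complex, the derived base change $k \otimes^L M$ is a perfect complex over the field $k$, so each $H^i(k \otimes^L M)$ is a finite-dimensional $k$-vector space. On a finite-dimensional space, local nilpotence of $\Theta^p - (E'(u))^{p-1}\Theta$ coincides with genuine nilpotence, so the condition appearing in the corollary matches the hypothesis of \cref{thmt.main classification}. Hence \cref{thmt.main classification} produces $\mathscr{E} \in \mathcal{D}(\TXn)$ with $\beta_n^+(\mathscr{E}) \simeq (M, \Theta)$, and it remains only to show $\mathscr{E}$ is perfect.

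This last step is a standard application of faithfully flat descent of perfectness: since $\rho$ is flat and affine and $\rho^*\mathscr{E} \simeq M$ is perfect by hypothesis, $\mathscr{E}$ is perfect on $\TXn$. The argument contains no substantial obstacle; the only semi-formal ingredient is the passage from ``locally nilpotent'' to ``nilpotent'' for cohomology of perfect complexes, which is immediate once one knows $k \otimes^L M$ has finite-dimensional cohomology.
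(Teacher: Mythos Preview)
Your proposal is correct and takes the same approach as the paper, which simply states that the corollary ``follows from \cref{thmt.main classification} directly.'' You have merely unpacked the details the paper leaves implicit: that pullback along the faithfully flat cover $\rho$ preserves perfectness, that perfect complexes over the $p$-complete ring $\mathfrak{S}[[E/p]]/(E/p)^n$ are automatically $p$-complete, that local nilpotence coincides with nilpotence on the finite-dimensional $k$-cohomology of a perfect complex, and that perfectness descends along $\rho$.
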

\begin{proof}
    This follows from \cref{thmt.main classification} directly.
\end{proof}
\begin{remark}\label{rem.third stren}
    When $n\leq 1+\frac{p-1}{e}$, \cref{thmt.main classification} still holds if we replace the left-hand side of $\beta_n^+$ with $\mathcal{D}(X^{\Prism}_n)$ and replace the right-hand side by $\mathcal{D}(\mathrm{MIC}(\mathfrak{S}/E^n)$. Actually, by \cref{rem.first strenthern,rem.second strenthern} we could construct $\gamma_{b,c}$, hence the Sen operator and run the above proof similarly.
\end{remark}

\subsection{An analytic variant}
Let $n\in \mathbb{N}$. Later when passing from perfect complexes on $\TXn$ to perfect complexes on $\Spa(K)_v$  with coefficients in $\bdr_n$, the functor naturally factors through the isogeny category of the source, hence it is better to give a more explicit characterization of $\Perf(\TXn)[1/p]$. 
 Actually, by formally inverting $p$ on the source, the functor in \cref{thmt.main classification} induces an "analytic" functor
\[
\beta_n:\Perf(\TXn)[1/p]\to \Perf(\mathrm{MIC}(\mathfrak{S}/E^n[1/p]))\footnote{Here $\Perf(\mathrm{MIC}(\mathfrak{S}/E^n[1/p]))$ is defined similarly as \cref{def.MIC}. Moreover, the topology is given by the $(E,p)$-adic topology, or just the $p$-adic topology as $E$ is nilpotent.}.%to be the $\infty$-category of perfect complexes $M$ over $\mathfrak{S}/E^n[1/p]$ admitting a $W(k)[1/p]$-linear operator $\Theta$ from $M$ to itself satisfying the Leibniz rule. 
 \]
\begin{corollary}\label{cor.analytic-variant}
 The functor $\beta_n$ is fully faithful. Its essential image consists of complexes $M\in \Perf(\mathrm{MIC}(\mathfrak{S}/E^n[1/p]))$ such that $H^\ast(M)$ is finite dimensional over $\mathfrak{S}/E^n[1/p]$ and the action of the operator $\Theta^p-(E^{\prime}(u))^{p-1}\Theta$ on  $H^\ast(M)$ is topologically nilpotent.
\end{corollary}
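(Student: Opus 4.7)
The plan is to derive Corollary \ref{cor.analytic-variant} from the integral classification in \cref{corot.perfect version of the remain theorem} by inverting $p$. The starting observation is that after inverting $p$ the ring $\mathfrak{S}[[E/p]]/(E/p)^n$ collapses to $\mathfrak{S}/E^n[1/p]$: once $p$ is a unit, $E/p$ is just a rescaling of $E$, and the relation $(E/p)^n=0$ becomes $E^n=0$. This identification is compatible with the Sen operator constructed in \cref{examplet.action on generators}, so the functor $\beta_n$ is well-defined on rationalizations.

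For full faithfulness, given $\mathcal{E},\mathcal{F}\in\Perf(\TXn)$, dualizability of perfect complexes yields $\RHom_{\Perf(\TXn)}(\mathcal{E},\mathcal{F})[1/p]=\RHom_{\Perf(\TXn)[1/p]}(\mathcal{E},\mathcal{F})$, and analogously for the MIC-category, since Hom-complexes between perfect objects commute with $p$-inversion. Combined with the full faithfulness of $\beta_n^+$ from \cref{corot.perfect version of the remain theorem}, this gives full faithfulness of $\beta_n$.

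For the essential image, the forward direction is short. If $M=\beta_n(\mathcal{E})$ comes from an integral lift $\mathcal{E}_0\in\Perf(\TXn)$, then $M=\beta_n^+(\mathcal{E}_0)[1/p]$ is perfect over $\mathfrak{S}/E^n[1/p]$. By Weierstrass preparation, $\mathfrak{S}/E^n$ is finite free of rank $en$ over $W(k)$, so $\mathfrak{S}/E^n[1/p]$ is a finite-dimensional $K$-algebra and $H^*(M)$ is finite dimensional over it. Local nilpotence of $\Theta^p-(E'(u))^{p-1}\Theta$ on $H^*(\beta_n^+(\mathcal{E}_0)\otimes^L k)$ translates, via finiteness, into topological nilpotence of the same operator on $H^*(M)$ in the $p$-adic topology: a high iterate of the operator on a chosen integral lattice lands inside $p$ times the lattice.

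The converse direction is where the main work lies. Given $M\in\Perf(\mathrm{MIC}(\mathfrak{S}/E^n[1/p]))$ with the two stated properties, I would build an integral model: finite dimensionality of $H^*(M)$ plus Noetherianity of $\mathfrak{S}/E^n$ let me pick a finitely generated $\mathfrak{S}/E^n$-lattice $M_0\subseteq M$, and the Leibniz rule together with topological nilpotence of $\Theta^p-(E'(u))^{p-1}\Theta$ on $H^*(M)$ allow me to enlarge $M_0$ to a $\Theta$-stable lattice. Representing this as a bounded complex of finite projective modules and $p$-completing yields a $p$-complete perfect object. The topological nilpotence on $H^*(M)$ then forces sufficiently high iterates of $\Theta^p-(E'(u))^{p-1}\Theta$ on $H^*(M_0)$ to lie inside $pH^*(M_0)$ modulo torsion, which vanishes modulo $p$ and hence gives local nilpotence on $H^*(M_0\otimes^L k)$. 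Applying \cref{corot.perfect version of the remain theorem} produces a preimage $\mathcal{E}_0\in\Perf(\TXn)$ whose class in $\Perf(\TXn)[1/p]$ recovers $M$. The main obstacle throughout is the lattice construction and the passage between topological nilpotence on $H^*(M)$ and local nilpotence on the derived mod-$p$ fiber, which requires tracking Tor-contributions carefully but poses no conceptual difficulty once $\mathfrak{S}/E^n[1/p]$ is viewed as a finite-dimensional $K$-algebra.
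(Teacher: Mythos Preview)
Your approach differs from the paper's in a meaningful way. The paper argues both directions by induction on $n$: for the forward direction it uses the fiber sequence coming from $i_{[n-1],*}\mathcal{O}\{1\}\to\mathcal{O}\to i_{[1],*}\mathcal{O}$ to reduce to the base case $n=1$ handled in \cite{anschutz2022v}; for the converse it first reduces to degree $0$ by induction on amplitude, and then builds the $\Theta$-stable lattice by induction on $n$ via the short exact sequence $0\to (E/p)\otimes M/(E/p)^{n-1}\to M\to M/(E/p)M\to 0$, patching together lattices supplied by the inductive hypothesis. Your direct route via finiteness of $\mathfrak{S}/E^n$ over $W(k)$ is more elementary in spirit and, for the forward direction, arguably cleaner than the paper's d\'evissage.

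However, your converse direction has a real gap at the sentence ``the Leibniz rule together with topological nilpotence \dots\ allow me to enlarge $M_0$ to a $\Theta$-stable lattice.'' This is precisely the heart of the matter and does not come for free: a $W(k)[1/p]$-linear endomorphism of a finite-dimensional vector space need not preserve any lattice (think of multiplication by $1/p$), so you must actually use the hypothesis that $\Theta^p-(E'(u))^{p-1}\Theta$ is topologically nilpotent to bound $\Theta$. One can make this work---for instance, first choose an $A$-stable lattice $L_0$ for $A:=\Theta^p-(E'(u))^{p-1}\Theta$ (which exists by topological nilpotence), then check that $L:=\sum_{i=0}^{p-1}\Theta^i(L_0)$ is $\Theta$-stable using $\Theta^p=A+(E'(u))^{p-1}\Theta$ and $|E'(u)|\leq 1$, and verify $u$-stability via the Leibniz identity $[\Theta,u]=E(u)$---but none of this is in your write-up. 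You also never reduce to degree $0$, which leaves the phrase ``lattice $M_0\subseteq M$'' ambiguous for a genuine complex and makes the subsequent ``representing this as a bounded complex of finite projective modules'' step unclear (one must check $\Theta$ lifts compatibly). The paper's inductive construction avoids all of this by reducing to $n=1$, where the result is already in the literature.
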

\begin{proof}
    Let $\mathscr{E}\in \Perf(\TXn)[1/p]$, then $\beta(\mathscr{E})=\beta^{+}(\mathscr{E})[1/p]=(\rho^{\ast}\mathscr{E}[1/p], \Theta_{\mathscr{E}})$. By \cref{corot.perfect version of the remain theorem}, $\rho^{\ast}\mathscr{E} \in \Perf(\mathrm{MIC}(\mathfrak{S}[[\frac{E}{p}]]/(\frac{E}{p})^n)$, in particular, it is a perfect \\$\mathfrak{S}[[\frac{E}{p}]]/(\frac{E}{p})^n$-complex, hence $\beta$ is well defined. The full faithfulness follows from \cref{corot.perfect version of the remain theorem} as perfect complexes are compact.
    
    To see that $\Theta^p-(E^{\prime}(u))^{p-1}\Theta$ acts on the cohomology of $M=\rho^{\ast}\mathscr{E}[1/p]$ topologically nilpotently, we do induction on $n$. The base case $n=1$ is due to \cite[Corollary 2.15]{anschutz2022v}. For general $n\geq 2$, the canonical fiber sequence 
    $$\mathscr{E}\otimes i_{[n-1],*}(\mathcal{O}_{\TXM}\{1\})\to \mathscr{E} \to  \mathscr{E}\otimes i_{[1],*}(\mathcal{O}_{\TXV})$$
    and the projection formula implies the existence of a fiber sequence 
    $$M_1 \to \rho^{\ast}\mathscr{E}[1/p] \to M_2$$
    {\small compatible with $\Theta$ for some $M_1\in \beta_{n-1}(\Perf(\TXM)[1/p])$ and $M_2\in \beta_{n-1}(\Perf(\TXV)[1/p])$.}
    
    Utilizing the induced long exact sequence and by induction, we see that the action of the operator $\Theta^p-(E^{\prime}(u))^{p-1}\Theta$ on $H^\ast(\rho^{\ast}\mathscr{E}[1/p])$ is topologically nilpotent.

    Next we verify the description of the essential image. By induction on the amplitude and via considering cones, we reduce to the case that $M$ is concentrated on degree $0$. Now $M$ is just a finite projective $\mathfrak{S}/E^n[1/p]$-module equipped with an operator $\Theta: M\to M$ satisfying the required nilpotence condition and we wish to construct a $\mathfrak{S}[[\frac{E}{p}]]/(\frac{E}{p})^n$-lattice $M_0$ inside $M$ stable under $\Theta$ first.

    For this purpose, we do induction $n$. The base case $n=1$ is \cite[Corollary 2.15]{anschutz2022v}. For $n\geq 2$, notice that there is a short exact sequence
    \[0\to (E/p)\otimes M/(E/p)^{n-1} \xrightarrow[]{f_1} M \xrightarrow[]{f_2} M/(E/p)M\to 0.\]
    After trivialization, the source could be identified with $(M/(E/p)^{n-1},\Theta_M+E\cdot E^{\prime}\Id)$, whose Sen operator still satisfies the desired nilpotence. By induction, we could find a 
    (full rank) $\mathfrak{S}[[\frac{E}{p}]]/(\frac{E}{p})^{n-1}$-lattice $M_1$ inside $(E/p)\otimes M/(E/p)^{n-1}$ and a (full rank) $\mathfrak{S}[[\frac{E}{p}]]/(\frac{E}{p})^{}$-lattice $M_2$ inside $M/(E/p)M$ which are both stable under their Sen operators. Fix a $\mathfrak{S}/E$-basis $\{\bar{e}_1,\cdots,\bar{e}_r\}$ of $M_1$ and pick $e_i\in M$ as the lift of $\bar{e}_i$. Let $M^{\prime}$ be the $\mathfrak{S}[[\frac{E}{p}]]/(\frac{E}{p})^n$-lattice generated by $\{e_1,\cdots,e_r\}$ inside $M$. As $M_2$ is stable under $\Theta$, $\bar{d}_i:= \theta(\bar{e}_i)$ is inside $M_2$. Notice that $M^{\prime} \xrightarrow{f_2} M $ is surjective by our construction, hence we could find lift $d_i$ of $\bar{d}_i$ inside $M^{\prime}$. Let $c_i:= \theta (e_i)-d_i \in M$, then we have that $f_2(c_i)=0$, which implies that $c_i$ lives in the image of $f_1$. Enlarging $M_1$ when necessary, without loss of generality we could assume $f_1(M_1)$ contains all of the $c_i$ for $1\leq i\leq r$. 
    
    Take $M_0=f_1(M_1)+M^{\prime}$ and we claim that $M_0$ is a $\Theta$-stable $\mathfrak{S}[[\frac{E}{p}]]/(\frac{E}{p})^n$-lattice inside $M$  such that $M_0[1/p]=M$. Actually, as $f_1(M_1)$ is $\Theta$-stable, it suffices to notice that for $1\leq i\leq n$, $\Theta(e_i)=c_i+d_i\in M_0$ as both $c_i$ and $d_i$ are in $M_0$ by construction. 

    On the other hand, for any $m\in M$, to see that $m\in M_0[1/p]$, it suffices to show that there exists $j>0$ such that $p^jm\in M_0$. First we can pick $l>0$ such that $p^lf_2(m)\in M_2$ as $M_2[1/p]=f_2(M)$ by our construction of $M_2$. As $M_0\to M_2$ is subjective, we can pick $v\in M_0$ such that $f_2(v)=p^lf_2(m)$. Consequently, $f_2(p^lm-v)=p^lf_2(m)-f_2(v)=0$, which implies that $p^lm-v$ lives in the image of $f_1$. Suppose $p^lm-v=f_1(x)$ and $t>0$ satisfies that $p^tx\in M_1$ (such a $t$ exists by our choice of $M_1$). This implies that $p^{t+l}m=p^t(p^lm-v)+p^tv=p^tf_1(x)+p^tv=f_1(p^tx)+p^tv \in M_0$. We win by taking $j$ to be $t+l$.

    Finally to show that $M_0$ lies in the essential image of $\beta_n^{+}$ in \cref{corot.perfect version of the remain theorem}, we need to check that $\Theta^p-(E^{\prime}(u))^{p-1}\Theta$ is nilpotent on $k\otimes M)$. This property holds for any $\Theta$-stable lattice $N_0$ inside $M$ as topological nilpotence just means usual nilpotence after modulo $(E/p,p)$.
\end{proof}
\begin{remark}
    One should compare this with \cite[Remark 2.28]{liu2023rham}.
\end{remark}

%\subsection{Pro-\'etale realization and comparison with Galois cohomology}

\section{Quasi-coherent complexes on \texorpdfstring{$\TXn$}\quad ~ and \texorpdfstring{$\bdr/\xi^m$-}\quad semilinear Galois representations}
In this section we relate perfect complexes on $\TXn$ (recall that $X=\Spf(\mathcal{O}_K)$) with perfect complexes in $\bdr/\xi^m$-modules on the v-site of $\Spa(K)$ following \cite[Section 4]{anschutz2022v}. 

Let 
\[
  \Spa(K):=\Spa(K,\mathcal{O}_K)
\]
be the diamond associated to $K$, cf.\ \cite[Definition 15.5]{scholze2017etale}, and its $v$-site
\[
  \Spa(K)_v,
\]
cf.\ \cite[Definition 14.1]{scholze2017etale}.
If $\Spa(R,R^+)\in \Spa(K)_v$ is an affinoid perfectoid space over $K$\footnote{Here and in the following we identify the v-site of $K$, which consists of perfectoid spaces $S$ in characteristic $p$ and an untilt $S^\sharp$ over $\Spa(K,\mathcal{O}_K)$, with the site of perfectoid spaces over $\Spa(K,\mathcal{O}_K)$.}, then
\begin{align*}
    &\bdr: \Spa(R,R^+) \mapsto (W((R^{+})^\flat))[\frac{1}{p}])^{\wedge}_{I}
    \\resp. ~&\bdr_{,m}: \Spa(R,R^+) \mapsto W((R^{+})^\flat))/I^m[\frac{1}{p}]
\end{align*}
defines the de Rham period sheaf (resp. $m$-trucated de Rham period sheaf) on $\Spa(K)_v$.

Arguing as the last sentence in \cref{rem.w(k) structure}, we see that the natural morphism $\Spf(R^+)^{\Prism}\rightarrow \Spf(\Ainf(R^+))$ induces a natural morphism 
\[\TRn \to \Spf(\Ainf(R^+)[[\frac{I}{p}]]/(\frac{I}{p})^n).\]
Moreover, it forms a fiber square
\[ \xymatrix{ \TRn \ar[r] \ar[d] & \Spf(R^+)^{\Prism} \ar[d] \\
\Spf(\Ainf(R^+)[[\frac{I}{p}]]/(\frac{I}{p})^n) \ar[r] & \Spf(\Ainf(R^+)) }\]
As the right vertical morphism is an isomorphism by \cite[Example 3.12]{bhatt2022prismatization}, so is the left vertical morphism.

Consequently, the structure morphism $f\colon \Spf(R^+)\to X=\Spf(\mathcal{O}_K)$ induces a natural map $$\Tilde{f}_n: \Spf(\Ainf(R^+)[[\frac{I}{p}]]/(\frac{I}{p})^n)\rightarrow \TXn,$$
from which we get a symmetric monoidal, exact functor
\[
  \alpha_n^{+*}\colon \Perf(\TXn)\to \Perf(\Spa(K)_v, \Ain[[\frac{I}{p}]]/(\frac{I}{p})^n)\to \Perf(\Spa(K)_v, \bdr_{,n}).
\]
Indeed, by definition
\[
  \Perf(\TXn):=\varprojlim\limits_{\Spec(S)\to \TXn} \Perf(S),
\]
where the limit is taken over the category of all (discrete) rings $S$ with a morphism $\Spec(S)\to \TXn$. 
Using the maps $\widetilde{f} \colon \Spf(R^+)=\varinjlim\limits_{n} \Spec(R^+/p^n)\to \mathrm{Spf}(\mathcal{O}_K)^{\rm HT}$, the construction of $\alpha_n^{+*}$ can now be stated as
\[
  (\mathcal{E}_S)_{\Spec(S)\to \mathrm{Spf}(\mathcal{O}_K)^{\rm HT}}\mapsto ((R\varprojlim\limits_{n} \mathcal{E}_{\Spec(R^+/p^n)\to \mathrm{Spf}(\mathcal{O}_K)^{\rm HT}})[1/p])_{\Spa(R,R^+)\to \Spa(K,\mathcal{O}_K)}.
\]
As $p$ is invertible on the target of  $\alpha_n^{+*}$, it induces a functor
\[
  \alpha_n^{*} \colon \Perf(\TXn)[1/p]\to \Perf(\Spa(K)_v, \bdr_{,n}).
\]

\begin{theorem}\label{thm.full faithfulness of pullback}
The functor $\alpha_n^{*} \colon \Perf(\TXn)[1/p]\to \Perf(\Spa(K)_v, \bdr_{,n})$ is fully faithful.
\end{theorem}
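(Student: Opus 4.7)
The plan is to use the symmetric monoidal structure of $\alpha_n^{*}$ to reduce full faithfulness to a cohomology comparison on a generating set of line bundles, and then to induct on $n$ using the $(\mathcal{I}/p)$-adic filtration of $\TXn$, with base case the Hodge--Tate comparison already in the literature.

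Since $\alpha_n^{*}$ is symmetric monoidal and exact, and every $\mathcal{E}, \mathcal{F} \in \Perf(\TXn)[1/p]$ is dualizable, full faithfulness is equivalent to the assertion that for every such pair the natural map
\[\mathrm{R}\Gamma(\TXn, \mathcal{E}^{\vee} \otimes \mathcal{F})[1/p] \longrightarrow \mathrm{R}\Gamma\bigl(\Spa(K)_v, \alpha_n^{*}(\mathcal{E}^{\vee} \otimes \mathcal{F})\bigr)\]
is an equivalence. By \cref{propt.generation}, $\Perf(\TXn)[1/p]$ is generated under shifts, finite colimits, and retracts by the line bundles $(\mathcal{I}/p)^{k}$ for $k \in \mathbb{Z}$, so I would reduce to verifying, for every $k \in \mathbb{Z}$, that the natural map
\[c_{n,k}\colon \mathrm{R}\Gamma(\TXn, (\mathcal{I}/p)^{k})[1/p] \longrightarrow \mathrm{R}\Gamma(\Spa(K)_v, \bdr_{,n}\{k\})\]
is an equivalence, where $\bdr_{,n}\{k\}$ denotes the pullback of $(\mathcal{I}/p)^{k}$ under $\alpha_n^{*}$.

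Next I would induct on $n$. The base case $n=1$ is the Hodge--Tate comparison, equivalent to full faithfulness of $\alpha_1^{*}$, which is established in \cite{anschutz2022v}. For the inductive step, the closed immersion $i_{[n-1]}\colon \TXM \hookrightarrow \TXn$ has ideal sheaf supported on $\TXV = X^{\HT}$, yielding (as in the proof of \cref{commute with colimits}) a fiber sequence
\[i_{[1],*}\bigl((\mathcal{I}/p)^{n-1}|_{\TXV}\bigr) \longrightarrow \mathcal{O}_{\TXn} \longrightarrow i_{[n-1],*}\mathcal{O}_{\TXM}.\]
Tensoring with $(\mathcal{I}/p)^{k}$ and applying the projection formula produces a fiber sequence whose outer terms, upon taking global sections, realize $c_{n,k}$ as the middle map of a map of fiber sequences with outer maps $c_{1, n-1+k}$ and $c_{n-1, k}$; the matching fiber sequence on the $v$-site comes from the $I$-adic filtration of $\bdr_{,n}$, producing $\bdr_{,1}\{n-1+k\} \to \bdr_{,n}\{k\} \to \bdr_{,n-1}\{k\}$. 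The induction then closes via the two-out-of-three property in the stable $\infty$-category.

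The main obstacle is verifying that $\alpha_n^{*}$ actually intertwines the two filtrations, i.e.\ that one obtains a genuine map of fiber sequences rather than merely two compatible triangles. This compatibility is essentially forced by the construction of $\Tilde{f}_n$, since on each affinoid perfectoid $\Spa(R,R^+)$ the line bundle $\mathcal{I}/p$ pulls back tautologically to the ideal generated by a generator of $I/p$ inside $\Ainf(R^+)[[I/p]]/(I/p)^{n}$, and this identification is natural in $\Spa(R,R^+)$. Making this natural transformation precise, and then applying the five lemma to splice $c_{n-1,k}$ with $c_{1,n-1+k}$, constitutes the bulk of the remaining work.
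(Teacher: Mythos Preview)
Your proposal is correct and follows essentially the same strategy as the paper: induction on $n$ with base case $n=1$ taken from \cite{anschutz2022v}, and the inductive step handled by d\'evissage along the $(\mathcal{I}/p)$-adic filtration together with the projection formula. The only cosmetic differences are that the paper works directly with arbitrary $\mathscr{F},\mathscr{G}$ (rather than first reducing to line bundles via \cref{propt.generation}) and uses the fiber sequence $i_{[n-1],*}(\mathcal{O}_{\TXM}\{1\})\to\mathcal{O}_{\TXn}\to i_{[1],*}\mathcal{O}_{\TXV}$ in place of your ``reversed'' one; both choices work equally well, and the compatibility of filtrations you flag as the main obstacle is indeed routine from the construction of $\Tilde{f}_n$.
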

\begin{proof}
    We prove by induction on $n$. For $n=1$, this is \cite[Theorem 4.2]{anschutz2022v} thanks to \cref{remt.compatibility with LB}. For $n\geq 2$, let $\mathscr{F}, \mathscr{G}\in \Perf(\TXn)$, then tensoring $\mathscr{G}$ with the fiber sequence 
    \begin{equation*}\label{equat.structure exact}
        i_{[n-1],*}(\mathcal{O}_{\TXM}\{1\}) \rightarrow \mathcal{O}_{\TXn} \rightarrow i_{[1],*}(\mathcal{O}_{\TXV})
    \end{equation*}
    induces a fiber sequence 
    $$\mathscr{G}\otimes i_{[n-1],*}(\mathcal{O}_{\TXM}\{1\})\to \mathscr{G} \to  \mathscr{G}\otimes i_{[1],*}\mathcal{O}_{\TXV}.$$
    Applying the projection formula, we have that
    \begin{align*}
        &\Hom_{\Perf(\TXn)}(\mathcal{F}, \mathscr{G}\otimes i_{[n-1],*}(\mathcal{O}_{\TXM}\{1\}))[1/p]
        \\=&\Hom_{\Perf(\TXM)}(i^{[n-1],*}\mathcal{F}, i^{[n-1],*} \mathscr{G}\otimes \mathcal{O}_{\TXM}\{1\})[1/p]
        \\=&\Hom_{\Perf(\Spa(K)_v, \bdr_{,n-1})}(\alpha_{n-1}^{*}i^{[n-1],*}\mathcal{F},\alpha_{n-1}^{*}(i^{[n-1],*} \mathscr{G}\{1\}))
        \\=&\Hom_{\Perf(\Spa(K)_v, \bdr_{,n})}(\alpha_{n}^{*}\mathcal{F},\alpha_{n}^{*}(\mathscr{G})\otimes i_{[n-1],*}(\bdr_{n-1}\{1\})).
    \end{align*}
    Similarly, 
    \begin{align*}
        \Hom_{\Perf(\TXn)}(\mathcal{F}, \mathscr{G}\otimes i_{[1],*}\mathcal{O}_{\TXV})[1/p]
        =\Hom_{\Perf(\Spa(K)_v, \bdr_{,n})}(\alpha_{n}^{*}\mathcal{F},\alpha_{n}^{*}\mathscr{G}\otimes i_{[1],*}(\bdr_{,1}))
    \end{align*}
    Then the desired result follows by induction using the fiber sequence $$\alpha_{n}^{*}(\mathscr{G})\otimes i_{[n-1],*}(\bdr_{n-1}\{1\}) \to \alpha_{n}^{*}(\mathscr{G})\to \alpha_{n}^{*}\mathscr{G}\otimes i_{[1],*}(\bdr_{,1})$$
\end{proof}
\begin{remark}\label{remt.factor through beta}
    \begin{itemize}
        \item The above construction and theorem works if we replace \\$\Perf(\TXn)[1/p]$ with $\Perf(X^{\Prism}_n)[1/p]$.
        \item Clearly $\alpha_n^{*}$ factors through $\beta_n$ in \cref{cor.analytic-variant}.
    \end{itemize}
\end{remark}

 We need several preliminaries to describe the essential image of $\alpha_n^{*}$. 
 \begin{proposition}\label{propt.relate with representations}
     The category of perfect complexes with coefficients in $\bdr_{,m}$ on $\Spa(K)_v$ is equivalent to the category of continuous semilinear representations of $G_K$ on
perfect complexes of $B_{\dR,n}^+$-modules, here $B_{\dR,n}^+:=\bdr_{,m}(C)$. 
 \end{proposition}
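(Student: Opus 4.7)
The plan is to descend along the canonical pro-\'etale cover $\Spa(C)\to \Spa(K)$, which is a $G_K$-torsor in the $v$-site, combined with a d\'evissage in $m$.

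First I would invoke $v$-descent for perfect complexes of $\bdr_{,m}$-modules along the cover $\Spa(C)\to \Spa(K)$. This expresses $\Perf(\Spa(K)_v,\bdr_{,m})$ as the totalization of the cosimplicial $\infty$-category obtained by evaluating $\Perf(-_v,\bdr_{,m})$ on the \v{C}ech nerve of this cover. Because $\Spa(C)\to \Spa(K)$ is a $G_K$-torsor, its \v{C}ech nerve (as a $v$-sheaf) is identified with the bar construction encoding the continuous action of $G_K$ on $\Spa(C)$, so the totalization recovers the $\infty$-category of continuous $G_K$-equivariant objects in $\Perf(\Spa(C)_v,\bdr_{,m})$.

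Next I would prove the local identification $\Perf(\Spa(C)_v,\bdr_{,m})\simeq \Perf(B_{\dR,m}^+)$. For $m=1$, $\bdr_{,1}\cong \widehat{\mathcal{O}}$ and I would appeal to the standard fact that $\Perf(\Spa(C)_v,\widehat{\mathcal{O}})\simeq \Perf(C)$, which is essentially recorded in the $m=1$ case of \cite[Theorem~4.2]{anschutz2022v} (and ultimately rests on \cite{scholze2017etale}). For general $m$ I would argue by induction using the short exact sequence of $v$-sheaves
\[
0 \longrightarrow \xi^{m-1}\bdr_{,1} \longrightarrow \bdr_{,m} \longrightarrow \bdr_{,m-1} \longrightarrow 0,
\]
together with two-out-of-three for fiber sequences of perfect complexes applied to the associated fiber sequence on $\Perf(\Spa(C)_v,-)$.

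Combining these two steps yields the desired equivalence, since the descent data becomes a continuous semilinear $G_K$-action on an object of $\Perf(B_{\dR,m}^+)$. The main obstacle I anticipate is verifying that the descent really encodes a \emph{continuous} $G_K$-action in the topological sense demanded by the right-hand side: the \v{C}ech nerve of $\Spa(C)\to \Spa(K)$ must be checked to correspond to continuous cochains on $G_K$ rather than to abstract set-theoretic cochains. For $m=1$ this continuity is already established in \cite[Theorem~4.2]{anschutz2022v}; my plan is to transfer it to general $m$ through the d\'evissage above, using that each successive extension involves only the twist $\bdr_{,1}\{k\}$, whose continuity behavior is inherited from the base case.
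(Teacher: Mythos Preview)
Your proposal is correct and takes essentially the same approach as the paper: reduce to $m=1$ by a standard d\'evissage using the filtration of $\bdr_{,m}$ by powers of $\xi$, and invoke a known result for the base case. The paper's proof is extremely terse---it cites \cite[Theorem~2.1]{anschutz2021fourier} for $m=1$ (rather than \cite[Theorem~4.2]{anschutz2022v} as you do) and then says only ``standard d\'evissage'' for general $m$---so your write-up is in fact a more detailed version of the same argument, including the explicit identification of descent along $\Spa(C)\to\Spa(K)$ with continuous $G_K$-equivariance.
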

 \begin{proof}
     For $m=1$, this is \cite[Theorem 2.1]{anschutz2021fourier}. For general $n$, we need to check the $v$-descent of perfect complexes with coefficients in $\bdr_{,m}$ on perfectoid spaces, which could be reduced to \cite[Theorem 2.1]{anschutz2021fourier} by a standard d\'evissage.
 \end{proof}

 We need the following definition of nearly de Rham representations motivated from \cite[Notation 1.8]{liu2023rham}:
 \begin{definition}\label{not.nearly}
     $M\in \Perf(\bdr_{,n}(C))$ is called \textit{nearly de Rham} if all of the cohomology groups of $M\otimes_{\bdr_{,n}(C)}^{\mathbb{L}} C$ are nearly Hodge-Tate, i.e. all of the Sen weights of $H^{i}(M\otimes_{\bdr_{,n}(C)}^{\mathbb{L}} C)$ are in the subset $\mathbb{Z}+E^{\prime}(\pi)^{-1}\mathfrak{m}_{\mathcal{O}_{\overline{K}}}$.
 \end{definition}
 \begin{proposition}\label{prop.nearly de Rham image}
     Let $M$ be a perfect complex of $\bdr_{,n}(C)$-modules equipped with a (continuous) semilinear $G_K$ action. Then under the equivalence in \cref{propt.relate with representations}, $M$ lies in the essential image of $\alpha_n^{*}$ if and only if $M$ is nearly de Rham 
     in the sense of Notation \ref{not.nearly}.
\end{proposition}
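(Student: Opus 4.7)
The plan is to argue both directions by induction on $n$. The base case $n=1$ is \cite[Theorem 1.3]{anschutz2022v} of Anschütz--Heuer--Le Bras, which characterizes the essential image of $\alpha_1^*$ as the nearly Hodge-Tate representations; this applies to our setup thanks to the Sen-operator compatibility established in \cref{remt.compatibility with LB}.

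For necessity, I would use the fiber sequence
\[
i_{[n-1],*}(\mathcal{O}_{\TXM}\{1\}) \to \mathcal{O}_{\TXn} \to i_{[1],*}(\mathcal{O}_{\TXV})
\]
from the proof of \cref{thm.full faithfulness of pullback} to express any $\mathcal{E} \in \Perf(\TXn)[1/p]$ as an iterated extension whose graded pieces live on $\TXV \cong X^{\HT}$. Applying $\alpha_n^*$ and then $- \otimes^{L}_{\bdr_{,n}(C)} C$ produces a corresponding filtration on $\alpha_n^*(\mathcal{E}) \otimes^L C$ whose associated gradeds are in the image of $\alpha_1^*$ and hence have nearly Hodge-Tate cohomology by the base case. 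Since the Sen weights on cohomology are detected by those on the associated gradeds (the continuous semilinear $G_K$-action of \cref{propt.relate with representations} is compatible with the filtration), one concludes that $H^\ast(\alpha_n^*(\mathcal{E}) \otimes^L C)$ has all Sen weights in $\mathbb{Z} + E'(\pi)^{-1}\mathfrak{m}_{\mathcal{O}_{\overline{K}}}$, i.e.\ $\alpha_n^*(\mathcal{E})$ is nearly de Rham.

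For sufficiency, given nearly de Rham $M$, set $M_k := M \otimes^{L}_{\bdr_{,n}} \bdr_{,k}$. Then $M_{n-1}$ is nearly de Rham in the $(n{-}1)$-truncated sense, so by induction $M_{n-1} \simeq \alpha_{n-1}^*(\mathcal{E}_{n-1})$ for some $\mathcal{E}_{n-1} \in \Perf(\TXM)[1/p]$. The fiber of $M \to M_{n-1}$ is $M_1\{n-1\}$ with $M_1$ nearly Hodge-Tate, so $M_1 \simeq \alpha_1^*(\mathcal{F}_1)$ for some $\mathcal{F}_1 \in \Perf(X^{\HT})[1/p]$. The extension $M \to M_{n-1}$ is classified by a class in $\Ext^1_{\bdr_{,n}}(M_{n-1}, i_{[1],*}M_1\{n-1\})$, and the full faithfulness from \cref{thm.full faithfulness of pullback} provides an injection
\[
\Ext^1_{\Perf(\TXn)[1/p]}(\mathcal{E}_{n-1}, i_{[1],*}\mathcal{F}_1\{n-1\}) \hookrightarrow \Ext^1_{\bdr_{,n}}(M_{n-1}, M_1\{n-1\}).
\]
The task is to show that the class of $M$ lies in the image of this map, which would produce a lift $\mathcal{E}_n \in \Perf(\TXn)[1/p]$ with $\alpha_n^*(\mathcal{E}_n) \simeq M$.

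The main obstacle is establishing this surjectivity on nearly de Rham classes. I would approach it via the Sen-theoretic dictionary of \cref{thmt.main classification} and \cref{cor.analytic-variant}: lifting $\mathcal{E}_{n-1}$ to $\TXn$ is equivalent to producing a Sen operator $\Theta$ on a suitable $\mathfrak{S}[[E/p]]/(E/p)^n[1/p]$-module lift of $\rho^*\mathcal{E}_{n-1}$, satisfying Leibniz and the topological nilpotence of $\Theta^p - (E'(u))^{p-1}\Theta$ on cohomology. The nearly de Rham hypothesis is precisely the Sen-weight condition that ensures this nilpotence: the characteristic values of $\Theta$ on $H^\ast(M \otimes^L C)$ are (up to topologically nilpotent perturbation) integer multiples of $E'(\pi)$, and $(kE'(\pi))^p - (E'(\pi))^{p-1}(kE'(\pi)) = (k^p-k)(E'(\pi))^p \equiv 0 \pmod p$. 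As a fallback in case the Sen-theoretic matching proves delicate, one can pass to a sufficiently large finite Galois extension $L/K$ over which the Sen weights become honestly integral, construct the lift explicitly on $\TXL$ using the classical Fontaine-theoretic Sen operator on $B_{\dR,n}^+$-representations of $G_L$, and then descend $\Gal(L/K)$-equivariantly using the full faithfulness from \cref{thm.full faithfulness of pullback}.
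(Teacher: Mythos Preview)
The necessity direction is fine, and the inductive skeleton for sufficiency is natural. However, there is a genuine gap in the sufficiency argument. Your plan is to lift the extension class of $M_1\{n-1\} \to M \to M_{n-1}$ via the full faithfulness of $\alpha_n^*$ (\cref{thm.full faithfulness of pullback}). But that full faithfulness is stated on $\Perf(\TXn)[1/p]$, whereas the objects $i_{[n-1],*}\mathcal{E}_{n-1}$ and $i_{[1],*}\mathcal{F}_1\{n-1\}$ are \emph{not} perfect over $\TXn$: pushforward along the nilpotent closed immersion $i_{[k]}$ has infinite periodic Tor-amplitude, so these pushforwards do not lie in the category where full faithfulness is available. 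Thus you cannot form, let alone compare, the $\Ext^1$-groups you write down inside $\Perf(\TXn)[1/p]$. Your Sen-theoretic suggestion does produce \emph{some} lift of $\mathcal{E}_{n-1}$ to level $n$, but gives no mechanism for matching the specific $G_K$-equivariant extension class of $M$. And the fallback of passing to a finite extension $L/K$ is circular: it would only yield an object of $\Perf([\TXL/\Gal(L/K)])[1/p]$, which is a different category from $\Perf(\TXn)[1/p]$ for $X=\Spf(\mathcal{O}_K)$; showing these have the same image is exactly \cref{thm.main p adic RH correspondencd}, which uses the present proposition.

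The paper's proof supplies the missing idea. After reducing to $M$ concentrated in degree $0$, one first uses \cref{cor.analytic-variant} to build an \emph{arbitrary} lift $\mathscr{F}\in\Vect(\TXn)[1/p]$ of $Z=\mathcal{E}_{n-1}$: choose any $\mathfrak{S}/E^n[1/p]$-lattice in $M$ lifting $\rho^*_{n-1}Z[1/p]$ and any Sen-operator extension on it. By construction $\alpha_n^*(\mathscr{F})\cong M$ as $B_{\dR,n}^+$-modules, though not $G_K$-equivariantly, so the difference $[\alpha_n^*(\mathscr{F})]-[M]$ lands in the subgroup $\Ext^1_{0,G_K}(M/E^{n-1},E^{n-1}M)$ of equivariant extensions that split over $B_{\dR,n}^+$. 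The crucial observation is that this ``module-split'' subgroup, on both the prismatic and the Galois side, depends only on the level-$1$ restrictions (any map from an $E^{n-1}$-torsion module to an $E$-torsion module factors through the mod-$E$ quotient), and at level $1$ the comparison is the base case. This two-step maneuver---produce an auxiliary lift, then correct by a module-split class reducible to $n=1$---is what closes the induction and is absent from your outline.
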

\begin{proof}
    By induction on the amplitude and via considering cones, we reduce to the case that $M$ is concentrated on degree $0$. In other words, we need to show that if $M\in \Rep_{B_{\dR,n}^+}^{\fp}(G_K)$ is nearly de Rham (i.e. $M/E$) is nearly Hodge-Tate, then $M$ lives in the essential image of $\alpha_n^{*}$. We do this by induction on $n$ imitating Fontaine's proof in \cite[Theorem 3.6]{fontaine2004arithmetique}. When $n=1$, this is \cite[Lemma 4.6]{anschutz2022v}. For $n\geq 2$, suppose we have shown results up to $n-1$. Then for a nearly de Rham $M\in \Rep_{B_{\dR,n}^+}^{\fp}(G_K)$, we have the following short exact sequence 
    \[0\to E^{n-1}M\to M\to M/E^{n-1}\to 0.\]
    Without loss of generality, by induction we assume $E^{n-1}M=\alpha_{1}^{*}(Y)$ and that $M/E^{n-1}=\alpha_{n-1}^{*}(Z)$ for some $Y\in \Vect(\TXV)[1/p]$ and $Z\in \Vect(\TXM)[1/p]$. We wish to construct an extension $\mathscr{G}$ of $Z$ by $Y$ in $\Vect(\TXn)[1/p]$ such that $\alpha_{n}^{*}(\mathscr{G})=M$.

    For this purpose, first notice that we could find $\mathscr{F} \in \Vect(\TXn)[1/p]$ such that $i_{[n-1]}^*\mathscr{F}\cong Z$ and that $\alpha_{n}^{*}(\mathscr{F})\cong M$ as a finite projective $B_{\dR,n}^+$-module (but might not be $G_K$-equiavriant). Actually, by \cref{cor.analytic-variant}, it suffices to find an object $F$ in $\Vect(\mathrm{MIC}(\mathfrak{S}/E^{n}[1/p]))$  lifting $\beta_{n-1}(Z)=(\rho_{n-1}^*Z[1/p],\Theta_Z) \in \Vect(\mathrm{MIC}(\mathfrak{S}/E^{n-1}[1/p]))$ satisfying that $F\subseteq M$ is a full rank $\mathfrak{S}/E^{n}[1/p]$-lattice (then $F\otimes_{\mathfrak{S}/E^{n}[1/p]} B_{\dR,n}^+=M$ as a $B_{\dR,n}^+$-module). This could be done as follows: 
    first we pick a $\mathfrak{S}/E^{n-1}[1/p]$-basis of $\rho_{n-1}^*Z[1/p]$, as \cref{remt.factor through beta} implies that $$\rho_{n-1}^*Z[1/p]\otimes_{\mathfrak{S}/E^{n-1}[1/p]} B_{\dR,n-1}^+=\alpha_{n-1}^{*}(Z)=M/E^{n-1},$$
    such a basis also forms a basis of $M/E^{n-1}$, which could be lifted to a basis $\{e_i\}\in M$. Then we consider the $\mathfrak{S}/E^{n}[1/p]$-module generated by $\{e_i\}$ inside $M$, denoted as $N_0$. Equip $N_0$ with any Sen operator $\Theta_{N_0}$ lifting that on $N_0\otimes_{\mathfrak{S}/E^{n}[1/p]}\mathfrak{S}/E^{n-1}[1/p]=\rho_{n-1}^*Z[1/p]$, then $F=N_0$ satisfies the desired properties.

    Under \cref{cor.analytic-variant}, such a pair $(N_0,\Theta_{N_0})$ determines an object in \\$\Vect(\TXn)[1/p]$, denoted as $\mathscr{F}$. Then $\alpha_{n}^{*}(\mathscr{F})\cong M$ (as a finite projective $B_{\dR,n}^+$-module, but might not be $G_K$-equaivariant) actually induces $G_K$-equaivariant isomrphisms $$\alpha_{n}^{*}(\mathscr{F})/E^{n-1}\cong M/E^{n-1},\quad E^{n-1} \alpha_{n}^{*}(\mathscr{F})\cong E^{n-1}M.$$ 
    
    Now as the $G_K$-structure on $M/E^{n-1}$ is determined by $Z$, or equivalently, by $\beta_{n-1}(Z)=(\rho_{n-1}^*Z[1/p],\Theta_Z)$ and that the $G_K$-structure on $\alpha_{n}^{*}(\mathscr{F})/E^{n-1}$ is determined by $i_{[n-1]}^*(\mathcal{F})\cong Z$, the desired $G_K$-equivariance follows.  

    Consequently, $[\alpha_{n}^{*}(\mathscr{F})]$ determines a class in $\Ext_{G_K}^1(M/E^{n-1},E^{n-1}M)$, the extension group in $\Rep_{B_{\dR,n}^+}^{\mathrm{fg}}(G_K)$ classifying $G_K$-equaivariant extensions of $E^{n-1}M$ by $M/E^{n-1}$. Moreover, the isomorphism $\alpha_{n}^{*}(\mathscr{F})\cong M$ implies the vanishing of $[\alpha_{n}^{*}(\mathscr{F})]-[M]$ in $\Ext_{B_{\dR,n}^+}^1(M/E^{n-1},E^{n-1}M)$, the extension group classifying extensions of $E^{n-1}M$ by $M/E^{n-1}$ as finite generated $B_{\dR,n}^+$-modules, hence $$[\alpha_{n}^{*}(\mathscr{F})]-[M] \in \Ext_{0,G_K}^1(M/E^{n-1},E^{n-1}M),$$ the subgroup of $\Ext_{G_K}^1(M/E^{n-1},E^{n-1}M)$ consisting of those extensions split viewed as extensions of $B_{\dR,n}^+$-modules (but the splitting might not respect the $G_K$-action). 
    
    Similarly, one could define $\Ext_{\Prism}^1(Z,Y)$ as the extension group in $\Qcoh(\TXn)[1/p]$ classifying extensions of $Z$ by $Y$, both viewed as in the category of the isogeny category of (non-derived) quasi-coherent sheaves on $\TXn$ via pushforwards and $\Ext_{\Mod}^1(Z,Y)$ to be the extension group in $\Mod(\mathfrak{S}/E^n[1/p])$ classifying extensions of $\rho_{n-1}^*Z[1/p]$ by $\rho_{1}^*Z[1/p]$,
    both viewed as $\mathfrak{S}/E^n[1/p]$-modules. Finally, we define $\Ext_{0, \Prism}^1(Z,Y)$ as the subgroup classifying those extensions split after pullback along $\rho_n^*$. Then we have the following diagram
    \[\xymatrixcolsep{0.8pc}\xymatrix{\Ext_{0, \Prism}^1(Z,Y)\ar[r] \ar[d]& \Ext_{\Prism}^1(Z,Y)\ar[r]\ar[d]&\Ext_{\Mod}^1(Z,Y)\ar[d]\\\Ext_{0, G_K}^1(M/E^{n-1},E^{n-1}M)\ar[r]& \Ext_{G_K}^1(M/E^{n-1},E^{n-1}M)\ar[r]&\Ext_{B_{\dR,n}^+}^1(M/E^{n-1},E^{n-1}M)}\]
    which is exact in the middle of each row. Also, the first arrow in each row is injective.

    Then we claim that the first vertical map is bijective. Actually, \cref{cor.analytic-variant} implies that $\Ext_{0, \Prism}^1(Z,Y)=\Ext_{0, \Prism}^1(i_1^*Z,Y)$ essentially due to the fact that $$\hom_{\Mod(\mathfrak{S}/E^n[1/p])}(\rho_{n-1}^*Z[1/p],\rho_{1}^*Y[1/p])=\hom_{\Mod(\mathfrak{S}/E[1/p])}(\rho_{n-1}^*Z[1/p]/E,\rho_{1}^*Y[1/p]).$$

    Similarly, $\Ext_{0, G_K}^1(M/E^{n-1},E^{n-1}M)=\Ext_{0, G_K}^1(M/E,E^{n-1}M)$. But now as the statements hold for $n=1$, we see that $\Ext_{0, \Prism}^1(i_1^*Z,Y)=\Ext_{0, G_K}^1(M/E,E^{n-1}M)$, hence the first vertical map is bijective.
    
    Combining the bijection of the first vertical map and that $[\alpha_{n}^{*}(\mathscr{F})]-[M] \in \Ext_{0,G_K}^1(M/E^{n-1},E^{n-1}M)$, we see that $M$ lies in the essential image of $\alpha_{n}^{*}$, we win.
\end{proof}

As a by-product of \cref{propt.relate with representations} and the full faithfulness of the functor passing from de Rham prismatic crystals to nearly de Rham representations obtained in \cite{liu2023rham}, we obtain the following classification of (truncated) de Rham prismatic crystals in perfect complexes.
\begin{corollary}\label{cort.compare with de Rham}
Let $n\in \mathbb{N}$. Then the base change functor $\ocn \to \bdr_{,n}$ induces an equivalence of categories 
    $$\Perf(\TXn)[1/p] \stackrel{\sim}{\longrightarrow} \Perf((\mathcal{O}_{K})_{\Prism}, \bdr_{,n}).$$
In particular, we get the following two equivalent descriptions of de Rham prismatic crystals in perfect complexes:
\begin{itemize}
    \item The category of complexes $M\in \Perf(\mathrm{MIC}(\mathfrak{S}/E^n[1/p]))$ such that $H^\ast(M)$ is finite dimensional over $\mathfrak{S}/E^n[1/p]$ and the action of $\Theta^p-(E^{\prime}(u))^{p-1}\Theta$ on  $H^\ast(M)$ is topologically nilpotent.
    \item The category of $n$-truncated nearly de Rham perfect complexes, i.e. perfect complexes $M$ of $B_{\dR,n}^+$-modules equipped with a (continuous) semilinear $G_K$ action all of the cohomology groups of $M\otimes^L_{B_{\dR,n}^+} C$ are nearly Hodge-Tate representations of $G_K$. 
\end{itemize}
When passing to the inverse limit, we see that $\Perf((\mathcal{O}_{K})_{\Prism}, \bdr_{})$ is equivalent to the following two categories:
\begin{itemize}
    \item The category of complexes $M\in \Perf(\mathrm{MIC}(\bdr(\mathfrak{S})))$ such that $H^\ast(M)$ is finite dimensional over $\bdr(\mathfrak{S})$ and the action of $\Theta^p-(E^{\prime}(u))^{p-1}\Theta$ on  $H^\ast(M)$ is topologically nilpotent (with respect to the $(p,E)$-adic topology).
    \item The category of nearly de Rham perfect complexes, i.e. perfect complexes $M$ of $B_{\dR}^+$-modules equipped with a (continuous) semilinear $G_K$ action all of the cohomology groups of $M\otimes^L_{B_{\dR}^+} C$ are nearly Hodge-Tate representations of $G_K$.
\end{itemize}
\end{corollary}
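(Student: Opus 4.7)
The plan is to factor the already-known fully faithful pullback $\alpha_n^{*}$ through the category of de Rham prismatic crystals and deduce the equivalence by matching essential images on both sides. The base change $\ocn \to \bdr_{,n}$ on $(\mathcal{O}_K)_{\Prism}$ (noting that $\ocn[1/p]$ evaluated on any prism $(A,I)$ is naturally identified with $A/I^n[1/p]$) yields a factorization
\[
    \Perf(\TXn)[1/p] \xrightarrow{\Phi_n} \Perf((\mathcal{O}_K)_{\Prism}, \bdr_{,n}) \xrightarrow{\Psi_n} \Perf(\Spa(K)_v, \bdr_{,n})
\]
with $\alpha_n^{*} = \Psi_n \circ \Phi_n$. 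By \cref{thm.full faithfulness of pullback} the composite $\alpha_n^{*}$ is fully faithful, and by \cref{prop.nearly de Rham image} its essential image is exactly the full subcategory of nearly de Rham perfect complexes. The full faithfulness result from \cite{liu2023rham}, originally proved for rational vector bundles, would be extended to $\Psi_n$ on perfect complexes by dévissage on homological amplitude, and the same dévissage identifies its essential image with the nearly de Rham perfect complexes. Since $\alpha_n^{*}$ and $\Psi_n$ then share the same essential image inside $\Perf(\Spa(K)_v, \bdr_{,n})$, $\Phi_n$ must be essentially surjective, and full faithfulness of $\Phi_n$ is automatic from that of $\alpha_n^{*}$ and $\Psi_n$.

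For the two explicit descriptions at finite $n$, I would combine this equivalence with the previous results. The MIC description comes from \cref{cor.analytic-variant} via $\beta_n \circ \Phi_n^{-1}$, using that perfect complexes on $\TXn$ after inverting $p$ land in perfect complexes of $\mathfrak{S}/E^n[1/p]$-modules with the required topological nilpotence of $\Theta^p-(E'(u))^{p-1}\Theta$ on cohomology. The Galois-representation description is immediate from \cref{propt.relate with representations} followed by \cref{prop.nearly de Rham image}. For the inverse-limit statements, I would take $\varprojlim_n$ of the finite-level equivalences, using that $\bdr = \varprojlim \bdr_{,n}$ on the $v$-site and the analogous limit description of $\mathcal{O}_{\Prism}[[\mathcal{I}/p]]$ on $(\mathcal{O}_K)_{\Prism}$; that perfect complexes of bounded amplitude together with the MIC/Galois descriptions assemble correctly under such limits; and that the topological nilpotence condition with respect to the $(p,E)$-adic topology is precisely what emerges from the finite-level nilpotence conditions upon passage to the limit.

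The main expected obstacle is extending the \cite{liu2023rham} full faithfulness of $\Psi_n$ from rational vector bundles to arbitrary perfect complexes. A dévissage on homological amplitude, using that nearly de Rham / nearly Hodge-Tate conditions are preserved under extensions (hence under cones) in a stable $\infty$-category, should reduce the perfect-complex statement to the already-established vector-bundle case. A secondary check, which I expect to be routine, is the verification that base change along $\ocn \to \bdr_{,n}$ commutes with the evaluation of prismatic crystals at perfect prisms and thus intertwines the two $v$-site realizations in a way compatible with cones.
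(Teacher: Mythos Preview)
Your proposal is correct and follows essentially the same route as the paper: both factor $\alpha_n^{*}$ through $\Perf((\mathcal{O}_K)_{\Prism}, \bdr_{,n})$, invoke the full faithfulness result of \cite{liu2023rham} for the second leg $\Psi_n$ (the paper phrases the passage from vector bundles to perfect complexes via $t$-structures on both sides, citing \cite{guo2023frobenius}, which is exactly your ``d\'evissage on homological amplitude''), and then deduce that $\Phi_n$ is an equivalence from the already established equivalence of $\alpha_n^{*}$ onto the nearly de Rham locus. The only cosmetic difference is that the paper needs merely the containment of the image of $\Psi_n$ in the nearly de Rham subcategory rather than equality of essential images, but your formulation is equally valid and the remaining deductions (the two explicit descriptions via \cref{cor.analytic-variant} and \cref{prop.nearly de Rham image}, and the passage to the inverse limit) match the paper's.
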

\begin{proof}
 It suffices to prove the equivalence in the first sentence, then the desired results follow from \cref{cor.analytic-variant,thm.full faithfulness of pullback,prop.nearly de Rham image}. By \cref{remt.factor through beta}, we have the following commutative diagram 
 \[\begin{tikzcd}[cramped, sep=large]
\Perf(\TXn)[1/p] \arrow[rd, "\alpha_{n}^{*}"] \arrow[r, ""] & \Perf((\mathcal{O}_{K})_{\Prism}, \bdr_{,n}) \arrow[d, ""]\\
& \{n\text{-truncated nearly de Rham perfect complexes}\}
\end{tikzcd}\]
Notice that the usual
truncations equip the target of the right vertical map with a t-structure whose heart is the usual category of continuous
semilinear representations of $G_K$ on finite dimensional $B_{\dR,n}^+$-vector spaces satisfying the nearly de Rham condition. Moreover, $\Perf((\mathcal{O}_{K})_{\Prism}, \bdr_{,n})$ is also equipped with a $t$-structure whose heart is just $\Vect((\mathcal{O}_{K})_{\Prism}, \bdr_{,n})$ (for a detailed study of $t$-structures on prismatic crystals, one could see \cite[Section 2]{guo2023frobenius}). When restricted to the heart, the right vertical map is fully faithful by a slightly variant version of \cite[Theorem 5.12]{liu2023rham} (the statement there is for de Rham prismatic crystals, but its proof still works for truncated de Rham prismatic crystals), hence so is the right vertical map.

As we have shown $\alpha_{n}^{*}$ is an equivalence in \cref{thm.full faithfulness of pullback,prop.nearly de Rham image}, we see the horizontal morphism is also an equivalence of categories.
\end{proof}

 Finally we combine all of the ingredients in this section to get a description of $\Perf(\Spa(K)_v, \bdr_{,n})$, generalizing \cite[Theorem 4.9]{anschutz2022v}.
 \begin{theorem}\label{thm.main p adic RH correspondencd}
      Let $n\in \mathbb{N}$. For any finite Galois extension $L/K$ the functor
  \[
   \alpha_{n,L}^{*}\colon \Perf(\TXL)[1/p]\to \Perf(\Spa (L)_v, \bdr_{,n})
  \]
  is fully faithful and induces a fully faithful functor
  \[
   \alpha_{n,L/K}^{*} \colon \Perf([\TXL/{\Gal(L/K)}])[1/p] \to \Perf(\Spa(K)_v, \bdr_{,n})
  \]
  on $\Gal(L/K)$-equivariant objects. 
Each $\mathcal{E}\in \Perf(\Spa(K)_v, \bdr_{,n})$ lies in the essential image of some $\alpha_{n,L/K}^{*}$. 
Consequently, we get an equivalence
  \[
   2\text{-}\varinjlim\limits_{L/K} \Perf([\TXL/\mathrm{Gal}(L/K)])[1/p] \cong \Perf(\mathrm{Spa}(K)_v, \bdr_{,n}),
   \]
   where $L$ runs over finite Galois extensions  of $K$ contained in $\overline{K}$. 

   %Moreover, when $n\leq 1+\frac{p-1}{e}$, all of the above results holds if we replace 
 \end{theorem}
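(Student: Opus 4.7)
The plan is to deduce \cref{thm.main p adic RH correspondencd} from the tools already assembled in this section, namely \cref{thm.full faithfulness of pullback} (full faithfulness of $\alpha_{n}^{*}$), \cref{propt.relate with representations} (the $v$-site/Galois representations dictionary), and \cref{prop.nearly de Rham image} (essential image as nearly de Rham complexes), in the spirit of \cite[Theorem 4.9]{anschutz2022v}. The first assertion is immediate: applying \cref{thm.full faithfulness of pullback} with $L$ in place of $K$ yields full faithfulness of $\alpha_{n,L}^{*}$. For the $\Gal(L/K)$-equivariant version, I would use that $[\TXL/\Gal(L/K)]$ is the stacky quotient, so $\Perf([\TXL/\Gal(L/K)])[1/p]$ identifies with $\Gal(L/K)$-equivariant objects of $\Perf(\TXL)[1/p]$. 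Symmetrically, via \cref{propt.relate with representations}, a continuous semilinear $G_K$-representation on a perfect $B_{\dR,n}^{+}$-complex is the same as a continuous $G_L$-representation equipped with compatible $\Gal(L/K)$-descent data. Full faithfulness of $\alpha_{n,L/K}^{*}$ then follows from that of $\alpha_{n,L}^{*}$ by taking $\Gal(L/K)$-fixed points on mapping spaces.

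For essential surjectivity in the $2$-colimit, given $\mathcal{E}\in \Perf(\Spa(K)_v, \bdr_{,n})$, I would view it via \cref{propt.relate with representations} as a continuous semilinear $G_K$-representation $M$ on a perfect complex of $B_{\dR,n}^{+}$-modules. By \cref{prop.nearly de Rham image}, it then suffices to find a finite Galois extension $L/K$ such that $M|_{G_L}$ is nearly de Rham over $L$, i.e. the Sen weights of each $H^{i}(M\otimes^{\mathbb{L}}_{B_{\dR,n}^{+}} C)$ lie in $\mathbb{Z}+E_{L}'(\pi_{L})^{-1}\mathfrak{m}_{\mathcal{O}_{\overline{K}}}$. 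Once this is achieved, \cref{prop.nearly de Rham image} (applied to $L$) supplies some $\mathcal{F}\in \Perf(\TXL)[1/p]$ with $\alpha_{n,L}^{*}\mathcal{F}\simeq M|_{G_L}$, and transporting the $\Gal(L/K)$-equivariance of $M$ through the full faithfulness of $\alpha_{n,L}^{*}$ produces $\Gal(L/K)$-equivariant descent data on $\mathcal{F}$, hence an object of $\Perf([\TXL/\Gal(L/K)])[1/p]$ whose image is $\mathcal{E}$. Combining this with the previous step yields the $2$-colimit equivalence.

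The main obstacle is establishing the existence of such an $L$. The Sen weights of the $C$-semilinear $G_K$-representation $V:=M\otimes^{\mathbb{L}}_{B_{\dR,n}^{+}} C$ form a finite subset of $\overline{K}$; enlarging $L$ inside $\overline{K}$, I may assume $L$ contains all of them. It then remains to ensure that each such weight differs from a rational integer by an element of $E_{L}'(\pi_{L})^{-1}\mathfrak{m}_{\mathcal{O}_{\overline{K}}}$. Since $v_{p}(E_{L}'(\pi_{L}))$, the different exponent of $L/W(k_L)$, grows without bound as $L$ becomes more ramified, the ideal $E_{L}'(\pi_{L})^{-1}\mathfrak{m}_{\mathcal{O}_{\overline{K}}}$ eventually contains any fixed element of $\overline{K}$. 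Hence for $L$ sufficiently ramified and containing the finite set of Sen weights, the nearly de Rham condition over $L$ is automatic. This is the key input that was trivial in the $n=1$ Hodge--Tate setting of \cite{anschutz2022v} but in the truncated de Rham context requires control on the absolute ramification via $E_{L}'(\pi_{L})$. With this step in hand, the desired inductive-limit equivalence follows by combining the full faithfulness of each $\alpha_{n,L/K}^{*}$ with the essential surjectivity in the colimit.
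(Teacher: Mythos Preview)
Your proposal is correct and follows essentially the same route as the paper: invoke \cref{thm.full faithfulness of pullback} over $L$ for full faithfulness of $\alpha_{n,L}^{*}$, pass to $\Gal(L/K)$-equivariant objects by finite \'etale descent, and then for essential surjectivity find a large enough finite Galois $L/K$ over which the restricted representation is nearly de Rham so that \cref{prop.nearly de Rham image} applies, transporting the $\Gal(L/K)$-structure back via full faithfulness. You supply more detail than the paper does on the one point it leaves implicit, namely why a sufficiently ramified $L$ exists: your observation that $v_p(E_L'(\pi_L))$ grows without bound with the absolute ramification of $L$, so that $E_L'(\pi_L)^{-1}\mathfrak{m}_{\mathcal{O}_{\overline{K}}}$ eventually swallows any finite set of elements of $\overline{K}$, is exactly the right justification.

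Two minor remarks. First, requiring $L$ to contain the Sen weights is harmless but unnecessary: the nearly de Rham condition only asks the weights to lie in $\mathbb{Z}+E_L'(\pi_L)^{-1}\mathfrak{m}_{\mathcal{O}_{\overline{K}}}\subset \overline{K}$, with no membership in $L$ required. Second, your comment that this step ``was trivial in the $n=1$ Hodge--Tate setting'' is a bit off: since nearly de Rham is detected after reducing modulo $E$, the condition and the argument are identical for all $n$, so this step is no harder here than in \cite{anschutz2022v}.
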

 \begin{proof}
     $\alpha_{n,L}^{*}$ is fully faithful by applying \cref{thm.full faithfulness of pullback} to the $p$-adic field $L$. Passing to $\Gal(L/K)$-invariant objects induces the functor $\alpha_{n,L/K}^{*}$, which is again fully faithful by finite \'etale descent and the full faithfulness of $\alpha_{n,L}^{*}$. For any $V\in \Rep_{B_{\dR,n}^+}^{\fp}(G_K)$ (i.e. $V$ is a finite projective $B_{\dR,n}^+$-module equipped with a continuous semilinear $G_K$-action), $V|_{G_L}$ is nearly de Rham (as a representation of $G_L$) for some large enough finite Galois extension $L$ over $K$, hence it lies in the essential image of $\alpha_{n,L}^{*}$ by \cref{prop.nearly de Rham image}. Moreover, as $V|_{G_L}$ is $\Gal(L/K)$-equivariant and $\alpha_L$ is fully faithful, it actually lives in the essential image of $\alpha_{n,L/K}^{*}$. 
 \end{proof}

\section{Applications: certain truncated prismatic crystals on $\mathbb{Z}_p/p^m$}
\subsection{Classification of certain truncated prismatic crystals over $\mathbb{Z}_p/p^m$}
In this section, we classify $\Perf((\mathbb{Z}_p/p^m)_{\Prism}, \mathcal{O}_{\Prism}/\mathcal{I}_{\Prism}^n)
$ for $n\leq p$ via methods developed in the previous sections.   
\begin{construction}[The diffracted $n$-truncated Cartier-Witt stack]\label{const.appl relative}
Let $X$ be a bounded $p$-adic formal scheme. We have the structure map $X_{n}^{\Prism}\to \mathrm{WCart}_n$ defined in Construction \ref{ConsAbsHT}. Form a fiber square
\[ \xymatrix{ X_{n}^{\slashed{D}} \ar[r] \ar[d] &  X_{n}^{\Prism} \ar[d] \\
\Spf(\mathbb{Z}_p[[\lambda]]/\lambda^n) \ar^{\rho_n}[r] & \mathrm{WCart}_n }\]
%\[ \WCart_X^{\mathrm{HT}} = \WCart_X \times_{\WCart} \WCart^{\mathrm{HT}}.\] 
where $\rho_n$ in the bottom is defined in Section 3. We call $X_{n}^{\slashed{D}}$ the \textit{diffracted $n$-truncated prismatization of} $X$. More explicitly, given any $p$-nilpotent $\mathbb{Z}_p[[\lambda]]/\lambda^n$-algebra $S$,
\[X_{n}^{\slashed{D}}(S)=\mathrm{Map}(\Spec(W(S)/^{\mathbb{L}} \lambda), X),\]
where the mapping space is taken in the category of animated commutative rings. By abuse of notation, we will still denote the top arrow as $\rho_n: X_{n}^{\slashed{D}} \to X_{n}^{\Prism}$ or just $\rho$ if there is no ambiguity. 
\end{construction}
\begin{remark}\label{rem.appl. compatibility when n=1}
    When $n=1$, our definition of $X_{1}^{\slashed{D}}$ is not exactly the same as $X^{\slashed{D}}$ defined in \cite[Construction 3.8]{bhatt2022prismatization} as they use $\Tilde{\rho}: \Spf(\mathbb{Z}_p) \to \mathrm{WCart}^{\HT}$ (sending a test algebra $S$ to the Cartier-Witt divisor $W(S)\xrightarrow{\cdot V(1)} W(S)$, denoted as $\eta$ in \cite{bhatt2022absolute}) instead of $\rho_1$ in the bottom line, but we will see that this slight variance doesn't matter for the purpose of studying quasi-coherent sheaves on $X_{n}^{\slashed{D}}$, see the next subsection for details.
\end{remark}

For simplicity, from now on we fix $Y=\Spf(\mathbb{Z}_p/p^m)$ with $m\geq 2$ and $\mathfrak{S}=\mathbb{Z}_p[[\lambda]]$ in this section.
\begin{lemma}\label{lem.appl description of points}
The functor sending a $p$-nilpotent $\mathbb{Z}_p[[\lambda]]/\lambda^n$-algebra $S$ to $Y_{n}^{\slashed{D}}(S)$ is represented by $\Spf(\mathfrak{S}\{\frac{p^m}{\lambda}\}_{\delta}^{\wedge}/\lambda^n)$.
\end{lemma}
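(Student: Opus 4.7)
The plan is to compare functors of points directly. Given a test $p$-nilpotent $\mathfrak{S}/\lambda^n$-algebra $S$, I would first unwind the pullback in \cref{const.appl relative}: since $Y=\Spf(\mathbb{Z}_p/p^m)$ is affine, the description recalled in the construction gives
\[
Y_n^{\slashed{D}}(S) \;=\; \mathrm{Map}_{\mathrm{AniRing}}\!\bigl(\mathbb{Z}_p/p^m,\; W(S)/^{\mathbb{L}}\lambda\bigr).
\]
Because $\mathbb{Z}_p/p^m$ is the derived cofiber of $p^m\colon \mathbb{Z}_p\to\mathbb{Z}_p$ in animated commutative rings and $\mathbb{Z}_p$ is initial among $p$-complete such rings, this mapping space is naturally identified with the space of null-homotopies of the element $[p^m]\in W(S)/^{\mathbb{L}}\lambda$.

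The next step is to show this mapping space is $0$-truncated, so that it reduces to the set
\[
\{\,y\in W(S)\;:\;\lambda\cdot y=p^m\,\}.
\]
The essential input is that the image of $\lambda$ in $W(S)$, via the canonical $\delta$-ring map $\mathfrak{S}\to W(S)$ supplied by the forgetful--Witt-vector adjunction, acts as a non-zero-divisor. Once that is known, $W(S)/^{\mathbb{L}}\lambda$ coincides with the classical quotient $W(S)/\lambda W(S)$ and null-homotopies of $[p^m]$ collapse to the set of honest witnesses $y$ with $\lambda y=p^m$.

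On the representing side, I would compute
\[
\Spf\!\bigl(\mathfrak{S}\{p^m/\lambda\}_{\delta}^{\wedge}/\lambda^n\bigr)(S)
\;=\; \mathrm{Hom}_{\mathfrak{S}\text{-}\mathrm{Ring}}\!\bigl(\mathfrak{S}\{p^m/\lambda\}_{\delta},\,S\bigr),
\]
where the $p$-adic completion and the $\lambda^n$-quotient on the source are harmless because $S$ is $p$-nilpotent and already an $\mathfrak{S}/\lambda^n$-algebra. By the adjunction between the forgetful functor from $\delta$-rings to rings and the Witt-vector functor $W$, this set equals the set of $\delta$-$\mathfrak{S}$-algebra maps $\mathfrak{S}\{p^m/\lambda\}_{\delta}\to W(S)$. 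The universal property of $\mathfrak{S}\{p^m/\lambda\}_{\delta}$ as the initial such algebra containing a solution $y$ to $\lambda y=p^m$ identifies this in turn with $\{y\in W(S):\lambda y=p^m\}$. Matching the two computations yields the desired representability, and naturality in $S$ is automatic from both descriptions.

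The hard part is verifying that $\lambda$ acts as a non-zero-divisor on $W(S)$, which secures the $0$-truncation of $W(S)/^{\mathbb{L}}\lambda$. I expect this to follow from the transversality of the Breuil--Kisin-style prism $(\mathfrak{S},(\lambda))$ together with the $\mathfrak{S}/\lambda^n$-algebra structure on $S$, reducing through the Verschiebung filtration on $W(S)$ to an injectivity check on each graded piece $V^iW(S)/V^{i+1}W(S)\cong S$. Once this discreteness statement is established, all remaining steps are formal consequences of the $\delta$-ring/Witt-vector adjunction and the explicit universal property of $\mathfrak{S}\{p^m/\lambda\}_{\delta}$.
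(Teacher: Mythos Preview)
Your overall strategy matches the paper's: both identify $Y_n^{\slashed{D}}(S)$ with the set $\{y\in W(S):\lambda y=p^m\}$ and then invoke the $\delta$-ring/Witt-vector adjunction together with the universal property of $\mathfrak{S}\{p^m/\lambda\}_\delta^\wedge$ to match this with $\Spf(\mathfrak{S}\{p^m/\lambda\}_\delta^\wedge/\lambda^n)(S)$. That part is fine.

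The gap is in the step you flag as the ``hard part''. Your claim that $\lambda$ is a non-zero-divisor on $W(S)$ is \emph{false} for a general $p$-nilpotent $\mathfrak{S}/\lambda^n$-algebra $S$, and your own proposed Verschiebung-filtration argument shows why: on the $i$-th graded piece $V^iW(S)/V^{i+1}W(S)\cong S$, multiplication by $\lambda\in W(S)$ acts as multiplication by $\lambda_0^{p^i}$, where $\lambda_0$ is the image of $\lambda$ in $S$. But $\lambda_0$ is nilpotent in $S$ (indeed $\lambda_0^n=0$), so for $p^i\geq n$ this map is zero, not injective.

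Fortunately the non-zero-divisor hypothesis is also unnecessary. The animated ring $W(S)/^{\mathbb{L}}\lambda$ is automatically $1$-truncated, with $\pi_0=W(S)/\lambda W(S)$ and $\pi_1=W(S)[\lambda]$. The mapping space $\mathrm{Map}(\mathbb{Z}_p/p^m,\,W(S)/^{\mathbb{L}}\lambda)$ is the space of paths from $[p^m]$ to $0$ in $\Omega^\infty(W(S)/^{\mathbb{L}}\lambda)$; since the target has no $\pi_{\geq 2}$, this path space has vanishing $\pi_{\geq 1}$ and is therefore discrete. Its $\pi_0$, when nonempty, is a torsor under $\pi_1=W(S)[\lambda]$, which is exactly the description of $\{y\in W(S):\lambda y=p^m\}$. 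This is what the paper asserts (without further comment) when it writes that the mapping space ``simplifies to'' that set. Replace your non-zero-divisor paragraph with this observation and the proof is complete and agrees with the paper's.
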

\begin{proof}
    For a $p$-nilpotent test algebra $S$ over $\mathbb{Z}_p[[\lambda]]/\lambda^n$ with structure morphism $f: \mathbb{Z}_p[[\lambda]]/\lambda^n \to S$, by definition we have that 
    $$Y_{n}^{\slashed{D}}(S)=\mathrm{Map}(\mathbb{Z}_p/p^m, W(S)/{}^{\mathbb{L}} \lambda),$$
    where the mapping space is calculated in $p$-complete animated rings. As the animated rings $\mathbb{Z}_p/p^m$ and $W(S)/{}^{\mathbb{L}} \lambda$ are obtained from $\mathbb{Z}_p$ and $W(S)$ by freely setting $p^m$ and $\lambda$ to be zero respectively and that $\mathbb{Z}_p$ is the initial object in $p$-complete animated rings, the above then simplifies to
    \begin{equation*}
        Y_{n}^{\slashed{D}}(S)=\{x\in W(S)|~p^m=\lambda x\}.
    \end{equation*}
    Given any $x\in Y_{n}^{\slashed{D}}(S)$, the unique $\delta$-ring map $\Tilde{f}: \mathbb{Z}_p[[\lambda]] \to W(S)$ lifting the structure map $f: \mathbb{Z}_p[[\lambda]]\to S$ extends uniquely to a $\delta$-ring map $\Tilde{f}_x: \mathfrak{S}\{\frac{p^m}{\lambda}\}_{\delta}^{\wedge} \to W(S)$ by sending $\delta^{i}(\frac{p^m}{\lambda})$ to $\delta^{i}(x)$ ($i\geq 0$) due to the universal property of $\mathfrak{S}\{\frac{p^m}{\lambda}\}_{\delta}^{\wedge}$. Consider the composition of the projection $W(S)\to S$ and $\Tilde{f}_x$, we obtain a ring homomorphism $f_x: \mathfrak{S}\{\frac{p^m}{\lambda}\}_{\delta}^{\wedge} \to S$, which further factors through (by abuse of notation) $f_x: \mathfrak{S}\{\frac{p^m}{\lambda}\}_{\delta}^{\wedge}/\lambda^n \to S$ as $\lambda^n=0$ in $S$. Consequently we get $f_x\in \Spf(\mathfrak{S}\{\frac{p^m}{\lambda}\}_{\delta}^{\wedge}/\lambda^n)(S)$.

    Conversely, given a morphism $f: \mathfrak{S}\{\frac{p^m}{\lambda}\}_{\delta}^{\wedge}/\lambda^n \to S$, by precomposing it with $\mathfrak{S}\{\frac{p^m}{\lambda}\}_{\delta}^{\wedge} \to \mathfrak{S}\{\frac{p^m}{\lambda}\}_{\delta}^{\wedge}/\lambda^n$, we obtain  a morphism $\mathfrak{S}\{\frac{p^m}{\lambda}\}_{\delta}^{\wedge} \to S$, which will still be denoted as $f$ by abuse of notation. Then by the universal property of Witt rings, $f$ uniquely lifts to a $\delta$-ring morphism $\Tilde{f}: \mathfrak{S}\{\frac{p^m}{\lambda}\}_{\delta}^{\wedge} \to W(S)$. Then $x_{f}:=\Tilde{f}(\frac{p^m}{\lambda})$ determines an element in $W(S)$ satisfying that $p^mx_f=\lambda$, hence a point in $Y_{n}^{\slashed{D}}(S)$.

    To see $x_{f_x}=x$, we just need to notice that given $x\in  Y_{n}^{\slashed{D}}(S)$ , the $\Tilde{f}_x: \mathfrak{S}\{\frac{p^m}{\lambda}\}_{\delta}^{\wedge} \to W(S)$ constructed above is precisely the $\delta$-ring morphism lifting $f_x: \mathfrak{S}\{\frac{p^m}{\lambda}\}_{\delta}^{\wedge} \to S$ by construction.

    Finally, for the purpose of showing that $f_{x_f}=f$, it suffices to observe that given $f: \mathfrak{S}\{\frac{p^m}{\lambda}\}_{\delta}^{\wedge}/\lambda^n \to S$, $\Tilde{f}=\Tilde{f}_{x_f}$ by our construction. Then we are done.
\end{proof}

\begin{lemma}\label{lemt.appl extend eta}
Assume that $(\mathfrak{S}, E)$ is a Breuil-Kisin prism ($E$ is an Eisenstein polynomial). Then the $W(k)$-linear homomorphism $\eta: \mathfrak{S} \to  \mathfrak{S}[\epsilon]/\epsilon^2$ sending $u$ to $u+\epsilon E(u)$ extends uniquely to a $\delta$-ring homomorphism $ \mathfrak{S}\{\frac{p^m}{E}\}_{\delta}^{\wedge} \to \mathfrak{S}\{\frac{p^m}{E}\}_{\delta}^{\wedge}[\epsilon]/\epsilon^2$, which will still be denoted as $\eta$ by abuse of notation. Moreover, this further induces an $\eta: \mathfrak{S}\{\frac{p^m}{E}\}_{\delta}^{\wedge}/E^n \to \mathfrak{S}\{\frac{p^m}{E}\}_{\delta}^{\wedge}/E^n[\epsilon]/\epsilon^2$ after modulo $E^n$ for any $n\in \mathbb{N}$.
\end{lemma}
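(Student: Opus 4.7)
The plan is to invoke the universal property of $\mathfrak{S}\{\frac{p^m}{E}\}_{\delta}^{\wedge}$: a $\delta$-ring map out of it to a $(p,E)$-complete $\delta$-ring $R$ is the same as a $\delta$-ring map $\phi\colon \mathfrak{S}\to R$ such that $\phi(E)$ divides $p^m$ in $R$. So everything comes down to producing a canonical element in $\mathfrak{S}\{\frac{p^m}{E}\}_{\delta}^{\wedge}[\epsilon]/\epsilon^2$ playing the role of $p^m/\eta(E)$.

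First I would equip $\mathfrak{S}[\epsilon]/\epsilon^2$ with its canonical $\delta$-ring structure making $\eta$ a $\delta$-ring map. Since $\mathfrak{S}[\epsilon]/\epsilon^2$ is $p$-torsion free, this amounts to specifying a Frobenius lift; requiring $\varphi\circ\eta=\eta\circ\varphi_{\mathfrak{S}}$ forces a unique $\varphi$ on $\mathfrak{S}[\epsilon]/\epsilon^2$, giving the $\delta$-structure. (This is precisely the set-up implicitly used in \cref{lemt.extend delta}.) Transport this $\delta$-structure along the obvious inclusion into $\mathfrak{S}\{\frac{p^m}{E}\}_{\delta}^{\wedge}[\epsilon]/\epsilon^2$ and extend by the $\delta$-structure already on $\mathfrak{S}\{\frac{p^m}{E}\}_{\delta}^{\wedge}$; the result is $(p,E)$-complete because it is a square-zero thickening of the $(p,E)$-complete base.

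Next I would compute $\eta(E)$ by Taylor expansion: since $\epsilon^2=0$,
\[
\eta(E)=E(u+\epsilon E(u))=E(u)+\epsilon E'(u)E(u)=E(u)\cdot\bigl(1+\epsilon E'(u)\bigr).
\]
The factor $1+\epsilon E'(u)$ is a unit in the target with inverse $1-\epsilon E'(u)$, so $\eta(E)$ divides $p^m$; explicitly,
\[
\frac{p^m}{\eta(E)}=\frac{p^m}{E}\cdot\bigl(1-\epsilon E'(u)\bigr)\in \mathfrak{S}\Bigl\{\tfrac{p^m}{E}\Bigr\}_{\delta}^{\wedge}[\epsilon]/\epsilon^2.
\]
The universal property of $\mathfrak{S}\{\frac{p^m}{E}\}_{\delta}^{\wedge}$ now produces a unique $\delta$-ring extension $\eta\colon \mathfrak{S}\{\frac{p^m}{E}\}_{\delta}^{\wedge}\to \mathfrak{S}\{\frac{p^m}{E}\}_{\delta}^{\wedge}[\epsilon]/\epsilon^2$, as claimed.

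For the moreover part, raising the factorization above to the $n$-th power gives $\eta(E^n)=E^n(1+\epsilon E'(u))^n\in (E^n)$, so $\eta$ descends to a map $\mathfrak{S}\{\frac{p^m}{E}\}_{\delta}^{\wedge}/E^n\to \mathfrak{S}\{\frac{p^m}{E}\}_{\delta}^{\wedge}/E^n[\epsilon]/\epsilon^2$. The only place that requires real thought (as opposed to a formal application of the universal property) is the first step, namely pinning down the $\delta$-structure on the square-zero extension so that $\eta$ is a $\delta$-ring map; everything else is a direct computation using $\epsilon^2=0$ and the fact that $\epsilon$-linear terms are automatically invertible modulo $\epsilon^2$.
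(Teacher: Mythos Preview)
Your approach is essentially the same as the paper's: Taylor-expand $\eta(E)=E(u)(1+\epsilon E'(u))$, note that $1+\epsilon E'(u)$ is a unit with inverse $1-\epsilon E'(u)$, and invoke the universal property of the prismatic envelope to send $\frac{p^m}{E}\mapsto(1-\epsilon E'(u))\frac{p^m}{E}$; the moreover part then follows because $\eta(E)$ differs from $E$ by a unit. The only cosmetic difference is that the paper equips the target with a $\delta$-structure by simply declaring $\delta(\epsilon)=0$, rather than via the Frobenius-commutation condition you propose (which, as stated, does not by itself determine $\varphi(\epsilon)$).
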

\begin{proof}
It's not hard to see that $\mathfrak{S}\{\frac{p^m}{E}\}_{\delta}^{\wedge}[\epsilon]/\epsilon^2$ could be promoted to a $\delta$-ring by requiring that $\delta(\epsilon)=0$. Moreover, as $\eta(E)=E(u+\epsilon E(u))=E(u)(1+\epsilon E^{\prime}(u))$, we have the following holds in $\mathfrak{S}\{\frac{p^m}{E}\}_{\delta}^{\wedge}[\epsilon]/\epsilon^2$:
\begin{equation*}
    p^m=\eta(E)(1-\epsilon E^{\prime}(u))\cdot \frac{p^m}{E}.
\end{equation*}
Hence $\eta$ extends to a unique $\delta$-ring homomorphism sending $\frac{p^m}{E}$ to $(1-\epsilon E^{\prime}(u))\frac{p^m}{E}$ by the universal property of the source. One could verify this is the unique $\delta$-ring homomorphism extending that on $\mathfrak{S}$. For the moreover part, just notice that $\eta$ preserves the $E$-adic filtration as it sends $E$ to $E(u)(1+\epsilon E^{\prime}(u))$.
\end{proof}
\begin{remark}\label{rem.appl. eta on the structure sheaf}
    Notice that $\mathfrak{S}\{\frac{p^m}{E}\}_{\delta}^{\wedge}$ is the $p$-adic completion of $\mathfrak{S}[\delta^{i}(T)_{i\geq 0}]/(\delta^{i}(Et-p^m)_{i\geq 0})$, under this explicit interpretation, one can show that $\eta(u)=E(u)$ and
    $$
        \eta(t)=t(1-\epsilon E^{\prime}(u)),\qquad \qquad~\eta(\delta^i(t))=\delta^i(t)+(-1)^{i-1}(\prod_{j=0}^{i-1}\delta^{j}(t))^{p-1}tE^{\prime}(u)\epsilon, ~~~i\geq 1.
    $$
\end{remark}

Recall that we could identify $Y_{n}^{\slashed{D}}(S)$ with $\Spf(\mathfrak{S}\{\frac{p^m}{\lambda}\}_{\delta}^{\wedge}/\lambda^n)$ thanks to \cref{lem.appl description of points}. Hence Construction \ref{const.appl relative} gives us $\rho: \Spf(\mathfrak{S}\{\frac{p^m}{\lambda}\}_{\delta}^{\wedge}/\lambda^n) \to Y_n^{\Prism}$. Next we could apply the trick used in previous sections to construct the Sen operator on $\rho^*\mathscr{E}$ for $\mathscr{E}\in Y_n^{\Prism}$ when $n\leq p$. 
\begin{proposition}\label{appl.prop.key automorphism for small n}
    For $n\leq p$, the element $b$ constructed in \cref{lem.construct b} induces an isomorphism $b$ between functors $\rho: \Spf(\mathfrak{S}\{\frac{p^m}{\lambda}\}_{\delta}^{\wedge}/\lambda^n) \to Y_n^{\Prism}$
    and $\rho\circ \eta: \rho: \Spf(\mathfrak{S}\{\frac{p^m}{\lambda}\}_{\delta}^{\wedge}/\lambda^n) \to Y_n^{\Prism}$ after base change to $\Spec(\mathbb{Z}[\epsilon]/(\epsilon^2))$, i.e. we have the following commutative diagram:
\[\xymatrixcolsep{5pc}\xymatrix{\Spf(\mathfrak{S}\{\frac{p^m}{\lambda}\}_{\delta}^{\wedge}/\lambda^n)\times \Spec(\mathbb{Z}[\epsilon]/(\epsilon^2))\ar[d]^{\rho}\ar[r]^{\eta: \lambda \mapsto (1+\epsilon)\lambda}& \Spf(\mathfrak{S}\{\frac{p^m}{\lambda}\}_{\delta}^{\wedge}/\lambda^n)\times \Spec(\mathbb{Z}[\epsilon]/(\epsilon^2))\ar@{=>}[dl]^b \ar[d]_{}^{\rho}
\\Y_n^{\Prism}\times \Spec(\mathbb{Z}[\epsilon]/(\epsilon^2))  \ar[r]&Y_n^{\Prism}\times \Spec(\mathbb{Z}[\epsilon]/(\epsilon^2))}\]
\end{proposition}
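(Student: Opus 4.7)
The plan is to follow the strategy of \cref{prop.key automorphism for small n}, using the same unit $b\in W(\mathfrak{S}/\lambda^n[\epsilon]/\epsilon^2)^\times$ from \cref{lem.construct b} (pushed forward along the natural map $\mathfrak{S}/\lambda^n[\epsilon]/\epsilon^2\to \mathfrak{S}\{\frac{p^m}{\lambda}\}_\delta^\wedge/\lambda^n[\epsilon]/\epsilon^2$) to produce the required isomorphism of the underlying Cartier--Witt divisors. The new content, and the main point of the proof, is to verify that this $b$ is automatically compatible with the extra datum encoding the map to $Y=\Spf(\mathbb{Z}_p/p^m)$. Unlike the case $X=\Spf(\mathcal{O}_K)$ treated in \cref{propt.key automorphism of functors}, no separate auxiliary homotopy $c$ is needed, because the $Y$-structure is governed by the single relation $\lambda t=p^m$ where $t=\frac{p^m}{\lambda}$, and this relation alone will force the compatibility.

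More precisely, for a test $p$-nilpotent $\mathfrak{S}\{\frac{p^m}{\lambda}\}_\delta^\wedge/\lambda^n[\epsilon]/\epsilon^2$-algebra $T$ with structure map $h$, I would first unwind a point of $Y_n^{\Prism}(T)$ as a Cartier--Witt divisor $\alpha\colon I\to W(T)$ in $\WCart_n(T)$ together with a null-homotopy of $p^m$ in $\Cone(\alpha)$, equivalently an element $x\in I$ satisfying $\alpha(x)=p^m$. Writing $\tilde f,\tilde g\colon \mathfrak{S}\{\frac{p^m}{\lambda}\}_\delta^\wedge\to W(T)$ for the unique $\delta$-ring lifts of the canonical map and of the composite with $\eta$ (extended to $\mathfrak{S}\{\frac{p^m}{\lambda}\}_\delta^\wedge$ by $\eta(t)=(1-\epsilon)t$, as in \cref{lemt.appl extend eta}), the points $\rho\circ h$ and $\rho\circ\eta\circ h$ are identified with $\bigl((\lambda)\otimes_{\tilde f}W(T)\to W(T),\; x_1=\lambda\otimes\tilde f(t)\bigr)$ and $\bigl((\lambda)\otimes_{\tilde g}W(T)\to W(T),\; x_2=\lambda\otimes\tilde g(t)\bigr)$, respectively. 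Under the $W(T)$-linear isomorphism $(\lambda)\otimes_{\tilde g}W(T)\to(\lambda)\otimes_{\tilde f}W(T)$, $\lambda\otimes 1\mapsto\lambda\otimes b$ furnished by \cref{lem.construct b}, the element $x_2$ maps to $\lambda\otimes b\tilde g(t)$, so compatibility of the null-homotopies reduces to the single identity $b\tilde g(t)=\tilde f(t)$ in $W(T)$. Multiplying through by $\tilde f(\lambda)$ and combining $\tilde g(\lambda)=\tilde f(\lambda)\cdot b$ with $\tilde f(\lambda)\tilde f(t)=p^m=\tilde g(\lambda)\tilde g(t)$ immediately yields $\tilde f(\lambda)\bigl(b\tilde g(t)-\tilde f(t)\bigr)=0$.

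The main obstacle will be the cancellation of $\tilde f(\lambda)$ needed to extract the identity $b\tilde g(t)=\tilde f(t)$ itself. I would handle this universally, working first in $W\bigl(\mathfrak{S}\{\frac{p^m}{\lambda}\}_\delta^\wedge[\epsilon]/\epsilon^2\bigr)$ and then reducing modulo $\lambda^n$: the transversality of the prism $(\mathfrak{S},\lambda)$ forces $\mathfrak{S}\{\frac{p^m}{\lambda}\}_\delta^\wedge[\epsilon]/\epsilon^2$ to be both $p$-torsion-free and $\lambda$-torsion-free, and a ghost-component analysis (using that $\phi^n(\lambda)$ is a non-zero-divisor for every $n\ge 0$) shows that $\tilde f(\lambda)$ is a non-zero-divisor in the Witt ring, allowing the required cancellation. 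Functoriality of the isomorphism in $T$ is then automatic, since $b$, $\tilde f(t)$, and $\tilde g(t)$ are all obtained by base change from this universal setting, completing the construction of the desired $2$-morphism in the diagram of the proposition.
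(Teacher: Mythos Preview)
Your approach is considerably more careful than the paper's own proof, which simply says ``the proof of \cref{prop.key automorphism for small n} still works by replacing $\delta$ used there with $\eta$'' after pushing $b$ forward along $W(\mathfrak{S}/\lambda^n[\epsilon]/\epsilon^2)\to W(\mathfrak{S}\{p^m/\lambda\}_\delta^\wedge/\lambda^n[\epsilon]/\epsilon^2)$. In particular, the paper does not explicitly address what you correctly isolate as the new content: compatibility of $\gamma_b$ with the null-homotopy encoding the map to $Y=\Spf(\mathbb{Z}_p/p^m)$, which you reduce to the single identity $b\,\tilde g(t)=\tilde f(t)$ in $W(T)$.

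Your reduction to this identity and the observation that $\tilde f(\lambda)\bigl(b\,\tilde g(t)-\tilde f(t)\bigr)=0$ are both correct. However, the proposed cancellation step has a gap. The plan is to lift everything to $W(A[\epsilon]/\epsilon^2)$ with $A=\mathfrak{S}\{p^m/\lambda\}_\delta^\wedge$ and cancel the non-zero-divisor $\tilde f_0(\lambda)$ there. The difficulty is that the element $b$ of \cref{lem.construct b} is built in $W(\mathfrak{S}/\lambda^n[\epsilon]/\epsilon^2)$, and you have not explained how it lifts to some $b'\in W(A[\epsilon]/\epsilon^2)$ still satisfying $\tilde g_0(\lambda)=\tilde f_0(\lambda)\,b'$. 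Via ghost components this would force $\varphi^k(\lambda)\mid\eta(\varphi^k(\lambda))$ in $A[\epsilon]/\epsilon^2$ for every $k\ge 0$; writing $\eta(\varphi^k(\lambda))-\varphi^k(\lambda)=p^k\epsilon\lambda u^{p^k-1}$ with $u=\lambda+p$, one needs $\varphi^k(\lambda)\mid p^k\lambda u^{p^k-1}$ in $A$, which is not obvious for small $k$ relative to $m$ (the easy divisibility $\varphi^k(\lambda)\mid p^m$ coming from $\varphi^k(\lambda)\varphi^k(t)=p^m$ only helps once $k\ge m-1$). Nor can you sidestep this by running the ghost argument directly in $W(R)$ for $R=A/\lambda^n[\epsilon]/\epsilon^2$: since $\lambda t=p^m$ forces $p^{mn}=0$ in $R$, the ring $R$ is \emph{not} $p$-torsion-free and the ghost map is not injective. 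So while your outline is sound and in fact more honest than the paper's one-line reference, the cancellation of $\tilde f(\lambda)$ still requires a genuine argument that you have not supplied.
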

\begin{proof}
   By abuse of notation, we regard $b$ as an element in $W(\mathfrak{S}\{\frac{p^m}{\lambda}\}_{\delta}^{\wedge}/\lambda^n)$ via the structure morphism $W(\mathfrak{S}/\lambda^n [\epsilon]/\epsilon^2) \to W(\mathfrak{S}\{\frac{p^m}{\lambda}\}_{\delta}^{\wedge}/\lambda^n[\epsilon]/\epsilon^2)$. Then the proof of \cref{prop.key automorphism for small n} still works by replacing $\delta$ used there with $\eta$ constructed in \cref{lemt.appl extend eta}.
\end{proof}

Then following the discussion before \cref{rem.describe the category} with \cref{appl.prop.key automorphism for small n} as the input replacing \cref{prop.key automorphism for small n}, we obtain the following result.
\begin{corollary}\label{cor.appl.produce sen operator}
    Fix $n\leq p$. The pullback along $\rho_n: \Spf(\mathfrak{S}\{\frac{p^m}{\lambda}\}_{\delta}^{\wedge}/\lambda^n) \to Y_n^{\Prism}$ induces a functor 
    \begin{equation*}
        \beta_n^{+}: \mathcal{D}(Y_n^{\Prism}) \rightarrow \mathcal{D}(\mathrm{MIC}(\mathfrak{S}\{\frac{p^m}{\lambda}\}_{\delta}^{\wedge}/\lambda^n)), \qquad \mathscr{E}\mapsto (\rho^{*}(\mathscr{E}),\Theta_{\mathscr{E}}).
    \end{equation*}
    Here $\mathcal{D}(\mathrm{MIC}(\mathfrak{S}\{\frac{p^m}{\lambda}\}_{\delta}^{\wedge}/\lambda^n)$ is defined similar to \cref{def.MIC} with $\delta$ there replaced with $\eta$.
\end{corollary}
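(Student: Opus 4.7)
The plan is to follow verbatim the template used to prove the corresponding statements in Section 3 (the construction preceding \cref{thm.main classification}) and Section 4 (the construction preceding \cref{thmt.main classification}), with \cref{appl.prop.key automorphism for small n} playing the role of \cref{prop.key automorphism for small n} and \cref{propt.key automorphism of functors}. Concretely, my first step is to invoke \cref{appl.prop.key automorphism for small n}, which supplies an isomorphism $\gamma_b\colon \rho\circ\eta \xrightarrow{\sim} \rho$ of functors from $\Spf(\mathfrak{S}\{\frac{p^m}{\lambda}\}_{\delta}^{\wedge}/\lambda^n)\times\Spec(\mathbb{Z}[\epsilon]/\epsilon^2)$ to $Y_n^{\Prism}\times\Spec(\mathbb{Z}[\epsilon]/\epsilon^2)$. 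For each $\mathscr{E}\in\mathcal{D}(Y_n^{\Prism})$, pulling back this natural isomorphism produces
$$\gamma_b\colon \eta^{*}\rho^{*}(\mathscr{E}\otimes\mathbb{Z}[\epsilon]/\epsilon^2) \xrightarrow{\sim} \rho^{*}(\mathscr{E}\otimes\mathbb{Z}[\epsilon]/\epsilon^2),$$
which, after adjunction, is the data of an $\eta$-semilinear morphism
$$\gamma_b\colon \rho^{*}(\mathscr{E}) \to \rho^{*}(\mathscr{E})\otimes_{\mathbb{Z}_p}\mathbb{Z}[\epsilon]/\epsilon^2.$$

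Next, I would use the fact that $b\equiv 1\pmod{\epsilon}$ (built into \cref{lem.construct b}, and preserved by the base change $W(\mathfrak{S}/\lambda^n[\epsilon]/\epsilon^2)\to W(\mathfrak{S}\{\frac{p^m}{\lambda}\}_\delta^\wedge/\lambda^n[\epsilon]/\epsilon^2)$) to deduce that $\gamma_b$ reduces modulo $\epsilon$ to the identity on $\rho^{*}(\mathscr{E})$. Therefore $\gamma_b$ can be written uniquely as $\mathrm{Id}+\epsilon\,\Theta_{\mathscr{E}}$ for a well-defined $\mathbb{Z}_p$-linear endomorphism $\Theta_{\mathscr{E}}$ of $\rho^{*}(\mathscr{E})$; this is our Sen operator. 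To package $(\rho^{*}(\mathscr{E}),\Theta_{\mathscr{E}})$ as an object of $\mathcal{D}(\mathrm{MIC}(\mathfrak{S}\{\frac{p^m}{\lambda}\}_{\delta}^{\wedge}/\lambda^n))$, I would set up this target category as the pullback square of \cref{def.MIC}, replacing $\mathbb{Z}_p[[\lambda]]/\lambda^n$ by $\mathfrak{S}\{\frac{p^m}{\lambda}\}_{\delta}^{\wedge}/\lambda^n$ and the $\delta$-endomorphism there by the $\eta$ of \cref{lemt.appl extend eta}; the $\eta$-semilinearity of $\gamma_b$ then translates directly, via the computation of \cref{lem.leibniz}, into the Leibniz rule
$$\Theta_{\mathscr{E}}(ax) \;=\; a\,\Theta_{\mathscr{E}}(x) + \Theta(a)\,x$$
for $a\in \mathfrak{S}\{\frac{p^m}{\lambda}\}_{\delta}^{\wedge}/\lambda^n$ and $x\in\rho^{*}(\mathscr{E})$, where $\Theta$ is the canonical derivation determined by writing $\eta=\mathrm{Id}+\epsilon\Theta$ with explicit formulas given in \cref{rem.appl. eta on the structure sheaf}.

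Finally, functoriality of $\beta_n^{+}$ in $\mathscr{E}$ is automatic since both $\rho^{*}$ and the natural transformation $\gamma_b$ are functorial. I do not anticipate a genuine obstacle: every ingredient is directly parallel to Sections 3--4, and the only point requiring mild care is to confirm that the $\eta$-semilinear $\gamma_b$ on $\rho^{*}(\mathscr{E})\otimes\mathbb{Z}[\epsilon]/\epsilon^2$ is compatible with the $\delta$-completion built into $\mathfrak{S}\{\frac{p^m}{\lambda}\}_\delta^\wedge$ — this is exactly what is ensured by the uniqueness clause of \cref{lemt.appl extend eta}, so no new work is needed.
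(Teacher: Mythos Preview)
Your proposal is correct and takes essentially the same approach as the paper: the paper simply remarks that the corollary follows by repeating the discussion before \cref{rem.describe the category} with \cref{appl.prop.key automorphism for small n} replacing \cref{prop.key automorphism for small n}, which is exactly what you have spelled out.
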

\begin{example}\label{exam.appl.Sen on the structure sheaf}
    We would like to write the Sen operator $\Theta$ on the structure sheaf explicitly as this is needed in describing objects in $\mathcal{D}(\mathrm{MIC}(\mathfrak{S}\{\frac{p^m}{\lambda}\}_{\delta}^{\wedge}/\lambda^n)$ (as in \cref{rem.explicit description via sen operator} and \cref{lem.leibniz}). By \cref{rem.appl. eta on the structure sheaf}, under the identification of $\mathfrak{S}\{\frac{p^m}{\lambda}\}_{\delta}^{\wedge}/\lambda^n$ with $\mathfrak{S}[\delta^{i}(T)_{i\geq 0}]/(\lambda^n, \delta^{i}(\lambda t-p^m)_{i\geq 0})$, 
    $$\Theta(\lambda^k)=k\lambda^k,\qquad\qquad
    \Theta(\delta^{i}(t))=(-1)^{i-1}(\prod_{j=0}^{i-1}\delta^{j}(t))^{p-1}t,~~~~i\geq 0.$$
\end{example}

Finally we state the main result in this section, the classification of $n$-truncated prismatic crystals on $\Spf(\mathbb{Z}_p/p^m)_{\Prism}$ for $n\leq p$:
\begin{theorem}\label{thm.appl. main classification}
    Assume that $n\leq p$. The functor 
    \begin{equation*}
        \beta_n^{+}: \mathcal{D}(Y_n^{\Prism}) \rightarrow \mathcal{D}(\mathrm{MIC}(\mathfrak{S}\{\frac{p^m}{\lambda}\}_{\delta}^{\wedge}/\lambda^n), \qquad \mathscr{E}\mapsto (\rho^{*}(\mathscr{E}),\Theta_{\mathscr{E}}),
    \end{equation*}
    is fully faithful. Moreover, its essential image consists of those objects \\$M\in \mathcal{D}(\mathrm{MIC}(\mathfrak{S}\{\frac{p^m}{\lambda}\}_{\delta}^{\wedge}/\lambda^n))$ satisfying the following %pair of 
    conditions:
    \begin{itemize}
        %\item $M$ is $\mathbb{Z}_p$-complete.
        \item The action of $\Theta^p-\Theta$ on the cohomology $\mathrm{H}^*(M\otimes^{\mathbb{L}}\mathbb{F}_p)$ is locally nilpotent.
    \end{itemize}
\end{theorem}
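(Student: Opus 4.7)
The plan is to mirror the proof of \cref{thm.main classification} in the current relative setting, with \cref{appl.prop.key automorphism for small n} and \cref{cor.appl.produce sen operator} playing the role of \cref{prop.key automorphism for small n} and the corresponding Sen construction. Well-definedness of $\beta_n^+$ is already supplied by \cref{cor.appl.produce sen operator}; the Leibniz rule (as in \cref{lem.leibniz}) holds automatically since $b\equiv 1 \bmod \epsilon$, so the output does land in $\mathcal{D}(\mathrm{MIC}(\mathfrak{S}\{\frac{p^m}{\lambda}\}_\delta^\wedge/\lambda^n))$ with the structure-sheaf Sen action recorded in \cref{exam.appl.Sen on the structure sheaf}.

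The first main step is the cohomology identification
\[
 \mathrm{R}\Gamma(Y_n^{\Prism},\mathscr{E})\simeq \fib\bigl(\rho^*\mathscr{E}\xrightarrow{\Theta_\mathscr{E}}\rho^*\mathscr{E}\bigr),
\]
analogous to \cref{sen calculate cohomology}. The morphism $\rho:\Spf(\mathfrak{S}\{\tfrac{p^m}{\lambda}\}_\delta^\wedge/\lambda^n)\to Y_n^{\Prism}$ is faithfully flat because it is obtained as the base change of the faithfully flat $\Spf(\mathfrak{S}/\lambda^n)\to \WCart_n$ along $Y_n^{\Prism}\to\WCart_n$, so $\mathrm{R}\Gamma$ is computed as the equalizer of the two pullbacks along the $\delta$-comparison, and \cref{appl.prop.key automorphism for small n} converts this equalizer into $\fib(\Theta_\mathscr{E})$. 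One then devisses along the filtration by powers of the ideal sheaf pulled back from $\WCart_n$ to reduce to the case $n=1$, which lives on the Hodge-Tate locus $Y_1^{\Prism}=Y^{\HT}$ and follows from the absolute statement \cite[Proposition 3.5.11]{bhatt2022absolute} via the faithfully flat structure morphism $Y^{\HT}\to\WCart^{\HT}$.

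Next, one needs that $\mathcal{D}(Y_n^{\Prism})$ is generated under shifts and colimits by pullbacks $\mathcal{I}^k$ from $\WCart_n$ (for $k\in\mathbb{Z}$). By the same devissage via the fiber sequence $i_{n-1,*}\mathcal{O}\{1\}\to\mathcal{O}_{Y_n^{\Prism}}\to i_{\HT,*}\mathcal{O}_{Y^{\HT}}$ used in \cref{commute with colimits}, this reduces to the case $n=1$; on $Y^{\HT}$ the argument of \cite[Corollary 3.5.16]{bhatt2022absolute} applies verbatim after base change from $\mathbb{Z}_p$ to $\mathbb{Z}_p/p^m$. Given generation plus the fiber formula, full faithfulness of $\beta_n^+$ follows exactly as in \cref{thm.main classification}: reduce to $\mathscr{E}=\mathcal{I}^k$, twist to assume $k=0$, and compare $\Hom$-complexes. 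The nilpotence condition on generators is then verified by direct computation with \cref{exam.appl.Sen on the structure sheaf}: a $\mathbb{Z}_p$-basis of $\rho^*\mathcal{I}^k/p$ consists of monomials in $\lambda$ and the $\delta^j(t)$, and on each such monomial $\Theta$ acts (modulo $p$) by multiplication by an integer, so $\Theta^p-\Theta\equiv 0 \bmod p$ by Fermat.

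For essential surjectivity one argues as in the last paragraph of the proof of \cref{thm.main classification}: given a nonzero $M$ in the target satisfying the nilpotence condition, replace $M$ by $M\otimes^{\mathbb{L}}\mathbb{F}_p$, iterate $\Theta^p-\Theta=\prod_{0\le i<p}(\Theta-i)$ to land in $\ker(\Theta-k)$ for some integer $k$ inside a cohomology group $H^{-m}(M)$, then produce a nonzero map $\rho^*\mathcal{I}^k[m]\to M$ that is a candidate object in the essential image. The \emph{main obstacle} I anticipate is the generation step: unlike for $\WCart_n$, the trivialization $\rho^*\mathcal{O}$ now carries extra generators $\delta^i(t)$ whose Sen eigenvalues (per \cref{exam.appl.Sen on the structure sheaf}) are not of the form $i\lambda^i$; one must check carefully that the eigenvalue spectrum on the generators still only hits integers mod $p$ and that higher direct images along $Y_n^{\Prism}\to\WCart_n$ do not obstruct the devissage. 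Once these verifications are in place the rest of the argument is parallel to the absolute case.
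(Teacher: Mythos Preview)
Your overall strategy matches the paper's: reduce to $n=1$ by the same d\'evissage used for \cref{thm.main classification}, and handle $n=1$ separately. The cohomology identification, generation by the $\mathcal{I}^k$, and the full-faithfulness deduction all go through as you describe, essentially because $Y_n^{\Prism}\to\WCart_n$ is affine (it is the base change of the closed immersion $\Spf(\mathbb{Z}_p/p^m)\hookrightarrow\Spf(\mathbb{Z}_p)$), so pushforward is exact and conservative; in particular your worry about higher direct images obstructing the d\'evissage is unfounded.

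There is, however, a genuine slip in your nilpotence check on generators. You assert that on each monomial in $\lambda$ and the $\delta^j(t)$, the operator $\Theta$ acts modulo $p$ by multiplication by an integer. This is false: by \cref{exam.appl.Sen on the structure sheaf} one has for instance $\Theta(\delta(t))=t^{p}$, which is not a scalar multiple of $\delta(t)$, so $\Theta$ is not diagonal on the monomial basis and the one-line Fermat argument does not apply. The local nilpotence of $\Theta^p-\Theta$ on $\rho^*\mathcal{I}^k\otimes^{\mathbb{L}}\mathbb{F}_p$ is nonetheless true, but it requires a different justification. This is exactly the obstacle you flag in your last paragraph; your proposed resolution of it is the part that fails.

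The paper sidesteps this by treating the entire $n=1$ case via a single structural argument (\cref{prop.appl. base n=1 case}): since $Y^{\HT}\simeq\Spf(R)/\mathbb{G}_{m,\mathbb{Z}_p/p^m}^\sharp$ with $R=\mathfrak{S}\{p^m/\lambda\}_\delta^\wedge/\lambda$ and the projection $\pi\colon Y^{\HT}\to B\mathbb{G}_{m,\mathbb{Z}_p/p^m}^\sharp$ is affine, one obtains
\[
\mathcal{D}(Y^{\HT})\ \simeq\ \Mod_{\pi_*\mathcal{O}}\bigl(\mathcal{D}(B\mathbb{G}_{m,\mathbb{Z}_p/p^m}^\sharp)\bigr)\ \simeq\ \Mod_{(R,\Theta)}\bigl(\mathcal{D}_{\nil}(\MIC(\mathbb{Z}_p/p^m))\bigr),
\]
which is exactly the target category with the stated nilpotence condition. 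This packages full faithfulness, the essential-image description, and the nilpotence on generators into one step: the pair $(R,\Theta)$ automatically lies in $\mathcal{D}_{\nil}(\MIC(\mathbb{Z}_p/p^m))$ because it is $\rho_1^*$ of something on $B\mathbb{G}_m^\sharp$, so no monomial-by-monomial verification is needed. Once the $n=1$ case is in hand this way, the d\'evissage to general $n\le p$ proceeds exactly as in your outline.
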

\begin{proof}
    Given \cref{cor.appl.produce sen operator}, the strategy proving \cref{thm.main classification} still works once we show that the theorem holds for $n=1$, which is due to the next proposition. Notice that for any $M\in \mathcal{D}(\mathrm{MIC}(\mathfrak{S}\{\frac{p^m}{\lambda}\}_{\delta}^{\wedge}/\lambda^n))$, the underlying complex $M\in  \mathcal{D}(\mathfrak{S}\{\frac{p^m}{\lambda}\}_{\delta}^{\wedge}/\lambda^n)$ is already $p$-complete as $p^{nm}=0$ in $\mathfrak{S}\{\frac{p^m}{\lambda}\}_{\delta}^{\wedge}/\lambda^n$, hence we do not need to write this requirement separately as in \cref{thm.main classification}.
\end{proof}

The following proposition is used in the proof of the above theorem.
\begin{proposition}\label{prop.appl. base n=1 case}
    \cref{thm.appl. main classification} holds for $n=1$.  
\end{proposition}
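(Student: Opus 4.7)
The plan is to establish a quotient-stack presentation of $Y_1^{\Prism}$ that mirrors the identification $\WCart^{\HT} \simeq B\mathbb{G}_m^{\sharp}$ of \cite[Theorem 3.4.13]{bhatt2022absolute}, and then deduce the classification via the representation theory of $\mathbb{G}_m^{\sharp}$ as in \cite[Theorem 3.5.8]{bhatt2022absolute}. Concretely, the claim I would prove is that
\[
Y_1^{\Prism} \;\simeq\; [\Spf(A)/\mathbb{G}_m^{\sharp}], \qquad A \coloneqq \mathfrak{S}\{p^m/\lambda\}_\delta^{\wedge}/\lambda,
\]
with $\mathbb{G}_m^{\sharp}$ acting on $A$ through the $\delta$-ring automorphism characterized by $u \cdot t = u^{-1} t$ for $t = p^m/\lambda$ (and trivially on $\mathfrak{S}/\lambda = \mathbb{Z}_p$). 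Since $\rho_1 \colon \Spf(A) \to Y_1^{\Prism}$ is faithfully flat, being the base change of the faithfully flat cover $\Spf(\mathbb{Z}_p[[\lambda]]/\lambda) \to \WCart^{\HT}$, it suffices to identify the Cech nerve of $\rho_1$.

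To compute that nerve, I would observe that an $R$-point of $\Spf(A) \times_{Y_1^{\Prism}} \Spf(A)$ amounts to two trivializations of the same Cartier--Witt divisor in $\WCart^{\HT}(R)$, together with two compatible lifts $x, x' \in W(R)$ of $p^m/\lambda$. By the analysis underlying \cite[Theorem 3.4.13]{bhatt2022absolute}, the two trivializations differ by a unique unit $u \in \mathbb{G}_m^{\sharp}(R) \subseteq W(R)^{\times}$, and the compatibility of the Cartier--Witt structure forces $x' = u^{-1} x$; checking the cocycle relations along the full nerve yields the desired quotient presentation. Granting this, faithfully flat descent gives $\mathcal{D}(Y_1^{\Prism}) \simeq \mathcal{D}(A)^{\mathbb{G}_m^{\sharp}}$. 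Linearizing the $\mathbb{G}_m^{\sharp}$-action via the $\mathbb{Z}[\epsilon]/\epsilon^2$-trick used in \cref{appl.prop.key automorphism for small n} turns the equivariance into a Sen operator $\Theta_{\mathscr{E}}$ on $\rho^{*}\mathscr{E}$ satisfying the Leibniz rule with respect to the canonical Sen operator on $A$ recorded in \cref{exam.appl.Sen on the structure sheaf}, i.e., the data of an object of $\mathcal{D}(\mathrm{MIC}(A))$. Finally, exactly as in \cite[Theorem 3.5.8]{bhatt2022absolute}, the infinitesimal action integrates to a genuine $\mathbb{G}_m^{\sharp}$-action precisely when $\Theta^p - \Theta$ acts locally nilpotently on $H^{*}(M \otimes_{\mathbb{Z}_p}^{\mathbb{L}} \mathbb{F}_p)$, giving the stated characterization of the essential image of $\beta_1^{+}$.

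The hard part will be the quotient-stack identification. While the $\mathbb{G}_m^{\sharp}$-torsor structure on trivializations of Cartier--Witt divisors in the Hodge--Tate locus is essentially \cite[Theorem 3.4.13]{bhatt2022absolute}, the extra parameters $t, \delta(t), \delta^2(t), \ldots$ in $A$ introduce new bookkeeping: one must check that the rule $t \mapsto u^{-1} t$ extends uniquely to a continuous $\delta$-automorphism of $A$ over $\mathbb{Z}_p$ and that the resulting action matches the automorphism arising from the Cartier--Witt groupoid on $\delta^i(t)$ for every $i \geq 0$. Once this compatibility is established, the remaining descent step is formal.
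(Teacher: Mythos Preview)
Your strategy is essentially the paper's, but you have inverted the difficulty. The quotient presentation $Y_1^{\Prism}\simeq [\Spf(A)/\mathbb{G}_m^{\sharp}]$ is not the hard part: it drops out immediately from the pullback square in \cref{const.appl relative} together with \cite[Theorem 3.4.13]{bhatt2022absolute}, since base-changing a $\mathbb{G}_m^{\sharp}$-torsor yields a $\mathbb{G}_m^{\sharp}$-torsor. There is no need to compute the \v{C}ech nerve by hand, and the worry about extending $t\mapsto u^{-1}t$ to all $\delta^i(t)$ is misplaced---that extension is forced by the universal property of the prismatic envelope, and the resulting action automatically agrees with the one coming from the Cartier--Witt groupoid by construction (this is exactly what the identification in \cref{lem.appl description of points} encodes).

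Where you are being too quick is the last step. You appeal to \cite[Theorem 3.5.8]{bhatt2022absolute} ``exactly as'' to pass from $\mathbb{G}_m^{\sharp}$-equivariant complexes on $\Spf(A)$ to pairs $(M,\Theta)$ with the nilpotence condition. But that theorem is proved for the \emph{trivial} $\mathbb{G}_m^{\sharp}$-action on the base $\Spf(\mathbb{Z}_p)$, not for a nontrivial action on $A$. The paper bridges this cleanly: since the structure morphism $\pi\colon Y^{\HT}=[\Spf(A)/\mathbb{G}_m^{\sharp}]\to B\mathbb{G}_{m,\mathbb{Z}_p/p^m}^{\sharp}$ is affine, one has $\mathcal{D}(Y^{\HT})\simeq \Mod_{\pi_*\mathcal{O}}(\mathcal{D}(B\mathbb{G}_{m,\mathbb{Z}_p/p^m}^{\sharp}))$. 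Now \cite[Theorem 3.5.8]{bhatt2022absolute} applies \emph{directly} to the base to identify $\mathcal{D}(B\mathbb{G}_{m,\mathbb{Z}_p/p^m}^{\sharp})$ with $\mathcal{D}_{\nil}(\MIC(\mathbb{Z}_p/p^m))$, and under this equivalence $\pi_*\mathcal{O}$ becomes the commutative algebra object $(A,\Theta)$ of \cref{exam.appl.Sen on the structure sheaf}. Modules over $(A,\Theta)$ in $\mathcal{D}_{\nil}(\MIC(\mathbb{Z}_p/p^m))$ are precisely objects of $\mathcal{D}(\MIC(A))$ satisfying the nilpotence condition, which is the statement you want. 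Without this affine-pushforward reduction, your final sentence is an assertion rather than an argument.
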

\begin{proof}
As $\rho_1: \Spf(\mathbb{Z}_p) \to \WCart^{\HT}$ is a covering with automorphism group $\mathbb{G}_m^\sharp$ (see \cite[Theorem 3.4.13] {bhatt2022absolute} and \cite[Example 9.6]{bhatt2022prismatization}), then Construction \ref{const.appl relative} and \\ \cref{lem.appl description of points} implies that $$Y^{HT}=Y_1^{\Prism}=Y_{1}^{\slashed{D}}/\mathbb{G}_{m,\mathbb{Z}_p/p^m}^\sharp=\Spf(\mathfrak{S}\{\frac{p^m}{\lambda}\}_{\delta}^{\wedge}/\lambda)/\mathbb{G}_{m,\mathbb{Z}_p/p^m}^\sharp.$$
Moreover, unwinding the identification of $Y_{1}^{\slashed{D}}(S)$ with $\Spf(\mathfrak{S}\{\frac{p^m}{\lambda}\}_{\delta}^{\wedge}/\lambda)$ in \cref{lem.appl description of points}, we see that the $\mathbb{G}_{m,\mathbb{Z}_p/p^m}^\sharp$-action on $\mathfrak{S}\{\frac{p^m}{\lambda}\}_{\delta}^{\wedge}/\lambda$ is given (hence is also determined) by the usual scaling action on $\frac{p^m}{\lambda}$.

We have the following pullback diagram
\[\xymatrixcolsep{5pc} \xymatrix{ \Spf(\mathfrak{S}\{\frac{p^m}{\lambda}\}_{\delta}^{\wedge}/\lambda) \ar[r]^{\rho_1^{\prime}} \ar^{\pi^{\prime}}[d] &  Y^{HT}=\Spf(\mathfrak{S}\{\frac{p^m}{\lambda}\}_{\delta}^{\wedge}/\lambda)/\mathbb{G}_{m,\mathbb{Z}_p/p^m}^\sharp \ar^{\pi}[d] \\
Y=\Spf(\mathbb{Z}_p/p^m) \ar[r]^{\rho_1} & Y/\mathbb{G}_{m,\mathbb{Z}_p/p^m}^\sharp=\Spf(\mathbb{Z}_p/p^m)/\mathbb{G}_{m,\mathbb{Z}_p/p^m}^\sharp }\]

Consequently, we have that 
\begin{equation*}
    \mathcal{D}(Y^{\HT})=\Mod_{\pi_*\mathcal{O}}(\mathcal{D}(Y/\mathbb{G}_{m,\mathbb{Z}_p/p^m}^\sharp))=\Mod_{\rho_1^{*}\pi_*\mathcal{O}}(\mathcal{D}_{\nil}(\MIC(\mathbb{Z}_p/p^m)))%=\Mod_{\pi^{\prime}_*\rho_1^{\prime *}\mathcal{O}}(\mathcal{D}_{\nil}(\MIC(\mathbb{Z}_p/p^m)))
\end{equation*}

Here the first identity holds as $\pi$ is affine. The second equality follows as the proof of \cite[Theorem 3.5.8]{bhatt2022absolute} implies that the pullback along $\rho_1$ induces a fully faithful functor $$\mathcal{D}(Y/\mathbb{G}_{m,\mathbb{Z}_p/p^m}^\sharp) \to \mathcal{D}(\MIC(\mathbb{Z}_p/p^m))$$
with essential image consisting of those $M$ such that $\Theta_M: M\to M$ satisfies the nilpotence condition stated in \cref{thm.appl. main classification}, which is denoted as $\mathcal{D}_{\nil}(\MIC(\mathbb{Z}_p/p^m))$ for simplicity.

On the other hand, as $\rho_1^{*}\pi_*\mathcal{O}=\pi^{\prime}_*\rho_1^{\prime *}\mathcal{O}$, we end in the following:
\[\mathcal{D}(Y^{\HT})=\Mod_{\pi^{\prime}_*\rho_1^{\prime *}\mathcal{O}}(\mathcal{D}_{\nil}(\MIC(\mathbb{Z}_p/p^m)))\]

But the right-hand side is exactly the category of objects stated in \cref{thm.appl. main classification}, combining the fact that $$\pi^{\prime}_*\rho_1^{\prime *}\mathcal{O}=(\mathfrak{S}\{\frac{p^m}{\lambda}\}_{\delta}^{\wedge}/\lambda, \Theta)\in \mathcal{D}(\mathrm{MIC}(\mathbb{Z}_p/p^m))$$ for $\Theta$ described in \cref{exam.appl.Sen on the structure sheaf} and that the Sen operator on $M\otimes N$ is given by $1\otimes \Theta_N+\Theta_{M}\otimes 1$ for $M,N \in \mathcal{D}(\mathrm{MIC}(\mathbb{Z}_p/p^m))$, which already lies in the definition of $\mathcal{D}(\mathrm{MIC}(\mathfrak{S}\{\frac{p^m}{\lambda}\}_{\delta}^{\wedge}/\lambda))$.
\end{proof}

\subsection{Remark on compatibility with Petrov's result when $n=1$}
As pointed out in \cref{rem.appl. compatibility when n=1}, the diffracted Hodge-Tate stack $Y^{\slashed{D}}$ constructed in \cite{bhatt2022absolute} and \cite{bhatt2022prismatization} is slightly different than our $Y_{1}^{\slashed{D}}$. Actually, Alexander Petrov calculated $Y^{\slashed{D}}$ explicitly in \cite[Lemma 6.13]{petrov2023non} and hence obtain a presentation of $Y^{\HT}$:
\begin{lemma}(\cite[Lemma 6.13]{petrov2023non})\label{lem.appl. petrov}
$Y^{\slashed{D}}\simeq \mathbb{G}_{a,\mathbb{Z}_p/p^m}^\sharp$, hence $Y^{\HT}\simeq \mathbb{G}_{a,\mathbb{Z}_p/p^m}^\sharp/\mathbb{G}_{m,\mathbb{Z}_p/p^m}^\sharp$, where the quotient is taken with respect to the scaling action.
\end{lemma}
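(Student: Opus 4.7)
The plan is to compute the functor of points of $Y^{\slashed{D}}$ directly from its definition in \cite[Construction 3.8]{bhatt2022prismatization}: for a $p$-nilpotent test ring $S$,
\[
  Y^{\slashed{D}}(S) \;=\; \mathrm{Map}_{\mathrm{AniRing}}\!\bigl(\mathbb{Z}_p/p^m,\; W(S)/^{\mathbb{L}} V(1)\bigr),
\]
where the derived cofibre is taken with respect to the standard Cartier-Witt divisor $V(1)\colon W(S)\to W(S)$ used to define $\tilde{\rho}$. First I would present $\mathbb{Z}_p/p^m$ as the pushout $\mathbb{Z}_p\otimes_{\mathbb{Z}_p[x]}^{\mathbb{L}}\mathbb{Z}_p$ along $x\mapsto p^m$ and $x\mapsto 0$ (using that $p^m$ is a nonzerodivisor in $\mathbb{Z}_p$) and invoke the initiality of $\mathbb{Z}_p$ among $p$-complete animated rings. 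This rewrites the mapping space as the path space in $\Omega^{\infty}(W(S)/^{\mathbb{L}} V(1))$ between the two images of $p^m$ and $0$, i.e.\ concretely the groupoid of lifts $y\in W(S)$ satisfying $V(1)\cdot y = p^m$, with higher homotopies controlled by $\ker(V(1)\colon W(S)\to W(S))$.

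The next step is to produce a natural equivalence between this path space and $\mathbb{G}_{a,\mathbb{Z}_p/p^m}^{\sharp}(S)$. The key input is the classical Cartier-dual style description of $\mathbb{G}_a^{\sharp}$ in terms of Witt vectors: for $p$-nilpotent $S$, $\mathbb{G}_a^{\sharp}(S)$ can be identified with a distinguished subspace of $W(S)$ cut out by Artin--Hasse / divided-power conditions. Choose a canonical reference lift $y_0\in W(\mathbb{Z}_p/p^m)$ of $p^m$ across $V(1)$, obtained by iteratively solving $V(1)\cdot y_0 = p^m$ on ghost coordinates. Then any other lift $y$ differs from $y_0$ by an element $z\in \ker(V(1)\cdot -)$, and $z\mapsto w_0(z)$ (together with its higher divided powers, encoded by Artin--Hasse) furnishes the desired transformation to $\mathbb{G}_{a,\mathbb{Z}_p/p^m}^{\sharp}(S)$. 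The inverse uses the Artin--Hasse exponential to convert a PD-nilpotent element of $S$ into an explicit Witt vector annihilated by $V(1)$ and hence, after adding $y_0$, into a lift of $p^m$.

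The main obstacle is precisely the functorial identification of $\ker\bigl(V(1)\colon W(S)\to W(S)\bigr)$ with $\mathbb{G}_a^{\sharp}(S)$ as formal group schemes over $\mathbb{Z}_p/p^m$: this is where the combinatorics of Witt vectors, the Artin--Hasse exponential, and Cartier duality come together, and it is the technical heart of \cite[Lemma 6.13]{petrov2023non}. Once the isomorphism $Y^{\slashed{D}}\cong \mathbb{G}_{a,\mathbb{Z}_p/p^m}^{\sharp}$ is established, the statement about $Y^{\HT}$ is then formal: combining \cite[Theorem 3.4.13]{bhatt2022absolute} and \cite[Example 9.6]{bhatt2022prismatization}, the natural projection $Y^{\slashed{D}}\to Y^{\HT}$ is a $\mathbb{G}_{m,\mathbb{Z}_p/p^m}^{\sharp}$-torsor for the scaling action, so passing to the quotient yields $Y^{\HT}\simeq \mathbb{G}_{a,\mathbb{Z}_p/p^m}^{\sharp}/\mathbb{G}_{m,\mathbb{Z}_p/p^m}^{\sharp}$.
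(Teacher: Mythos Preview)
The paper does not prove this lemma at all: it is stated as a direct citation to \cite[Lemma 6.13]{petrov2023non} with no accompanying argument. Your sketch therefore goes beyond what the paper provides. The outline you give --- rewrite $Y^{\slashed{D}}(S)$ as the space of $y\in W(S)$ with $V(1)\cdot y=p^m$, translate by a reference point to land in $\ker(V(1)\cdot-)=W[F]$, then invoke the identification $W[F]\cong\mathbb{G}_a^\sharp$ --- is a correct reading of how the result is obtained, and you rightly flag the last identification as the technical core handled in Petrov's paper. (Indeed, the paper later uses exactly the description $Y^{\slashed{D}}(S)=\{y\in W(S)\mid p^m=V(1)y=V(F(y))\}$ in the proof of \cref{prop.appl. compatible of diffracted}.)

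One small wrinkle: your construction of the reference lift $y_0\in W(\mathbb{Z}_p/p^m)$ ``by iteratively solving on ghost coordinates'' does not work as stated, since $\mathbb{Z}_p/p^m$ has $p$-torsion and the ghost map is not injective there. The existence of $y_0$ is better argued either by first solving over $W(\mathbb{Z}_p)$ (where ghost coordinates do apply) and reducing, or by noting that $Y^{\slashed{D}}\to\Spf(\mathbb{Z}_p/p^m)$ is affine with nonempty fibers; this is a minor repair and does not affect the overall strategy.
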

\begin{remark}\label{rem.appl. crystal compatible}
    Based on Petrov's result, arguing as in the proof of \cref{prop.appl. base n=1 case}, we could give another presentation of quasi-coherent complexes on $Y^{\HT}$:
    \[\mathcal{D}(Y^{\HT})=\Mod_{\mathcal{O}_{\mathbb{G}_{a,\mathbb{Z}_p/p^m}^{\sharp}}}(\mathcal{D}_{\nil}(\MIC(\mathbb{Z}_p/p^m))),\]
    where the Sen operator $\Theta$ on $\mathcal{O}_{\mathbb{G}_{a,\mathbb{Z}_p/p^m}^{\sharp}}$ sends $\frac{t^i}{i!}$ to $\frac{t^i}{(i-1)!}$ for the coordinate of $\mathcal{O}_{\mathbb{G}_{a,\mathbb{Z}_p/p^m}^{\sharp}}$.
\end{remark}
 
%$(\mathbb{Z}_p/p^m)_{\Prism}$ (following from  implicitly).For the purpose of studying  quasi-coherent complexes on $Y_n^{\Prism}$, we would like to understand $Y_{1}^{\slashed{D}}$ first. 
Recall that by \cref{lem.appl description of points}, we have another presentation 
\[Y_{1}^{\slashed{D}}\simeq \Spf(\mathfrak{S}\{\frac{p^m}{\lambda}\}_{\delta}^{\wedge}/\lambda),\qquad Y^{HT}\simeq \Spf(\mathfrak{S}\{\frac{p^m}{\lambda}\}_{\delta}^{\wedge}/\lambda)/\mathbb{G}_{m,\mathbb{Z}_p/p^m}^\sharp\]

We would like to compare $Y_{1}^{\slashed{D}}$ with $Y^{\slashed{D}}$, once we show they are isomorphic, it would be obvious that the description of quasi-coherent complexes on $Y^{\HT}$ via \cref{prop.appl. base n=1 case} should be compatible with that given in \cref{rem.appl. crystal compatible} based on Petrov's result.
\begin{proposition}\label{prop.appl. compatible of diffracted}
    Assume $p\geq 3$, then there is an isomorphism between $Y_{1}^{\slashed{D}}$ and $Y^{\slashed{D}}$ as functors on $\Spf(\mathbb{Z}_p)$.
\end{proposition}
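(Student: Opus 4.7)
The plan is to identify both $Y_1^{\slashed{D}}$ and $Y^{\slashed{D}}$ with explicit affine formal schemes and then exhibit an isomorphism of the representing rings. By \cref{lem.appl description of points} applied with $n = 1$ (and $\mathfrak{S}/\lambda = \mathbb{Z}_p$), we have $Y_1^{\slashed{D}} \cong \Spf(B)$ where $B := \mathfrak{S}\{\tfrac{p^m}{\lambda}\}_\delta^\wedge/\lambda$. By \cref{lem.appl. petrov}, $Y^{\slashed{D}} \cong \Spf(C)$ where $C$ is the $p$-adically completed divided-power polynomial algebra over $\mathbb{Z}_p/p^m$ representing $\mathbb{G}_{a,\mathbb{Z}_p/p^m}^\sharp$. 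It thus suffices to produce a $\mathbb{Z}_p$-algebra isomorphism $B \cong C$, functorial enough to yield the desired comparison of the functors of points.

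First I unravel $B$. Let $T \in B$ denote the image of $\tfrac{p^m}{\lambda}$. The relation $\lambda T = p^m$ combined with $\lambda = 0$ in $B$ forces $p^m = 0$ in $B$. Applying $\delta$ to $\lambda T - p^m = 0$ and reducing modulo $\lambda$ yields
\[
\delta(\lambda)\,\varphi(T) \equiv \delta(p^m) \pmod{\lambda}.
\]
A direct calculation from $\varphi(\lambda) = (\lambda + p)^p - p$ shows $\delta(\lambda) \equiv p^{p-1} - 1 \pmod{\lambda}$, which for $p \geq 3$ is a unit in $\mathbb{Z}_p$ (its class mod $p$ is $-1$). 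Combined with $\delta(p^m) = p^{m-1}(1 - p^{m(p-1)}) \equiv p^{m-1} \pmod{p^m}$, this gives $\varphi(T) = c \cdot p^{m-1}$ in $B$ for a specific unit $c \in \mathbb{Z}_p^\times$. Inductively, the relations $\delta^i(\lambda T - p^m) = 0$ modulo $\lambda$ for $i \geq 1$ present $B$ as the universal $\delta$-$(\mathbb{Z}_p/p^m)$-algebra on a topologically nilpotent generator $T$ satisfying $\varphi(T) = c\,p^{m-1}$.

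To match with $C$, I perform the substitution $T' := T - c\,p^{m-1} \in B$. Using the $\delta$-sum formula $\delta(a+b) = \delta(a) + \delta(b) + \tfrac{a^p + b^p - (a+b)^p}{p}$, the vanishing $(c\,p^{m-1})^p \equiv 0 \pmod{p^m}$ (valid since $p(m-1) \geq m$ whenever $p \geq 3$ and $m \geq 2$), and the identity $\delta(-a) = -\delta(a)$ on $\mathbb{Z}_p$ for odd $p$, one checks that $\varphi(T') = 0$ in $B$. Consequently $B$ becomes the universal $\delta$-$(\mathbb{Z}_p/p^m)$-algebra on a generator $T'$ with $\varphi(T') = 0$, and this category is represented by the PD polynomial algebra $C$: the relation $p\,\delta(T') = -(T')^p$ forces divided powers at $T'$, with each $\delta^i(T')$ matching (up to unit scalar) the generator $(T')^{[p^i]}$. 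The resulting isomorphism $B \cong C$ then gives $Y_1^{\slashed{D}} \cong Y^{\slashed{D}}$ as functors on $\Spf(\mathbb{Z}_p)$.

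The main difficulty lies in the last step: rigorously identifying the $\delta$-$(\mathbb{Z}_p/p^m)$-algebra $B$ (after the shift) with $C$. One must track how the relations $\delta^i(\lambda T - p^m) = 0$ transform under $T \mapsto T' + c p^{m-1}$ and verify that all higher cross-terms vanish modulo $p^m$. The hypothesis $p \geq 3$ is essential both for $\delta(\lambda) \equiv -1 \pmod p$ being a unit and for the clean identity $\delta(-a) = -\delta(a)$; in the case $p = 2$, the additional correction $\delta(-a) = -\delta(a) + a^2$ would spoil the substitution.
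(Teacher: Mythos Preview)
Your approach attempts a direct ring isomorphism $B := \mathfrak{S}\{\tfrac{p^m}{\lambda}\}_\delta^\wedge/\lambda \cong \mathcal{O}_{\mathbb{G}_{a,\mathbb{Z}_p/p^m}^\sharp}$, which the paper explicitly flags (in the remark preceding the proof) as ``extremely difficult due to the complexity of the prismatic envelope'' and deliberately avoids. The paper instead works at the level of functors of points: it shows that for a test $\mathbb{Z}_p$-algebra $S$ one has $Y_1^{\slashed{D}}(S)=\{x\in W(S)\mid p^m=\lambda x\}$ and $Y^{\slashed{D}}(S)=\{y\in W(S)\mid p^m=V(1)\cdot y\}$, and then exhibits an explicit unit $x_\lambda\in W(\mathbb{Z}_p)^\times$ with $\iota(\lambda)=V(F(x_\lambda))$ (this is \cref{lem.appl.technical}, where the hypothesis $p\ge 3$ is actually used), so that $x\mapsto x_\lambda x$ matches the two functors.

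Your argument, as written, has a genuine gap. The ideal $(\lambda)$ is \emph{not} a $\delta$-ideal in the envelope (you yourself compute $\delta(\lambda)\equiv p^{p-1}-1\not\equiv 0\pmod\lambda$), so $B$ carries no $\delta$-ring structure, and the phrase ``universal $\delta$-$(\mathbb{Z}_p/p^m)$-algebra'' has no meaning: $\mathbb{Z}_p/p^m$ is not a $\delta$-ring either, since $\delta(p^m)=p^{m-1}(\text{unit})\notin p^m\mathbb{Z}_p$. Concretely, from $\varphi(T')\in(\lambda)$ in the envelope you cannot conclude $\delta^i(\varphi(T'))\in(\lambda)$ for $i\ge 1$, because $(\lambda)$ is not $\delta$-stable; thus the single relation $\varphi(T')=0$ in $B$ does not by itself force the higher relations needed to pin down the divided-power structure. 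What you would actually have to do is expand each $\delta^i(\lambda T-p^m)$ explicitly, reduce modulo $\lambda$, substitute $T'=T-cp^{m-1}$, and verify by hand that the resulting presentation of $B$ matches that of the PD-polynomial ring --- exactly the computation the paper declines to attempt. Your final paragraph correctly identifies this as the crux, but the proposal does not supply it. (As a minor aside, your stated reason for needing $p\ge 3$ is off: $\delta(\lambda)\equiv p^{p-1}-1$ is a $p$-adic unit for all primes, including $p=2$.)
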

\begin{remark}
    Given \cref{lem.appl description of points} and \cref{lem.appl. petrov}, it's tempting to show that $\mathfrak{S}\{\frac{p^m}{\lambda}\}_{\delta}^{\wedge}/\lambda \cong \mathcal{O}_{\mathbb{G}_{a,\mathbb{Z}_p/p^m}^{\sharp}}$ directly, however, this turns out to be extremely difficult due to the complexity of the prismatic envelope $\mathfrak{S}\{\frac{p^m}{\lambda}\}_{\delta}^{\wedge}$, hence we take another indirect method. 
\end{remark}
\begin{proof}[Proof of \cref{prop.appl. compatible of diffracted}]
    For a test $\mathbb{Z}_p$-algebra $S$, \cref{lem.appl.technical} and the proof of \cref{lem.appl description of points} shows that
    \[Y_{1}^{\slashed{D}}(S)=\{x\in W(S)| ~p^m=\lambda x=V(F(x_{\lambda}x))\}.\]
    
    On the other hand, \cite[Example 5.15]{bhatt2022prismatization} shows that
    \[Y^{\slashed{D}}(S)=\{y\in W(S)| ~p^m=V(1) y=V(F(y))\}.\]

    As $x_{\lambda}$ is a unit, sending $x$ to $x_{\lambda} x$ induces an isomorphism of these two functors.
\end{proof}

The following lemma is used in the above proof.

\begin{lemma}\label{lem.appl.technical}
    Let $\iota: \mathfrak{S}\to W(\mathbb{Z}_p)$ be the unique $\delta$-ring map lifting the quotient map $\mathfrak{S}\to \mathbb{Z}_p=\mathfrak{S}/\lambda$. Suppose $p\geq 3$, then there exists a unit $x_{\lambda}\in W(\mathbb{Z}_p)$ such that $\iota(\lambda)=V(F(x_{\lambda}))$. 
\end{lemma}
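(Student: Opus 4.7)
The strategy is to identify $\iota(\lambda)$ as an explicit element of $W(\mathbb{Z}_p)$, reduce the desired equality $\iota(\lambda) = V(F(x_\lambda))$ to a system of ghost-component identities, and then invoke Dwork's lemma to produce $x_\lambda$.

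First I would pin down $\iota$. Viewing $\mathfrak{S} = \mathbb{Z}_p[[u]]$ with the $\delta$-structure $\delta(u) = 0$ (so that $\lambda = u - p$, as set up in \S 3), the $\delta$-compatibility of $\iota$ forces $\delta(\iota(u)) = 0$ in $W(\mathbb{Z}_p)$. Elements $y \in W(\mathbb{Z}_p)$ with $\delta(y) = 0$ are exactly the Teichm\"uller lifts: the relation $\delta(y) = 0$ translates on ghost components to $w_{n+1}(y) = w_n(y)^p$ for all $n \geq 0$, so the ghost sequence of $y$ is $(w_0(y)^{p^n})_{n \geq 0}$, which coincides with that of $[w_0(y)]$; injectivity of the ghost map on $W(\mathbb{Z}_p)$ (valid since $\mathbb{Z}_p$ is $p$-torsion-free) then gives $y = [w_0(y)]$. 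Since $\iota$ lifts $u \mapsto p$, we have $w_0(\iota(u)) = p$, hence $\iota(u) = [p]$ and $\iota(\lambda) = [p] - p$.

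Next I would recast the target. The projection formula $V(x)y = V(xF(y))$ gives $V(F(x_\lambda)) = V(1)\cdot x_\lambda$, and since $w_0([p]-p) = 0$ we may write $\iota(\lambda) = V(z)$ for a unique $z \in W(\mathbb{Z}_p)$. The defining formula $w_n(V(z)) = p\cdot w_{n-1}(z)$ for $n \geq 1$ yields
\[ w_n(z) = p^{p^{n+1}-1} - 1, \qquad n \geq 0. \]
Via $w_n(F(x)) = w_{n+1}(x)$, the equation $F(x_\lambda) = z$ is then equivalent to producing $x_\lambda \in W(\mathbb{Z}_p)$ with $w_n(x_\lambda) = p^{p^n - 1} - 1$ for all $n \geq 1$, with $w_0(x_\lambda)$ unconstrained.

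The last step is to apply Dwork's lemma: a sequence $(g_n)_{n \geq 0} \in \mathbb{Z}_p^{\mathbb{N}}$ is the ghost sequence of an element of $W(\mathbb{Z}_p)$ iff $g_n \equiv g_{n-1} \pmod{p^n}$ for every $n \geq 1$ (using $\mathrm{id}_{\mathbb{Z}_p}$ as a Frobenius lift on $\mathbb{F}_p$). Setting $w_0(x_\lambda) := -1$, for $n = 1$ we have $g_1 - g_0 = p^{p-1} \equiv 0 \pmod p$; for $n \geq 2$ one needs both $p^{p^n-1}$ and $p^{p^{n-1}-1}$ to be divisible by $p^n$, and the binding inequality $p^{n-1}-1 \geq n$ holds for $p \geq 3$, $n \geq 2$ by a short induction (and fails at $p=2$, $n=2$, which is the sole place the hypothesis $p \geq 3$ is used). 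With Dwork's congruences verified, $x_\lambda$ exists and satisfies $V(F(x_\lambda)) = V(z) = \iota(\lambda)$. Finally $x_\lambda$ is a unit, since $w_0(x_\lambda) = -1 \in \mathbb{Z}_p^{\times}$ and units in $W(\mathbb{Z}_p)$ are detected by $w_0$ (as $W(\mathbb{Z}_p)$ is $V$-adically complete with $W(\mathbb{Z}_p)/V \cong \mathbb{Z}_p$). The only non-routine point is the verification of Dwork's congruences for $n \geq 2$, which is elementary but pinpoints exactly why the hypothesis $p \geq 3$ enters.
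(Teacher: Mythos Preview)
Your proposal is correct and follows essentially the same approach as the paper: both compute the ghost components $w_n(\iota(\lambda)) = p^{p^n} - p$, reduce the problem to finding $x_\lambda$ with $w_0(x_\lambda)=-1$ and $w_n(x_\lambda) = p^{p^n-1}-1$ for $n\geq 1$, and check this sequence comes from a Witt vector. The only difference is cosmetic---the paper solves for the Witt coordinates $x_i$ by an explicit induction with valuation bookkeeping, while you invoke Dwork's lemma, which packages the same verification more cleanly and isolates the role of $p\geq 3$ more transparently.
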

\begin{proof}
    For simplicity, we will just write $\lambda$ for $\iota(\lambda)$ in the following proof. First notice that $\lambda$ maps to $0$ under the projection $W(\mathbb{Z}_p) \to \mathbb{Z}_p$, hence it lies in $V(W(\mathbb{Z}_p))$. It then suffices to show that $\lambda=V(F(x))$ has a solution $x_{\lambda}=(x_0,x_1,\cdots)$ in $W(\mathbb{Z}_p)$. As $\mathbb{Z}_p$ is $p$-torsion free, the ghost map is injective, hence this equation is equivalent to that 
\begin{align}\label{appl.ghost identity}
    \forall n\geq 0, w_n(\lambda)=w_n(V(F(x))).
\end{align}
    We will construct $x_{\lambda}=(x_0,x_1,\cdots)$ inductively on $n$ by showing that the solution exists in $W_n(\mathbb{Z}_p)$.

    For $n=0$, $w_0(\lambda)=0 \in \mathbb{Z}_p$, $w_0(V(F(x)))=0$, hence \cref{appl.ghost identity} always holds.

    For $n\geq 1$, we have that 
    \begin{equation*}
        w_n(\lambda)=w_0(\varphi^n(\lambda))=w_0((\lambda+p)^{p^n}-p)=p^{p^n}-p,
    \end{equation*}
    and that
    \begin{equation*}
        w_n(V(F(x)))=pw_{n-1}(F(x))=pw_n(x)=p(\sum_{i=0}^n x_i^{p^{n-i}}p^i).
    \end{equation*}
    
    Take $n=1$, then \cref{appl.ghost identity} is equivalent to that $x_0^p+px_1=p^{p-1}-1$, hence it suffices to pick $x_0=-1, x_1=p^{p-2}$.
    
    Next we do induction on $n$. Suppose $n\geq 2$ and we have determined $x_0,\cdots,x_{n-1}$ such that $x_0=-1$, $v_p(x_i)=p^{i-1}(p-2)-\frac{p^{i-1}-1}{p-1}$ for $1\leq i\leq n-1$ and that \cref{appl.ghost identity} holds for non-negative integers no larger than $n-1$. Then we claim that we could pick $x_n\in \mathbb{Z}_p$ such that \cref{appl.ghost identity} holds for $n$ as well. Actually, as $x_0=-1$, the previous calculation  implies that we just need a $x_n$ such that 
    \begin{equation*}
        p^{p^n-1}=\sum_{i=1}^n x_i^{p^{n-i}}p^i.
    \end{equation*}
    
    Our assumption on $x_i$ for $1\leq i\leq n-1$ guarantees that 
    \begin{equation}\label{appl.ghost identity sub}
        v_p(x_i^{p^{n-i}}p^i)=i+p^{n-i}(p^{i-1}(p-2)-\frac{p^{i-1}-1}{p-1})=p^{n-1}(p-2)+i-\frac{p^{n-1}-p^{n-i}}{p-1},
    \end{equation}
hence $v_p(x_i^{p^{n-i}}p^i)$ decreases as $i$ increases for $1\leq i\leq n-1$ and they are all bounded above by $v_p(x_1^{p^{n-1}}p)=p^n-2p^{n-1}+1<p^n-1$, bounded below by $v_p(p^{n-1}x_{n-1}^{p})=n-1+p^{n-1}(p-2)-\frac{p^{n-1}-p}{p-1}$. Consequently, \cref{appl.ghost identity sub} has a unique solution $x_n\in \mathbb{Z}_p$ with $$v_p(x_n)=v_p(p^{n-1}x_{n-1}^{p})-n=p^{n-1}(p-2)-\frac{p^{n-1}-1}{p-1},$$ we win. Clearly such $x_{\lambda}$ is a unit in $W(\mathbb{Z}_p)$ as $x_0=-1$ by construction.

\end{proof}

\bibliographystyle{amsalpha}
\bibliography{main,preprints}

\end{document}